\documentclass{article}

\usepackage{graphicx} 
\usepackage{amsfonts}
\usepackage{amssymb}
\usepackage{amsthm}
\usepackage{amsmath}
\usepackage{color}

\newtheorem{thm}{Theorem}[section]
\newtheorem{lem}[thm]{Lemma}
\newtheorem{prop}[thm]{Proposition}
\newtheorem{cor}[thm]{Corollary}

\newtheorem{defn}{Definition}

\usepackage{amsfonts}
\usepackage{amssymb}
\usepackage{amsthm}
\usepackage{amsmath}
\begin{document}

\title{Computable Scott Sentences and the Friedman-Stanley embedding}

\author{David Gonzalez and Julia Knight}

\maketitle

\begin{abstract}

Friedman and Stanley \cite{FS} developed the notion of \emph{Borel reducibility} and illustrated its use in comparing classification problems for some familiar classes of countable structures.  For many embeddings, the fact that the embedding is $1-1$ on isomorphism types is explained by the existence of simple formulas that, uniformly, interpret the input structure in the output structure.  For the embeddings of graphs in trees, and in linear orderings, there is no uniform interpretation \cite{H-TM}, \cite{KSV}.  We focus on a version of the Friedman-Stanley embedding from \cite{H-TM} that takes each structure $\mathcal{A}$ for the language of graphs to a labeled tree $T_\mathcal{A}$.  Gonzalez and Rossegger \cite{GR} showed that this embedding preserves Scott complexity.  We refine this result, showing that for an $X$-computable ordinal, if one of $\mathcal{A}$, $T_\mathcal{A}$ has a computable infinitary Scott sentence, then so does the other, and the complexities match.  Let $\mathbb{T}$ be the class of labeled trees isomorphic to those in the range of the embedding, and let $\mathbb{T}^\alpha$ be the subclass consisting of structures of Scott rank at most $\alpha$.  It follows from results of Gao \cite{gao} that $\mathbb{T}$ is not Borel.  We show that for each $\alpha$, $\mathbb{T}^\alpha$ is Borel.  In fact, if $\alpha$ is an $X$-computable ordinal, then $\mathbb{T}^\alpha$  is complete $X$-effective $\Pi_{2\alpha+2}$.    
\end{abstract}    

\section{Introduction}

A good deal of work, in different branches of mathematics, involves trying to ``classify'' the objects in some class up to some notion of sameness, using relatively simple invariants.  For a countable language $L$, $Mod(L)$ denotes the class of $L$-structures with universe $\omega$.  Our objects are elements of $Mod(L)$, for some countable language $L$.  Our classes are subclasses of $Mod(L)$, always closed under isomorphism.  Our notion of sameness is isomorphism.  The class of $\mathbb{Q}$-vector spaces is nicely classified by dimension.  Nobody would try to classify directed graphs.  Of course, there are only countably many isomorphism types of $\mathbb{Q}$-vector spaces, while there are $2^{\aleph_0}$ isomorphism types of countable directed graphs. Friedman and Stanley \cite{FS} developed the notion of ``Borel reducibility'' to compare classification problems for classes of countable structures, using more than just cardinality.    

\begin{defn} 

For classes $K\subseteq Mod(L)$ and $K'\subseteq Mod(L')$, we say that $K$ is \emph{Borel reducible} to $K'$, and we write $K\leq_B K'$, if there is a Borel function $\Phi:K\rightarrow K'$ such that for $\mathcal{A},\mathcal{B}\in K$, $\mathcal{A}\cong\mathcal{B}$ iff $\Phi(\mathcal{A})\cong\Phi(\mathcal{B})$.  

\end{defn}

In \cite{FS}, Friedman and Stanley located various familiar classes of structures under Borel reducibility, showing that undirected graphs, fields, $2$-step nilpotent groups, and linear orderings lie on top, while $\mathbb{Q}$-vector spaces and Abelian $p$-groups lie strictly below the top.   There has been a great deal of further work locating further interesting classes.  For the class $TFAb_n$ of torsion-free Abelian groups of rank $n$, Hjorth \cite{Hjorth} and Thomas \cite{Thomas} showed that $TFAb_n <_B TFAb_{n+1}$.  Paolini and Shelah \cite{PS} showed that torsion-free Abelian groups of infinite rank lie on top.  For some of their results, Friedman and Stanley used known embeddings.  To show that linear orderings and trees lie on top, they gave new embeddings.  We focus on a version of the Friedman-Stanley embedding, described in \cite{H-TM}, that, for a finite relational language $L$, takes each $\mathcal{A}\in Mod(L)$ to a labeled tree $T_\mathcal{A}$. 

\bigskip

Let $\mathcal{A}\in Mod(L)$, where $L$ is the language of graphs, and let $T_\mathcal{A}$ be the corresponding tree structure.  In \cite{GR}, it is shown that $\mathcal{A}$ and $T_\mathcal{A}$ have the same Scott rank.  Hence, if one of the structures has a $\Pi_{\alpha+1}$ Scott sentence, then so does the other.  They had asked whether the same is true for computable infinitary Scott sentences.   In Section 2, we give a positive answer.

\bigskip
\noindent
\textbf{Theorem A}. For all $\mathcal{A}\in Mod(L)$, if one of $\mathcal{A}$, $T_\mathcal{A}$ has a computable $\Pi_{\alpha+1}$ Scott sentence, then so does the other.  

\bigskip
\noindent
\textbf{Remark}:  Our proof shows more. Analagously to the work in \cite{GR}, we see that, if $\alpha$ is a computable limit ordinal and $\mathcal{A}$ has a computable $\Pi_\alpha$ Scott sentence, then so does $T_\mathcal{A}$.

\bigskip

Let $\mathbb{T}$ be the class of labeled trees isomorphic to one of the form $T_\mathcal{A}$ for $\mathcal{A}\in Mod(L)$.  Let $\mathbb{T}^\alpha$ consist of the elements of $\mathbb{T}$ of rank at most $\alpha$.  The full class $\mathbb{T}$ is not Borel.  Gao \cite{gao} gave results implying that for any embedding of graphs in linear orderings, the class of orderings isomorphic to those in the range is not Borel.  The same reasoning shows that $\mathbb{T}$ is not Borel (see \cite{GR} or \cite{GHT}).  In Section 3, we show that for all countable ordinals $\alpha$, $\mathbb{T}^\alpha$ \emph{is} Borel, and we give the precise complexity. In Section 4, we show that our given complexity is the best possible.

\bigskip
\noindent
\textbf{Theorem B}.  For all $X$-computable ordinals $\alpha\geq 1$, $\mathbb{T}^\alpha$ is complete $X$-effective $\Pi_{2\alpha+2}$.  

\subsection{Background}

In the remainder of the introduction, we give some background.  We describe the embedding $\mathcal{A}\rightarrow T_\mathcal{A}$ precisely.  We recall infinitary formulas, $L_{\omega_1\omega}$-formulas, and computable infinitary formulas.  We state basic results of Scott \cite{S} and Montalb\'{a}n \cite{robuster} on Scott sentences and Scott rank.  We define the standard back-and-forth relations.  Finally, we describe variants of Borel reducibility for which a Pullback Theorem holds \cite{KMV}.   

\subsubsection{Tree of tuples} 

In this subsection, we describe the embedding $\mathcal{A}\rightarrow T_\mathcal{A}$ in a precise way, following \cite{H-TM}, see also \cite{GR}, p.\ 4.  Let $L$ be a finite relational language.  Let $X$ be the set of variables $x_1,x_2,\ldots$.  We assign to each $L$-structure $\mathcal{A}$, with universe $\omega$, a tree structure $T_\mathcal{A}$.  The language of the tree structures, $L^*$, consists of a unary function symbol $p$ (the predecessor function) and unary relation symbols $U_D$, where $D$ assigns an atomic type to the variables $x_1,\ldots,x_n$ for some $n$, making the variables all distinct.    

For each tuple $\bar{a} = (a_1,\ldots,a_n)$ of distinct elements of $\mathcal{A}$, let $\bar{x} = (x_1,\ldots,x_n)$ be the corresponding tuple of variables from $X$, and let $D_{\bar{a}}(\bar{x})$ be the conjunction of the formulas $\pm\alpha(\bar{x})$ true of $\bar{a}$, where $\alpha(\bar{x})$ is atomic. Each node of $T_\mathcal{A}$ represents some tuple $\bar{a}$ and carries exactly the label $U_D$, where $D = D_{\bar{a}}(\bar{x})$ for the appropriate initial tuple of variables $\bar{x}$. We ensure that every possible tuple from $\mathcal{A}$ is represented among the nodes of $T_\mathcal{A}$. A predecessor of a node will represent the tuple with one less element.  We define $T_\mathcal{A}$ more precisely below.            

\begin{defn} [Tree of tuples $T_\mathcal{A}$]

At level $0$, $T_\mathcal{A}$ has a unique node $\lambda$, representing $\emptyset$. The nodes at level $n$ represent tuples $\bar{a}$ of length $n$. A node representing a tuple $\bar{a}$ is labeled with a code for the atomic diagram of $\bar{a}$. For each node $\sigma$ representing an $n$-tuple $\bar{a}$, and each further element $a'$, $\sigma$ has infinitely many extensions representing the tuple $(\bar{a},a')$.         

\end{defn}

\subsubsection{Formulas of $L_{\omega_1\omega}$}

The infinitary logic $L_{\omega_1\omega}$ allows countably infinite disjunctions and conjunctions, but only finite strings of quantifiers.  Bringing negations inside, we get a normal form in which $\bigvee\exists$ alternates with $\bigwedge\forall$.  We classify formulas in this normal form as $\Sigma_\alpha$ or $\Pi_\alpha$ for countable ordinals $\alpha$.

\begin{enumerate}
    
\item $\varphi(\bar{x})$ is $\Sigma_0$ and $\Pi_0$ if it is finitary quantifier-free.

\item  For $\alpha\geq 1$, 

\begin{enumerate} 
    
\item  $\varphi(\bar{x})$ is $\Sigma_\alpha$ if it has form $\bigvee_i\exists\bar{u}_i\psi_i(\bar{x},\bar{u}_i)$, where each $\psi_i$ is $\Pi_{\beta_i}$ for some $\beta_i < \alpha$,   

\item  $\varphi(\bar{x})$ is $\Pi_\alpha$ if it has form $\bigwedge_i\forall\bar{u}_i\psi_i(\bar{x},\bar{u}_i)$, where each $\psi_i$ is $\Sigma_{\beta_i}$ for some $\beta_i < \alpha$.

\end{enumerate}
\end{enumerate}

\subsubsection{Computable infinitary formulas}

Computable infinitary formulas are $L_{\omega_1\omega}$ formulas in which the infinite disjunctions and conjunctions are over c.e.\ sets.  To make this precise, we need to assign indices to the formulas, as in \cite{AKbook} or \cite{Mbook2}.  We classify computable infinitary formulas as computable $\Sigma_\alpha$ or computable $\Pi_\alpha$ for computable ordinals $\alpha$.   

\subsubsection{Scott complexity}

In this subsubsection, we recall the Scott Isomorphism Theorem \cite{S} and define a Scott sentence.  We also state a result of Montalb\'{a}n \cite{robuster}, which yields the notion of Scott rank we will use.   

\begin{thm} [Scott Isomorphism Theorem]

For each countable structure $\mathcal{A}$ for a countable language $L$, there is a sentence $\varphi$ of $L_{\omega_1\omega}$ whose countable models are just the isomorphic copies of $\mathcal{A}$.

\end{thm}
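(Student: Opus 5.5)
The plan is the standard back-and-forth argument: define relations $\equiv_\alpha$ on finite tuples by transfinite recursion, find an ordinal at which they stabilize, and then write down a sentence describing the stabilized relation. For tuples $\bar a\in\mathcal{A}$ and $\bar b\in\mathcal{B}$ of the same length, set $\bar a\equiv_0\bar b$ if they satisfy the same atomic formulas. Set $\bar a\equiv_{\alpha+1}\bar b$ if for every $c$ there is $d$ with $\bar a c\equiv_\alpha\bar b d$, and conversely. At limit $\lambda$, let $\equiv_\lambda=\bigcap_{\beta<\lambda}\equiv_\beta$. Alongside this, for each tuple $\bar a$ of $\mathcal{A}$ and each countable $\alpha$, I will define an $L_{\omega_1\omega}$ formula $\varphi^\alpha_{\bar a}(\bar x)$ such that for every countable $\mathcal{B}$ and $\bar b$ of the right length, $\mathcal{B}\models\varphi^\alpha_{\bar a}(\bar b)$ iff $(\mathcal{A},\bar a)\equiv_\alpha(\mathcal{B},\bar b)$.

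The formulas are built as follows. $\varphi^0_{\bar a}$ is the conjunction of the atomic and negated atomic formulas true of $\bar a$. This conjunction is countable, and it is finite when $L$ is finite relational. Next,
\[
\varphi^{\alpha+1}_{\bar a}(\bar x)=\bigwedge_{c\in A}\exists y\,\varphi^\alpha_{\bar a c}(\bar x,y)\wedge\forall y\bigvee_{c\in A}\varphi^\alpha_{\bar a c}(\bar x,y).
\]
Finally, $\varphi^\lambda_{\bar a}=\bigwedge_{\beta<\lambda}\varphi^\beta_{\bar a}$ at limits. All conjunctions and disjunctions range over countable sets because $A$ is countable. A routine induction on $\alpha$ verifies the stated property.

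The key step is stabilization inside $\mathcal{A}$. For each $\alpha$, restrict $\equiv_\alpha$ to pairs of tuples from $\mathcal{A}$ itself. These relations decrease as $\alpha$ grows, and they live on the countable set $A^{<\omega}\times A^{<\omega}$. A decreasing sequence of subsets of a countable set can strictly decrease only countably often, so there is a countable $\gamma$ with $\equiv_\gamma=\equiv_{\gamma+1}$ on $A$. It follows by induction that $\equiv_\gamma=\equiv_\delta$ on $A$ for all $\delta\ge\gamma$.

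Now define the Scott sentence
\[
\varphi=\varphi^{\gamma}_{\emptyset}\wedge\bigwedge_{\bar a\in A^{<\omega}}\forall\bar x\Big(\varphi^\gamma_{\bar a}(\bar x)\rightarrow\varphi^{\gamma+1}_{\bar a}(\bar x)\Big).
\]
$\mathcal{A}$ satisfies $\varphi$. Indeed, if $\mathcal{A}\models\varphi^\gamma_{\bar a}(\bar a')$, then $\bar a\equiv_\gamma\bar a'$ within $\mathcal{A}$, hence $\bar a\equiv_{\gamma+1}\bar a'$ by the choice of $\gamma$.

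Conversely, let $\mathcal{B}$ be a countable model of $\varphi$. Consider the family of finite partial maps $\bar a\mapsto\bar b$ with $\mathcal{B}\models\varphi^\gamma_{\bar a}(\bar b)$. The empty map belongs to this family by the first conjunct. Forth: given $\bar a\mapsto\bar b$ and a new $c\in A$, the second conjunct yields $\mathcal{B}\models\varphi^{\gamma+1}_{\bar a}(\bar b)$. Its first part produces $d$ with $\mathcal{B}\models\varphi^\gamma_{\bar a c}(\bar b,d)$. Back: given $d\in B$, the disjunctive part produces $c$ with $\mathcal{B}\models\varphi^\gamma_{\bar a c}(\bar b,d)$. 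Each map in the family preserves atomic formulas, since $\varphi^\gamma_{\bar a}$ implies $\varphi^0_{\bar a}$. Because both structures are countable, enumerating $A$ and $B$ and alternating back and forth builds an isomorphism.

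The main obstacle is the stabilization step, specifically making sure $\gamma$ is countable. Without that bound $\varphi$ would not be a formula of $L_{\omega_1\omega}$. I expect the countability argument for the decreasing chain to handle it.
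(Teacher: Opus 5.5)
The paper does not prove this theorem. It is quoted from Scott as background, so there is no proof in the paper to compare your route against.

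Your argument is the classical back-and-forth proof, and it is correct.
\begin{itemize}
\item The formulas $\varphi^\alpha_{\bar a}$ use only countable conjunctions and disjunctions, and they capture $\equiv_\alpha$ by a routine induction.
\item The ordinal $\gamma$ is countable. For each $\alpha$ at which the restriction to $A$ strictly drops, pick a pair in $\equiv_\alpha\setminus\equiv_{\alpha+1}$. These pairs are distinct for distinct $\alpha$, which gives an injection into the countable set $A^{<\omega}\times A^{<\omega}$.
\item The back-and-forth family in a countable model $\mathcal{B}$ of $\varphi$ works as you describe. You never need the relations between $\mathcal{A}$ and $\mathcal{B}$ to stabilize at $\gamma$. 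The second conjunct gives you, inside $\mathcal{B}$, exactly the implication $\varphi^\gamma_{\bar a}(\bar b)\Rightarrow\varphi^{\gamma+1}_{\bar a}(\bar b)$ that the forth and back steps use.
\end{itemize}

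The one fact you use without proof is monotonicity, $\equiv_{\alpha+1}\subseteq\equiv_\alpha$. It is what gives $\varphi^\gamma_{\bar a}\Rightarrow\varphi^0_{\bar a}$, and hence that your maps are well defined, injective, and preserve atomic formulas. It is a short induction whose base case uses $A\neq\emptyset$.

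Your construction gives no sharp complexity bound, which is what the paper actually cares about. Each successor step wraps the previous level in $\bigwedge\exists$ and $\forall\bigvee$, so it adds two quantifier alternations. The paper instead relies on Montalb\'{a}n's theorem, which uses the asymmetric relations $\le_\alpha$ and $\Sigma_\alpha$-definable orbits (cf.\ Lemma \ref{addedcondition}), to get a $\Pi_{\alpha+1}$ Scott sentence matched to the Scott rank. For the bare existence statement, your argument is all that is needed.
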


\begin{defn}
    
A \emph{Scott sentence} for $\mathcal{A}$ is a sentence $\varphi$ that characterizes $\mathcal{A}$ up to isomorphism, as in Scott's Theorem. 

\end{defn}

\begin{thm} [Montalb\'{a}n]

Let $\alpha\geq 1$ be a countable ordinal.  For $\mathcal{A}\in Mod(L)$, the following are equivalent:

\begin{enumerate}

\item  $\mathcal{A}$ has a $\Pi_{\alpha+1}$ Scott sentence,

\item  for each tuple $\bar{a}$, the orbit is defined by a $\Sigma_\alpha$ formula,

\item  for each tuple $\bar{a}$, there is a $\Sigma_\alpha$-formula that implies (in $\mathcal{A}$) all $\Pi_\alpha$ formulas satisfied by $\bar{a}$.

\end{enumerate}

\end{thm}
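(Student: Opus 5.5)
The plan is to prove the cycle of implications $(1)\Rightarrow(3)\Rightarrow(2)\Rightarrow(1)$, working throughout with the standard back-and-forth relations $\leq_\alpha$ on tuples. The key fact, proved by induction on $\alpha$, is that $\bar{a}\leq_\alpha\bar{b}$ holds iff every $\Pi_\alpha$ formula true of $\bar{a}$ is true of $\bar{b}$. For countable structures we also have, for each tuple $\bar{a}$ and each $\beta$, a $\Pi_\beta$ formula $\varphi^{\beta}_{\bar{a}}$ defining $\{\bar{b}:\bar{a}\leq_\beta\bar{b}\}$ in $\mathcal{A}$. This formula is built recursively as a countable conjunction over the finitely-many-variable extensions, in the manner of Scott's original construction.

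\emph{$(2)\Rightarrow(1)$.} Suppose each orbit is defined by a $\Sigma_\alpha$ formula $\psi_{\bar{a}}$. The Scott sentence is the conjunction of three parts. The first is $\psi_\emptyset$-type information, which is vacuous for the empty tuple; more precisely, it is $\forall\bar{x}\,(\psi_{\bar{a}}(\bar{x})\to D_{\bar{a}}(\bar{x}))$ together with the sentences $\exists\bar{x}\,\psi_{\bar{a}}(\bar{x})$. The second is, for each $\bar{a}$, the $\Pi_{\alpha+1}$ sentence $\forall\bar{x}\,(\psi_{\bar{a}}(\bar{x})\to\bigwedge_{c}\exists y\,\bigvee_{c}\psi_{\bar{a}c}(\bar{x},y))$, read as: for each $c$ there is a witness, and every $y$ lands in some orbit. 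The third is $\forall\bar{x}\bigvee_{\bar{a}}\psi_{\bar{a}}(\bar{x})$ for each length of $\bar{x}$. Each conjunct is $\Pi_{\alpha+1}$ once the existential forth part, which is $\Sigma_\alpha$, and the universal back part, $\forall y\bigvee\psi$, which is $\Pi_{\alpha+1}$, are placed under $\forall\bar{x}$ with a $\Pi_\alpha$ antecedent. In any countable model $\mathcal{B}$ of this sentence, the relation $\{(\bar{a},\bar{b}):\mathcal{B}\models\psi_{\bar{a}}(\bar{b})\}$ is a back-and-forth system with matching atomic diagrams. A standard back-and-forth construction then gives $\mathcal{B}\cong\mathcal{A}$.

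\emph{$(3)\Rightarrow(2)$.} Let $\theta_{\bar{a}}$ be $\Sigma_\alpha$ and imply all $\Pi_\alpha$ formulas true of $\bar{a}$. If $\mathcal{A}\models\theta_{\bar{a}}(\bar{b})$, then $\bar{a}\leq_\alpha\bar{b}$. The main point is to upgrade this to an automorphism. I would show that the set $\{(\bar{a},\bar{b}):\mathcal{A}\models\theta_{\bar{a}}(\bar{b})\}$ is a back-and-forth system. The forth direction comes from $\bar{a}\leq_\alpha\bar{b}$ giving, for each $c$, some $d$ with $\bar{a}c\geq_{\beta}\bar{b}d$ for all $\beta<\alpha$. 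The step I expect to be the main obstacle is obtaining $\theta_{\bar{a}c}(\bar{b}d)$ itself, and not merely the lower-level relations. To handle it I would exploit that $\theta_{\bar{a}c}$ is $\Sigma_\alpha$ and true of $\bar{a}c$, so $\exists y\,\theta_{\bar{a}c}(\bar{x},y)$ is $\Sigma_\alpha$ and true of $\bar{a}$. Its negation is $\Pi_\alpha$ and false of $\bar{a}$, so I need the reverse comparison: $\bar{b}$ should satisfy every $\Sigma_\alpha$ formula that $\bar{a}$ satisfies. To get this, use $\bar{a}\leq_\alpha\bar{b}$ together with $\theta_{\bar{b}}$. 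Since $\bar{b}\models\theta_{\bar{a}}$, the $\Pi_\alpha$-types of $\bar{a}$ and $\bar{b}$ are comparable. Applying hypothesis (3) at $\bar{b}$, the $\Pi_\alpha$ formula $\neg\theta_{\bar{b}}$... I would handle this by symmetrizing. The formula $\neg\theta_{\bar{a}}$ is $\Pi_\alpha$; if $\bar{a}\models\neg\theta_{\bar{b}}$, then $\bar{b}$ would satisfy it by $\leq_\alpha$, which is a contradiction. So $\bar{a}\models\theta_{\bar{b}}$, and hence $\bar{a}\equiv_\alpha\bar{b}$ in both directions. With $\Pi_\alpha$-types equal, $\Sigma_\alpha$-types are equal too, and $\exists y\,\theta_{\bar{a}c}$ transfers to $\bar{b}$, giving the witness $d$. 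The back direction is symmetric. A back-and-forth system in a countable structure yields automorphisms, so $\theta_{\bar{a}}$ defines the orbit.

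\emph{$(1)\Rightarrow(3)$.} Given a $\Pi_{\alpha+1}$ Scott sentence $\varphi$, I would argue by contradiction. Suppose $\bar{a}$ has no $\Sigma_\alpha$ formula isolating its $\Pi_\alpha$-type. Then every $\Sigma_\alpha$ formula true of $\bar{a}$ is also satisfied by some $\bar{b}$ with $\bar{a}\not\leq_\alpha\bar{b}$. Using a Henkin or forcing-style construction, or the Lopez-Escobar style model building from the $\leq_\alpha$ relations, I would build a countable structure $\mathcal{B}$ with the same $\Pi_{\alpha+1}$ theory as $\mathcal{A}$ (so $\mathcal{B}\models\varphi$) but realizing no tuple whose $\Pi_\alpha$-type is exactly that of $\bar{a}$. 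Since $\mathcal{B}\not\cong\mathcal{A}$, this contradicts $\varphi$ being a Scott sentence. Equivalently, one can show that $\Pi_{\alpha+1}$ theories satisfy an omitting-types theorem for non-isolated $\Pi_\alpha$-types, where isolation is by $\Sigma_\alpha$ formulas. Proving this omitting-types step at infinitary level $\alpha$ is, I expect, the genuinely hard part of the whole equivalence.
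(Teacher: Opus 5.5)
The paper does not prove this theorem; it quotes it from Montalb\'{a}n, so your proposal has to stand on its own. Your directions $(2)\Rightarrow(1)$ and $(3)\Rightarrow(2)$ are correct. In $(2)\Rightarrow(1)$ the displayed formula is garbled, but your verbal reading is Scott's standard $\Pi_{\alpha+1}$ sentence. In $(3)\Rightarrow(2)$ the symmetrization is exactly what makes $\{(\bar a,\bar b):\mathcal{A}\models\theta_{\bar a}(\bar b)\}$ a back-and-forth family: $\mathcal{A}\models\theta_{\bar a}(\bar b)$ gives $\bar a\le_\alpha\bar b$, so the $\Pi_\alpha$ formula $\neg\theta_{\bar b}$ cannot hold of $\bar a$.

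The genuine gap is $(1)\Rightarrow(3)$, which is the substance of the theorem and which you only announce. As stated, the plan also does not go through off the shelf, for two reasons. First, you should aim only at $\mathcal{B}\models\varphi$, not at the full (uncountable) $\Pi_{\alpha+1}$ theory of $\mathcal{A}$. Second, the classical omitting types theorem for a countable fragment requires the type to be non-isolated by \emph{every} formula of the fragment, and the fragment generated by $\varphi$ contains formulas well above $\Sigma_\alpha$. Failure of (3) only gives non-isolation by $\Sigma_\alpha$ formulas. The missing idea is that, because $\varphi$ is $\Pi_{\alpha+1}$, a Henkin construction for $\varphi$ never needs conditions above $\Sigma_\alpha$, so $\Sigma_\alpha$ non-isolation is exactly enough.

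Concretely, write $\varphi=\bigwedge_i\forall\bar y\,\psi_i(\bar y)$ with each $\psi_i$ in $\Sigma_\alpha$. Let $\pi(\bar x)$ be the $\Pi_\alpha$ formula of Lemma~\ref{Pialpha} defining $\{\bar b:\bar a\le_\alpha\bar b\}$.

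\emph{Key observation.} If (3) fails at $\bar a$, then every $\Sigma_\alpha$ formula $\chi(\bar x)$ satisfiable in $\mathcal{A}$ is satisfiable together with $\neg\pi$. If $\chi(\bar a)$ holds, this is just the failure of (3), since implying all $\Pi_\alpha$ formulas true of $\bar a$ is the same as implying $\pi$. Otherwise $\neg\chi$ is a $\Pi_\alpha$ formula true of $\bar a$, so any $\bar b$ satisfying $\chi$ has $\bar a\not\le_\alpha\bar b$.

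\emph{Construction.} Take conditions to be finite sets $s$ of $\Sigma_\alpha$ sentences in new constants that are satisfiable in $\mathcal{A}$. This is the right notion of consistency, since $\mathcal{A}$ is the only countable model of $\varphi$. The tasks are:
\begin{itemize}
\item add $\psi_i(\bar c)$ for every $i$ and every tuple of constants $\bar c$, which is possible since $\mathcal{A}\models\varphi$ (this is where $\varphi\in\Pi_{\alpha+1}$ is used);
\item for a $\Sigma_\gamma$ member of $s$, add one witnessed disjunct, which is $\Pi_{<\alpha}$;
\item for a $\Pi_\gamma$ member, add all its instances;
\item decide every atomic sentence;
\item for every tuple $\bar d$ of constants of length $|\bar a|$, add $\neg\pi(\bar d)$.
\end{itemize}
The last task is possible by the key observation applied to $\exists\bar w\bigwedge s(\bar x,\bar w)$, where $\bar d$ is replaced by $\bar x$ and the other constants by $\bar w$. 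No task ever leaves $\Sigma_\alpha$.

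\emph{Conclusion.} By induction on complexity, the term model $\mathcal{B}$ satisfies every sentence added. So $\mathcal{B}\models\varphi$, and hence $\mathcal{B}\cong\mathcal{A}$. But no tuple of $\mathcal{B}$ satisfies $\pi$, while the image of $\bar a$ must. This contradiction is the proof of $(1)\Rightarrow(3)$ that your proposal is missing.
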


\begin{defn}

The \emph{Scott rank} of $\mathcal{A}$, denoted by $SR(\mathcal{A})$, is the least $\alpha$ such that the orbits of all tuples are defined by computable $\Sigma_\alpha$ formulas.  If $\alpha$ is a limit ordinal, then for a structure $\mathcal{A}$ of Scott rank $\alpha$, the orbit of each tuple is defined by a $\Sigma_\beta$ formula for some $\beta < \alpha$.  The structure $\mathcal{A}$ will have a $\Pi_{\alpha+1}$ Scott sentence.  It may or may not have a Scott sentence that is $\Pi_\alpha$.   

\end{defn}

Alvir, Knight, and McCoy \cite{AKM} gave the following partial effective version of Montalb\'{a}n's result.

\begin{thm}\label{thm:AKM}

If $\mathcal{A}$ has a computable $\Pi_\alpha$ Scott sentence, then the orbit of each tuple is defined by a computable $\Sigma_\beta$ formula for some $\beta < \alpha$.    

\end{thm}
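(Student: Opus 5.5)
The plan is to effectivize Montalb\'{a}n's argument, using Barwise--Kreisel compactness to turn a failure of computable definability into a model of $\varphi$ that is not a copy of $\mathcal{A}$. Write the computable $\Pi_\alpha$ Scott sentence as $\varphi=\bigwedge_{i\in W}\forall\bar{u}_i\,\psi_i(\bar{u}_i)$, where $W$ is c.e.\ and each $\psi_i$ is computable $\Sigma_{\beta_i}$ with $\beta_i<\alpha$. Fix a tuple $\bar{a}$ and suppose, for a contradiction, that no computable $\Sigma_\beta$ formula with $\beta<\alpha$ defines the orbit of $\bar{a}$. I aim to build a countable model of $\varphi$ that is not isomorphic to $\mathcal{A}$.

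\textbf{Step 1.} I first show that two tuples satisfying the same computable $\Pi_\beta$ formulas for all $\beta<\alpha$ are automorphic. This is where $\varphi$ being $\Pi_\alpha$ matters: each conjunct $\forall\bar{u}\,\psi_i$ is controlled by formulas of rank below $\alpha$.

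\textbf{Step 2.} From the hypothesis that the orbit is not computably $\Sigma_{<\alpha}$-definable, I extract the following. For each $\beta<\alpha$ and each computable $\Sigma_\beta$ formula $\theta(\bar{x})$ true of $\bar{a}$, some $\bar{b}$ satisfies $\theta$ but is not in the orbit of $\bar{a}$.

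\textbf{Step 3.} I form the computable infinitary theory
\[
\Gamma=\{\varphi\}\cup\{\theta(\bar{c}) : \theta \text{ computable } \Sigma_\beta,\ \beta<\alpha,\ \mathcal{A}\models\theta(\bar{a})\}\cup\{\neg\rho(\bar{c})\},
\]
where $\rho(\bar{x})$ ranges over the candidate orbit descriptions of $\bar{a}$. These candidates are the computable $\Pi_{\beta}$ formulas true of $\bar{a}$, which by Step 1 jointly pin down the orbit. Rather than adding all of them at once, I would adjoin, for each $\beta<\alpha$, the requirement that $\bar{c}$ fails some computable $\Pi_\beta$ formula true of $\bar{a}$. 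The realizations in $\mathcal{A}$ from Step 2 witness that every subset of $\Gamma$ that is $\Sigma_1$ over $L_{\omega_1^{CK}}$ (that is, every $\Pi^1_1$-bounded fragment) is satisfiable.

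\textbf{Step 4.} Barwise--Kreisel compactness (see \cite{AKbook}) then gives a countable model $\mathcal{B}\models\Gamma$. Since $\mathcal{B}\models\varphi$, we have $\mathcal{B}\cong\mathcal{A}$. But the interpretation of $\bar{c}$ in $\mathcal{B}$ then satisfies all computable $\Sigma_{<\alpha}$ formulas true of $\bar{a}$ while differing from $\bar{a}$ in some computable $\Pi_{<\alpha}$ formula. A back-and-forth argument against Step 1 then gives the contradiction.

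The main obstacle is Step 3. The set of computable $\Sigma_\beta$ formulas true of $\bar{a}$ is hyperarithmetical in $\mathcal{A}$, not c.e., so $\Gamma$ is not $\Pi^1_1$ as written. To handle this, I would first try to relativize to $\mathcal{A}$, using $\omega_1^{\mathcal{A}}$ and admissible sets containing $\mathcal{A}$. I would then argue, following the proof of Theorem~\ref{thm:AKM} in \cite{AKM}, that the Barwise fragment can be taken over $\varphi$ alone, because the relevant formulas are subformulas of $\varphi$ instantiated at $\bar{a}$. Concretely, I expect the defining formula to be assembled from the true disjuncts of the $\psi_i(\bar{a})$ and their subformulas. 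This would give a single computable $\Sigma_\beta$ formula, with $\beta<\alpha$ bounded by the ranks occurring in $\varphi$, and so avoid the definability problem.
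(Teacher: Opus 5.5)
The paper does not prove this statement; it quotes it from \cite{AKM}. Judged on its own, your argument has a genuine gap, and it sits exactly where the hypothesis that $\varphi$ is $\Pi_\alpha$ ought to be used.

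\textbf{Step 4 yields no contradiction.} Since $\mathcal{B}\models\varphi$ gives $\mathcal{B}\cong\mathcal{A}$, compactness has only produced a tuple $\bar{b}$ of $\mathcal{A}$, outside the orbit of $\bar{a}$, satisfying every computable $\Sigma_{<\alpha}$ formula true of $\bar{a}$. Step 1 cannot exclude this. It concerns only tuples with the \emph{same} computable $\Pi_{<\alpha}$-type as $\bar{a}$, and you built $\bar{c}$ to fail a formula in that type. Excluding such a $\bar{b}$ is precisely the content of the theorem, since an orbit-defining computable $\Sigma_{<\alpha}$ formula would be among the formulas $\bar{b}$ satisfies. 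Step 1 is likewise the conclusion in another form (it follows at once from the theorem). The remark that the conjuncts are ``controlled by formulas of rank below $\alpha$'' does not prove it. Your announced plan of producing a model of $\varphi$ not isomorphic to $\mathcal{A}$ is never carried out.

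\textbf{Step 3 fails independently, for three reasons.} First, the clause that $\bar{c}$ fails some computable $\Pi_\beta$ formula true of $\bar{a}$ is a disjunction over a non-c.e.\ set, so $\Gamma$ is not even in the computable fragment. Second, Barwise--Kreisel compactness requires models for all \emph{hyperarithmetic} subsets of a $\Pi^1_1$ set. These may contain infinitely many $\theta$'s, whose conjunction can have rank $\geq\alpha$, while Step 2 supplies a witness for one formula of rank $<\alpha$ at a time. Third, a Step 2 witness outside the orbit need not fail any computable $\Pi_{<\alpha}$ formula true of $\bar{a}$ unless Step 1 is already known. Relativizing to $\mathcal{A}$ only produces $\mathcal{A}$-computable formulas, which is exactly what must be avoided, and appealing to the proof in \cite{AKM} is circular.

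\textbf{What is missing is an omitting-types construction aimed at the right hypothesis.} Do not refute ``the orbit is not definable.'' Refute instead ``the computable $\Pi_{<\alpha}$-type $p$ of $\bar{a}$ (the computable $\Pi_\gamma$-type if $\alpha=\gamma+1$) is unsupported.'' This means every satisfiable computable $\Sigma_{<\alpha}$ formula has a realizer in $\mathcal{A}$ that fails some member of $p$. The argument then runs as follows, for $\alpha\geq 2$ (so that $\exists\bar{y}$ does not raise complexity).
\begin{enumerate}
\item Run a Henkin construction in the countable fragment of computable formulas. The conditions are finite sets of sentences of rank $<\alpha$ in new constants that are realizable in $\mathcal{A}$. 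Add all instances $\psi_i(\bar{d})$ of the conjuncts of $\varphi$ along the way.
\item Because $\varphi$ is $\Pi_\alpha$, these instances have rank $<\alpha$. So each condition $s$ is captured by the single computable $\Sigma_{<\alpha}$ formula $\exists\bar{y}\bigwedge s(\bar{x},\bar{y})$.
\item Non-support then lets you add $neg(\sigma(\bar{d}))$, for some $\sigma\in p$, for every tuple $\bar{d}$ of constants.
\item The canonical model satisfies $\varphi$, hence is isomorphic to $\mathcal{A}$, yet it omits $p$, which $\mathcal{A}$ realizes. This contradiction shows $p_{\bar{a}}$ is supported by some computable $\Sigma_{<\alpha}$ formula $\theta_{\bar{a}}$, which is then automatically true of $\bar{a}$.
\item Do a back-and-forth on tuples with equal computable $\Pi_{<\alpha}$-types, extending via $\exists y\,\theta_{\bar{b}d}(\bar{x},y)$. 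This proves your Step 1 and shows that $\theta_{\bar{a}}$ defines the orbit.
\end{enumerate}
Step 1 should be an output of the argument, not an input.
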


\subsubsection{Back-and-forth relations}

We define the standard back-and-forth relations  $\leq_\alpha$.  We are mainly interested in tuples from a single structure, although in the definition, we allow tuples from two different structures.       

\begin{defn}

Let $\mathcal{A},\mathcal{B}$ be structures for the same countable language.  First, suppose that $\bar{a}$ in $\mathcal{A}$ and $\bar{b}$ in $\mathcal{B}$ are tuples of the same length.  

\begin{enumerate}

\item  $(\mathcal{A},\bar{a})\leq_1(\mathcal{B},\bar{b})$ if all existential formulas satisfied by $\bar{b}$ in $\mathcal{B}$ are satisfied by $\bar{a}$ in $\mathcal{A}$,

\item  For $\alpha > 1$, $(\mathcal{A},\bar{a})\leq_\alpha (\mathcal{B},\bar{b})$ if for each $\bar{d}$ and each $1\leq\beta < \alpha$, there exists $\bar{c}$ such that $(\mathcal{B},\bar{b},\bar{d})\leq_\beta(\mathcal{A},\bar{a},\bar{c})$.  

\end{enumerate}  

If $\bar{b}$ is longer than $\bar{a}$, where $\bar{b}'$ is the initial segment of $\bar{b}$ of the same length as $\bar{a}$, then $(\mathcal{A},\bar{a})\leq_\alpha(\mathcal{B},\bar{b})$ if $(\mathcal{A},\bar{a})\leq_\alpha(\mathcal{B},\bar{b}')$.   

\end{defn}

Carol Karp \cite{Karp} proved the following.

\begin{thm}

Let $\bar{a}$ in $\mathcal{A}$ and $\bar{b}$ in $\mathcal{A}$ be tuples of the same length.  For all countable ordinals $\alpha\geq 1$, the following are equivalent:

\begin{enumerate}

\item  $(\mathcal{A},\bar{a})\leq_\alpha(\mathcal{B},\bar{b})$,

\item  all $\Pi_\alpha$ formulas satisfied by $\bar{a}$ in $\mathcal{A}$ are satisfied by $\bar{b}$ in $\mathcal{B}$,

\item  all $\Sigma_\alpha$ formulas satisfied by $\bar{b}$ in $\mathcal{B}$ are satisfied by $\bar{a}$ in $\mathcal{A}$.

\end{enumerate}

\end{thm}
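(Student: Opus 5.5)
We prove the equivalences by induction on $\alpha$, with the structures countable throughout. First, (2)$\Leftrightarrow$(3) is pure duality. The negation of a $\Pi_\alpha$ formula, with negations pushed inside, is $\Sigma_\alpha$, and conversely. So "every $\Pi_\alpha$ formula true of $\bar{a}$ in $\mathcal{A}$ is true of $\bar{b}$ in $\mathcal{B}$" is the contrapositive of "every $\Sigma_\alpha$ formula true of $\bar{b}$ in $\mathcal{B}$ is true of $\bar{a}$ in $\mathcal{A}$". It therefore remains to prove (1)$\Leftrightarrow$(3), by induction on $\alpha$.

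For $\alpha=1$, a $\Sigma_1$ formula has the form $\bigvee_i\exists\bar{u}_i\psi_i$ with $\psi_i$ finitary quantifier-free. If it is true of $\bar{b}$, then some disjunct $\exists\bar{u}_i\psi_i$, a finitary existential formula, is true of $\bar{b}$. Hence preservation of existential formulas (the definition of $\leq_1$) is the same as preservation of $\Sigma_1$ formulas.

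For $\alpha>1$, direction (1)$\Rightarrow$(3): let $\varphi=\bigvee_i\exists\bar{u}_i\psi_i$ be $\Sigma_\alpha$ with $\psi_i$ of class $\Pi_{\beta_i}$, $\beta_i<\alpha$, and suppose $\mathcal{B}\models\varphi(\bar{b})$.
\begin{enumerate}
\item Choose $i$ and $\bar{d}$ with $\mathcal{B}\models\psi_i(\bar{b},\bar{d})$.
\item Set $\beta=\max(\beta_i,1)<\alpha$; a quantifier-free formula is trivially $\Pi_1$, so $\psi_i$ may be regarded as $\Pi_\beta$.
\item By (1), there is $\bar{c}$ with $(\mathcal{B},\bar{b},\bar{d})\leq_\beta(\mathcal{A},\bar{a},\bar{c})$.
\item By the induction hypothesis, in the form (1)$\Rightarrow$(2) at level $\beta$, $\Pi_\beta$ formulas true of $\bar{b}\bar{d}$ in $\mathcal{B}$ are true of $\bar{a}\bar{c}$ in $\mathcal{A}$.
\end{enumerate}
So $\mathcal{A}\models\psi_i(\bar{a},\bar{c})$, and hence $\mathcal{A}\models\varphi(\bar{a})$.

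Direction (3)$\Rightarrow$(1) is the main obstacle. Given $\bar{d}$ and $1\leq\beta<\alpha$, we need $\bar{c}$ in $\mathcal{A}$ such that every $\Pi_\beta$ formula true of $\bar{b}\bar{d}$ is true of $\bar{a}\bar{c}$. By induction this is equivalent to $(\mathcal{B},\bar{b},\bar{d})\leq_\beta(\mathcal{A},\bar{a},\bar{c})$. The difficulty is that we cannot simply conjoin all $\Pi_\beta$ formulas true of $\bar{b}\bar{d}$, since there are uncountably many. We use countability of $\mathcal{A}$ instead. Suppose for contradiction that no $\bar{c}$ works. Then for each of the countably many tuples $\bar{c}$ in $\mathcal{A}$ of the right length, the induction hypothesis supplies a $\Pi_\beta$ formula $\chi_{\bar{c}}(\bar{x},\bar{y})$ true of $\bar{b}\bar{d}$ in $\mathcal{B}$ but false of $\bar{a}\bar{c}$ in $\mathcal{A}$.

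Let $\chi=\bigwedge_{\bar{c}}\chi_{\bar{c}}$. This is a countable conjunction of $\Pi_\beta$ formulas, hence still $\Pi_\beta$. Then $\exists\bar{y}\,\chi(\bar{x},\bar{y})$ is $\Sigma_\alpha$ (as $\beta<\alpha$) and is true of $\bar{b}$ in $\mathcal{B}$, witnessed by $\bar{d}$. By (3), it is true of $\bar{a}$ in $\mathcal{A}$ with some witness $\bar{c}$. But $\chi_{\bar{c}}$ fails at $\bar{a}\bar{c}$, a contradiction. This completes the induction; the only points to check are the closure of $\Pi_\beta$ under countable conjunction and the bookkeeping for tuple lengths in the convention for comparing tuples of different lengths.
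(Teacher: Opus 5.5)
The paper gives no proof of this theorem; it is simply quoted from Karp. The statement also has a typo: $\bar{b}$ should lie in $\mathcal{B}$, which you implicitly corrected.

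Your argument is correct and is the standard one:
\begin{enumerate}
\item $(2)\Leftrightarrow(3)$ follows from the $neg$ duality between $\Pi_\alpha$ and $\Sigma_\alpha$.
\item The base case follows by splitting a $\Sigma_1$ formula into its finitary existential disjuncts.
\item $(1)\Rightarrow(3)$ follows by unwinding one disjunct and applying the induction hypothesis with the two structures interchanged.
\item $(3)\Rightarrow(1)$ follows by contradiction via the single $\Sigma_\alpha$ formula $\exists\bar{y}\bigwedge_{\bar{c}}\chi_{\bar{c}}(\bar{x},\bar{y})$.
\end{enumerate}

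That last step conjoins one separating $\Pi_\beta$ formula for each of the countably many tuples of $\mathcal{A}$. It is the same device the paper uses in its proof sketch of Lemma~\ref{Pialpha}, and it is the only place countability enters. Each appeal to the induction hypothesis swaps the roles of $\mathcal{A}$ and $\mathcal{B}$, so you really do need both structures countable, as you assumed. This is automatic in the paper's setting of $Mod(L)$.
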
  

The following is well-known---see \cite{AK}.  

\begin{lem}
\label{Pialpha}

Let $L$ be a countable language, and let $\mathcal{A}$ be a countable $L$-structure.  For each countable ordinal $\alpha$ and each tuple $\bar{a}$ in $\mathcal{A}$, there is a $\Pi_\alpha$-formula $\varphi(\bar{x})$ that defines in $\mathcal{A}$ the set of $\bar{b}$ such that $\bar{a}\leq_\alpha\bar{b}$.

\end{lem}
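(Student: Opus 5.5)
We prove Lemma~\ref{Pialpha} by transfinite induction on $\alpha$, proving simultaneously, for all tuples $\bar{a}$ in $\mathcal{A}$ (of every length), that there is a $\Pi_\alpha$ formula $\varphi^\alpha_{\bar{a}}(\bar{x})$ such that for all $\bar{b}$ in $\mathcal{A}$ of the same length, $\mathcal{A}\models\varphi^\alpha_{\bar{a}}(\bar{b})$ iff $\bar{a}\leq_\alpha\bar{b}$. Since $\mathcal{A}$ is countable, there are only countably many tuples, so every conjunction and disjunction below is countable and the formulas lie in $L_{\omega_1\omega}$. For $\alpha=0$ (if it is included) we take $\varphi^0_{\bar{a}}$ to be the finitary atomic diagram $D_{\bar{a}}$, which is a finite conjunction because $L$ is finite relational. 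In the general countable case we instead take the countable conjunction of the literals true of $\bar{a}$, which is still of the lowest complexity.

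\textbf{Base case $\alpha=1$.} By definition, $\bar{a}\leq_1\bar{b}$ iff every existential formula true of $\bar{b}$ is true of $\bar{a}$. Equivalently, every universal formula true of $\bar{a}$ is true of $\bar{b}$. So we let $\varphi^1_{\bar{a}}(\bar{x})$ be the conjunction of all finitary $\forall\bar{u}\,\theta(\bar{x},\bar{u})$, with $\theta$ quantifier-free, that are true of $\bar{a}$. There are countably many such formulas, so this is a $\Pi_1$ formula, and by construction it defines $\{\bar{b}:\bar{a}\leq_1\bar{b}\}$.

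\textbf{Successor and limit cases, $\alpha>1$.} We unwind the definition. We have $\bar{a}\leq_\alpha\bar{b}$ iff for every $1\leq\beta<\alpha$ and every $\bar{d}$ there is $\bar{c}$ with $(\bar{b},\bar{d})\leq_\beta(\bar{a},\bar{c})$. For a fixed $\beta$ and $\bar{a}$, the possible witnesses $\bar{c}$ range over the countably many tuples of the right length. By the induction hypothesis, "$(\bar{b},\bar{d})\leq_\beta(\bar{a},\bar{c})$" should be expressed as a property of $(\bar{b},\bar{d})$. The key observation is that, for tuples within $\mathcal{A}$, it is equivalent to $(\bar{b},\bar{d})$ failing every $\Pi_\beta$ formula that $(\bar{a},\bar{c})$ fails. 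A convenient way to say this uses Karp's theorem: $(\bar{b},\bar{d})\leq_\beta(\bar{a},\bar{c})$ iff every $\Pi_\beta$ formula true of $(\bar{b},\bar{d})$ is true of $(\bar{a},\bar{c})$. In particular this holds iff $\varphi^\beta_{\bar{b}\bar{d}}(\bar{a},\bar{c})$ holds — but that formula is indexed by $(\bar{b},\bar{d})$, which is the variable side. To handle this, we define
\[
\psi^\beta_{\bar{a}\bar{c}}(\bar{x},\bar{y})\;=\;\bigwedge\bigl\{\neg\varphi^\beta_{\bar{e}}(\bar{x},\bar{y}) : \bar{e}\text{ a tuple in }\mathcal{A},\ \bar{e}\not\leq_\beta(\bar{a},\bar{c})\bigr\}.
\]
Each $\neg\varphi^\beta_{\bar{e}}$ is $\Sigma_\beta$, so $\psi^\beta_{\bar{a}\bar{c}}$ is a countable conjunction of $\Sigma_\beta$ formulas and hence is $\Pi_{\beta+1}$.

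We check that $\psi^\beta_{\bar{a}\bar{c}}(\bar{b},\bar{d})$ holds iff $(\bar{b},\bar{d})\leq_\beta(\bar{a},\bar{c})$; this uses transitivity of $\leq_\beta$. If $(\bar{b},\bar{d})\leq_\beta(\bar{a},\bar{c})$ and $\bar{e}\leq_\beta(\bar{b},\bar{d})$, then $\bar{e}\leq_\beta(\bar{a},\bar{c})$. So any $\bar{e}\not\leq_\beta(\bar{a},\bar{c})$ has $\varphi^\beta_{\bar{e}}$ false at $(\bar{b},\bar{d})$, and $\psi^\beta_{\bar{a}\bar{c}}(\bar{b},\bar{d})$ holds. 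Conversely, if $(\bar{b},\bar{d})\not\leq_\beta(\bar{a},\bar{c})$, take $\bar{e}=(\bar{b},\bar{d})$. By reflexivity, $\varphi^\beta_{\bar{e}}$ holds at $(\bar{b},\bar{d})$, so $\psi^\beta_{\bar{a}\bar{c}}(\bar{b},\bar{d})$ fails. We then set
\[
\varphi^\alpha_{\bar{a}}(\bar{x})\;=\;\bigwedge_{1\leq\beta<\alpha}\ \bigwedge_{n}\ \forall\bar{y}\,\bigvee_{\bar{c}}\ \psi^\beta_{\bar{a}\bar{c}}(\bar{x},\bar{y}),
\]
where $n$ is the length of $\bar{y}$ and $\bar{c}$ ranges over $n$-tuples.

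\textbf{Main obstacle: complexity.} The formula above is naively of complexity $\Pi_{\beta+2}$ rather than $\Pi_\alpha$. The fix, and the step I expect to be hardest, is to avoid the outer disjunction costing a level. Instead of $\psi$, I would write the condition "$\exists$ some $\bar{c}$ with $(\bar{b},\bar{d})\leq_\beta(\bar{a},\bar{c})$" directly as
\[
\bigwedge\bigl\{\neg\varphi^\beta_{\bar{e}}(\bar{x},\bar{y}) : \bar{e}\not\leq_\beta(\bar{a},\bar{c})\text{ for every }\bar{c}\bigr\},
\]
a conjunction of $\Sigma_\beta$ formulas. Then $\forall\bar{y}$ of it is $\Pi_{\beta+1}$, and this is $\leq\Pi_\alpha$ since $\beta<\alpha$. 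The converse direction again takes $\bar{e}=(\bar{b},\bar{d})$. The forward direction needs that if $\bar{e}\leq_\beta(\bar{b},\bar{d})$ and $(\bar{b},\bar{d})\leq_\beta(\bar{a},\bar{c})$ for some $\bar{c}$, then $\bar{e}\leq_\beta(\bar{a},\bar{c})$, which is transitivity once more. Finally, the whole conjunction over $\beta<\alpha$ and over all lengths $n$ is $\Pi_\alpha$, completing the induction.
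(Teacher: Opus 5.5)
Your proof is correct, but it takes a different route from the paper. The paper's argument is a two-line corollary of Karp's theorem. For each of the countably many $\bar b$ with $\bar a\not\leq_\alpha\bar b$, Karp gives a $\Pi_\alpha$ formula true of $\bar a$ and false of $\bar b$, and the paper takes the conjunction of these. This conjunction fails at every such $\bar b$. It holds at every $\bar b$ with $\bar a\leq_\alpha\bar b$, because by Karp again every $\Pi_\alpha$ formula true of $\bar a$ holds there.

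You instead build the defining formulas by transfinite induction, directly from the recursive definition of $\leq_\alpha$. You use only reflexivity and transitivity of $\leq_\beta$, both of which follow by a direct induction from the definition, so your motivational appeal to Karp is never actually needed. Your key step defines $\{(\bar b,\bar d):\exists\bar c\,(\bar b,\bar d)\leq_\beta(\bar a,\bar c)\}$ as the conjunction of $neg(\varphi^\beta_{\bar e})$ over all tuples $\bar e$ outside this set. This is the paper's ``conjoin over the countably many bad tuples'' trick, applied one level down. The separating formulas are the inductively obtained cone formulas, and transitivity guarantees that no good tuple is excluded.

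What each buys: the paper's version is shorter but presupposes Karp's theorem. For countable structures, the nontrivial direction of Karp is essentially your construction. Yours is self-contained and yields an explicit formula, visibly $\Pi_\alpha$ in normal form as $\bigwedge_{\beta,n,\bar e}\forall\bar y\, neg(\varphi^\beta_{\bar e})(\bar x,\bar y)$.

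Two harmless slips. First, your abandoned first formula is naively $\Pi_{\beta+3}$, not $\Pi_{\beta+2}$: the disjunction over $\bar c$ of $\Pi_{\beta+1}$ formulas is $\Sigma_{\beta+2}$, and then $\forall\bar y$ raises it to $\Pi_{\beta+3}$. Second, your $\alpha=0$ remark is off for infinite languages, since a countable conjunction of literals is $\Pi_1$, not $\Pi_0$. Neither matters, since you discard the first formula and the relations $\leq_\alpha$ here start at $\alpha=1$.
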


\begin{proof} [Proof sketch]

For each tuple $\bar{b}$ such that $\bar{a}\not\leq_\alpha\bar{b}$, choose a $\Pi_\alpha$ formula true of $\bar{a}$ and not true of $\bar{b}$.  The conjunction of the chosen formulas is the desired $\varphi(\bar{x})$.
\end{proof}

We add one more to the list of conditions in Montalb\'{a}n's Theorem.   

\begin{lem}
\label{addedcondition}

Let $\mathcal{A}$ be a countable structure for a countable language $L$.  For a countable ordinal $\alpha$, the following are equivalent:

\begin{enumerate}

\item  $\mathcal{A}$ has Scott rank at most $\alpha$

\item for each tuple $\bar{a}$, there exist $\beta < \alpha$ and a tuple $\bar{b}$ such that for all tuples $\bar{a}'$ of the same length as $\bar{a}$ and $\bar{b}'$ of the same length as $\bar{b}$, $\bar{a},\bar{b}\leq_\beta\bar{a}'\bar{b}'$ implies $\bar{a}\leq_\alpha\bar{a}'$.

\end{enumerate}

\end{lem}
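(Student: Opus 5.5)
The plan is to use Montalbán's Theorem, in the form of condition (3): every tuple has a $\Sigma_\alpha$ formula implying its $\Pi_\alpha$ type. I will pass between formulas and back-and-forth relations using Karp's Theorem and Lemma \ref{Pialpha}. Throughout, "Scott rank at most $\alpha$" is read as "the orbit of every tuple is $\Sigma_\alpha$-definable", which by Montalbán's Theorem is equivalent to condition (3). By Karp's Theorem, condition (3) says: for each $\bar{a}$ there is a $\Sigma_\alpha$ formula $\psi$ true of $\bar{a}$ such that every $\bar{a}'$ satisfying $\psi$ has $\bar{a}\leq_\alpha\bar{a}'$. When $\beta=0$ I read $\leq_0$ as "same atomic type", in line with $\Sigma_0=\Pi_0$ being the finitary quantifier-free formulas. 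This is the one bookkeeping point to watch, because for $\alpha=1$ the witness $\beta$ must be $0$.

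For (1)$\Rightarrow$(2), fix $\bar{a}$ and take $\psi$ as above. Write $\psi=\bigvee_i\exists\bar{u}_i\theta_i(\bar{x},\bar{u}_i)$ with $\theta_i$ in $\Pi_{\beta_i}$ and $\beta_i<\alpha$. Since $\mathcal{A}\models\psi(\bar{a})$, choose a disjunct $i$ and a witness $\bar{b}$ with $\mathcal{A}\models\theta_i(\bar{a},\bar{b})$, and put $\beta=\beta_i$.

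Now suppose $\bar{a}\bar{b}\leq_\beta\bar{a}'\bar{b}'$. By Karp's Theorem, every $\Pi_\beta$ formula true of $\bar{a}\bar{b}$ is true of $\bar{a}'\bar{b}'$; for $\beta=0$ this is the same atomic type. Hence $\theta_i(\bar{a}',\bar{b}')$ holds, so $\psi(\bar{a}')$ holds, and therefore $\bar{a}\leq_\alpha\bar{a}'$. This is exactly condition (2).

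For (2)$\Rightarrow$(1), fix $\bar{a}$ and take $\beta<\alpha$ and $\bar{b}$ from (2). By Lemma \ref{Pialpha}, there is a $\Pi_\beta$ formula $\varphi(\bar{x},\bar{y})$ defining in $\mathcal{A}$ the set of $\bar{a}'\bar{b}'$ with $\bar{a}\bar{b}\leq_\beta\bar{a}'\bar{b}'$. For $\beta=0$, take $\varphi$ to be the atomic diagram of $\bar{a}\bar{b}$, which is finite because $L$ is finite relational.

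Let $\psi(\bar{x})=\exists\bar{y}\,\varphi(\bar{x},\bar{y})$. This formula is $\Sigma_{\beta+1}$, hence $\Sigma_\alpha$ since $\beta+1\leq\alpha$, and it is true of $\bar{a}$ with witness $\bar{b}$. If $\mathcal{A}\models\psi(\bar{a}')$ with witness $\bar{b}'$, then $\bar{a}\bar{b}\leq_\beta\bar{a}'\bar{b}'$, so (2) gives $\bar{a}\leq_\alpha\bar{a}'$. By Karp's Theorem, $\bar{a}'$ then satisfies every $\Pi_\alpha$ formula true of $\bar{a}$. So $\psi$ witnesses condition (3) of Montalbán's Theorem for $\bar{a}$, and $\mathcal{A}$ has Scott rank at most $\alpha$.

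Neither direction has a real obstacle; the argument is bookkeeping. The points to check are the edge cases: the degenerate case $\beta=0$ just discussed; the case $\bar{b}$ empty, which is allowed; and the definition of $\leq_\beta$ for tuples of different lengths, which is harmless here because $\bar{a}\bar{b}$ and $\bar{a}'\bar{b}'$ have the same length. The substantive content is already carried by Montalbán's Theorem and Karp's Theorem.
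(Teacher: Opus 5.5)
Your proof is correct and follows the paper's argument. In both directions you use Karp's Theorem to pass between a $\Sigma_\alpha$ formula $\exists\bar u\,\theta(\bar x,\bar u)$, with $\theta\in\Pi_\beta$ and $\beta<\alpha$, and the $\Pi_\beta$-definable $\leq_\beta$-cone of $\bar a\bar b$ from Lemma \ref{Pialpha}. You correctly write $\Pi_\beta$ where the paper's proof has the typo $\Pi_\alpha$. Ending (2)$\Rightarrow$(1) with condition (3) of Montalb\'{a}n's Theorem is also cleaner than the paper's ``so it defines the orbit''.

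Your separate handling of $\beta=0$ goes beyond the paper, which never defines $\leq_0$ in Section 1. That step uses that $L$ is finite relational, which is the paper's working setting but not the lemma's stated hypothesis. The restriction is unavoidable under your reading: for infinite $L$ with $\leq_0$ meaning full atomic type, the case $\alpha=1$ fails. Take unary $R_n$, elements $a_k$ satisfying exactly $R_0,\dots,R_{k-1}$, and $a_\infty$ satisfying all $R_n$. Then condition (2) holds with $\beta=0$ and $\bar b$ empty, but $\{a_\infty\}$ is not $\Sigma_1$-definable.
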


\begin{proof}

$(1)\Rightarrow(2)$:  For each tuple in $\mathcal{A}$, there is a $\Sigma_\alpha$ formula $\varphi(\bar{x})$ that defines the orbit of $\bar{a}$.  We may suppose that $\varphi(\bar{x})$ has form $(\exists\bar{u})\psi(\bar{x},\bar{u})$, where $\psi(\bar{x},\bar{u})$ is $\Pi_\beta$ for some $\beta < \alpha$.  Take $\bar{b}$ such that $\mathcal{A}\models\psi(\bar{a},\bar{b})$.  If $\bar{a},\bar{b}\leq_\beta\bar{a}',\bar{b}'$, then $\mathcal{A}\models\psi(\bar{a}',\bar{b}')$.  Then $\bar{a}\leq_\alpha\bar{a}'$---the two tuples are automorphic.    

$(2)\Rightarrow(1)$:  Take $\bar{a}$ in $\mathcal{A}$.  By hypothesis, we have $\beta < \alpha$ and $\bar{b}$ such that $\bar{a},\bar{b}\leq_\beta \bar{a}',\bar{b}'$ implies $\bar{a}\leq_\alpha\bar{a}'$.  By Lemma \ref{Pialpha}, there is a $\Pi_\alpha$-formula $\psi(\bar{x},\bar{u})$ such that $\mathcal{A}\models\psi(\bar{a}',\bar{b}')$ iff $\bar{a},\bar{b}\leq_\beta\bar{a}',\bar{b}'$.  Then the $\Sigma_\alpha$-formula $(\exists\bar{u})\psi(\bar{x},\bar{u})$ implies all $\Pi_\alpha$ formulas true of $\bar{a}$, so it defines the orbit.      
\end{proof}

\subsubsection{Embeddings that preserve complexity}

We consider variants of the notion of Borel embedding that satisfy a ``Pullback'' theorem.  
The following definition is from \cite{CCKM}.

\begin{defn} [Turing computable embedding]

Let $L$ and $L'$ be computable languages and take classes $K\subseteq Mod(L)$, $K'\subseteq Mod(L')$.   A \emph{Turing computable embedding} of $K$ in $K'$ is a Turing operator $\Phi:K\rightarrow K'$ such that $\mathcal{A}\cong\mathcal{B}$ iff $\Phi(\mathcal{A})\cong\Phi(\mathcal{B})$.  We write $K\leq_{tc} K'$ if there is such an embedding.  

\end{defn}

The result below is from \cite{KMV}. 

\begin{thm} [Pull-back Theorem]

Suppose $K\leq_{tc} K'$, where $K\subseteq Mod(L)$ and $K'\subseteq Mod(L')$.  For any computable infinitary $L'$-sentence  $\varphi$, we can effectively find a computable infinitary $L$-sentence $\varphi^*$, of the same complexity, such that for all $\mathcal{A}\in K$, $\Phi(\mathcal{A})\models \varphi$ iff $\mathcal{A}\models\varphi^*$.  

\end{thm}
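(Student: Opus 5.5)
We prove the Pull-back Theorem by induction on the complexity of $\varphi$, after reducing to formulas with free variables. Fix the Turing operator $\Phi$. For each tuple $\bar{x}$ of variables, we think of $\bar{x}$ as ranging over elements of $\omega$, the universe of $\Phi(\mathcal{A})$. We prove the stronger, uniform statement below. For each computable infinitary $L'$-formula $\varphi(\bar{x})$ and each tuple $\bar{c}$ from $\omega$ of the right length, we can effectively find a computable infinitary $L$-sentence $\varphi^*_{\bar{c}}$ of the same complexity such that, for all $\mathcal{A}\in K$, $\Phi(\mathcal{A})\models\varphi(\bar{c})$ iff $\mathcal{A}\models\varphi^*_{\bar{c}}$. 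Here we use that all structures have universe $\omega$, so elements of $\Phi(\mathcal{A})$ are named by numbers and no quantifier over them needs translating into a quantifier in $L$. The sentence case is the instance with $\bar{c}$ empty.

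\textbf{Base case.} Let $\varphi(\bar{c})$ be finitary quantifier-free. Its truth in $\Phi(\mathcal{A})$ depends on finitely many atomic facts about $\bar{c}$ in the diagram of $\Phi(\mathcal{A})$. Each atomic fact $\theta(\bar{c})$ is decided by some halting computation of $\Phi$ on a finite piece $p$ of the atomic diagram of $\mathcal{A}$. Such a $p$ is a finite set of literals about specific numbers, so it is expressed by a finitary quantifier-free $L$-sentence $\delta_p$ in which constants from $\omega$ are handled as below.

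This is the delicate point: $L$ has no constants. I would replace "$\mathcal{A}$'s diagram extends $p$" by a formula about the elements $0,1,\dots,k$. The natural fix is to run the whole induction with $L$-formulas in free variables $y_0,\dots,y_k$, evaluated at $0,\dots,k$. Because the universe is $\omega$, this is harmless, provided the final $\varphi^*$ is invariant. Invariance holds because $K$ and the truth of $\varphi$ are isomorphism-invariant on the image, which is exactly what the embedding property gives. For the base case we then set $\varphi^*_{\bar{c}}$ to be the c.e. disjunction $\bigvee\{\delta_p : \Phi^p \text{ halts and makes } \varphi(\bar{c}) \text{ true}\}$. Quantifier-free truth needs both positive and negative facts, and each is witnessed by a finite computation, so $\Phi(\mathcal{A})\models\varphi(\bar{c})$ holds iff some such $p$ is true in $\mathcal{A}$. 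This makes the base translation computable $\Sigma_1$, so the base level needs care. Following \cite{KMV}, one can absorb the $\exists$ over the finitely many parameters $y_i$ into the next level, so that $\Sigma_\alpha$ and $\Pi_\alpha$ are preserved for $\alpha\ge1$.

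\textbf{Inductive step.} Suppose $\varphi(\bar{c})=\bigvee_i\exists\bar{u}_i\,\psi_i(\bar{c},\bar{u}_i)$ is computable $\Sigma_\alpha$. Then set
\[
\varphi^*_{\bar{c}}=\bigvee_i\bigvee_{\bar{d}}(\psi_i)^*_{\bar{c}\bar{d}},
\]
where $\bar{d}$ ranges over tuples from $\omega$. This is a c.e. disjunction of $\Pi_{\beta_i}$ formulas, hence computable $\Sigma_\alpha$. The $\Pi_\alpha$ case is dual, using $\bigwedge_i\bigwedge_{\bar{d}}$.

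Effectiveness comes from the recursion theorem applied to indices for computable formulas, by effective transfinite recursion on the notation for $\alpha$. Correctness is immediate by induction, since the quantifiers of $\Phi(\mathcal{A})$ range over $\omega$.

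\textbf{Main obstacle.} I expect the hardest step to be the base case. There one must handle the finite-use computations so that the level does not rise, and must deal with the fact that the atomic diagram of $\mathcal{A}$ refers to specific numbers rather than definable elements.
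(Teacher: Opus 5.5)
There is a genuine gap, at exactly the point you flag as delicate, and the fix you sketch does not close it. (The paper does not prove this theorem; it quotes it from \cite{KMV}.)

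Your formulas $\varphi^*_{\bar c}$ are statements about the particular elements $0,1,2,\dots$ of $\mathcal{A}$, and they need unboundedly many of them. Already in the base case, the diagrams $p$ in $\bigvee\{\delta_p:\dots\}$ mention arbitrarily large numbers, because the use of $\Phi$ is not bounded uniformly in $\mathcal{A}$. The clauses $\bigvee_{\bar d}$ and $\bigwedge_{\bar d}$ make this worse. So there is no finite list $y_0,\dots,y_k$, and what you build is not an $L_{\omega_1\omega}$ formula; closing off the parameters would need an infinite quantifier block.

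The invariance remark does not rescue this. For a fixed numerical tuple $\bar c$, the property $\Phi(\mathcal{A})\models\varphi(\bar c)$ is not isomorphism-invariant in $\mathcal{A}$. The embedding property gives $\Phi(\mathcal{A})\cong\Phi(\mathcal{B})$, not an isomorphism respecting $\bar c$. Hence no $L$-formula expresses that property, and replacing ``$y_i$ denotes $i$'' by $\exists\bar y$ changes its meaning. Invariance is available only for sentences, but your sentence-level translation is assembled from these non-invariant pieces. So ``absorbing the $\exists$ into the next level'' has nothing to act on.

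What is missing is a forcing argument over enumerations of $\mathcal{A}$. It follows the same pattern as Section 2 of the paper: forcing conditions, definability of forcing, and generic paths.

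\textbf{Conditions.} Take finite tuples $\bar a$ of distinct elements of $\mathcal{A}$, viewed as initial segments of bijections $g:\omega\to\mathcal{A}$. Each $g$ gives a copy $\mathcal{A}_g\in K$ with $\Phi(\mathcal{A}_g)\cong\Phi(\mathcal{A})$. This is the only place the embedding property is used.

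\textbf{Forcing clauses.} For $\theta$ atomic or negated atomic, let $\bar a\Vdash\theta(\bar b)$ mean that $\Phi$, run for $|\bar a|$ steps on the finite diagram of $\bar a$ with $a_i$ renamed $i$, outputs $\theta(\bar b)$. This depends only on finitely much of the atomic type of $\bar a$, so it is quantifier-free in $\bar x$, and there is no level rise. The $\Sigma_\alpha$ clause is your $\bigvee_i\bigvee_{\bar d}$. The $\Pi_\alpha$ clause says that no extension $\bar a\bar a'$ forces $neg(\varphi(\bar b))$; this is a quantifier $\forall\bar y$ over extensions.

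\textbf{Definability.} Definability of forcing then gives a computable formula $force_{\varphi(\bar b)}(\bar x)$ with exactly the free variables $\bar x$ and the same complexity as $\varphi$, for $\alpha\geq 1$. The quantification over extensions is what replaces your infinitely many named parameters.

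\textbf{Conclusion.} Truth-and-Forcing for a generic $g$, together with $\Phi(\mathcal{A}_g)\cong\Phi(\mathcal{A})$, gives the required $\varphi^*$. For a $\Pi_\alpha$ sentence $\varphi$, $\Phi(\mathcal{A})\models\varphi$ iff the empty tuple forces $\varphi$. For a $\Sigma_\alpha$ sentence, $\Phi(\mathcal{A})\models\varphi$ iff $\mathcal{A}\models\bigvee_n\exists x_1\dots x_n\,(\text{distinct}\ \&\ force_\varphi(x_1,\dots,x_n))$.
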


The notion of Turing computable embedding relativizes.  

\bigskip
\noindent
\textbf{Fact}:  An embedding is continuous iff it is $X$-computable for some $X$.  

\bigskip

The Pull-back Theorem relativizes to give the following.     

\begin{cor}

Let $\Phi$ be a continuous embedding of $K$ in $K'$, where $K\subseteq Mod(L)$ and $K'\subseteq Mod(L')$.  Then for any $L_{\omega_1\omega}$ sentence $\varphi$, there is an $L_{\omega_1\omega}$ sentence $\varphi^*$, of the same complexity, such that $\Phi(\mathcal{A})\models\varphi$ iff $\mathcal{A}\models\varphi^*$.  

\end{cor}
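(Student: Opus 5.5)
The plan is to relativize the Pull-back Theorem directly, using the Fact that a continuous $\Phi$ is $X$-computable for some oracle $X$. Fix such an $X$, so $\Phi$ is an $X$-Turing computable embedding of $K$ in $K'$. Given an arbitrary $L_{\omega_1\omega}$ sentence $\varphi$ of complexity $\Sigma_\alpha$ or $\Pi_\alpha$, the countable sentence $\varphi$ involves only countably many subformulas and conjunction/disjunction index sets. So there is a set $Y$ such that $\varphi$ is an $Y$-computable infinitary sentence of the same complexity, and $\alpha$ is $Y$-computable. Concretely, let $Y$ code a notation for $\alpha$ together with the full syntactic tree of $\varphi$, with each infinite conjunction or disjunction listed by $Y$. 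Then set $Z = X\oplus Y$. Now $\Phi$ is a $Z$-computable embedding and $\varphi$ is a $Z$-computable $\Sigma_\alpha$ (or $\Pi_\alpha$) sentence.

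Next, apply the relativized Pull-back Theorem with oracle $Z$. The proof in \cite{KMV} uses only the effectiveness of the operator and of the formula, so it relativizes uniformly. It yields a $Z$-computable infinitary $L$-sentence $\varphi^*$ of the same complexity with $\Phi(\mathcal{A})\models\varphi$ iff $\mathcal{A}\models\varphi^*$ for all $\mathcal{A}\in K$. Every $Z$-computable infinitary formula is in particular an $L_{\omega_1\omega}$ formula. Moreover, its classification as $Z$-computable $\Sigma_\alpha$ or $\Pi_\alpha$ implies the same classification in the ordinary (non-effective) hierarchy. So $\varphi^*$ is the desired sentence.

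For completeness, I would recall why the relativized Pull-back Theorem holds, by induction on $\alpha$. An atomic sentence $\varphi$ about $\Phi(\mathcal{A})$ holds iff some finite piece of the atomic diagram of $\mathcal{A}$ causes $\Phi^Z$ to enumerate that fact. So we take $\varphi^*$ to be the $Z$-c.e. disjunction of the existential formulas describing such finite diagrams. This works because the elements of $\Phi(\mathcal{A})$ are numbers computed from finitely many elements of $\mathcal{A}$. For formulas with free variables, we pull back $\varphi(\bar{x})$ along each finite tuple of output elements, and we replace quantifiers over $\Phi(\mathcal{A})$ by disjunctions/conjunctions over the (numbers of) output elements. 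Negations are pushed inward, and the complexity level is preserved at each step.

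The main obstacle is the base case. A negated atomic fact about $\Phi(\mathcal{A})$ pulls back to a $\Pi_1$ rather than a finitary formula. The standard fix in \cite{KMV} is that a finitary quantifier-free sentence about $\Phi(\mathcal{A})$ pulls back to a Boolean combination of $\Sigma_1$ formulas. At levels $\alpha\ge 1$ this is absorbed into the quantifier blocks, since $\Sigma_1$ and $\Pi_1$ are closed under the needed operations. The induction therefore preserves $\Sigma_\alpha$ and $\Pi_\alpha$ for $\alpha\ge1$. This is the only place I would check carefully, and since we may cite the theorem, the corollary follows from the relativization argument above.
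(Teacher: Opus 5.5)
Your proposal is correct and matches the paper's argument. The paper just combines the Fact that a continuous $\Phi$ is $X$-computable for some $X$ with the relativized Pull-back Theorem; your extra step of choosing $Y$ so that $\varphi$ and a notation for $\alpha$ are $Y$-computable, then working over $Z=X\oplus Y$, makes explicit what the paper leaves implicit (it notes elsewhere that every $L_{\omega_1\omega}$ sentence is $X$-computable for some $X$).

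Your supplementary sketch of the Pull-back Theorem is looser than the theorem you cite. A Boolean combination of $\Sigma_1$ formulas is not absorbed into a $\Pi_1$ block. What makes level $1$ work is that $\Phi$ computes the full atomic diagram, so each quantifier-free fact about output elements pulls back to a formula equivalent on $K$ to both a $\Sigma_1$ and a $\Pi_1$ formula. Since you rely on the cited theorem rather than this sketch, the proof is unaffected.
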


The following is well-known; see, for example \cite{Mbook1} Theorem VI.27 and Lemma VI.29.

\begin{thm}

Let $L$ be a computable relational language.  There is a Turing computable embedding $\Phi$ of $Mod(L)$ in the class of undirected graphs.  Moreover, 

\begin{enumerate}

\item  the class $K$ consisting of graphs $G$ such that for some $\mathcal{A}\in Mod(L)$, $G\cong\Phi(\mathcal{A})$ is effective $\Pi_2$, 

\item  there are existential formulas that for all $\mathcal{A}\in Mod(L)$, define a copy of $\mathcal{A}$ in $\Phi(\mathcal{A})$; we have formulas $d_U(x)$, defining the universe of the copy, and, for $R\in L$, $d_R(\bar{x})$ defining the interpretation of $R$, and $d_{\neg{R}}(\bar{x})$ defining the interpretation of $\neg{R}$.  

\end{enumerate}
Hence, there is a Turing computable embedding $\Psi$ of $K$ in $Mod(L)$ such that for all $\mathcal{A}\in Mod(L)$, $\Psi(\Phi(\mathcal{A}))\cong\mathcal{A})$.  

\end{thm}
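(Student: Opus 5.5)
We plan to prove the theorem by the standard coding of an arbitrary finite relational structure into a graph, checking the three clauses in turn. Given $\mathcal{A}\in Mod(L)$ with universe $\omega$, the graph $\Phi(\mathcal{A})$ will contain:
\begin{itemize}
\item for each element $a$, a vertex $v_a$ tagged by a distinctive finite gadget (say a cycle of length $3$ attached to it), marking it as coding a domain element;
\item for each relation symbol $R_i\in L$ of arity $n_i$ and each $n_i$-tuple $\bar a$, a ``tuple vertex'' $t_{i,\bar a}$;
\item for each $j\le n_i$, a path of length $j+4$ (say) joining $t_{i,\bar a}$ to $v_{a_j}$, so the position of $a_j$ in the tuple is recoverable;
\item at $t_{i,\bar a}$, one of two distinct cycle gadgets, depending on whether $\mathcal{A}\models R_i(\bar a)$ or $\mathcal{A}\models\neg R_i(\bar a)$.
\end{itemize}
All gadget lengths are chosen pairwise distinct and distinct from the connecting path lengths, so that no accidental copies arise. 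Since $L$ is finite (computable), each of these finitely many gadget types is a fixed finite graph. The atomic diagram of $\Phi(\mathcal{A})$ is computed from that of $\mathcal{A}$ by adding vertices and edges as facts about $\mathcal{A}$ are revealed. An isomorphism $\mathcal{A}\cong\mathcal{B}$ induces one between the graphs, which gives the forward direction of the embedding property. The converse follows from clause 2.

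For clause 2, take $d_U(x)$ to be the existential formula asserting that $x$ lies on a triangle of the designated shape. Take $d_R(\bar x)$ to say that there exist a vertex $t$ carrying the ``true'' gadget for $R$ and paths of the prescribed lengths from $t$ to $x_1,\dots,x_n$; $d_{\neg R}$ is the same with the ``false'' gadget. These formulas are existential. They define exactly the intended sets, by the rigidity of the gadget choice: any witness configuration must consist of genuine gadget and path vertices. The point needing care is that no other subgraph can mimic a gadget or a path of the given length between gadget-tagged vertices. This is handled by making the vertices on the connecting paths have degree $2$ and making the cycle lengths unique, so every cycle of length $k$ in the graph is an intended gadget. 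Note that $d_R$ and $d_{\neg R}$ are complementary on $d_U$-tuples, because every tuple gets exactly one tuple vertex per relation.

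For clause 1, $K$ is described by an effective $\Pi_2$ sentence. It says that every vertex is of exactly one of the listed kinds, which is a $\forall\exists$ statement together with finitely many universal local-shape conditions. It also says that for every relation $R_i$ and every $n_i$-tuple of $d_U$-vertices there exists a tuple vertex connected appropriately, carrying exactly one truth gadget, and that this tuple vertex is unique. Any countable graph satisfying this, with infinitely many $d_U$-vertices, decodes to some $\mathcal{A}$ (relabel the $d_U$ vertices by $\omega$) with $G\cong\Phi(\mathcal{A})$. To also allow finite universes we would require the decoded set to be infinite, which is again $\Pi_2$. The main obstacle is getting the combinatorial bookkeeping right so that both the $\Pi_2$ axioms and the existential definitions are exact; this is routine but fiddly, and we would follow \cite{Mbook1}.

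Finally, $\Psi$ is built from the definitions in clause 2. Given $G\in K$, enumerate the $d_U$-vertices by searching for existential witnesses, and identify them with $\omega$ in order of discovery. For each tuple, wait until a witness for $d_R$ or for $d_{\neg R}$ appears; exactly one will, because $G\in K$. This uniform procedure is a Turing operator. The structure it outputs is isomorphic to the $\mathcal{A}$ coded by $G$, so $\Psi(\Phi(\mathcal{A}))\cong\mathcal{A}$. It also shows that $\Phi(\mathcal{A})\cong\Phi(\mathcal{B})$ implies $\mathcal{A}\cong\mathcal{B}$, since isomorphic graphs carry isomorphic interpreted copies.
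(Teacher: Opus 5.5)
The paper does not prove this theorem; it cites \cite{Mbook1}. Your outline (gadget coding, existential read-back, a $\Pi_2$ axiomatization of the range, and $\Psi$ obtained from the interpretation) is the standard route. Your treatment of clause 1, of $\Psi$, and of injectivity on isomorphism types goes through once the coding is made rigid.

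The gap is in clause 2, at exactly the point you flag. Your justification is that path vertices have degree $2$ and gadget lengths are pairwise distinct and distinct from the path lengths, ``so every cycle of length $k$ in the graph is an intended gadget''. This is false, because the connecting paths themselves close up into cycles through tuple and element vertices. For example, let $L$ consist of a unary $R$ and a ternary $R'$, so your path lengths are $5$ and $5,6,7$. Give the ``true'' gadget of $R$ length $13$, which your rules allow. For any $a\neq c$, the vertex $t_{R',(a,c,c)}$ lies on a $13$-cycle formed by its position-$2$ and position-$3$ paths to $v_c$. Its position-$1$ path is a path of length $5$ to $v_a$, disjoint from that cycle. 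So $d_R(v_a)$ holds for every $a$, whatever $R^{\mathcal{A}}$ is, even if you require all witnesses to be distinct.

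For finite $L$ this is repairable by choosing parameters. Give the connecting paths distinct lengths in a window $[N,2N)$, with $N$ above every gadget length. Also require the cycle witnesses and path witnesses in $d_R$ to be disjoint. Then every cycle not inside a gadget has length at least $2N$. Every path from a tuple vertex to an element vertex, other than a direct connecting path, has length at least $3N$. This forces the witnesses to be a genuine tuple vertex and its direct paths.

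That repair does not give the theorem as stated, however. $L$ is only assumed computable, and you wrote ``finite (computable)''. The infinite case is needed: in Section 4 the paper applies the Proposition following this theorem to $L$ expanded by infinitely many unary $U_n$. Keeping your path lengths, no choice of gadget lengths saves the unique-cycle argument in general. Let $L$ consist of one ternary $R'$ and infinitely many unary $R_i$. Route through fresh $R'$-tuple vertices, each contributing $11$, $12$ or $13$. This shows that $t_{R',(a,c,d)}$ lies on cycles of every length $\geq 68$ through its position-$2$ and position-$3$ paths, avoiding its position-$1$ path to $v_a$. Infinitely many distinct ``true'' gadget lengths must include one that is $\geq 68$. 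For that $i$, $d_{R_i}(v_a)$ holds for every $a$.

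You need an additional, local idea. For instance:
\begin{itemize}
\item join each tuple vertex to its $j$-th entry by a path of length $2$ whose middle vertex carries an odd petal of length $p_j$;
\item put on the tuple vertex an odd petal whose length codes $i$ and the truth value;
\item put a triangle on each element vertex, with all these lengths distinct.
\end{itemize}
The graph minus its petals is bipartite, so the odd cycles are exactly the petals. Take $d_{R_i}(\bar x)$ to assert the existence of $t,u_1,\dots,u_n$ with $t$ on a petal of the right length, each $u_j$ on a petal of length $p_j$, and edges $t\,u_j$ and $u_j\,x_j$. This can only be witnessed by a genuine tuple vertex and its own connectors.
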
 

Using this theorem, we easily obtain the following.

\begin{prop} 

Let $L$ be a computable relational language.  There is a Turing computable embedding $\Phi$ of $Mod(L)$ in the class of graphs such that for $\mathcal{A}\in Mod(L)$, the Scott sentences for $\mathcal{A}$ and $\Phi(\mathcal{A})$ have the same complexity.

\end{prop}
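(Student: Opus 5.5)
We take $\Phi$ to be the Turing computable embedding from the preceding theorem, together with $\Psi$ and the existential defining formulas $d_U$, $d_R$, $d_{\neg R}$. The proof has two directions, and both go through the Pull-back Theorem and the uniform existential interpretation.

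\emph{From $\Phi(\mathcal{A})$ to $\mathcal{A}$.} Suppose $\Phi(\mathcal{A})$ has a Scott sentence $\varphi$ of some complexity $\Gamma$ (say $\Pi_\gamma$, or $\Sigma_\gamma$, or $d$-$\Sigma_\gamma$). By the relativized Pull-back Theorem (the Corollary above, or the Theorem itself in the computable case), there is $\varphi^*$ of the same complexity such that for all $\mathcal{B}\in Mod(L)$ we have $\mathcal{B}\models\varphi^*$ iff $\Phi(\mathcal{B})\models\varphi$. Since $\varphi$ characterizes $\Phi(\mathcal{A})$ up to isomorphism and $\Phi$ is injective on isomorphism types, we get $\Phi(\mathcal{B})\models\varphi$ iff $\Phi(\mathcal{B})\cong\Phi(\mathcal{A})$ iff $\mathcal{B}\cong\mathcal{A}$. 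So $\varphi^*$ is a Scott sentence for $\mathcal{A}$, and its complexity is no greater than that of $\varphi$.

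\emph{From $\mathcal{A}$ to $\Phi(\mathcal{A})$.} Suppose $\mathcal{A}$ has a Scott sentence $\psi$ of complexity $\Gamma$. We translate $\psi$ through the interpretation.
\begin{enumerate}
\item Relativize quantifiers to $d_U$, and replace each atomic $R(\bar x)$ by $d_R(\bar x)$ and each $\neg R(\bar x)$ by $d_{\neg R}(\bar x)$. Because both positive and negative atomic facts are existentially definable, a standard induction shows that the translation $\psi^\#$ of a $\Sigma_\gamma$ (resp.\ $\Pi_\gamma$) formula is $\Sigma_\gamma$ (resp.\ $\Pi_\gamma$), for $\gamma\ge 1$.
\item The one delicate point in step 1 is the relativized universal quantifier $\forall x\,(d_U(x)\to\dots)$. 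Here $\neg d_U$ is universal, which is harmless at the $\Pi_\gamma$ level for $\gamma\ge 1$. For the base case, where finitary quantifier-free formulas become existential, we note that the formulas are Boolean combinations that become $\Sigma_1$ once placed under the next quantifier level.
\item Conjoin the effective $\Pi_2$ sentence $\chi$ axiomatizing the class $K$ (item 1 of the theorem) with $\psi^\#$. A countable graph $G$ satisfying $\chi\wedge\psi^\#$ is isomorphic to some $\Phi(\mathcal{B})$, and its interpreted copy $\Psi(G)\cong\mathcal{B}$ satisfies $\psi$. Hence $\mathcal{B}\cong\mathcal{A}$, and so $G\cong\Phi(\mathcal{A})$.
\end{enumerate}
This gives a Scott sentence for $\Phi(\mathcal{A})$ of complexity $\Gamma$, provided $\Gamma$ is at least $\Pi_2$.

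\emph{The main obstacle} is low complexity, where conjoining the $\Pi_2$ sentence $\chi$ raises the complexity; for instance, a $\Sigma_2$ or $\Pi_1$ Scott sentence for $\mathcal{A}$ is a problem. We handle it as follows.
\begin{itemize}
\item For $\Sigma_\gamma$ sentences with $\gamma\ge 3$, and for $d$-$\Sigma_\gamma$ sentences, $\chi$ can be absorbed into the complexity, so no issue arises.
\item For the small cases, we first check which Scott complexities are actually possible for structures in $Mod(L)$ and for graphs in $K$; the relevant structures are finite, or of very low rank.
\item For a $\Sigma_2$ Scott sentence $\exists\bar u\,\theta$, we try to show that $\psi^\#\wedge\chi$ is equivalent, in countable graphs, to a $\Sigma_2$ sentence. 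The idea is to use the fact that, once the finitely many witnesses $\bar u$ are fixed, the $\Pi_2$ axioms of $K$ reduce to a $\Pi_1$ condition relative to those witnesses. To establish this, we would examine the concrete coding in \cite{Mbook1} directly.
\end{itemize}
If that reduction fails, we would restrict the statement to complexities $\Pi_\alpha$ with $\alpha\ge 2$ (and the corresponding $\Sigma$ and $d$-$\Sigma$ classes), which is all that is needed later.
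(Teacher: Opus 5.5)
Your first direction, pulling a Scott sentence for $\Phi(\mathcal{A})$ back along $\Phi$, is exactly the paper's argument. The second direction differs. The paper does not translate by hand. It applies the Pull-back Theorem to the Turing computable embedding $\Psi$ of $K$ into $Mod(L)$. This gives $\psi^*$ of the same complexity with $G\models\psi^*$ iff $\Psi(G)\models\psi$ for $G\in K$, and the paper then conjoins the $\Pi_2$ axiom for $K$.

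Translating through the existential interpretation is a legitimate substitute, but step 1 as you state it does not preserve complexity. Suppose $R$ is always replaced by $d_R$ and $\neg R$ by $d_{\neg R}$. Then every quantifier-free matrix becomes existential, so a $\Pi_1$ formula $\forall\bar u\,\rho$ becomes $\forall\bar u\,(\neg d_U(\bar u)\vee\rho^\#)$, which is $\Pi_2$. Consequently $\Sigma_2$ subformulas become $\Sigma_3$, and a $\Pi_3$ Scott sentence (Scott rank $2$), whose conjuncts have $\Sigma_2$ parts, can come out $\Pi_4$. Your step 2 only covers matrices under $\exists$, where $\Sigma_1$ is what you want; under $\forall$ they must be $\Pi_1$.

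The repair uses the fact you already cite, that both $R$ and $\neg R$ are existentially definable. In graphs of $K$, on tuples satisfying $d_U$, $d_R$ is equivalent to $\neg d_{\neg R}$, so each literal has both a $\Sigma_1$ and a $\Pi_1$ translation. Use the $\Pi_1$ one ($\neg d_{\neg R}$ for $R$, $\neg d_R$ for $\neg R$) for matrices under a universal block. Then $\forall\bar u\,(\neg d_U(\bar u)\vee\rho^{\forall})$ is $\Pi_1$, and the induction goes through for all $\gamma\geq 1$. Correctness on $K$ suffices because you conjoin $\chi$. The Pull-back Theorem has this built in, since a Turing operator witnesses negative as well as positive atomic facts by finite information. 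That is why the paper can skip the bookkeeping.

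Your low-complexity worry is legitimate; the paper just says ``adding a conjunct, if necessary.'' However, the planned $\Sigma_2$ reduction via the concrete coding is unnecessary. Every structure in $Mod(L)$ and in $K$ is infinite and the languages are relational. Any $\Pi_1$, $\Sigma_1$, $d$-$\Sigma_1$ or $\Sigma_2$ sentence true in an infinite relational structure is also true in a finite substructure. For $\Pi_1$, any finite substructure works. Otherwise, take the substructure on the outer existential witnesses, since $\Pi_1$ formulas pass down to substructures.

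So no structure here has a Scott sentence of those complexities. The complexities that actually occur are $\Pi_\alpha$ and $d$-$\Sigma_\alpha$ for $\alpha\geq 2$, and $\Sigma_\alpha$ for $\alpha\geq 3$, which is exactly where $\chi$ is absorbed. Your fallback restriction therefore costs nothing.

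The same remark covers a point that both you and the paper leave implicit. The pulled-back $\varphi^*$, and likewise $\chi$, only control models with universe $\omega$. To exclude finite countable models you must conjoin ``there are infinitely many elements,'' which is $\Pi_2$ and harmless at these levels.
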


\begin{proof}

Let $\Phi$, $K$, and $\Psi$ be as in the previous theorem.  Take $\mathcal{A}\in Mod(L)$, and let $G = \Phi(\mathcal{A})$.  First, let $\varphi$ be a Scott sentence for $G$.  Let $\varphi^*$ be the $\Phi$-pullback of $\varphi$, of the same complexity as $\varphi$.  For $\mathcal{A}'\in Mod(L)$, we have $\mathcal{A}'\models\varphi^*$ iff $\Phi(\mathcal{A}')\models\varphi$ iff $\Phi(\mathcal{A}')\cong G$ iff $\mathcal{A}'\cong\mathcal{A}$.  Therefore, $\varphi^*$ is a Scott sentence for $\mathcal{A}$.

Now, let $\psi$ be a Scott sentence for $\mathcal{A}$.  Let $\psi^*$ be the $\Psi$-pullback of $\psi$.  Adding a conjunct, if necessary, we may suppose that $\psi^*$ implies the computable $\Pi_2$ sentence characterizing the class $K$.  Then we can show, by a sequence of equivalences just like the one in the previous paragraph, that $\psi^*$ is a Scott sentence for $G$. 
\end{proof}

\section{Transfer of computable Scott sentences}

In this section, we prove the first of our two main results.  In \cite{GR}, Gonzalez and Rossegger proved that for a structure $\mathcal{A}\in Mod(L)$, $\mathcal{A}$ and $T_\mathcal{A}$ have the same Scott rank. By Montalb\'{a}n's Theorem, if one of the structures has a Scott sentence that is $\Pi_{\alpha+1}$, then so does the other.  Gonzalez and Rossegger asked whether the same is true for computable infinitary Scott sentences.  We give a positive answer. 

\bigskip
\noindent
\textbf{Theorem A}.  Let $L$ be a finite relational language.  If one of the structures $\mathcal{A}$ or $T_\mathcal{A}$ has a computable $\Pi_{\alpha+1}$ Scott sentence, then so does the other.  Moreover, we can pass effectively from a Scott sentence for one to a Scott sentence for the other.      

\bigskip
\noindent
\textbf{Remark}:  We can vary Theorem A, letting the language $L$ be computable, but not necessarily finite.  

\bigskip

We split the proof of Theorem A in two.  The result below gives the easier implication.   

\begin{prop}
\label{Prop7}

Given a computable infinitary Scott sentence $\varphi$ for $T_\mathcal{A}$, we can effectively pass to a computable infinitary Scott sentence $\varphi^*$ for $\mathcal{A}$, where the complexity of $\varphi^*$ matches that of $\varphi$.  

\end{prop}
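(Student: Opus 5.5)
The plan is to use the Pull-back Theorem, exactly as in the proof of the Proposition on graphs. The map $\mathcal{A}\mapsto T_\mathcal{A}$ is a Turing computable embedding of $Mod(L)$ into the class of $L^*$-structures. We fix a computable procedure that builds $T_\mathcal{A}$ from the atomic diagram of $\mathcal{A}$. The nodes are coded as pairs (tuple of distinct elements, copy index), the predecessor function drops the last coordinate, and the label of a node is $U_D$ with $D=D_{\bar a}$. The labels are decided by finitely much of the atomic diagram because $L$ is a finite relational language (for computable $L$ the label $U_D$ still depends only on finitely many atomic facts about $\bar a$). So there is a Turing operator $\Phi$ with $\Phi(\mathcal{A})=T_\mathcal{A}$. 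The cited fact that $\mathcal{A}\cong\mathcal{B}$ iff $T_\mathcal{A}\cong T_\mathcal{B}$ is the Friedman--Stanley property from \cite{H-TM}. Briefly: an isomorphism of $\mathcal{A}$ with $\mathcal{B}$ induces one of the trees, since each node has infinitely many copies of each extension. Conversely, an isomorphism of the trees can be used to build one of $\mathcal{A}$ with $\mathcal{B}$ along a branch by back-and-forth, because labels code atomic types and every one-point extension of a tuple appears as a child.

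Now let $\varphi$ be a computable infinitary Scott sentence for $T_\mathcal{A}$. The Pull-back Theorem effectively yields a computable infinitary $L$-sentence $\varphi^*$ of the same complexity such that for every $\mathcal{B}\in Mod(L)$, $T_\mathcal{B}\models\varphi$ iff $\mathcal{B}\models\varphi^*$. Then for countable $\mathcal{B}$ we have
\[
\mathcal{B}\models\varphi^* \iff T_\mathcal{B}\models\varphi \iff T_\mathcal{B}\cong T_\mathcal{A}\iff \mathcal{B}\cong\mathcal{A}.
\]
So $\varphi^*$ is a Scott sentence for $\mathcal{A}$. Strictly speaking we need this for all countable $L$-structures, but these are isomorphic to members of $Mod(L)$ (finite structures can be handled similarly, or we can assume infinite universes as in the setup). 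Since the Pull-back Theorem produces $\varphi^*$ uniformly from an index for $\varphi$ given the fixed operator $\Phi$, the passage is effective.

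The only point needing care is the first step: checking that the tree-of-tuples construction really is a Turing operator, meaning that a fixed computable presentation of $T_\mathcal{A}$ is produced uniformly and monotonically from finite pieces of the diagram of $\mathcal{A}$. This is routine once the universe of $T_\mathcal{A}$ is fixed in advance as the set of codes for (tuple, index) pairs, independent of $\mathcal{A}$. Only the labels depend on $\mathcal{A}$, and each label is determined by finitely many atomic facts. I expect no real obstacle beyond this bookkeeping.
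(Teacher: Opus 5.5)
Your proposal is correct and follows the paper's proof: the tree-of-tuples map is a Turing computable embedding, and the Pull-back Theorem gives $\varphi^*$ of the same complexity with $\mathcal{B}\models\varphi^*$ iff $T_\mathcal{B}\models\varphi$ iff $T_\mathcal{B}\cong T_\mathcal{A}$ iff $\mathcal{B}\cong\mathcal{A}$. One aside is slightly off: for an infinite computable $L$, the full atomic type of $\bar a$ is not determined by finitely many facts, so the labels would have to be truncated. This does not affect the argument, since $L$ is finite relational here.
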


\begin{proof}

The embedding $\mathcal{A}\rightarrow T_\mathcal{A}$ is Turing computable.  Applying the Pullback Theorem, we take $\varphi^*$ to be the pullback of $\varphi$.  The two sentences have the same complexity, and for $\mathcal{B}\in Mod(L)$, $\mathcal{A}\cong\mathcal{B}$ iff $T_\mathcal{A}\cong T_\mathcal{B}$.  Since $\varphi$ is a Scott sentence for $T_\mathcal{A}$, we have that
\[\mathcal{B}\models\varphi^*\iff T_\mathcal{B}\models\varphi\iff T_\mathcal{B}\cong T_\mathcal{A}\iff \mathcal{A}\cong\mathcal{B},\]
and so $\varphi^*$ is a Scott sentence for $\mathcal{A}$.          
\end{proof} 

Every $L_{\omega_1\omega}$-sentence is $X$-computable for some $X$.  Relativizing Proposition~\ref{Prop7}, we get the result, already proved by Gonzalez and Rossegger \cite{GR}, saying that if $T_\mathcal{A}$ has a Scott sentence that is $\Pi_{\alpha}$,  then so does $\mathcal{A}$. 

\bigskip

The result below gives the harder implication in Theorem A. 

\begin{prop}\label{prop:PushForwardCompSentences}

  For a computable ordinal $\alpha$, if $\mathcal{A}$ has a computable $\Pi_\alpha$ Scott sentence $\varphi$, then so does $T_\mathcal{A}$.  

\end{prop}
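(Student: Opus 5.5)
Given a computable $\Pi_\alpha$ Scott sentence $\varphi$ for $\mathcal{A}$, I will build a computable $\Pi_\alpha$ Scott sentence for $T_\mathcal{A}$ as a conjunction of three parts:
\begin{itemize}
\item[(a)] a computable $\Pi_2$ sentence $\theta$ saying the structure is a labeled tree of the right shape;
\item[(b)] a sentence saying that the graph coded by a branch is a model of $\varphi$;
\item[(c)] a sentence saying that every tuple of that graph is represented below every node.
\end{itemize}

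\emph{Step (a).} The sentence $\theta$ says the following. There is a unique root with $p$ fixed. Every node has exactly one label $U_D$, with $D$ on $n$ variables exactly at level $n$. Labels cohere along $p$, meaning the label of $p(\sigma)$ is the restriction of the label of $\sigma$. Above each node, each extension type that is realized is realized infinitely often. Because $L$ is finite, the label types are finitary.

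\emph{Step (c), the branch property.} The intended content is that all branches give isomorphic graphs, and that every node at level $n$ has, for each one-point extension of its tuple available in that graph, infinitely many children carrying it. The natural way to say this is through a back-and-forth: for any two nodes $\sigma,\tau$, the ``structures above'' them realize the same extension labels, recursively. That recursive statement is not first-order in the right complexity.

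The key idea is to use Theorem~\ref{thm:AKM} instead. Since $\varphi$ is computable $\Pi_\alpha$, each orbit of a tuple $\bar a$ in $\mathcal{A}$ is defined by a computable $\Sigma_{\beta}$ formula $\psi_{\bar a}$ with $\beta<\alpha$. I would like these to be found uniformly. The AKM argument gives a c.e.\ list of candidate $\Sigma_\beta$ formulas which, in any model of $\varphi$, are exactly the orbit definitions. I will check that proof to extract that uniformity.

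\emph{Translating formulas.} I translate each $L$-formula $\chi(\bar x)$ into a tree formula $\chi^T(y)$ about a node $y$ at level $|\bar x|$:
\begin{itemize}
\item $\exists u$ becomes ``there is a child $z$ of $y$ with $\chi^T(z)$'';
\item $\forall u$ becomes ``every child $z$ of $y$ satisfies $\chi^T(z)$'';
\item atomic formulas are read off the label.
\end{itemize}
This translation preserves computable $\Sigma_\beta/\Pi_\beta$ complexity, since $p$ and the labels are atomic. In a correct tree, $\chi^T(\sigma)$ holds iff $\mathcal{A}\models\chi(\bar a_\sigma)$. The reason is that the children of $\sigma$ enumerate all one-point extensions, so the nodes above $\sigma$ unravel $\mathcal{A}$ starting from $\bar a_\sigma$.

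\emph{The Scott sentence.} It is the conjunction of the following, over the c.e.\ list of orbit formulas:
\begin{itemize}
\item $\theta$;
\item $\varphi^T(\lambda)$, the translation of $\varphi$ at the root;
\item for each pair of orbit formulas $\psi(\bar x)$ and $\psi'(\bar x,u)$ with $\psi'$ realized in $\mathcal{A}$ over a realization of $\psi$: $\forall y\,[\psi^T(y)\to$ ``$y$ has infinitely many children satisfying $\psi'^T$''$]$.
\end{itemize}
The last family is $\Pi_{\beta+2}$ or lower. I still need to confirm that this is at most $\Pi_\alpha$. The delicate case is $\alpha=\beta+1$ or $\beta+2$, where I may need the Montalbán form, that orbits are $\Sigma_{\alpha-1}$ when $\alpha$ is a successor.

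\emph{Isomorphism.} Suppose $T$ satisfies this sentence. I will build an isomorphism $T\cong T_\mathcal{A}$ level by level via back-and-forth, matching nodes whose tuples lie in the same orbit. The third conjunct supplies the needed children with infinite multiplicity on both sides.

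\emph{Main obstacle.} The main obstacle is justifying that $\varphi^T(\lambda)$ together with $\theta$ forces the tree to unravel a genuine model of $\varphi$. A branch of a non-standard tree might mention ``elements'' inconsistently. To handle this, I will read off a structure $\mathcal{B}$ from a single branch, whose elements are indexed by levels. I then argue that the translated formulas behave correctly on $\mathcal{B}$, using the orbit-formula conjuncts to show that children realize every extension type. The result is that $\mathcal{B}\models\varphi$, hence $\mathcal{B}\cong\mathcal{A}$. Getting this argument through without increasing quantifier complexity is where I expect the real work to lie.
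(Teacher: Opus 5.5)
There is a genuine gap, and it is exactly your ``main obstacle'': the sentence you propose is not a Scott sentence for $T_\mathcal{A}$. Your third family gives only the \emph{forth} half of the back-and-forth. Nothing forces an arbitrary child of a node satisfying $\psi^T_{\bar a}$ to satisfy some $\psi^T_{\bar a a'}$. Nor can $\varphi^T(\lambda)$ supply this: your translation evaluates quantifiers over children, so different subtrees may unravel different structures.

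Here is a counterexample.
\begin{itemize}
\item Let $\mathcal{A}$ be the infinite graph with exactly one edge. It has the computable $\Pi_2$ Scott sentence $\varphi$ saying ``irreflexive, symmetric, at most one edge, infinitely many vertices'' $\wedge\ \exists x\exists y\,xEy$. Its orbits are $\Sigma_1$-definable; for example, $x$ is isolated iff some edge avoids $x$.
\item Let $\mathcal{C}$ be the infinite edgeless graph, and let $T$ be $T_\mathcal{A}$ and $T_\mathcal{C}$ glued at their roots.
\item $T$ satisfies your $\theta$.
\item $T$ satisfies $\varphi^T(\lambda)$: the universal conjuncts hold in both halves, and $\exists x\exists y\,xEy$ is witnessed in the $\mathcal{A}$-half.
\item $T$ satisfies your third family, which is vacuous on the $\mathcal{C}$-half. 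No node there satisfies $\psi^T$ for any orbit formula $\psi$ of $\mathcal{A}$. Indeed, a $\Sigma_1$ formula true of a tuple in $\mathcal{C}$ is, by embedding the witnesses into $\mathcal{A}$ in two ways, true of tuples in two different orbits of $\mathcal{A}$.
\item Yet $T\not\cong T_\mathcal{A}$, since the $\mathcal{C}$-children of the root have no descendant labeled with an edge.
\end{itemize}
Reading a structure off a single branch cannot work either. Even in $T_\mathcal{A}$, a branch avoiding the edge yields the edgeless graph, which fails $\varphi$. You need generic branches, together with axioms making all branches through a node agree.

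The natural repair is to add $\forall y\,[\psi^T_{\bar a}(y)\rightarrow\forall z\,(z\mbox{ a child of }y\rightarrow\bigvee_{a'}\psi^T_{\bar a a'}(z))]$. This does make the back-and-forth go through, but it creates two new problems.
\begin{itemize}
\item When $\alpha$ is a limit, the ranks of the disjuncts may be cofinal in $\alpha$, so this conjunct is only $\Pi_{\alpha+1}$. You have essentially rebuilt the standard Scott sentence from orbit formulas, which is $\Pi_{\alpha+1}$ in general.
\item Theorem~\ref{thm:AKM} supplies the orbit formulas only non-uniformly; the paper chooses the $d_{\bar a}$ non-effectively. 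So a computable conjunction indexed by orbit formulas, and by the relation ``$\psi'$ is realized over $\psi$'', is unjustified.
\end{itemize}

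The paper avoids both problems by never putting orbit formulas into the sentence.
\begin{itemize}
\item It defines forcing with nodes as conditions, where existential witnesses must be variables of the node.
\item It takes $\varphi_*$ to say that the root weakly forces $\varphi$; this is computable $\Pi_\alpha$ by definability of forcing.
\item It adds $\beta$-agreement and $\beta$-permutation for \emph{all} computable $\Sigma_\beta$ formulas with $\beta<\alpha$. Each is $\Pi_{\beta+1}$, so the conjunction is $\Pi_\alpha$ even at limits.
\end{itemize}
Agreement excludes the glued tree above: a $\mathcal{C}$-child forces $\forall x\forall y\,\neg xEy$, while a level-$2$ node of the $\mathcal{A}$-half forces its negation. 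Truth-and-Forcing gives $\mathcal{A}^p\cong\mathcal{A}$ for generic $p$. The orbit formulas $d_{\bar a}$ enter only the verification. In the back step, one runs a generic path through the new child and pulls back the element its new variable names.
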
 

\subsection{Outline for the proof of Proposition \ref{prop:PushForwardCompSentences}}  

We begin by isolating some simple properties of trees $T_\mathcal{A}$ for $\mathcal{A}\in Mod(L)$.  For a labeled tree $T$ satisfying the first two properties below, any path $p$ yields a structure $\mathcal{A}^p$.  Next, we define forcing and prove the basic lemmas as in Cohen \cite{Cohen}.  Suppose $T\cong T_\mathcal{A}$, where $\mathcal{A}$ has computable Scott rank $\alpha$.  We show that if $p$ is a generic path, then $\mathcal{A}^p\cong\mathcal{A}$.  If $\mathcal{A}$ has a computable infinitary Scott sentence $\varphi$, then the lemma on definability of forcing lets us pass from $\varphi$ to a computable infinitary Scott sentence for $T_\mathcal{A}$.  

\subsection{Simple properties of trees $T_\mathcal{A}$}

The basic properties listed below are simple to state, and they are clearly true of the tree structures $T_\mathcal{A}$.  We will give further properties after we have defined forcing.  

\bigskip
\noindent
\textbf{Levels}:  Each node satisfies exactly one $U_D$.  Moreover, if $\sigma$ is at level $n$, then $\sigma$ satisfies $U_D$ for some $D$ assigning a complete atomic type to the variables  $(x_1,\ldots,x_n)$ of $\sigma$.   

\bigskip
\noindent
\textbf{Consistency}:  If $\tau\supseteq\sigma$, then the atomic type assigned to $\tau$ implies the one assigned to $\sigma$.

\bigskip
\noindent
\textbf{Replication}:  For any $x$, if $x$ has at least one successor satisfying $U_D$, then it has infinitely many.    

\subsection{The Definition of Forcing}

In this subsection, we define the appropriate kind of forcing to produce a generic path.  Let $T$ be a tree satisfying the Levels and Consistency properties.  Each path $p$ yields an $L$-structure $\mathcal{A}^p$ with universe $X = \{x_1,x_2,x_3,\ldots\}$.  We think of the variables in $X$ as constants.  The atomic diagram of $\mathcal{A}^p$ is the set of conjuncts $\pm\alpha(\bar{x})$ of formulas $D = D(\bar{x})$ such that some $\sigma\prec p$ has label $U_D$.  The forcing conditions are the nodes of $T$.  The partial ordering is the tree ordering $\preceq$.   The forcing language describes the structure $\mathcal{A}^p$ obtained from a generic path $p$.  The language consists of computable infinitary sentences $\psi(\bar{x})$ in the language $L\cup X$.  We define forcing for these sentences by induction on complexity.    

\begin{defn}\

\begin{enumerate}

\item  Suppose $\alpha(\bar{x})$ is finitary quantifier-free.  For $\sigma$ with label $U_D$, $\sigma\Vdash\alpha$ if the variables $\bar{x}$ are among those of $\sigma$ and $D$ logically implies $\alpha(\bar{x})$.  

\item  Suppose $\theta(\bar{x})$ is computable $\Sigma_\beta$, of form $\bigvee_i(\exists\bar{u}_i)\psi_i(\bar{x},\bar{u}_i)$, where $\psi_i$ is computable $\Pi_{\gamma_i}$ for some $\gamma_i < \beta$, Then $\sigma\Vdash\theta(\bar{x})$ if for some $i$ and some assignment $t$ mapping 
$\bar{u}_i$ to variables of $\sigma$, we have $\sigma\Vdash\psi_i(\bar{x},t(\bar{u}_i))$.

\item  Suppose $\theta(\bar{x})$ is computable $\Pi_\beta$, of form $\bigwedge_i(\forall\bar{u}_i)\psi_i(\bar{x},\bar{u}_i)$.  Then $\sigma\Vdash\theta(\bar{x})$ if the variables $\bar{x}$ are among those of $\sigma$ and for all $\sigma'\supseteq\sigma$, $\sigma'\not\Vdash neg(\theta(\bar{x})$---recall that $neg(\theta)$ is the (computable $\Sigma_\beta$) formula that is equivalent to $\neg{\theta}$, but with the negations brought inside.    

\end{enumerate}

\end{defn}   

\subsection{The Forcing Lemmas}

We state the usual forcing lemmas.  We show that they hold for all $T$ satisfying the Levels and Consistency properties.       

\begin{lem} [Extension]  

If $\sigma\Vdash\psi(\bar{x})$ and $\sigma'\supseteq\sigma$, then $\sigma'\Vdash\psi(\bar{x})$.

\end{lem}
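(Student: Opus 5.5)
We argue by induction on the complexity of $\psi(\bar{x})$, following the clauses of the definition of forcing. Throughout, $\sigma'\supseteq\sigma$ means that $\sigma'$ extends $\sigma$ in the tree ordering. Such a $\sigma'$ lies at a level at least that of $\sigma$, so the variables of $\sigma$ are among those of $\sigma'$.

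\emph{Quantifier-free case.} Suppose $\psi(\bar{x})$ is finitary quantifier-free and $\sigma\Vdash\psi$. Let $\sigma$ have label $U_D$. Then the variables $\bar{x}$ are among those of $\sigma$, and $D$ logically implies $\psi(\bar{x})$. If $\sigma'\supseteq\sigma$ has label $U_{D'}$, the Consistency property says that $D'$ implies $D$. Hence $D'$ implies $\psi(\bar{x})$. The variables $\bar{x}$ remain among those of $\sigma'$, so $\sigma'\Vdash\psi$.

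\emph{$\Sigma_\beta$ case.} Let $\psi=\bigvee_i(\exists\bar{u}_i)\psi_i(\bar{x},\bar{u}_i)$. From $\sigma\Vdash\psi$ we obtain some $i$ and an assignment $t$ of $\bar{u}_i$ to variables of $\sigma$ with $\sigma\Vdash\psi_i(\bar{x},t(\bar{u}_i))$. The formula $\psi_i$ is $\Pi_{\gamma_i}$ with $\gamma_i<\beta$, so it has lower complexity. By the induction hypothesis, $\sigma'\Vdash\psi_i(\bar{x},t(\bar{u}_i))$. The map $t$ is still an assignment into variables of $\sigma'$, and therefore $\sigma'\Vdash\psi$.

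\emph{$\Pi_\beta$ case.} This case needs no induction hypothesis. If $\sigma\Vdash\psi$, then the variables $\bar{x}$ are among those of $\sigma$, and hence among those of $\sigma'$. Also, no $\sigma''\supseteq\sigma$ forces $neg(\psi)$. Every $\sigma''\supseteq\sigma'$ satisfies $\sigma''\supseteq\sigma$ by transitivity of the tree ordering. So no extension of $\sigma'$ forces $neg(\psi)$, which gives $\sigma'\Vdash\psi$.

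The induction is on the ordinal rank of formulas, so it is well-founded; no step is a real obstacle. The only point that needs care is the base case. There, Consistency is exactly what lets the atomic type at $\sigma'$ imply the one at $\sigma$. Levels is used only to see that the variables of $\sigma$ are among those of $\sigma'$. This is why the lemma holds for every $T$ satisfying Levels and Consistency.
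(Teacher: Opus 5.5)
Your proof is correct and follows essentially the same argument as the paper: induction on complexity, with Consistency handling the quantifier-free case, the induction hypothesis applied to a disjunct in the $\Sigma_\beta$ case, and transitivity of extension (no induction needed) in the $\Pi_\beta$ case. You also make explicit that the variables $\bar{x}$ remain among those of $\sigma'$, which the paper leaves implicit.
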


\begin{proof}

First, suppose $\psi(\bar{x})$ is finitary quantifier-free.  If $\sigma\Vdash\psi(\bar{x})$, then the variables of $\sigma$ include $\bar{x}$, and the label on $\sigma$ assigns an atomic type $D$ that logically implies $\psi(\bar{x})$.  If $\sigma'\supseteq\sigma$, then by Consistency, the label on $\sigma'$, assigning an atomic type $D'$ to a larger tuple of variables, implies $D$, so it implies $\psi(\bar{x})$.
Next, suppose $\psi(\bar{x}) = \bigvee_i(\exists\bar{u}_i)\psi_i(\bar{x},\bar{u}_i)$, where the statement holds for all $\psi_i(\bar{x},\bar{z})$.  If $\sigma\Vdash\psi(\bar{x})$, then for some $i$ and some assignment fixing $\bar{x}$ and taking $\bar{u}_i$ to some $\bar{z}$, $\sigma\Vdash\psi_i(\bar{x},\bar{z})$.  If $\sigma'\supseteq\sigma$, then by the Induction Hypothesis, $\sigma'\Vdash\psi_i(\bar{x},\bar{z}),$ so $\sigma\Vdash\psi(\bar{x})$.  
Finally, suppose $\psi(\bar{x}) = \bigwedge_i(\forall\bar{u}_i)\psi_i(\bar{x},\bar{u}_i)$, where the statement holds for all $\psi_i(\bar{x},\bar{z})$.  If $\sigma\Vdash\psi(\bar{x})$, then by the definition of forcing, no extension of $\sigma$ forces $neg(\psi(\bar{x}))$.  For $\sigma'\supseteq\sigma$, no extension of $\sigma'$ forces $neg(\psi(\bar{x}))$, so $\sigma'\Vdash\psi(\bar{x})$.    
\end{proof}

\begin{lem} [Consistency]  

For all $\sigma$ and $\psi(\bar{x})$, $\sigma$ cannot force both $\psi(\bar{x})$ and $neg(\psi(\bar{x}))$. 

\end{lem}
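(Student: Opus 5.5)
I will argue by induction on the complexity of $\psi(\bar{x})$, following the clauses of the definition of forcing. Since $neg(neg(\psi))$ is $\psi$ itself (up to the obvious syntactic identification), the only cases are these: $\psi$ finitary quantifier-free, or $\psi$ a computable $\Sigma_\beta$ formula whose negation $neg(\psi)$ is computable $\Pi_\beta$. The case where $\psi$ is $\Pi_\beta$ then follows by symmetry, because the statement is symmetric in $\psi$ and $neg(\psi)$.

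\emph{Quantifier-free case.} Suppose $\sigma$ carries the label $U_D$ and forces both $\psi(\bar{x})$ and $neg(\psi(\bar{x}))$. Then $D$ logically implies both $\psi(\bar{x})$ and $\neg\psi(\bar{x})$, so $D$ is inconsistent. By the Levels property, $D$ is a complete atomic type making the variables of $\sigma$ distinct. Such a $D$ has a model: take the structure on those variables determined by the literals of $D$. So $D$ is consistent, which is a contradiction. The only point needing care is that $neg$ of a finitary quantifier-free formula is logically equivalent to its negation, and this is immediate.

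\emph{$\Sigma_\beta$ case.} Let $\psi(\bar{x}) = \bigvee_i(\exists\bar{u}_i)\psi_i(\bar{x},\bar{u}_i)$, and suppose $\sigma\Vdash\psi(\bar{x})$ and $\sigma\Vdash neg(\psi(\bar{x}))$. Here $neg(\psi)$ is the $\Pi_\beta$ formula $\bigwedge_i(\forall\bar{u}_i)neg(\psi_i)$. By the $\Pi_\beta$ clause of the definition of forcing, $\sigma\Vdash neg(\psi)$ means that no $\sigma'\supseteq\sigma$ forces $neg(neg(\psi)) = \psi$. Taking $\sigma' = \sigma$ contradicts $\sigma\Vdash\psi$, so no induction hypothesis is even needed. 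For this to work, $neg(neg(\theta))$ must literally be $\theta$; I will state this explicitly as a convention about $neg$, which the indexing of computable infinitary formulas allows.

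\emph{Main obstacle.} The one step I expect to need care is the quantifier-free base case. There I must make sure the Levels property really guarantees that each label $D$ is satisfiable, i.e.\ that it is a genuine complete atomic type rather than an arbitrary code. I will also check that forcing of quantifier-free formulas requires the variables $\bar{x}$ to be among those of $\sigma$, so that $D$ decides every atomic formula in $\bar{x}$.
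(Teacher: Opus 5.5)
Your proposal is correct and follows essentially the same route as the paper. The quantifier-free case rests on the label $D$ being a consistent complete atomic type, and the $\Sigma_\beta$/$\Pi_\beta$ cases follow immediately from the $\Pi_\beta$ clause of the definition of forcing by taking $\sigma'=\sigma$, with no induction hypothesis needed. Making the convention $neg(neg(\theta))=\theta$ explicit is a good touch, since the paper uses it only implicitly.
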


\begin{proof}

First, suppose $\psi(\bar{x})$ is finitary quantifier-free.  Then $neg(\psi(\bar{x})) = \neg{\psi(\bar{x})}$.  If $\sigma$ assigns the atomic type $D$ to variables including $\bar{x}$, $D$ does not imply both $\psi(\bar{x})$ and $\neg{\psi(\bar{x})}$, so $\sigma$ does not force both.  
Next let $\psi(\bar{x}) = \bigvee_i(\exists\bar{u}_i)\psi_i(\bar{x},\bar{u}_i)$.  If $\sigma\Vdash neg(\psi(\bar{x}))$, then by definition, $\sigma$ does not force $\psi(\bar{x})$.
Similarly, for $\psi(\bar{x}) = \bigwedge_i(\forall\bar{u}_i)\psi_i(\bar{x},\bar{u}_i)$, if $\sigma\Vdash\psi(\bar{x})$, then by definition, $\sigma$ does not force $\psi(\bar{x})$.  
\end{proof}

\begin{lem}[Density]
\label{lem:density}   

For all $\sigma$ and $\psi(\bar{x})$, some extension of $\sigma$ forces either 
$\psi(\bar{x})$ or $neg(\psi(\bar{x}))$. 

\end{lem}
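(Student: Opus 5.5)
The plan is a direct case analysis on the form of $\psi(\bar{x})$. Only the finitary quantifier-free case needs the atomic labels; the infinitary cases follow from the way forcing of $\Pi_\beta$ formulas was defined. One preliminary point has to be settled first, namely that $\sigma$ can be extended to a node whose variables include $\bar{x}$.

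\emph{Preliminary step.} By the Levels property, a node at level $n$ carries the variables $x_1,\ldots,x_n$. So if $\bar{x}\subseteq\{x_1,\ldots,x_m\}$, it is enough to extend $\sigma$ to a node at level at least $m$. This needs every node to have a successor. That holds for the trees $T_\mathcal{A}$, since $\mathcal{A}$ has universe $\omega$ and every node representing $\bar{a}$ has extensions representing $(\bar{a},a')$. It does not follow from Levels and Consistency alone, so I would state it as an added standing assumption: every node has a successor. The Replication property already points in this direction. I expect this to be the only real obstacle, and a definitional rather than mathematical one: without the assumption, a leaf $\sigma$ that lacks some variable of $\bar{x}$ forces neither $\psi$ nor $neg(\psi)$. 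Under the assumption, fix an extension $\sigma_0\supseteq\sigma$ whose variables include $\bar{x}$.

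\emph{Quantifier-free case.} Let $\psi(\bar{x})$ be finitary quantifier-free and let $D$ be the label of $\sigma_0$. By Levels, $D$ is a complete atomic type in variables including $\bar{x}$, so it decides every atomic formula in $\bar{x}$. By induction on the Boolean structure, $D$ therefore decides every finitary Boolean combination of them. Hence $D$ logically implies either $\psi$ or $\neg\psi=neg(\psi)$, and $\sigma_0$ forces that one.

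\emph{$\Sigma_\beta$ case.} Let $\psi$ be computable $\Sigma_\beta$. If some extension of $\sigma$ forces $\psi$, we are done. Otherwise no extension of $\sigma$ forces $\psi$, and in particular no extension of $\sigma_0$ does. The formula $neg(\psi)$ is computable $\Pi_\beta$ with $neg(neg(\psi))=\psi$, and the variables $\bar{x}$ lie among those of $\sigma_0$. So by clause 3 of the definition of forcing, $\sigma_0\Vdash neg(\psi)$.

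\emph{$\Pi_\beta$ case.} Let $\psi$ be computable $\Pi_\beta$, so $neg(\psi)$ is computable $\Sigma_\beta$. If some extension of $\sigma$ forces $neg(\psi)$, we are done. Otherwise no extension of $\sigma_0$ forces $neg(\psi)$, and since $\sigma_0$ contains the variables $\bar{x}$, clause 3 gives $\sigma_0\Vdash\psi$.

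No induction on $\beta$ is needed beyond the quantifier-free case, because clause 3 is defined precisely as ``no extension forces the negation''. The only fact to record is that $neg$ is an involution on normal-form formulas.
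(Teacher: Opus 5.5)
Your proof is correct and follows the paper's argument. The quantifier-free case uses the complete atomic type carried by the label, and the $\Sigma_\beta$ and $\Pi_\beta$ cases are read off from the clause defining forcing of $\Pi_\beta$ formulas as ``no extension forces the negation.'' Your two refinements repair points the paper glosses over. Passing first to an extension $\sigma_0$ whose variables include $\bar{x}$ secures the variable requirement in that clause, which the paper's version skips when it concludes $\sigma\Vdash neg(\psi)$ directly at $\sigma$. Making explicit that every node has a successor supplies a fact that Levels and Consistency alone do not give, though the paper uses it tacitly.
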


\begin{proof}

First, suppose $\psi(\bar{x})$ is finitary quantifier-free.  Take $\sigma'\supseteq\sigma$ such that $\bar{x}$ is included among the variables of $\sigma'$.  Then $\sigma'$ forces one of $\psi(\bar{x})$, $\neg(\psi(\bar{x}))$, as the label of $\sigma'$ describes the full quantifier-free type of $\bar{x}$.  
Next, suppose $\psi(\bar{x}) = \bigvee_i(\exists\bar{u}_i)\psi_i(\bar{x},\bar{u}_i)$, where the statement holds for all $\psi_i(\bar{x},\bar{z})$.  If no $\sigma'\supseteq\sigma$ forces $\psi(\bar{x})$, then by definition, $\sigma$ forces $neg(\psi(\bar{x}))$.  
Similarly, if $\psi(\bar{x}) = \bigwedge_i(\forall\bar{u}_i)\psi_i(\bar{x},\bar{u}_i)$, and $\sigma$ does not force $\psi(\bar{x})$, then by definition, some $\sigma'\supseteq\sigma$ forces $neg(\psi(\bar{x}))$.    
\end{proof}

Using the Density and Extension Lemmas, we get the following.

\begin{lem} [Existence of c.f.s.]  

Any $\sigma$ extends to a path with initial segments deciding all computable infinitary sentences of complexity at most that of $\varphi$. 

\end{lem}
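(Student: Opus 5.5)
We build the path as the union of a chain $\sigma=\sigma_0\preceq\sigma_1\preceq\sigma_2\preceq\cdots$ of nodes of $T$, using a diagonal enumeration. The forcing language consists of computable infinitary sentences in $L\cup X$ of complexity at most that of $\varphi$. Since there are only countably many indices for computable infinitary formulas, and only countably many ways of substituting variables from $X$, the set of such sentences is countable. Enumerate it as $\psi_0,\psi_1,\psi_2,\ldots$.

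At stage $n$ we already have $\sigma_n$. By the Density Lemma (Lemma~\ref{lem:density}) we choose $\sigma_{n+1}\supseteq\sigma_n$ forcing either $\psi_n$ or $neg(\psi_n)$. We also make sure that $\sigma_{n+1}$ lies strictly above level $n$, so that $\sigma_{n+1}$ has length greater than $n$. This is possible because Density already produces an extension, and each node of $T_\mathcal{A}$-like trees has successors. If one wants to avoid assuming that successors exist, interleave the sentences $x_k=x_k$ for every $k$: by the definition of forcing for quantifier-free sentences, forcing $x_k=x_k$ requires $x_k$ to be among the variables of the node, and this pushes the chain upward. Let $p=\bigcup_n\sigma_n$. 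This is a path through $T$ extending $\sigma$.

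For each sentence $\psi$ of the relevant complexity, $\psi=\psi_n$ for some $n$, and $\sigma_{n+1}\prec p$ decides it. By the Extension Lemma, every later $\sigma_m$ also forces the same alternative. By the Consistency Lemma, no initial segment of $p$ forces both $\psi$ and $neg(\psi)$. So $p$ is a complete forcing sequence for the sentences of complexity at most that of $\varphi$.

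The only delicate point is making sure that the enumeration covers all sentences of the needed complexity, including those with free variables $x_k$ that appear only later along the path. Because we enumerate all sentences over all of $X$ in advance, before building the chain, this is automatic.
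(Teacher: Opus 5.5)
Your proposal is correct and takes essentially the same route as the paper: enumerate the countably many sentences of the forcing language, use the Density Lemma to decide $\psi_n$ at stage $n$, and push the chain to level at least $n$ so that it determines a path. One small caveat: your $x_k = x_k$ device does not remove the need for nodes to have extensions at higher levels, because the Density Lemma for quantifier-free sentences already presupposes such extensions (as does the paper's own proof); they do exist in $T_\mathcal{A}$.
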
 

\begin{proof}

Let $(\psi_i)_{i\in\omega}$ be a list of the sentences in the forcing language.  Starting with $\sigma_0 = \sigma$, we build a c.f.s.\ $\sigma_0\subseteq\sigma_1\subseteq\ldots$, where $\sigma_{i+1}$ decides $\psi_i$.  We choose $\sigma_i$ at a level of at least $i$, so the sequence determines a path.    
\end{proof}  

\begin{lem} [Truth-and-forcing]  

Let $(\sigma_i)_{i\in\omega}$ be a c.f.s.\ giving the generic path $p$, and let $\mathcal{A}^p$ be the resulting structure.  Then for all $\psi(\bar{x})$ in the forcing language, $\mathcal{A}^p\models\psi(\bar{x})$ iff there is some $i$ such that $\sigma_i\Vdash\psi(\bar{x})$. 

\end{lem}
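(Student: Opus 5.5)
We prove the Truth-and-forcing Lemma by induction on the complexity of $\psi(\bar{x})$, following the order in which forcing was defined: finitary quantifier-free sentences first, then computable $\Sigma_\beta$ and computable $\Pi_\beta$ sentences, treated simultaneously for each $\beta$. Throughout we use that the c.f.s.\ $(\sigma_i)$ decides every sentence in the forcing language (Existence of c.f.s.), together with the Extension and Consistency Lemmas. By Extension, if some $\sigma_i$ forces $\psi$, then so does every later $\sigma_j$.

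\emph{Quantifier-free case.} Take $i$ large enough that the variables $\bar{x}$ are among those of $\sigma_i$, and let $D$ be the label of $\sigma_i$. By Levels and Consistency, the atomic diagram of $\mathcal{A}^p$ restricted to these variables is exactly the complete atomic type $D$. Hence $\mathcal{A}^p\models\psi(\bar{x})$ iff $D$ logically implies $\psi(\bar{x})$, which is iff $\sigma_i\Vdash\psi(\bar{x})$. Conversely, if some $\sigma_j\Vdash\psi$, then $D_{\sigma_j}$ implies $\psi$ and $D_{\sigma_j}$ is part of the diagram, so $\psi$ holds in $\mathcal{A}^p$.

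\emph{$\Sigma_\beta$ case.} Let $\psi=\bigvee_i(\exists\bar{u}_i)\psi_i$. Suppose first that $\sigma_k\Vdash\psi$. Then for some $i$ and some variables $\bar{z}$ of $\sigma_k$, we have $\sigma_k\Vdash\psi_i(\bar{x},\bar{z})$. By the induction hypothesis $\mathcal{A}^p\models\psi_i(\bar{x},\bar{z})$, and the universe of $\mathcal{A}^p$ is $X$, so $\mathcal{A}^p\models\psi$. Conversely, suppose $\mathcal{A}^p\models\psi$. Pick $i$ and $\bar{z}\subseteq X$ witnessing this. By the induction hypothesis some $\sigma_k$ forces $\psi_i(\bar{x},\bar{z})$. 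Since every variable of $\bar{z}$ eventually appears along the path, we may enlarge $k$ (using Extension) so that $\bar{z}$ are variables of $\sigma_k$. Then $\sigma_k\Vdash\psi$.

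\emph{$\Pi_\beta$ case.} Let $\psi=\bigwedge_i(\forall\bar{u}_i)\psi_i$, so $neg(\psi)$ is $\Sigma_\beta$ and the previous case applies to it. Since the c.f.s.\ is generic, some $\sigma_k$ forces $\psi$ or forces $neg(\psi)$. If $\sigma_k\Vdash neg(\psi)$, the $\Sigma_\beta$ case gives $\mathcal{A}^p\models neg(\psi)$, so $\psi$ fails. By Extension and Consistency, no $\sigma_j$ can then force $\psi$. If $\sigma_k\Vdash\psi$, then no $\sigma_j$ forces $neg(\psi)$, again by Extension and Consistency. By the $\Sigma_\beta$ case, $\mathcal{A}^p\not\models neg(\psi)$, that is, $\mathcal{A}^p\models\psi$. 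Both directions follow.

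The main obstacle is the $\Pi_\beta$ step. It cannot be handled directly from the definition; it must go through $neg(\psi)$, so we need the induction to cover $\Sigma_\beta$ and $\Pi_\beta$ together. It also relies on genericity: the sequence must decide $\psi$, which holds because the forcing language only contains sentences of complexity bounded as in the Existence of c.f.s. lemma, and $neg$ of such a sentence stays within that bound. The same deciding property is needed for the subformulas $\psi_i(\bar{x},\bar{z})$ used in the $\Sigma_\beta$ case.
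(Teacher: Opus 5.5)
Your proof is correct and takes essentially the same route as the paper. Both argue by induction on complexity: the $\Sigma_\beta$ case goes through the induction hypothesis on the disjuncts, and the $\Pi_\beta$ case is reduced to the $\Sigma_\beta$ case for $neg(\psi)$ using genericity of the c.f.s.\ together with Extension and Consistency. The only difference is bookkeeping: the paper shows that whichever of $\psi$, $neg(\psi)$ holds in $\mathcal{A}^p$ is forced by some $\sigma_i$ and then appeals to Consistency, whereas you prove the biconditional directly.
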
 

\begin{proof}

First, suppose $\psi(\bar{x})$ is finitary quantifier-free.  Suppose the variables $\bar{x}$ are among the first $i$.  If $\mathcal{B}\models\psi(\bar{x})$, then $\sigma_i\Vdash\psi(\bar{x})$.  If $\mathcal{B}\models\neg{\psi(\bar{x})}$, then $\sigma_i\Vdash\neg{\psi(\bar{x})}$, where $\neg{\psi(\bar{x})} = neg(\psi(\bar{x}))$.  No $\sigma_j$ can force $\psi(\bar{x})$.  
Next, let $\psi(\bar{x}) = \bigvee_j(\exists\bar{u}_j)\psi_j(\bar{x},\bar{u}_j)$, where the statement holds for $\psi_i(\bar{x},\bar{z})$.  If $\mathcal{B}\models\psi(\bar{x})$, then for some $i$ and $\bar{z}$, $\mathcal{B}\models\psi_i(\bar{x},\bar{z})$.  Some $\sigma_i$ forces $\psi_j(\bar{x},\bar{z})$, so it forces $\psi(\bar{x})$.  If $\mathcal{B}\models neg(\psi(\bar{x}))$, then for all $j$ and all $\bar{z}$, $\mathcal{B}\models neg(\psi_j(\bar{x},\bar{z}))$.  Then no $\sigma_i$ forces $\psi(\bar{x})$.  Therefore, some $\sigma_i$ forces $neg(\psi(\bar{x}))$ by Lemma \ref{lem:density}.  
This is enough, since we cannot have $\psi(\bar{x})$ and $neg(\bar{x})$ both forced by terms in the c.f.s.  Finally, let $\psi(\bar{x}) = \bigwedge_j(\forall\bar{u}_j)\psi_j(\bar{x},\bar{u}_j)$.  We have already seen that whichever of $\psi(\bar{x})$, $neg(\psi(\bar{x}))$ is true in $\mathcal{B}$ is forced by some $\sigma_i$     
\end{proof} 

\noindent
\textbf{Remark}:  Suppose $(\sigma_i)_{i\in\omega}$ is a chain of forcing conditions that decides all computable $\Sigma_\beta$ sentences for $\beta < \alpha$.  Let $p$ be the resulting path, and let $\mathcal{A}^p$ be the corresponding structure.  Then Truth-and-Forcing holds for computable $\Sigma_\beta$ and computable $\Pi_\beta$ sentences $\psi(\bar{x})$.  

\begin{lem} [Definability of forcing]  

For each sentence $\theta(\bar{x})$ in the forcing language, we can find a formula $force_{\theta(\bar{x})}(y)$ that defines in $T$ the set of $\sigma\in T$ such that $\sigma\Vdash\theta$.  The formula has the same complexity as $\theta(\bar{x})$, except that if $\theta$ is finitary quantifier-free, we may take $force_\theta(y)$ to be either computable $\Sigma_1$ or computable $\Pi_1$.  

\end{lem}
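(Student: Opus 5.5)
We will prove the lemma by effective transfinite induction on the complexity of $\theta(\bar{x})$, producing $force_{\theta(\bar{x})}(y)$ uniformly from an index for $\theta$. The tree language has the predecessor function $p$ and the labels $U_D$. So the level of a node $y$ is expressed by the quantifier-free formula saying $p^n(y)=\lambda$ and $p^{n-1}(y)\neq\lambda$, where $\lambda$ is defined as the fixed point of $p$, or by $U_D(p^{n-1}y)$ for some $D$ on one variable. Here ``$y$ is at level $n$'' is the finite disjunction of $U_D(y)$ over the finitely many $D$ assigning complete atomic types to $(x_1,\ldots,x_n)$, since $L$ is finite relational. Similarly, the atomic type of the tuple of variables of $y$ is read off from $U_D(y)$.

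\textbf{Base case.} For $\theta(\bar{x})$ finitary quantifier-free, $y\Vdash\theta$ iff $y$ is at level $n\geq$ the largest index of a variable in $\bar{x}$ and its label $D$ implies $\theta$. At a fixed level $n$ there are finitely many $D$, so we can write
\[
force_\theta(y)\;=\;\bigvee_{n\ge m}\ \bigvee_{D\text{ at level } n,\ D\models\theta} U_D(y).
\]
This is a c.e.\ disjunction of atomic formulas, hence computable $\Sigma_1$. Its complement among nodes is the disjunction over the remaining labels (Levels gives exactly one label per node), so it is also equivalent in such trees to the computable $\Pi_1$ formula $\bigwedge_{D\not\models\theta \text{ or level}<m}\neg U_D(y)$.

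\textbf{$\Sigma_\beta$ case.} Let $\theta=\bigvee_i(\exists\bar{u}_i)\psi_i(\bar{x},\bar{u}_i)$. Since an assignment $t$ sends $\bar{u}_i$ to variables of $y$, we set
\[
force_\theta(y)=\bigvee_i\bigvee_{t}\ \bigl(\text{level}(y)\ge \max t(\bar u_i)\bigr)\wedge force_{\psi_i(\bar{x},t(\bar{u}_i))}(y),
\]
where $t$ ranges over finite maps into $X$. This is a c.e.\ disjunction of computable $\Pi_{\gamma_i}$ formulas, hence computable $\Sigma_\beta$. For $\gamma_i=0$ we use the $\Pi_1$ version from the base case, which is fine when $\beta\geq1$.

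\textbf{$\Pi_\beta$ case.} Let $\theta$ be computable $\Pi_\beta$. Then $y\Vdash\theta$ iff the variables $\bar{x}$ lie among those of $y$ and $(\forall z)\bigl(z\succeq y\rightarrow\neg force_{neg(\theta)}(z)\bigr)$. The obstacle is that ``$z\succeq y$'' is not quantifier-free uniformly, since the level difference is unbounded. We handle it by taking the c.e.\ conjunction over $k$ of $\forall z\,(p^k(z)=y\rightarrow \neg force_{neg\theta}(z))$. Because $neg(\theta)$ is computable $\Sigma_\beta$, its forcing formula is $\Sigma_\beta$ by induction. Its negation with negations brought inside is $\Pi_\beta$, and a $\bigwedge\forall$ of $\Pi_\beta$ formulas is again $\Pi_\beta$ when $\beta\ge1$.

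The main delicacy, and the step I expect to need most care, is this $\Pi$ case. There we are in effect applying recursion to $neg(\theta)$, which has the same rank as $\theta$, not a lower one. I would organize the induction so that at stage $\beta$ we simultaneously produce the forcing formulas for all $\Sigma_\beta$ formulas, built from the $\Pi_{<\beta}$ forcing formulas, and then for all $\Pi_\beta$ formulas, built from the $\Sigma_\beta$ forcing formulas just obtained. Using effective transfinite recursion on ordinal notations gives the uniform indices, which is needed for the formulas to be computable infinitary. Correctness at each step is immediate from the definition of forcing, together with the fact, from Levels, that the label of a node codes the atomic type of its variables.
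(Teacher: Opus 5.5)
Your proposal is the paper's proof. The paper also argues by induction on complexity: the base case is a c.e.\ disjunction of labels $U_D(y)$, the $\Sigma_\beta$ case is a disjunction over $i$ and assignments $t$ of the forcing formulas for $\psi_i(\bar{x},t(\bar{u}_i))$, and the $\Pi_\beta$ case says that no extension forces $neg(\theta)$. Your extra steps only make explicit what the paper leaves implicit: expressing ``$z\succeq y$'' as $\bigwedge_k\forall z\,(p^k(z)=y\rightarrow\cdots)$, handling $\Sigma_\beta$ before $\Pi_\beta$ at each stage, and writing the $\Pi_1$ base formula via the complementary labels (the paper instead says that no extension forces $\neg\alpha$).

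There is one off-by-one slip. For a $\Sigma_1$ sentence, where all $\gamma_i=0$, you must plug in the $\Sigma_1$ version of the base-case formula, because a c.e.\ disjunction of $\Pi_1$ formulas is $\Sigma_2$, not $\Sigma_1$. So ``fine when $\beta\geq 1$'' should read $\beta\geq 2$. With that fix, $\Sigma_1$ sentences, and hence $\Pi_1$ sentences, get forcing formulas of matching complexity.
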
 

\begin{proof}

First, suppose $\alpha(\bar{x})$ is finitary quantifier-free.  We have
$\sigma\Vdash\alpha(\bar{x})$ if $\sigma$ satisfies the disjunction of $U_D(y)$ for $D$ such that the variables include those of $\alpha$ and $D$ implies $\alpha$.  This is computable $\Sigma_1$.  We may also say that no extension of $\sigma$ forces $\neg{\alpha}$.  This is computable $\Pi_1$.
Next, consider a computable $\Sigma_\beta$ sentence $\theta(\bar{x}) = \bigvee_i(\exists\bar{u}_i)\psi_i(\bar{x},\bar{u}_i)$.  We have $\sigma\Vdash\theta(\bar{x})$ if the variables of $\sigma$ include $\bar{x}$ and $t(\bar{u}_i)$ for some $i$ and some assignment $t$ taking $\bar{u}_i$ to $X$, and $\sigma\Vdash\psi_i(\bar{x},t(\bar{u}_i))$.  Using the induction hypothesis, we get a computable $\Sigma_\beta$ formula saying $\sigma\Vdash\theta(\bar{x})$.  
Finally, consider a computable $\Pi_\beta$ sentence $\theta(\bar{x}) = \bigwedge_i(\forall\bar{u}_i)\psi_i(\bar{x},\bar{u}_i)$.  We have $\sigma\Vdash\theta(\bar{x})$ if no extension of $\sigma$ forces the $\Sigma_\beta$ sentence $neg(\theta(\bar{x}))$.  This is computable $\Pi_\beta$, again by induction.
\end{proof} 

\begin{lem}

Let $\mathcal{A}$ be an $L$-structure with a computable $\Pi_\alpha$ Scott sentence $\varphi$.  Let $p$ be a generic path through $T_\mathcal{A}$.  Then the structure given by $p$ is isomorphic to $\mathcal{A}$.  

\end{lem}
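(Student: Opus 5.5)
The plan is to apply the Truth-and-forcing Lemma to $\varphi$ itself, and then to use density arguments to show that the structure given by $p$ really is a copy of $\mathcal{A}$ (rather than a structure on a proper quotient or a proper subset of $X$).

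First we check that $\mathcal{A}^p$, built on the universe $X$, satisfies $\varphi$. The labels of $T_\mathcal{A}$ describe tuples of \emph{distinct} elements of $\mathcal{A}$, so the variables $x_i$ name distinct elements. We need the following density fact.

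\begin{quote}
\textbf{Density fact.} Every node $\sigma$ of $T_\mathcal{A}$ representing $\bar a$ has, for each $a\in\mathcal{A}$ not in $\bar a$, extensions representing $(\bar a, a)$.
\end{quote}

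This holds by the definition of the tree of tuples. Now let $\mathcal{A}_{\bar a}$ be the image of $\bar a$, that is, $\mathcal{A}$ with the elements of $\bar a$ named by $x_1,\ldots,x_n$. The key claim is:

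\begin{quote}
\textbf{Key claim.} If $\sigma$ represents $\bar a$ and $\theta(\bar x)$ is a sentence in the forcing language, then $\sigma\Vdash\theta(\bar x)$ implies $\mathcal{A}\models\theta(\bar a)$.
\end{quote}

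We prove this by induction on the complexity of $\theta$.

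\begin{itemize}
\item \emph{Finitary quantifier-free case.} This is immediate, since the label of $\sigma$ is $D_{\bar a}$.
\item \emph{$\Sigma$ case.} If $\sigma\Vdash\psi_i(\bar x,t(\bar u_i))$, then the witnesses are variables of $\sigma$, hence entries of $\bar a$. The induction hypothesis gives the conclusion.
\item \emph{$\Pi$ case.} Suppose $\mathcal{A}\not\models\theta(\bar a)$. Then for some $i$ there is a tuple $\bar c$ in $\mathcal{A}$ with $\mathcal{A}\models neg(\psi_i)(\bar a,\bar c)$. We must produce an extension $\sigma'\supseteq\sigma$ that forces $neg(\theta)$; this contradicts $\sigma\Vdash\theta$.
\end{itemize}

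The $\Pi$ case is the main obstacle. The naive induction only transfers forcing to truth, not truth to forcing, so the two directions must be proved simultaneously. The claim to establish is: for every $\sigma$ representing $\bar a$, we have $\sigma\Vdash\theta$ if and only if $\mathcal{A}\models\theta(\bar a)$, for $\Sigma$ and $\Pi$ sentences alike.

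\begin{itemize}
\item \emph{$\Sigma$ direction (truth to forcing).} Given witnesses $\bar c$, we use the density fact to extend $\sigma$ to $\sigma'$ representing $\bar a\bar c$ (with $\bar c$ disjoint from $\bar a$, or else reuse the existing variables). The induction hypothesis gives $\sigma'\Vdash\psi_i$, so $\sigma'$ forces the $\Sigma$ sentence. Here a subtlety appears: the $\Sigma$ clause of forcing requires the witnesses to be variables of $\sigma$ itself. So for $\Sigma$ sentences the correct statement is that \emph{some extension} of $\sigma$ forces $\theta$ iff $\mathcal{A}\models\theta(\bar a)$.
\item \emph{$\Pi$ direction.} We have $\sigma\Vdash\theta$ iff no extension forces $neg(\theta)$. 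By the $\Sigma$ statement applied to every extension $\sigma'$ (which represents some $\bar a\bar c$, and $neg(\theta)$ only mentions $\bar x$), this holds iff $\mathcal{A}\not\models neg(\theta)(\bar a)$.
\end{itemize}

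Thus the $\Pi$ case is an exact equivalence. For the $\Sigma$ case we carry the ``some extension'' form, and the Extension Lemma makes it consistent with the c.f.s.

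Next we apply this to the sentence $\varphi$ at the root $\lambda$, which represents the empty tuple. Since $\mathcal{A}\models\varphi$ and $\varphi$ is $\Pi_\alpha$, the equivalence gives $\lambda\Vdash\varphi$. By Extension, every $\sigma_i$ in the c.f.s.\ forces $\varphi$, so Truth-and-forcing yields $\mathcal{A}^p\models\varphi$. If $\alpha$ is a successor or limit ordinal and $\varphi$ is not literally $\Pi$ in form, we first rewrite it as a $\Pi_\alpha$ conjunction.

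Finally, $\mathcal{A}^p$ must be a countable structure on the whole of $X$, with distinct variables naming distinct elements. Distinctness holds by Levels, since each label makes all variables distinct. Every variable $x_n$ is named because $p$ is an infinite path.

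Since $\varphi$ is a Scott sentence for $\mathcal{A}$, we conclude $\mathcal{A}^p\cong\mathcal{A}$. Note that surjectivity onto $\mathcal{A}$ is not needed: the Scott sentence handles it. This is why genericity suffices and we need no bookkeeping to ensure that every element of $\mathcal{A}$ appears along $p$.
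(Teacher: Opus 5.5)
Your proof is correct, but it gets to $\lambda\Vdash\varphi$ by a different route than the paper.

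The paper proves no truth lemma for $T_\mathcal{A}$. It takes an initial segment $\sigma\prec p$ that decides $\varphi$. It then extends $\sigma$ to a second complete forcing sequence whose nodes eventually represent every element of $\mathcal{A}$, so the resulting path $q$ has $\mathcal{A}^q\cong\mathcal{A}$ and hence $\mathcal{A}^q\models\varphi$. Truth-and-forcing along $q$, with Consistency and Extension, shows $\sigma$ must force $\varphi$ rather than $neg(\varphi)$. Truth-and-forcing along $p$ then gives $\mathcal{A}^p\models\varphi$.

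You instead prove directly, by induction following the definition of forcing, a uniform fact about $T_\mathcal{A}$. A node representing $\bar a$ forces a $\Pi$ sentence exactly when $\mathcal{A}\models\theta(\bar a)$, for sentences whose variables are among the node's own, as you implicitly assume. Some extension of the node forces a $\Sigma$ sentence exactly when it is true of $\bar a$. This uses only two features of the tree: labels are the exact atomic types of the represented tuples, and every one-point extension of a tuple is represented by a successor.

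The paper's argument is shorter given the forcing lemmas already established, and needs no new induction. However, it uses Truth-and-forcing in both directions and needs an auxiliary bookkept path $q$.

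Your argument needs only the direction ``forced implies true'' along $p$ and no auxiliary path. It also gives more: forcing in $T_\mathcal{A}$ is determined entirely by $\mathcal{A}$, so the root forces $\varphi$ outright. It immediately yields the paper's later lemma that each node weakly forces every formula true of the tuple it represents, which the paper proves by repeating the auxiliary-path trick.
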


\begin{proof}  

Take $\sigma\prec p$ deciding $\varphi$.  We can extend $\sigma$ to a c.f.s.\ $(\sigma_i)_{i\in\omega}$ such that the tuple $\bar{a}_i$ represented by $\sigma_i$ includes the first $i$ elements of $\mathcal{A}$.  Let $q$ be the path with $\sigma_i\prec q$ for all $i$.  Then $\mathcal{A}^q\cong\mathcal{A}$, so $\mathcal{A}^q\models\varphi$,  By Truth-and-Forcing, $\sigma\Vdash\varphi$, so $\mathcal{A}^p\models\varphi$.  
\end{proof} 

\begin{defn} [Weak forcing]

For $\sigma\in T$ and $\psi(\bar{x})$ a sentence in our forcing language, $\sigma$ \emph{weakly forces} $\psi(\bar{x})$ if no extension of $\sigma$ forces $neg(\psi)$.

\end{defn} 

\noindent
\textbf{Remark}:  The lemma above shows that if $\varphi$ is a computable $\Pi_\alpha$ Scott sentence for $\mathcal{A}$, then the base node of $T_\mathcal{A}$ weakly forces $\varphi$.  For computable $\Pi_\alpha$ formulas, weak forcing is the same as forcing.     

\bigskip

\begin{lem}

Suppose $\sigma\in T_\mathcal{A}$, where $\sigma$ has variables $\bar{x}$ and $\sigma$ represents the tuple $\bar{a}$.  For any computable infinitary formula $\psi(\bar{x})$ satisfied by $\bar{a}$, $\sigma$ weakly forces $\psi(\bar{x})$.  

\end{lem}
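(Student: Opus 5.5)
We argue by contradiction, copying the proof of the preceding lemma but now anchored at $\sigma$ instead of at the root. Suppose $\bar{a}$ satisfies $\psi(\bar{x})$ in $\mathcal{A}$, but some $\sigma'\supseteq\sigma$ forces $neg(\psi(\bar{x}))$. We will build a particular path $q$ through $\sigma'$ whose structure $\mathcal{A}^q$ is isomorphic to $\mathcal{A}$ via an isomorphism sending the variables $\bar{x}$ of $\sigma$ to $\bar{a}$. Then Truth-and-Forcing will give $\mathcal{A}^q\models neg(\psi(\bar{x}))$, i.e.\ $\bar{a}\models neg(\psi)$, which is the desired contradiction.

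The first step is to read off what $\sigma'$ represents. Since $\sigma'\supseteq\sigma$ in $T_\mathcal{A}$, it represents some tuple $(\bar{a},\bar{a}')$ of distinct elements extending $\bar{a}$. Its label records exactly the atomic type of this tuple. We then extend $\sigma'$ to a complete forcing sequence $\sigma'=\sigma_0\subseteq\sigma_1\subseteq\cdots$, choosing the successors so that the tuples represented eventually enumerate all of $\mathcal{A}$. At stage $i$ we first extend to decide the $i$-th sentence of the forcing language, as in Density. We then append a node representing the least element of $\mathcal{A}$ not yet listed; such nodes exist because every extension of a tuple by a new element is represented.

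With this path $q$, the map $x_k\mapsto$ ($k$-th element of the represented sequence) is a bijection onto $\mathcal{A}$. It preserves atomic formulas, since the labels along $q$ are the atomic diagrams of the represented tuples. Hence $\mathcal{A}^q\cong\mathcal{A}$ with $\bar{x}\mapsto\bar{a}$. Because the sequence is complete, Truth-and-Forcing applies, and $\sigma_0\Vdash neg(\psi(\bar{x}))$ yields $\mathcal{A}^q\models neg(\psi(\bar{x}))$. Transporting along the isomorphism, $\mathcal{A}\models neg(\psi)(\bar{a})$. But $neg(\psi)$ is equivalent to $\neg\psi$, which contradicts $\mathcal{A}\models\psi(\bar{a})$.

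The main obstacle is interleaving the two requirements, deciding every sentence and covering every element of $\mathcal{A}$, within one chain. The worry is that appending an element might undo genericity. It does not, because the Extension lemma keeps forced sentences forced along longer nodes. A minor point is that the forcing language may be restricted to complexity at most that of $\psi$, so the enumeration is countable; the Remark after Truth-and-Forcing covers this.
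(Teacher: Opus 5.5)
Your proof is correct and is essentially the paper's argument. The paper takes an arbitrary extension $\tau\supseteq\sigma$ that decides $\psi(\bar{x})$ and extends it to a complete forcing sequence whose tuples exhaust $\mathcal{A}$. It then uses $\mathcal{A}^q\cong\mathcal{A}$ (with $\bar{x}\mapsto\bar{a}$) together with Truth-and-Forcing to conclude that $\tau$ forces $\psi(\bar{x})$ rather than $neg(\psi(\bar{x}))$, which is just your contradiction argument in contrapositive form. One minor quibble: the set of all computable infinitary sentences is already countable, so restricting the complexity is a convenience matching the Remark after Truth-and-Forcing, not something needed for countability.
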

 
\begin{proof}

Suppose $\tau$ is an extension of $\sigma$ that decides $\psi(\bar{x})$.  There is a c.f.s.\ $(\sigma_i)_{i\in\omega}$, extending $\tau$, and such that $\sigma_i$, with variables $\bar{x}_i$, represents a tuple $\bar{a}_i$ including the first $i$ elements of $\mathcal{A}$.  Let $q$ be the generic path with $\sigma_i\prec q$ for all $i$.  Then $\mathcal{A}^q\cong\mathcal{A}$, with an isomorphism $F$ that maps $\bar{x}_i$ to $\bar{a}_i$.  Since $\mathcal{A}\models\psi(\bar{a})$, $\mathcal{A}^q\models\psi(\bar{x})$, so $\tau\Vdash\psi(\bar{x})$.  Then $\sigma$ weakly forces $\psi(\bar{x})$.  
\end{proof}

So far, we have used only the simplest properties of structures in $\mathbb{T}$.  Below, we use forcing to state some further properties.

\bigskip
\noindent
\textbf{$\alpha$-agreement}:  Let $\psi(\bar{x})$ be a computable $\Sigma_\alpha$ formula, where $\bar{x}$ is among the first $n$ variables.  For $\sigma$ at level $n$, if some extension $\tau$ of $\sigma$ forces $\tau\Vdash\psi(\bar{x},\bar{u})$, then $\sigma$ weakly forces $\psi(\bar{x},\bar{u})$.

\bigskip
\noindent
If $\alpha$ is a computable limit ordinal, then we consider just the disjuncts of $\psi(\bar{x})$, each of which is computable $\Sigma_\beta$ for some $\beta < \alpha$.  We see that $T$ satisfies $\alpha$-agreement iff it satisfies $\beta$-agreement for all $\beta < \alpha$. 

\bigskip
\noindent
\textbf{$\alpha$-permutation}:  Let $\sigma$ be a node with variables $\bar{x}$.  Let $\psi(\bar{x},z)$ be a computable $\Sigma_\alpha$ formula, where $z$ is not in $\bar{x}$.  If some $\tau\supseteq\sigma$ forces $\psi(\bar{x},z)$, and $x'$ is the first variable not in $\bar{x}$, then some $\tau'\supseteq\sigma$ forces $\psi(\bar{x},x')$.  (With $\alpha$-agreement, this means that the initial segment of $\tau'$ with variables $\bar{x},x'$ weakly forces $\psi(\bar{x},x')$.) 

\bigskip
\noindent
Again, if $\alpha$ is a computable limit ordinal, then we consider the disjuncts, each of which is computable $\Sigma_\beta$ for some $\beta < \alpha$.  We see that $T$ satisfies $\alpha$-permutation iff it satisfies $\beta$-permutation for all $\beta < \alpha$. 

\bigskip

Below, we give computable infinitary axioms for all of the basic properties.

\begin{lem}\label{lem:property'sAxioms}

For a computable ordinal $\alpha\geq 2$, there is a computable $\Pi_\alpha$-sentence stating the properties of Length, Consistency, Replication, plus $\beta$-agreement, and $\beta$-permutation for $\beta < \alpha$.  

\end{lem}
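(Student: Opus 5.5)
We will write the required sentence as a conjunction of computable sentences, one for each property, using the Definability of forcing lemma to express the forcing clauses. Each conjunct will be checked to have complexity at most $\Pi_\alpha$. The conjunction is taken over a c.e. set of indices, so the result is a single computable $\Pi_\alpha$ sentence. Since $\alpha\geq 2$, any computable $\Pi_\gamma$ formula with $\gamma<\alpha$, or any computable $\Pi_2$ formula, may be absorbed as a conjunct.

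\textbf{Levels, Consistency, Replication.} For Levels we use the sentence $(\forall y)\bigvee_D U_D(y)$, together with finitary statements that the labels are pairwise exclusive. We also assert that $y$ is at level $n$ iff $p^n(y)$ is the root, and that the root is the unique fixed point of $p$. Since $L$ is finite, there are finitely many $D$ for each $n$, so for each $n$ the statement ``if $p^n(y)$ is the root and $p^{n-1}(y)$ is not, then $y$ carries a label for $n$ variables'' is finitary. The sentence is then computable $\Pi_1$, or computable $\Pi_2$ if we need ``every node is at some finite level'' via $\bigvee_n$ inside a $\forall$. Consistency is $(\forall y)\bigwedge_{D,D'}(U_D(p(y))\wedge U_{D'}(y)\to \text{``}D'\text{ implies }D\text{''})$, which is computable $\Pi_1$. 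Replication says that for each $D$ and $k$,
$(\forall x)[(\exists y)(p(y)=x\wedge U_D(y))\to(\exists y_1\ldots y_k)(\text{distinct}\wedge\bigwedge_j p(y_j)=x\wedge U_D(y_j))]$. This is computable $\Pi_2$, which is fine because $\alpha\ge 2$.

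\textbf{$\beta$-agreement and $\beta$-permutation for $\beta<\alpha$.} Fix $\beta<\alpha$ and a computable $\Sigma_\beta$ formula $\psi(\bar x)$ with $\bar x$ among the first $n$ variables; we range over a c.e. list of indices. By Definability of forcing we have a computable $\Sigma_\beta$ formula $force_\psi(y)$. We also have a computable $\Pi_\beta$ formula $force_{neg\psi}$-free description of weak forcing of $\psi$, namely $(\forall z)(z\supseteq y\to\neg force_{neg(\psi)}(z))$. Since $force_{neg(\psi)}$ is computable $\Pi_\beta$, weak forcing of $\psi$ is expressed by a computable $\Sigma_{\beta+1}$-negated-inside formula. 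More precisely it is $\Pi_{\beta+1}$, but in fact weak forcing of $\psi$ is exactly forcing of the $\Pi_{\beta+1}$-style formula, so we should read it as computable $\Pi_{\beta+1}$. The relation ``$z\supseteq y$'' is expressed by $\bigvee_m p^m(z)=y$, which is computable $\Sigma_1$.

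$\beta$-agreement becomes $(\forall y)[\text{level}_n(y)\wedge(\exists z)(z\supseteq y\wedge force_\psi(z))\to wf_\psi(y)]$. The antecedent is $\Sigma_\beta$ and the consequent is $\Pi_{\beta+1}$, so the whole is computable $\Pi_{\beta+1}$, which is at most $\Pi_\alpha$ since $\beta+1\le\alpha$. $\beta$-permutation becomes $(\forall y)[(\exists z\supseteq y)\,force_{\psi(\bar x,z)}(z)\to(\exists z'\supseteq y)\,force_{\psi(\bar x,x')}(z')]$, with $\bar x$ the variables of $y$; here we take a finite disjunction over the level of $y$. This is computable $\Pi_{\beta+1}$.

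\textbf{Main obstacle.} The main obstacle is bookkeeping the complexity so that it does not overshoot. This concerns the treatment of weak forcing as $\Pi_{\beta+1}$ rather than something larger, and the uniformity in $\beta$ and $\psi$. For the latter we need the Definability of forcing lemma to be effective in indices, so that the big conjunction is over a c.e. set. Effectiveness is used uniformly via the recursion-theorem-style construction in that lemma.

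When $\alpha$ is a limit, every conjunct is $\Pi_{\beta+1}$ with $\beta+1<\alpha$, so the conjunction is $\Pi_\alpha$. When $\alpha=\gamma+1$, the worst conjuncts come from $\beta=\gamma$ and are $\Pi_\alpha$. In both cases the conjunction is computable $\Pi_\alpha$.
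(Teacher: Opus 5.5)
Your proposal is correct and follows essentially the same route as the paper. You write Length, Consistency and Replication as computable sentences of complexity at most $\Pi_2$. You use Definability of forcing to express $\beta$-agreement and $\beta$-permutation as computable $\Pi_{\beta+1}$ sentences: the antecedent is computable $\Sigma_\beta$, and the consequent is either ``no extension forces $neg(\psi)$'' or a $\Sigma_\beta$ statement. You then treat successor and limit $\alpha$ separately.

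The differences are cosmetic. For $\alpha=\gamma+1$ the paper states only $\gamma$-agreement and $\gamma$-permutation, while you conjoin over all $\beta<\alpha$; both are valid. Your passage on weak forcing should be tidied to say simply that $(\forall z)(z\supseteq y\rightarrow \neg force_{neg(\psi)}(z))$ is computable $\Pi_{\beta+1}$.
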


\begin{proof}

\noindent
\textbf{Length}:  Recall that $L$ is a finite relational language.  For each $n$, there are only finitely many possible formulas $D$ assigning an atomic type to the tuple $\bar{x} = (x_1,\ldots,x_n)$.  

\noindent
(1)  First, taking into account all $n$, we say that each $x$ satisfies $U_D$ for exactly one $D$.  This is computable $\Pi_2$.  To say that $x = \lambda$ (the base node), we write $p(x) = x$.  To say that $x$ is at level $n\geq 1$, 
we say $p^{n+1}(x) = p^n(x) \ \&\ p^n(x) \not= p^{n-1}(x)$.  This is finitary quantifier-free.           

\noindent
(2)  We say that for each $n$, for all $x$ at level $n$, $x$ satisfies $U_D$ for one of the finitely many $D$  assigning an atomic type to $(x_1,\ldots,x_n)$.  This is computable~$\Pi_2$.

\noindent
Altogether, it is computable $\Pi_2$ to say that $T$ satisfies (1) and (2).      

\bigskip
\noindent
\textbf{Consistency}:  Recall that $T$ has the Consistency Property if for $\sigma\subseteq\tau$, the atomic type $D'$ assigned by $\tau$ implies the atomic type $D$ assigned by $\sigma$.  For a given $n$, let $P_n$ be the set of pairs $(D,D')$ such that $D$ is an atomic type in the first $n$ variables, $D'$ is an atomic type in the first $n+1$ variables, and $D'$ logically implies $D$.  We say for all $x,y$ if $y$ is at level $n+1$ and $p(y) = x$, then $\bigvee_{(D,D')\in P_n} (U_Dx\ \&\ U_{D'}y)$.  This is computable $\Pi_2$.    

\bigskip
\noindent
\textbf{Replication}:  Recall that $T$ has the Replication Property if for all $\sigma$ if some successor satisfies $U_D$, then infinitely many do.  This is computable $\Pi_2$.

\bigskip
\noindent
\textbf{$\beta$-agreement}:  Recall that $T$ has the $\beta$-agreement Property if for all $n$, for all computable $\Sigma_\beta$ formulas $\psi(\bar{x})$ in variables among the first $n$, and all $\sigma$ at level $n$, if some $\tau\supseteq\sigma$ has $\tau\Vdash\psi(\bar{x})$, then no $\tau'\supseteq\sigma$ forces $neg(\psi(\bar{x}))$. 
It is computable $\Sigma_\beta$ to say of $\tau$ that it extends $\sigma$ and forces $\psi(\bar{x})$.  It is computable $\Pi_{\beta}$ to say of $\tau\supseteq\sigma$ that it forces $neg(\psi(\bar{x}))$.  To say that $\tau$ does not force $neg(\psi(\bar{x}))$ is computable $\Sigma_\beta$.  We have a computable $\Pi_{\beta+1}$ sentence saying for all $n$, for all $\psi(\bar{x})$ with variables among the first $n$ and $\sigma$ at level $n$, if some extension of $\sigma$ forces $\psi(\bar{x})$, then no extension forces $neg(\psi(\bar{x}))$.    

\bigskip
\noindent
 If $\alpha = \beta+1$, then it is enough to say that $T$ has the $\beta$-agreement property, which is $\Pi_{\beta+1}$.  If $\alpha$ is a computable limit ordinal, then we say that $T$ has the $\beta$-agreement property for all $\beta < \alpha$.  This is computable $\Pi_\alpha$.

\bigskip
\noindent
\textbf{$\beta$-permutation}:  Recall that $T$ has the $\beta$-permutation property if for $\psi$ a computable $\Sigma_\beta$ formula, and $\bar{x}$ consisting of the first $n$ variables, if $\sigma$, at level $n$, has an extension forcing $\psi(\bar{x},z)$, where $z$ is a later variable, then some extension of $\sigma$ forces $\psi(\bar{x},x')$, where  $x'$ is the $(n+1)^{st}$ variable.  For fixed $n$, $\psi$ and $\sigma$ at level $n$, it is computable $\Sigma_\beta$ to say that some extension of $\sigma$ forces $\psi(\bar{x},z)$ for some later variable $z$, and it is computable $\Sigma_\beta$ to say that some immediate extension forces $\psi(\bar{x},x')$.  Then it is computable $\Pi_{\beta+1}$ to say that for all $n$, $\psi$, and $\sigma$ at level $n$, if some extension of $\sigma$ forces $\psi(\bar{x},z)$ for some $z$, then some immediate extension of $\sigma$ forces $\psi(\bar{x},x')$.

\bigskip
\noindent
 If $\alpha = \beta+1$, it is enough to say that $T$ has the $\beta$-permutation property, which is computable $\Pi_{\beta+1}$.  If $\alpha$ is a computable limit ordinal, then we say that $T$ has the $\beta$-permutation property for all $\beta < \alpha$.  This is computable $\Pi_\alpha$.          
\end{proof}

\subsection{Completing the proof of Proposition \ref{prop:PushForwardCompSentences}}

 We suppose that $\varphi$ is a computable $\Pi_\alpha$ Scott sentence for $\mathcal{A}$.  Necessarily, $\alpha\geq 2$.  By Lemma \ref{lem:property'sAxioms}, we have a computable $\Pi_\alpha$ sentence $\theta$ characterizing the labeled trees $T$ that satisfy the basic properties Length, Consistency, Replication, plus $\beta$-agreement and $\beta$-permutation, for all $\beta < \alpha$.  Let $\varphi_*$ be the natural computable $\Pi_\alpha$-sentence saying that the root node weakly forces $\varphi$.  Then $\theta\ \&\ \varphi_*$ is a computable $\Pi_\alpha$ sentence.  The result below says that this is a Scott sentence for $T_\mathcal{A}$.  This is all we need to complete the proof of Proposition \ref{prop:PushForwardCompSentences}.  

\begin{prop}\label{prop:finishing}

Suppose that $\mathcal{A}$ has a computable $\Pi_\alpha$ Scott sentence.  Let $T$ be a labeled tree satisfying $\theta\ \&\ \varphi_*$.  Then $T_\mathcal{A}\cong T$.

\end{prop}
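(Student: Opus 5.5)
The plan is to build an isomorphism $T\cong T_\mathcal{A}$ by a back-and-forth argument that matches each node $\sigma\in T$, with variables $\bar{x}$, to a node of $T_\mathcal{A}$ representing a tuple $\bar{a}$ with the same label. The invariant is semantic rather than syntactic. For a node $\sigma$ of $T$ at level $n$, let $\Gamma_\sigma$ be the set of computable $\Sigma_\beta$ and $\Pi_\beta$ formulas $\psi(\bar{x})$, for $\beta<\alpha$, that $\sigma$ weakly forces.

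The first step uses Existence of c.f.s.\ and the Remark after Truth-and-forcing. Any $\sigma$ extends to a chain deciding all computable $\Sigma_\beta$ sentences for $\beta<\alpha$, giving a path $p$. Since $T\models\varphi_*$, the root weakly forces $\varphi$. Hence, by Truth-and-forcing (the relevant instance is the $\Pi_\alpha$ sentence $\varphi$ itself, whose conjuncts are below $\alpha$), $\mathcal{A}^p\models\varphi$, and so $\mathcal{A}^p\cong\mathcal{A}$. The isomorphism sends $\bar{x}$ to some tuple $\bar{a}$ of $\mathcal{A}$ satisfying the formulas forced along $p$.

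Using $\alpha$-agreement, we then argue that the forced $\Sigma_\beta$ formulas in $\bar{x}$ are already weakly forced by $\sigma$, independently of the path. Hence all such $\bar{a}$ have the same $\Sigma_\beta$ and $\Pi_\beta$ types for $\beta<\alpha$. By Theorem \ref{thm:AKM}, orbits in $\mathcal{A}$ are defined by computable $\Sigma_\beta$ formulas with $\beta<\alpha$, so these tuples $\bar{a}$ lie in a single orbit. Call this orbit $O(\sigma)$. On the $T_\mathcal{A}$ side, the lemma that $\sigma'$ weakly forces everything true of its tuple gives the analogous assignment of an orbit $O(\sigma')$.

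Next, we show that the correspondence between nodes and orbits respects successors with the right multiplicities. Suppose $O(\sigma)=O(\sigma')$, with $\sigma\in T$ and $\sigma'\in T_\mathcal{A}$. Each successor $\tau'$ of $\sigma'$ represents some $(\bar{a},a')$. Let $\chi(\bar{x},z)$ be the $\Sigma_\beta$ orbit formula of $(\bar{a},a')$, for $\beta<\alpha$. By the first step, some extension of $\sigma$ forces $\chi(\bar{x},z)$ for some later variable $z$, because $\exists z\,\chi$ holds of $\bar{a}$ and is weakly forced. Then $\beta$-permutation gives an immediate successor $\tau$ of $\sigma$ on which $\chi(\bar{x},x')$ is forced along some path. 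By $\beta$-agreement, $\tau$ weakly forces $\chi$, so $O(\tau)$ is the orbit of $(\bar{a},a')$.

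Conversely, every successor $\tau$ of $\sigma$ has an orbit $O(\tau)$ extending $O(\sigma)$, and that orbit is realized by a successor of $\sigma'$. Labels match, since the label is part of the forced atomic type. By Replication in both trees, each realized orbit occurs at infinitely many successors, so the successor sets can be matched bijectively, orbit by orbit. We start with the roots, which both have the empty orbit, and extend the matching level by level. Because the trees are countable and every node is reached at a finite level, this produces an isomorphism $T\cong T_\mathcal{A}$ preserving $p$ and all $U_D$.

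The main obstacle is the well-definedness of $O(\sigma)$: that $\sigma$ alone, not the path, determines the orbit. This needs agreement at exactly the right levels. In the limit case, orbit formulas have rank $\beta<\alpha$, and only $\beta$-agreement for $\beta<\alpha$ is available. In the successor case $\alpha=\gamma+1$, orbits are $\Sigma_\beta$ with $\beta\le\gamma$, and we have $\gamma$-agreement. The first attempt will be to show that if two paths through $\sigma$ gave tuples in different orbits, then the orbit formula of one would be forced above $\sigma$ while its negation is forced along the other. That would contradict $\beta$-agreement, since the orbit formula is $\Sigma_\beta$ in the variables of $\sigma$.
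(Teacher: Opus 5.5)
Your overall architecture is the paper's. Your invariant $O(\sigma)$ is its ``good match'' condition: same label, and weak forcing of the orbit formula $d_{\bar a}$. Well-definedness comes from agreement, forth from permutation plus agreement, and back from a generic path through the new successor; those steps are fine.

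The genuine gap is the sentence ``By Replication in both trees, each realized orbit occurs at infinitely many successors.'' In $T_\mathcal{A}$ this holds by construction. In $T$, however, Replication only says that each \emph{label} $U_D$ occurring among the successors of a node occurs infinitely often. Many distinct orbits of $(\bar a,a')$ share one atomic type. Neither agreement nor permutation supplies multiplicity, since permutation yields just one successor.

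In fact this step cannot be derived from $\theta\ \&\ \varphi_*$ as stated. Take $\mathcal{A}$ to be the graph with one isolated vertex $a_0$ and infinitely many disjoint edges; it has a computable $\Pi_3$ Scott sentence. Let $T$ be $T_\mathcal{A}$ with all but one of the root's successors representing $a_0$ deleted, together with their subtrees. At every surviving non-root node the set of extensions is unchanged, so forcing there is exactly as in $T_\mathcal{A}$. From this, Levels, Consistency, Replication, $\beta$-agreement, $\beta$-permutation and $\varphi_*$ all hold in $T$:
\begin{itemize}
\item Replication holds at the root because all level-$1$ nodes carry the same label.
\item Permutation at the root holds because the surviving $a_0$-node handles formulas realized by $a_0$.
\item $\varphi_*$ holds because no non-root node forces $neg(\varphi)$.
\end{itemize}
Yet the root of $T$ has exactly one successor none of whose successors has a label containing $E(x_1,x_2)$, while the root of $T_\mathcal{A}$ has infinitely many. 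So $T\not\cong T_\mathcal{A}$.

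The paper's own argument has the same hole, hidden in its Forth step. The successor $\tau'$ of $\tau$ supplied by Permutation may already lie in the range of $f$, and nothing provides a fresh one.

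The natural repair makes both your level-by-level matching and the paper's back-and-forth work. Add to $\theta$ an orbit-sensitive replication axiom: for each $\beta<\alpha$, each computable $\Sigma_\beta$ formula $\psi(\bar x,x')$, and each $\sigma$ with variables $\bar x$, if some successor of $\sigma$ has an extension forcing $\psi(\bar x,x')$, then infinitely many successors do.
\begin{itemize}
\item It is computable $\Pi_{\beta+1}$, so $\theta$ stays computable $\Pi_\alpha$.
\item It holds in $T_\mathcal{A}$, because every successor representing a given $(\bar a,a')$ weakly forces all formulas true of that tuple.
\end{itemize}
Applied to the orbit formulas $d_{\bar a,a'}$, together with agreement, it gives infinitely many successors of $\sigma$ in each realized orbit. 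That is exactly what your bijective orbit-by-orbit matching needs.
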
 

\begin{proof}

By Theorem \ref{thm:AKM}, the fact that $\mathcal{A}$ has a computable $\Pi_\alpha$ Scott sentence implies that the orbit of each tuple $\bar{a}$ in $\mathcal{A}$ is defined by a computable $\Sigma_\beta$ formula for some $\beta < \alpha$.  For each $\bar{a}$, we choose (non-effectively) such a formula $d_{\bar{a}}(\bar{x})$.  Let $D_{\bar{a}}(\bar{x})$ be the formula giving the atomic type of $\bar{a}$.  To show that $T\cong T_\mathcal{A}$, we define a back-and-forth family.  We use a preliminary notion. 

\begin{defn}  

Let $\sigma$ be a node at level $n$ in $T_\mathcal{A}$, representing a tuple $\bar{a}$ in $\mathcal{A}$.  We say that $\tau$, at level $n$ of $T$, is a \emph{good match for $\sigma$} if $\tau$ 
satisfies $U_{D_{\bar{a}}}$ and weakly forces $d_{\bar{a}}(\bar{x})$. 

\end{defn} 

\begin{defn}

Let $F$ be the set of partial isomorphisms $f$ that take a finite labeled subtree $S$ of $T_\mathcal{A}$ to a finite labeled subtree $S'$ of $T$ such that for each node $\sigma\in S$, $f(\sigma)$, in $S'$ is a good match for $\sigma$.  

\end{defn}

We prove that $F$ has the back-and-forth property.   

\medskip
\noindent
\textbf{$\mathbf{F\not=\emptyset}$}.  The base node $\lambda$ in $T_\mathcal{A}$ represents $\emptyset$ in $\mathcal{A}$.  We may take $d_\emptyset$ to be $\varphi$, the Scott sentence for $\mathcal{A}$.  The base node $\lambda$ in $T$ is the unique good match, as by assumption $T\models\varphi_*$ so $\lambda$ weakly forces $\varphi$.  The function $f$ taking the base node $\lambda$ of $T_\mathcal{A}$ to the base node $\lambda$ of $T$ is in $F$.    

\medskip
\noindent
\textbf{Forth}.  Take $f\in F$, with domain $S$ and range $S'$.  Let $\sigma\in T_\mathcal{A}$ be a terminal node in $S$, with variables $\bar{x}$, and let $\sigma'$ be a successor of $\sigma$, with variables $\bar{x},x'$.  Say that $\sigma$ represents $\bar{a}$, and $\sigma'$ represents $\bar{a},a'$.  Suppose that $f(\sigma) = \tau$, where $\tau$ is a good match for $\sigma$.  We need a successor $\tau'$ of $\tau$ that is a good match for $\sigma'$.  Since $\tau$ weakly forces $\varphi\ \&\ d_{\bar{a}}(\bar{x})$, some extension $\nu$ of $\tau$ forces this.  Any generic path $p$ extending $\nu$ gives a structure $\mathcal{A}^p$ satisfying $\varphi\ \&\ d_{\bar{a}}(\bar{x})$, so $(\mathcal{A}^p,\bar{x})\cong (\mathcal{A},\bar{a})$.  Let $\nu'$ be the extension of $\nu$ including an $y$ element corresponding to $a'\in(\mathcal{A},\bar{a})$ and note that $\nu$ weakly forces $d_{\bar{a},a'}(\bar{x},y)$.  
By the Permutation Property, there is a successor $\tau'$ of $\tau$ such that $\tau'$ weakly forces $d'_{\bar{a,a}}(\bar{x},x')$, where $x'$ is the next variable after~$\bar{x}$.  We extend $f$, taking $\sigma'$ to $\tau'$.     

\medskip
\noindent
\textbf{Back}.  Take $f$, $\sigma$, $\tau$ $\bar{a}$ and $\bar{x}$ be as in the previous paragraph.  Suppose $\tau'$ is a successor of $\tau$.  There is a generic path $p$ extending $\tau'$.  We have an isomorphism $g$ from $\mathcal{A}$ to $\mathcal{A}^p$ that takes $\bar{a}$ to $\bar{x}$.  Say that $x'$ is the next variable after $\bar{x}$.  Then $\tau'$ has variables $\bar{x},x'$.  Let $a'$ be the $g$-pre-image of $x'$.  For the formula $d_{\bar{a},a'}$ that defines the orbit of $\bar{a},a'$ in $\mathcal{A}$, some initial segment of $p$ must force $d_{\bar{a},a'}(\bar{x},x')$, and $\tau'$ must weakly force it.  Let $\sigma'$ be the successor of $\sigma$ that represents $\bar{a},a'$.  Then $\tau'$ is a good match for $\sigma'$.  We extend $f$, taking $\sigma'$ to $\tau'$.      
\end{proof}

\medskip
\noindent
\textbf{Remark}: If $\mathcal{A}$ has a Scott sentence $\varphi$ that is computable $\Sigma_\alpha$, or computable $d$-$\Sigma_\alpha$, the sentence $\theta \& \varphi_*$ is still $\Pi_{\alpha+1}$ because $\theta$ is $\Pi_{\alpha+1}$. This is consistent with the results in \cite{GR}. They show, in the boldface setting, that if $\mathcal{A}$ has Scott complexity  $\Sigma_\alpha$ and $d$-$\Sigma_\alpha$, the $T_\mathcal{A}$ has Scott complexity $\Pi_{\alpha+1}$. This means that the above result is the best possible. For example, consider $\omega^\alpha\cdot 2$ where $\alpha$ is a computable ordinal. This structure has a computable $d$-$\Sigma_{\alpha+1}$ Scott sentence and no $\Pi_{\alpha+1}$ Scott sentence (see \cite{Mbook2} Chapter 2). Because it has no $\Pi_{\alpha+1}$ Scott sentence, $T_{\omega^\alpha\cdot 2}$ has Scott complexity $\Pi_{\alpha+2}$. Therefore, the argument above produces a computable Scott sentence of the ideal complexity, $\Pi_{\alpha+2}$. Note that this precludes the possibility of a simpler set of tree properties $\theta$. If $\theta$ was $d$-$\Sigma_\alpha$, $\theta\&\varphi_*$ would yield a contradictory $d$-$\Sigma_\alpha$ Scott sentence for $T_{\omega^\alpha\cdot 2}$.

\section{Describing $\mathbb{T}^\alpha$}

Recall that for a countable ordinal $\alpha$, $\mathbb{T}^\alpha$ is the class of labeled trees $T$ such that for some $\mathcal{A}\in Mod(L)$ of Scott rank at most $\alpha$, $T\cong T_\mathcal{A}$.  In this section, we show that if $\alpha$ is a computable ordinal, then $\mathbb{T}^\alpha$ is effective $\Pi_{2\alpha+2}$.  This relativizes, and we get the fact that for all countable ordinals $\alpha$, $\mathbb{T}^\alpha$ is $\mathbf{\Pi_{2\alpha+2}}$ in the Borel hierarchy.  We will see that a labeled tree $T$ is in $\mathbb{T}^\alpha$ iff it satisfies certain properties, all of which are given by computable $\Pi_{2\alpha+2}$-sentences.

More specifically, we will show that a labeled tree $T$ is in $\mathbb{T}^\alpha$ iff it satisfies the following properties:  

\begin{enumerate}

\item  levels

\item  consistency

\item  replication

\item  $\alpha$-agreement

\item  $\alpha$-permutation

\item  $\alpha$-support

\end{enumerate}

In the previous section, we stated properties characterizing the labeled $T$ isomorphic to $T_\mathcal{A}$, for $\mathcal{A}$ having a specific computable infinitary Scott sentence.  In that section, we used forcing.  The first three properties---levels, consistency, and replication---were stated in a way that did not involve forcing.  They were given by computable $\Pi_2$-sentences.  The other properties were defined in terms of forcing.  In this section, we shall give new definitions, which do not involve forcing.  We begin with some back-and-forth relations.            

\begin{defn}

Let $T$ be a labeled tree satisfying levels, consistency, and replication.  Suppose $\sigma$ and $\tau$ be nodes, in $T$, and let $\bar{x}$ and $\bar{y}$ be tuples of variables, of the same length, where $\bar{x}$ is among the variables of $\sigma$ and $\bar{y}$ is among the variables of $\tau$.      

\begin{enumerate}

\item  $(\sigma,\bar{x})\leq_0(\tau,\bar{y})$ if the atomic type given to $\bar{x}$ by the label on $\sigma$ matches that given to $\bar{y}$ by the label on $\tau$,

\item  for $\alpha > 0$, $(\sigma,\bar{x})\leq_\alpha(\tau,\bar{y})$ if for all $\beta < \alpha$, all $\tau'$ extending $\tau$, with variables including $\bar{y}$ and $\bar{y}'$, there exist $\sigma'$ extending $\sigma$ and $\bar{x}'$ such that $(\tau',\bar{x},\bar{y})\leq_\beta(\sigma',\bar{x},\bar{x}')$.

\end{enumerate} 

\end{defn} 
  
In the previous section, we described the trees $T$ isomorphic to $T_\mathcal{A}$ for a specific $\mathcal{A}$, assuming that $\mathcal{A}$ has a computable $\Pi_\alpha$ Scott sentence. In this case, the orbit of each tuple is defined by a computable $\Sigma_\beta$ formula for some $\beta < \alpha$.  We stated the properties of $\alpha$-agreement and $\alpha$-permutation in terms of forcing, which depended on the computability of these orbit-defining formulas.  Here we focus on structures of Scott rank at most $\alpha$.  We do not require that the orbits be defined by computable infinitary formulas.  For this reason, we define $\alpha$-agreement and $\alpha$-permutation in a way that does not involve forcing.  We also add a final property, $\alpha$-support.    

\bigskip
\noindent
\textbf{$\alpha$-agreement}:  A labeled tree $T$ satisfies \emph{$\alpha$-agreement} provided that for all nodes $\nu$, $\sigma$, and $\tau$, and all $\beta < \alpha$, if $\nu\prec\tau$ and $\nu\prec\sigma$, where $\nu$ has variables $\bar{x}$ and $\tau$ has variables $\bar{x},\bar{y}$, then there exists $\tau'$, with variables including $\bar{x}$ and some $\bar{z}$ such that $\sigma\prec\tau'$ and $(\tau,\bar{x},\bar{y})\leq_\beta(\tau',\bar{x},\bar{z})$.      

\bigskip
\noindent
\textbf{$\alpha$-permutation}:  A labeled tree $T$ satisfies \emph{$\alpha$-permutation} provided that for all nodes $\sigma$, with variables $\bar{x}$, all $\beta < \alpha$, if for some $\tau$, with variables $\bar{x},\bar{y},z$, we have $\sigma\prec\tau$, then for $x'$ the next variable after $\bar{x}$, there exists $\tau'$, with variables $\bar{x}$, $x'$ and some $\bar{u}$, such that $(\tau,\bar{x},\bar{y},z)\leq_\beta(\tau',\bar{u},x')$.    

\bigskip
\noindent
\textbf{$\alpha$-support}:  A labeled tree $T$ satisfies \emph{$\alpha$-support} provided that for all $\sigma$ with variables $\bar{x}$, there exist $\beta < \alpha$ and $\tau$, with variables $\bar{x}$ and some $\bar{y}$, such that for all $\tau'$, with variables including $\bar{x}'$ of the same length as $\bar{x}$ and $\bar{y}'$ of the same length as $\bar{y}$, $(\tau,\bar{x},\bar{y})\leq_\beta(\tau',\bar{x}',\bar{y}')$ implies $(\sigma,\bar{x})\leq_\alpha(\tau',\bar{x}')$.   

\bigskip

It is easy to see that all $T\in \mathbb{T}^\alpha$ satisfy $\alpha$-agreement, $\alpha$-permutation, and $\alpha$-support.  It is also easy to see the following.  

\begin{lem}
\label{lem20}

For each computable ordinal $\alpha$, there is a computable $\Pi_{2\alpha+2}$ sentence characterizing the labeled trees $T$ that satisfy $\alpha$-agreement and $\alpha$-permutation (without forcing), and $\alpha$-support.   

\end{lem}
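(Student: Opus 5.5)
The plan is to bound the complexity of the tree back-and-forth relations and then simply count quantifiers in the three definitions. The first step is to show, by effective transfinite induction on $\beta$, that for each computable $\beta$ there is a computable infinitary formula $R_\beta(u,\bar{x},v,\bar{y})$ (uniformly in $\beta$ and in the lengths and positions of the variable tuples) defining $(u,\bar{x})\leq_\beta(v,\bar{y})$ in every labeled tree satisfying levels, consistency and replication. Here $\bar{x},\bar{y}$ are coded as finite sets of positions: the $k$-th variable of a node $u$ is read off from the ancestor $p^{m}(u)$ at level $k$. Since $L$ is finite, $\leq_0$ is a finite disjunction of conjunctions of $U_D$'s evaluated at the appropriate ancestors, hence quantifier-free.

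The induction itself goes as follows. Unwinding the definition, $(\sigma,\bar{x})\leq_\beta(\tau,\bar{y})$ reads as $\bigwedge_{\gamma<\beta}\forall \tau'\succeq\tau\ \bigwedge_{\bar{y}'}\exists\sigma'\succeq\sigma\ \bigvee_{\bar{x}'}R_\gamma(\ldots)$. The relation $\succeq$ is $\exists m\, p^m(\tau')=\tau$, a $\Sigma_1$ disjunction, and the choices of the extra variable positions are countable conjunctions and disjunctions. So if $R_\gamma$ is $\Pi_{2\gamma}$, then the inner part is $\Sigma_{2\gamma+1}$ and $R_\beta$ is $\Pi_{2\gamma+2}$. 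This gives the claim that $R_\beta$ is computable $\Pi_{2\beta}$ for successor $\beta$, and for limit $\beta$ a conjunction of $\Pi_{<2\beta}$ formulas, hence $\Pi_{2\beta}$. The doubling reflects that an existential over nodes in the tree must also quantify over ``is an extension of'', which costs an extra alternation compared with Karp's relations. I would state precisely that $R_\beta$ is computable $\Pi_{2\beta}$ and that its negation is $\Sigma_{2\beta}$, keeping indices uniform via effective transfinite recursion on a notation for $\alpha$.

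Then I would count the three axioms. For \textbf{$\alpha$-agreement}, the sentence is $\bigwedge_{\beta<\alpha}\forall\nu,\sigma,\tau\,[\nu\prec\tau\wedge\nu\prec\sigma\rightarrow\exists\tau'(\sigma\prec\tau'\wedge\bigvee_{\bar{z}}R_\beta)]$, which is $\Pi_{2\beta+2}$ for each $\beta$, hence $\Pi_{2\alpha}\subseteq\Pi_{2\alpha+2}$. \textbf{$\alpha$-permutation} has the same shape. \textbf{$\alpha$-support} is the costly one: $\forall\sigma\bigvee_{\beta<\alpha}\exists\tau\bigvee_{\bar{y}}\forall\tau'\bigwedge[\neg R_\beta(\tau,\ldots,\tau',\ldots)\vee R_\alpha(\sigma,\ldots,\tau',\ldots)]$. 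Here $\neg R_\beta$ is $\Sigma_{2\beta}$ and $R_\alpha$ is $\Pi_{2\alpha}$, so the matrix is $\Pi_{2\alpha}$, the $\forall\tau'$ keeps it $\Pi_{2\alpha}$, the $\exists\tau$ makes it $\Sigma_{2\alpha+1}$, and the outer $\forall\sigma$ makes it $\Pi_{2\alpha+2}$. Conjoining the three pieces gives a computable $\Pi_{2\alpha+2}$ sentence.

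The main obstacle is the first step: getting the indexing in the induction exactly right. In particular, I need to check that the doubling yields $\Pi_{2\beta}$ and not $\Pi_{2\beta+1}$ at limits and successors, and that the node-extension predicates do not add a further alternation. The fallback if this fails is to absorb ``$\tau'\succeq\tau$'' into a conjunction over $m$ of the quantifier-free statements $p^m(\tau')=\tau$, which keeps it inside the existing quantifier block.
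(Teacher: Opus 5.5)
Your proposal is correct, and it is exactly the routine quantifier count the paper leaves to the reader (``it is also easy to see''), mirroring the standard proof that $Mod(L)^\alpha$ is $\Pi_{2\alpha+2}$: the tree relations $\leq_\beta$ are computable $\Pi_{2\beta}$, agreement and permutation contribute only $\Pi_{2\alpha}$, and $\alpha$-support contributes $\forall\exists\forall[\Pi_{2\alpha}]$, i.e.\ $\Pi_{2\alpha+2}$. One small correction: node levels are unbounded, so $\leq_0$ is computable $\Sigma_1$ (and, given the levels axiom, $\Pi_1$) rather than finitary quantifier-free, unless you also fix the levels of the nodes; this $\Sigma_1$ matrix is absorbed into the existential block, so $\leq_1$ is still $\Pi_2$ and your count goes through unchanged.
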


We want to prove the following.  

\begin{thm}
\label{thm21}

If $T$ is a labeled tree satisfying length, consistency, replication, the new versions of $\alpha$-agreement and $\alpha$-permutation, and $\alpha$-support, then $T$ is in $\mathbb{T}^\alpha$.   

\end{thm}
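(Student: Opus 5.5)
We need to produce a structure $\mathcal{A}$ of Scott rank at most $\alpha$ with $T\cong T_\mathcal{A}$. The plan is to build $\mathcal{A}$ from a suitably chosen path through $T$, using $\alpha$-agreement and $\alpha$-permutation in place of the forcing-based Permutation and Agreement Properties of the previous section, and $\alpha$-support in place of the orbit-defining formulas $d_{\bar a}$.

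\textbf{Choosing the path.} First we choose a path $p$ through $T$ that is ``$\alpha$-generic'' in the back-and-forth sense. We build $p=\bigcup_i\sigma_i$ in stages, interleaving countably many tasks. For each node $\sigma_i$ on the path, each $\beta<\alpha$, and each extension $\tau\succ\sigma_i$ with variables $\bar x,\bar y,z$, we use $\alpha$-permutation and $\alpha$-agreement to find later $\sigma_j\prec p$ realizing $\tau$ up to $\leq_\beta$. That is, we want $(\tau,\bar x,\bar y,z)\leq_\beta(\sigma_j,\bar x,\bar u,x')$, with $x'$ a variable actually on the path.

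Since $T$ is countable and there are only countably many $\beta<\alpha$, this is a countable construction. Agreement guarantees that such witnesses exist below any node of the path that extends the common initial node $\nu$. Put $\mathcal{A}=\mathcal{A}^p$, with universe the variables $X$. The goal of this stage is the following claim: for every node $\sigma\prec p$ with variables $\bar x$, and every node $\tau\in T$ with variables $\bar x'$ such that $(\sigma,\bar x)\leq_0(\tau,\bar x')$ holds ``as far as the tree sees'', we have
\[(\sigma,\bar x)\leq_\gamma(\tau,\bar x')\iff(\mathcal{A},\bar x)\leq_\gamma(\mathcal{A},\bar x'')\]
where $\bar x''$ is a tuple of $\mathcal{A}$ realizing the ``type'' of $\tau$. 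More precisely, we show by induction on $\gamma\le\alpha$ that the tree relations $\leq_\gamma$ between nodes on $p$ coincide with the structure relations $\leq_\gamma$ between the corresponding tuples in $\mathcal{A}$.

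\textbf{Scott rank.} With the correspondence in hand, $\alpha$-support transfers to condition (2) of Lemma~\ref{addedcondition}. Given $\bar a=\bar x$ from $\mathcal{A}$, take $\sigma\prec p$ with variables $\bar x$. Support yields $\beta<\alpha$ and $\tau$. By genericity we may take $\tau$ on $p$, or $\leq_\beta$-realized on $p$ by a tuple $\bar x,\bar b$. Then $\bar x,\bar b\leq_\beta\bar x',\bar b'$ in $\mathcal{A}$ implies $\bar x\leq_\alpha\bar x'$. Hence $SR(\mathcal{A})\le\alpha$, and by the proof of Lemma~\ref{addedcondition} we obtain $\Sigma_\alpha$ orbit formulas $d_{\bar a}$.

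\textbf{Isomorphism.} Finally we show $T\cong T_\mathcal{A}$ by a back-and-forth family, as in Proposition~\ref{prop:finishing}. Replace ``$\tau$ is a good match for $\sigma$'' by: $\tau$ has the label $U_{D_{\bar a}}$, and $(\tau,\bar x)$ is $\leq_\alpha$-equivalent to $(\sigma_{\bar a},\bar a)$, where $\sigma_{\bar a}$ is a node on $p$ carrying $\bar a$. For the Forth step, given a successor $\bar a,a'$, we use $\alpha$-permutation to get a successor of $\tau$ that is $\leq_\beta$-related to the right node for every $\beta<\alpha$. Support then upgrades $\leq_\beta$ for the supporting $\beta$ to $\leq_\alpha$. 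The Back step is symmetric, using agreement to pull any successor of $\tau$ back to a realization on $p$. Replication ensures that each successor type occurs infinitely often on both sides, so the matching of successors can be made bijective.

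\textbf{Main obstacle.} The hardest step is the inductive comparison between the tree-side relations $\leq_\gamma$ and the structure-side relations on $\mathcal{A}^p$. The tree relations quantify over arbitrary extensions anywhere in $T$, not just along $p$, so we must show that the single path $p$ reflects all of $T$ up to level $\alpha$. I would first try to prove this only for $\gamma<\alpha$, using the genericity tasks, and then get level $\alpha$ itself from $\alpha$-support rather than from a direct induction.
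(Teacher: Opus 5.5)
\textbf{The gap: injectivity of the back-and-forth.} The problem is the sentence ``Replication ensures that each successor type occurs infinitely often on both sides.'' Replication only says that a node with one successor labeled $U_D$ has infinitely many such successors. It says nothing about how many successors lie in a given $\equiv_\alpha$-class, and that is what the back-and-forth needs. In the Forth step, the successor of $\tau=f(\sigma)$ you produce must lie outside the finite range of $f$, whereas on the $T_\mathcal{A}$ side every class is infinite.

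Nor can this be extracted from the other hypotheses. They only assert the existence of nodes up to the relations $\le_\beta$, so they are blind to multiplicities. Here is a concrete counterexample.
\begin{itemize}
\item Let $\mathcal{A}$ be the graph consisting of a path $u - c - v$ together with infinitely many isolated vertices. It has Scott rank $1$, and $c$ is fixed by every automorphism.
\item Let $T'$ be $T_\mathcal{A}$ with all but one of the root's successors representing $c$ deleted, together with the subtrees above them.
\item Above every node of $T'$ there are still nodes representing every finite extension of its tuple. So, by induction, the relations $\le_\gamma$ on $T'$ coincide with $\le_\gamma$ on the represented tuples of $\mathcal{A}$.
\item Hence $T'$ satisfies Levels, Consistency, Replication (all successors of the root carry the same label), and $\alpha$-agreement, $\alpha$-permutation and $\alpha$-support for every $\alpha\ge 1$, just as $T_\mathcal{A}$ does.
\item Yet no sibling of the surviving $c$-node has an isomorphic subtree: a back-and-forth along paths would give $(\mathcal{A},c)\cong(\mathcal{A},e)$ with $e\neq c$. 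In every $T_\mathcal{B}$, by contrast, each successor of the root has infinitely many siblings with isomorphic subtrees. So $T'\notin\mathbb{T}$.
\end{itemize}

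You therefore need an extra multiplicity hypothesis. For instance: for every $\tau$ at level $n$, every successor $\tau_1$ of $\tau$, every $\rho\succeq\tau_1$ and every $\beta<\alpha$, infinitely many successors $\tau_2$ of $\tau$ have an extension $\rho_2$ with $(\rho,\bar x,x',\bar w)\le_\beta(\rho_2,\bar x,x',\bar w')$. Here $\bar x,x'$ are the first $n+1$ variables. Combined with $\alpha$-support, this makes every nonempty $\equiv_\alpha$-class of successors infinite, and then your matching can be made bijective. The paper's argument has the same hole: the Forth step of Lemma~\ref{lem26} takes ``another successor $\tau^*$'' from $\alpha$-permutation without checking that it is new, and Proposition~\ref{prop:finishing} does the same.

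\textbf{The rest of the plan.} Apart from this, your outline follows the paper's: an $\alpha$-generic path from agreement as in Lemma~\ref{lem22}, comparison of tree and structure relations, Scott rank via $\alpha$-support and Lemma~\ref{addedcondition}, then a back-and-forth between trees. Your matching of $\tau$ to a node on $p$ up to $\equiv_\alpha$ replaces the detour through Lemma~\ref{lem25}.

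Your two-sided correspondence along $p$ is in fact what Lemma~\ref{lem24} needs, since Lemma~\ref{lem23} only gives the tree-to-structure direction. It goes through by a direct induction up to and including $\alpha$, because genericity covers every $\delta<\alpha$; no appeal to support is needed there.

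In the Forth step, fix the supporting $\beta$ and its extension before applying permutation, because the permutation witness depends on $\beta$. To get $\equiv_\alpha$ rather than a one-sided $\le_\alpha$, you also need two facts:
\begin{itemize}
\item the symmetry of $\le_\alpha$, which comes from the argument of Lemma~\ref{lem:alphaToAlphaPlus1};
\item the agreement-based fact that a relation $(\cdot)\le_\gamma(\tau_2,\bar y)$ transfers to any $\tau\preceq\tau_2$ whose variables include $\bar y$.
\end{itemize}
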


Here is an outline of the proof. 

\begin{enumerate}

\item  We say what it means for a path to be ``$\alpha$-generic'' in a way that does not involve forcing.  

\item  We show that $T$ has an $\alpha$-generic path $p$.

\item  We show that the resulting structure $\mathcal{B} = \mathcal{A}^p$ has rank at most $\alpha$.  

\item  Finally, we show that $T\cong T_\mathcal{B}$.   

\end{enumerate} 

Having fixed a countable ordinal $\alpha$, we say that a labeled tree $T$ has \emph{all the properties}, meaning that it has all six properties listed in the statement of Theorem \ref{thm21}.  
\begin{defn}

Let $T$ be a labeled tree satisfying all the properties.  A path $p$ is \emph{$\alpha$-generic} if for all $\beta < \alpha$ and all pairs of nodes $(\nu,\tau)$, where $\nu$ has variables $\bar{x}$, $\tau$ has variables $\bar{x},\bar{y}$, $\nu\prec p$, and $\nu\prec\tau$, there exists $\tau'$, with variables  including $\bar{x}$ and some $\bar{z}$, such that $\nu\prec\tau'\prec p$, and $(\tau,\bar{x},\bar{y})\leq_\beta (\tau',\bar{x},\bar{z})$. 

\end{defn}

\begin{lem} 
\label{lem22}

Let $T$ be a labeled tree satisfying all the properties.  For each $\sigma\in T$, there is an $\alpha$-generic path with $\sigma\prec p$.   

\end{lem}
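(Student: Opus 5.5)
We build the path $p$ as the union of a chain $\sigma=\sigma_0\preceq\sigma_1\preceq\sigma_2\preceq\cdots$ in which each $\sigma_{k+1}$ has strictly higher level than $\sigma_k$. Along the way we meet countably many requirements. The requirements are indexed by triples $(\beta,\nu,\tau)$ with $\beta<\alpha$, $\nu\prec\tau$, $\nu$ having variables $\bar{x}$ and $\tau$ variables $\bar{x},\bar{y}$. For this to be a countable list, $\alpha$ must be a countable ordinal; we may assume this, since $T$ is a countable structure. Fix a bookkeeping enumeration of all such triples in which each triple appears infinitely often.

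Requirement $(\beta,\nu,\tau)$ is relevant only if $\nu\prec p$. Since the chain determines its initial segments as we go, at stage $k$ we act on the current triple $(\beta,\nu,\tau)$ only when $\nu\preceq\sigma_k$. Any $\nu\prec p$ is below $\sigma_k$ for all large $k$, and the triple recurs infinitely often, so it is handled at some stage where $\nu\preceq\sigma_k$.

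When we act, we apply $\alpha$-agreement to the nodes $\nu$, $\sigma_k$ and $\tau$. We have $\nu\prec\tau$ and $\nu\preceq\sigma_k$, where $\nu$ has variables $\bar{x}$ and $\tau$ has variables $\bar{x},\bar{y}$. Agreement gives $\tau'$ with $\sigma_k\prec\tau'$, whose variables include $\bar{x}$ and some $\bar{z}$, such that
\[(\tau,\bar{x},\bar{y})\leq_\beta(\tau',\bar{x},\bar{z}).\]
We set $\sigma_{k+1}=\tau'$, extended by one further node if necessary so that the level strictly increases. By monotonicity of $\leq_\beta$ under extension (below), this extension preserves the witness. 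If the triple is not relevant at stage $k$, we simply take $\sigma_{k+1}$ to be any successor of $\sigma_k$. Trees of the form $T_\mathcal{A}$ have no terminal nodes, and $T$ inherits this from replication together with the general setup.

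Then $p=\bigcup_k\sigma_k$ is a path through $\sigma$. For every $\beta<\alpha$ and every relevant pair $(\nu,\tau)$ with $\nu\prec p$, the node $\tau'=\sigma_{k+1}$ chosen at the acting stage satisfies $\nu\prec\tau'\prec p$ and the required $\leq_\beta$ relation. So $p$ is $\alpha$-generic.

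The two points needing care are these.
\begin{enumerate}
\item Agreement is stated with $\nu\prec\sigma$, a strict relation. When $\nu=\sigma_k$, we first pass to a successor of $\sigma_k$, or equivalently apply agreement to that successor.
\item The relation $\leq_\beta$ must be monotone in the second coordinate: if $(\tau,\bar{x},\bar{y})\leq_\beta(\tau',\bar{x},\bar{z})$ and $\tau''\succeq\tau'$, then $(\tau,\bar{x},\bar{y})\leq_\beta(\tau'',\bar{x},\bar{z})$. This holds because extensions of $\tau''$ are extensions of $\tau'$, so the quantifier ``for all $\tau'$ extending'' only shrinks. At level $0$ it holds by consistency of labels.
\end{enumerate}

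The main obstacle is this monotonicity together with the bookkeeping. Everything else is a direct application of $\alpha$-agreement. The construction is Henkin/forcing-style, like the earlier existence of c.f.s., with agreement playing the role of density.
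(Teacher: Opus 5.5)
Your proposal is correct and is essentially the paper's proof. Both build $p$ as an increasing chain of nodes starting at $\sigma$ and meet the countably many requirements $W_{(\nu,\tau,\beta)}$ one at a time using $\alpha$-agreement; the paper acts on the first requirement needing attention, while you revisit each triple infinitely often, which comes to the same thing. Two small remarks: the monotonicity of $\leq_\beta$ is not actually needed, since the $\tau'$ supplied by agreement is itself an initial segment of $p$; and the absence of terminal nodes comes from $\alpha$-agreement (which gives a strict extension of $\sigma$), not from replication.
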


\begin{proof}

We build $p$ satisfying the following witnessing requirements.

\bigskip
\noindent
\textbf{$W_{(\nu,\tau,\beta)}$}:  If $\nu\prec p$ and $\nu\prec\tau$, where $\nu$ has variables $\bar{x}$ and $\tau$ has variables $\bar{x},\bar{y}$, then there is some $\tau'\prec p$ with variables including $\bar{x}$ and some $\bar{z}$, such that
$(\tau,\bar{x},\bar{y})\leq_\beta(\tau',\bar{x},\bar{z})$.

\bigskip

Fix a list of the requirements.  To build $p$, we form an increasing chain of nodes $(\sigma_s)_{s\in\omega}$.  Let $\sigma_0 = \sigma$.  Given $\sigma_s$, we take the first requirement $W_{(\nu,\tau,\beta)}$ needing attention, where this means that $\nu\prec\sigma_s$, and there does not exist $\tau'\prec\sigma_s$ as above.  The $\alpha$-agreement property gives an extension $\sigma_{s+1}$ of $\sigma_s$ satisfying the requirement.  The resulting path $p$ is $\alpha$-generic.  
\end{proof}

\begin{lem}
\label{lem23}

Let $T$ be a tree satisfying all the properties, and let $P$ be the set of pairs $(\sigma,\bar{x})$ such that $\sigma\in T$ and $\bar{x}$ is among the variables of $\sigma$.  For all $\beta\leq\alpha$, for all $(\sigma,\bar{x})$ and $(\tau,\bar{y})$ in $P$, if $(\sigma,\bar{x})\leq_\beta(\tau,\bar{y})$, then for all $\alpha$-generic paths $p$ extending $\sigma$ and $q$ extending $\tau$, 
$(\mathcal{A}^p,\bar{x})\leq_\beta(\mathcal{A}^q,\bar{y})$.   

\end{lem}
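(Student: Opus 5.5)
We argue by induction on $\beta\leq\alpha$, proving the statement simultaneously for all pairs in $P$ and all $\alpha$-generic paths. Throughout, $\mathcal{A}^p$ has universe the variables, and a variable $x_i$ is interpreted via the node of $p$ at level $i$. Since $\sigma\prec p$ and $\bar{x}$ is among the variables of $\sigma$, the tuple $\bar{x}$ names elements of $\mathcal{A}^p$. By Consistency, the atomic type of $\bar{x}$ in $\mathcal{A}^p$ is the one given by the label on $\sigma$.

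\textbf{Base case.} For $\beta=0$, and for the atomic part of the case $\beta=1$, $(\sigma,\bar{x})\leq_0(\tau,\bar{y})$ says that the labels assign $\bar{x}$ and $\bar{y}$ the same atomic type. So $\bar{x}$ in $\mathcal{A}^p$ and $\bar{y}$ in $\mathcal{A}^q$ have the same atomic type, which is what we want.

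\textbf{Inductive step.} Suppose $\beta>0$ and $(\sigma,\bar{x})\leq_\beta(\tau,\bar{y})$. Using the back-and-forth definition of $\leq_\beta$ for structures, we fix $\gamma<\beta$ and a tuple $\bar{d}$ in $\mathcal{A}^q$. We must find $\bar{c}$ in $\mathcal{A}^p$ with $(\mathcal{A}^q,\bar{y},\bar{d})\leq_\gamma(\mathcal{A}^p,\bar{x},\bar{c})$.

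The elements $\bar{d}$ are variables, so there is a node $\tau'\prec q$ extending $\tau$ whose variables include $\bar{y},\bar{d}$. By the tree definition of $(\sigma,\bar{x})\leq_\beta(\tau,\bar{y})$, there are $\sigma'\supseteq\sigma$ and $\bar{x}'$ with $(\tau',\bar{y},\bar{d})\leq_\gamma(\sigma',\bar{x},\bar{x}')$. However, $\sigma'$ need not lie on $p$. This is the main obstacle, and $\alpha$-genericity is what handles it.

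Apply $\alpha$-genericity of $p$ to the pair $(\sigma,\sigma')$, where $\sigma$ has variables including $\bar{x}$ and $\sigma\prec p$, with ordinal $\gamma<\beta\leq\alpha$. This gives $\sigma''\prec p$ with variables including $\bar{x}$ and some $\bar{z}$ such that $(\sigma',\bar{x},\bar{x}')\leq_\gamma(\sigma'',\bar{x},\bar{z})$. One point needs checking here: genericity is stated for $\nu$ whose variables are exactly those of $\nu$. So we take $\nu$ to be $\sigma$ with all its variables, and carry the extra variables along in the tuples. The restriction convention for tuples of different lengths then lets us pass to $\bar{x}$.

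Next we use transitivity of the tree relations $\leq_\gamma$. This needs a short auxiliary induction, proved exactly as transitivity of the usual back-and-forth relations, so that we get $(\tau',\bar{y},\bar{d})\leq_\gamma(\sigma'',\bar{x},\bar{z})$. Now $\tau'\prec q$ and $\sigma''\prec p$, and $\gamma<\beta$. The induction hypothesis, with the roles of the structures swapped (the statement is symmetric in which side is on the left), gives $(\mathcal{A}^q,\bar{y},\bar{d})\leq_\gamma(\mathcal{A}^p,\bar{x},\bar{z})$. Taking $\bar{c}=\bar{z}$ finishes the step.

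If $\beta=1$, the required $\leq_1$ statement reduces through this argument to $\gamma=0$ matching of atomic types. The base case handles that, together with the equivalence of the structure relation $\leq_1$ with the existential-type definition.

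The remaining care is bookkeeping. We must make sure the ordinals used stay below $\alpha$ wherever genericity is invoked; they do, since $\gamma<\beta\leq\alpha$. We must also make sure the nodes and tuples chosen meet the variable-inclusion conventions in the definitions of $\leq_\beta$ and of $\alpha$-genericity.
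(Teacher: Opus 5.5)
Your proposal is correct and follows the paper's proof essentially step for step. You induct on $\beta$, use the tree relation $(\sigma,\bar x)\leq_\beta(\tau,\bar y)$ applied to $\tau'\prec q$ to get an extension of $\sigma$ that may lie off $p$, pull it onto $p$ by $\alpha$-genericity (legitimate since $\gamma<\beta\leq\alpha$), compose by transitivity of the tree relations, and finish with the induction hypothesis. Your version also applies the tree definition to $\tau'$ rather than $\tau$ (where the paper has a slip), and it makes explicit the transitivity and variable-restriction bookkeeping that the paper uses silently.
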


\begin{proof}

We proceed by induction on $\beta$.  For $\beta = 0$, the label on $\sigma$ determines the atomic type of $\bar{x}$ in $\mathcal{A}^p$, and the label on $\tau$ determines the atomic type of $\mathcal{A}^q$, so the statement holds.  
Supposing that the statement holds for $\delta < \beta$, we show that it holds for $\beta$.  Take $(\sigma,\bar{x})$ and $(\tau,\bar{y})$ in $P$, where $(\sigma,\bar{x})\leq_\beta(\tau,\bar{y})$.  Let $p$ and $q$ be $\alpha$-generic paths, where $\sigma\prec p$ and $\tau\prec q$.  We must show that $(\mathcal{A}^p,\bar{x})\leq_\beta(\mathcal{A}^q,\bar{y})$.  
Take $\delta < \beta$ and $\bar{v}$, and let $\tau'\prec q$, with variables including $\bar{y},\bar{v}$.  We need $\sigma'\prec p$ with variables including $\bar{x}$ and some $\bar{u}$, such that 
$(\tau',\bar{y},\bar{v})\leq_\delta (\sigma',\bar{x},\bar{u})$. By the definition of the back-and-forth relations, there exists $\sigma*$, with variables including 
$\bar{x}$ and some $\bar{u}*$, such that 
$\sigma\prec\sigma*$ and
$(\tau,\bar{y},\bar{v})\leq_\delta(\sigma*,\bar{x},\bar{u}*)$.  By $\alpha$-genericity, there exists $\sigma'\prec p$, with variables including $\bar{x}$ and some $\bar{u}$, such that $(\sigma*,\bar{x},\bar{u}*)\leq_\delta(\sigma',\bar{x},\bar{u}')$.  Then
$(\tau',\bar{y},\bar{v})\leq_\delta(\sigma',\bar{x},\bar{u}')$.  By the Induction Hypothesis, $(\mathcal{A}^q,\bar{y},\bar{v})\leq_\delta(\mathcal{A}^p,\bar{x},\bar{u}')$.  Then
$(\mathcal{A}^p,\bar{x})\leq_\beta(\mathcal{A}^q,\bar{y})$, as required.          
\end{proof}  

\begin{lem}
\label{lem24}

Suppose $T$ satisfies all the properties, and let $p$ be an $\alpha$-generic path.  Then the resulting structure $\mathcal{A}^p$ has Scott rank at most $\alpha$. 

\end{lem}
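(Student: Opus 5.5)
We will verify condition (2) of Lemma \ref{addedcondition} for $\mathcal{B} = \mathcal{A}^p$, whose universe is the set $X$ of variables. Since every tuple of $\mathcal{B}$ is a tuple of variables, and enlarging a tuple only makes the condition easier to verify (automorphic tuples restrict to automorphic subtuples), it suffices to treat $\bar{x}$ consisting of the first $n$ variables. Let $\sigma\prec p$ be the node at level $n$, so $\sigma$ has variables $\bar{x}$. Apply $\alpha$-support to $\sigma$: we obtain $\beta<\alpha$ and $\tau$ with variables $\bar{x},\bar{y}$ such that for every $\tau'$ and tuples $\bar{x}',\bar{y}'$ of its variables, $(\tau,\bar{x},\bar{y})\leq_\beta(\tau',\bar{x}',\bar{y}')$ implies $(\sigma,\bar{x})\leq_\alpha(\tau',\bar{x}')$. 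Since $\sigma\prec\tau$ and $\sigma\prec p$, $\alpha$-genericity of $p$ (with $\nu=\sigma$) gives $\tau^*\prec p$ with variables including $\bar{x}$ and some $\bar{z}$ such that $(\tau,\bar{x},\bar{y})\leq_\beta(\tau^*,\bar{x},\bar{z})$. Hence $\tau^*$ with $(\bar{x},\bar{z})$ is itself a support witness lying on $p$: by transitivity of $\leq_\beta$ on pairs in $P$ (checked by a routine induction on the tree relations), $(\tau^*,\bar{x},\bar{z})\leq_\beta(\tau',\bar{x}',\bar{z}')$ implies $(\sigma,\bar{x})\leq_\alpha(\tau',\bar{x}')$. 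The tuple $\bar{z}$ in $\mathcal{B}$ is the candidate $\bar{b}$ for Lemma \ref{addedcondition}.

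Now let $\bar{a}',\bar{b}'$ be tuples in $\mathcal{B}$ with $(\mathcal{B},\bar{x},\bar{z})\leq_\beta(\mathcal{B},\bar{a}',\bar{b}')$; we must get $(\mathcal{B},\bar{x})\leq_\alpha(\mathcal{B},\bar{a}')$. Choose $\rho\prec p$ containing all variables of $\bar{a}',\bar{b}'$ and extending $\tau^*$. The key transfer step is a converse of Lemma \ref{lem23}: for $\alpha$-generic $p,q$ and nodes on them, $(\mathcal{A}^p,\bar{u})\leq_\gamma(\mathcal{A}^q,\bar{v})$ implies $(\sigma_1,\bar{u})\leq_\gamma(\sigma_2,\bar{v})$ for the nodes $\sigma_1\prec p$, $\sigma_2\prec q$ containing these variables, for $\gamma\le\alpha$. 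I would prove this by induction on $\gamma$: given $\delta<\gamma$ and an extension $\sigma_2'$ of $\sigma_2$ adding variables $\bar{v}'$, first use $\alpha$-genericity of $q$ to pull $\sigma_2'$ down onto $q$ up to $\leq_\delta$ (giving $\bar{v}''$ on $q$), then use the structure-level relation to get matching $\bar{u}''$ in $\mathcal{A}^p$, apply the induction hypothesis to get the node-level $\leq_\delta$ relation, and compose. Applying this with $\gamma=\beta$ (and $p=q$) yields $(\tau^*,\bar{x},\bar{z})\leq_\beta(\rho,\bar{a}',\bar{b}')$, hence $(\sigma,\bar{x})\leq_\alpha(\rho,\bar{a}')$ by support, and then Lemma \ref{lem23} with $\beta=\alpha$ gives $(\mathcal{B},\bar{x})\leq_\alpha(\mathcal{B},\bar{a}')$. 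Lemma \ref{addedcondition} then shows $SR(\mathcal{B})\le\alpha$.

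The main obstacle is the converse transfer step. Lemma \ref{lem23} as stated has hypotheses with $\sigma$ at the root of its pair, whereas here $(\bar{a}',\bar{b}')$ are arbitrary variables on $p$, not necessarily an initial segment, so I must be careful that the node relation is invariant under passing to extensions along $p$ (using consistency and the fact that $\leq$ relations only look at extensions) and that genericity witnesses can be chosen on $p$ rather than off it. If a direct converse fails, I would instead fall back on $\alpha$-permutation to rename the variables $\bar{a}',\bar{b}'$ into an initial segment before applying support and Lemma \ref{lem23}.
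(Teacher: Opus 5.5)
Your proposal is correct and follows essentially the same route as the paper: Lemma~\ref{addedcondition}, then $\alpha$-support at the node $\sigma\prec p$, then $\alpha$-genericity to replace the support witness by one lying on $p$, and finally Lemma~\ref{lem23}. The one difference is that you make explicit the converse transfer, namely that structure-level $\leq_\beta$ between tuples on generic paths implies node-level $\leq_\beta$; the paper's proof uses this tacitly when it turns the hypothesis on $\mathcal{A}^p$ into the node-level support hypothesis, and your inductive sketch of it (genericity at the node, then the structure relation, then induction and transitivity of the tree relations) works as stated, so the $\alpha$-permutation fallback is not needed.
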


\begin{proof}  

Recall that $\mathcal{A}^p$ has universe equal to $X$.  By Lemma \ref{addedcondition}, to show that $\mathcal{A}^p$ has Scott rank at most $\alpha$, it is enough to show that for each $\bar{x}$ in $X$, there exist $\bar{u}$ and $\beta < \alpha$ such that for all $\bar{x}'$ of the same length as $\bar{x}$ and $\bar{u}'$ of the same length as $\bar{u}$, 
$(\mathcal{A}^p,\bar{x},\bar{u})\leq_\beta(\mathcal{A}^p,\bar{x}',\bar{u}')$ implies 
$(\mathcal{A}^p,\bar{x})\leq_\alpha(\mathcal{A}^p,\bar{x}')$.  
Take $\sigma\prec p$ with variables including $\bar{x}$.  By $\alpha$-support, there exist $\beta < \alpha$, and $\tau$, with variables including $\bar{x}$ and some $\bar{u}$, such that 
for all $\tau'$ with variables including $\bar{x}'$ the same length as $\bar{x}$ and $\bar{u}'$ the same length as $\bar{u}$, $(\tau,\bar{x},\bar{u})\leq_\beta(\tau',\bar{x}',\bar{u}')$ implies $(\tau,\bar{x})\leq_\alpha(\tau',\bar{x}')$.  By $\alpha$-agreement, there exists $\tau*$, with variables $\bar{x}$ and some $\bar{u}^*$ such that $\nu\prec\tau*\prec p$ and 
$(\tau,\bar{x},\bar{u})\leq_\beta(\tau*,\bar{x},\bar{u}^*)$.  Then $(\tau*,\bar{x},\bar{u}^*)\leq_\beta(\tau',\bar{x}',\bar{u}')$ implies $(\tau*,\bar{x})\leq_\alpha(\tau',\bar{x}')$.  By Lemma \ref{lem23}, $(\mathcal{A}^p,\bar{x})\leq_\alpha(\mathcal{A}^p,\bar{x}')$.  
\end{proof}  

\begin{lem}
\label{lem:alphaToAlphaPlus1}

Suppose $T$ satisfies all the properties, and let $(\sigma,\bar{x})\leq_{\alpha}(\tau,\bar{y})$,  Then $(\sigma,\bar{x})\leq_{\alpha+1}(\tau,\bar{y})$.

\end{lem}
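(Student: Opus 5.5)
The plan is to imitate the classical fact that in a structure of Scott rank at most $\alpha$, $\leq_\alpha$ already implies being automorphic, hence $\leq_{\alpha+1}$. The tree-level $\alpha$-support property plays the role of the orbit-defining formulas. By the definition of the tree back-and-forth relations, $(\sigma,\bar{x})\leq_{\alpha+1}(\tau,\bar{y})$ requires two things for every $\tau'$ extending $\tau$ with variables including $\bar{y},\bar{v}$: for each $\gamma<\alpha+1$, some $\sigma'$ extending $\sigma$ and $\bar{u}$ with $(\tau',\bar{y},\bar{v})\leq_\gamma(\sigma',\bar{x},\bar{u})$. For $\gamma<\alpha$ this is exactly what $(\sigma,\bar{x})\leq_\alpha(\tau,\bar{y})$ gives. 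So the only new case is $\gamma=\alpha$.

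For the case $\gamma=\alpha$, fix $\tau'\succeq\tau$ and the tuple $\bar{y},\bar{v}$ among its variables. I apply $\alpha$-support to the node $\tau'$, with its full tuple of variables, which contains $\bar{y},\bar{v}$. This gives $\beta<\alpha$ and a node $\tau''$ whose variables include those of $\tau'$ and some $\bar{w}$, with the following property. For every node $\rho$ with tuples of matching lengths, $(\tau'',\bar{y},\bar{v},\ldots,\bar{w})\leq_\beta(\rho,\ldots)$ implies $(\tau',\bar{y},\bar{v},\ldots)\leq_\alpha(\rho,\ldots)$.

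If the support node $\tau''$ is not literally an extension of $\tau$, I first use $\alpha$-agreement at the common node $\tau'$. This replaces $\tau''$ by a $\leq_\beta$-equivalent node extending $\tau'$, hence extending $\tau$, in the same way $\alpha$-agreement is used in Lemma \ref{lem24}. Since $\beta<\alpha$ and $(\sigma,\bar{x})\leq_\alpha(\tau,\bar{y})$, the definition of $\leq_\alpha$ then gives $\sigma'\succeq\sigma$ and a tuple $\bar{u}$ such that $(\tau'',\bar{y},\bar{v},\bar{w})\leq_\beta(\sigma',\bar{x},\bar{u})$. The support property, with $\rho=\sigma'$, now yields $(\tau',\bar{y},\bar{v})\leq_\alpha(\sigma',\bar{x},\bar{u}_0)$, where $\bar{u}_0$ is the corresponding initial part of $\bar{u}$. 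This is the required witness for $\gamma=\alpha$.

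Two routine auxiliary facts are needed, each proved by induction on the ordinal. First, the relations $\leq_\delta$ are monotone under restricting to subtuples: dropping the variables not in $\bar{y},\bar{v}$ and the matching ones on the other side preserves $\leq_\delta$. Second, $\leq_\delta$ implies $\leq_{\delta'}$ for $\delta'<\delta$, and is transitive. I also need to check that the direction of the inequalities in $\alpha$-support matches. The support conclusion has the supported node on the left, and the node obtained from the hypothesis $\leq_\alpha$ appears on the right, as needed.

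The main obstacle I expect is this bookkeeping of variables. Support is stated for a node together with its variables, while we need it for an arbitrary subtuple $\bar{y},\bar{v}$ of a node's variables. We also need the witness $\tau''$ placed so that the definition of $(\sigma,\bar{x})\leq_\alpha(\tau,\bar{y})$ can be applied to it. I would handle this by applying support to the full variable tuple of $\tau'$, then restricting with the subtuple-monotonicity lemma, using $\alpha$-agreement to move witnesses onto extensions of $\tau$ where necessary.
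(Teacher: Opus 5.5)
Your argument is correct and is essentially the paper's proof. You apply $\alpha$-support to the extension $\tau'$ to get $\beta<\alpha$ and a support node $\tau''\succeq\tau'\succeq\tau$. You then use $(\sigma,\bar{x})\leq_\alpha(\tau,\bar{y})$ at level $\beta$ to find $\sigma'\succeq\sigma$ with $(\tau'',\ldots)\leq_\beta(\sigma',\ldots)$, and read off $(\tau',\bar{y},\bar{v})\leq_\alpha(\sigma',\bar{x},\bar{u}_0)$ from support, exactly as the paper does. The paper takes the support node to extend the node being supported, so your $\alpha$-agreement detour is unnecessary. The subtuple and reordering bookkeeping you flag is what the paper leaves implicit.
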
    

\begin{proof}
    Assume that $(\sigma,\bar{x})\leq_{\alpha}(\tau,\bar{y})$.  
    Take an extension $(\tau',\bar{y}')$ of $(\tau,\bar{y})$.
    We aim to show that $(\sigma,\bar{x})$ has an extension $(\sigma',\bar{x}')$ such that $(\sigma',\bar{x}')\geq_{\alpha}(\tau',\bar{y}')$.
    By the definition of $\alpha$-support, there exist an extension $(\tau'',\bar{y}',\bar{u})$ of $(\tau,\bar{y}')$ and $\beta<\alpha$ such that for all $(\nu,\bar{z},\bar{w})$,  $(\tau'',\bar{y}',\bar{u})\leq_\beta(\nu,\bar{z},\bar{w})$ implies $(\tau',\bar{y}')\geq_\alpha(\nu,\bar{z})$.
    Since $(\sigma,\bar{x})\leq_\alpha(\tau,\bar{y})$, $(\sigma,\bar{x})$ has an extension $(\sigma'',\bar{x}',\bar{u}')$ with $(\sigma'',\bar{x}',\bar{u}')\geq_\beta(\tau'',\bar{y}',\bar{u})$.
    By $\alpha$-support, $(\sigma',\bar{x}')\geq_{\alpha}(\tau',\bar{y}')$.
    Thus, $(\sigma,\bar{x})\leq_{\alpha+1}(\tau,\bar{y})$ as desired.
\end{proof}

\begin{lem}
\label{lem25}

If $T$ satisfies all the properties, and $p$ and $q$ are $\alpha$-generic paths through $T$.  Then $\mathcal{A}^p\cong\mathcal{A}^q$.     

\end{lem}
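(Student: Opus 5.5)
The plan is a back-and-forth argument between $\mathcal{A}^p$ and $\mathcal{A}^q$, using the relation $\leq_\alpha$ on the structures. It is stronger to aim for an isomorphism through the tree, using the tree relation $\leq_\alpha$ on pairs $(\sigma,\bar{x})$, and then transfer the result via Lemma \ref{lem23}. Both structures have universe $X$. I would consider the family $\mathcal{F}$ of finite partial maps $\bar{x}\mapsto\bar{y}$, with $\bar{x}$ and $\bar{y}$ tuples of variables of the same length, for which there are $\sigma\prec p$ and $\tau\prec q$ containing these variables such that $(\sigma,\bar{x})\leq_\alpha(\tau,\bar{y})$. By Lemma \ref{lem:alphaToAlphaPlus1}, applied repeatedly, this relation automatically upgrades to $\leq_{\alpha+1}$, and this is the extra room the back-and-forth steps need.

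\textbf{Nonemptiness.} I take the empty tuple, so we need $(\lambda,\emptyset)\leq_\alpha(\lambda,\emptyset)$, where $\lambda$ is the root. This holds by reflexivity of $\leq_\alpha$, proved by a routine induction.

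\textbf{Forth step.} Given $(\sigma,\bar{x})\leq_{\alpha+1}(\tau,\bar{y})$ and a new variable $y'$ in $\mathcal{A}^q$, I choose $\tau'\prec q$ containing $y'$. The definition of $\leq_{\alpha+1}$, with $\beta=\alpha$, supplies $\sigma^*\succ\sigma$ and a tuple $\bar{x}^*$ with $(\tau',\bar{y},y')\leq_\alpha(\sigma^*,\bar{x},\bar{x}^*)$. This $\sigma^*$ need not lie on $p$. To move it onto $p$ I would use $\alpha$-genericity of $p$ with $\nu=\sigma$. That only yields a node $\sigma'\prec p$ with $(\sigma^*,\bar{x},\bar{x}^*)\leq_\beta(\sigma',\bar{x},\bar{z})$ for $\beta<\alpha$, not for $\alpha$ itself.

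\textbf{The main obstacle.} The issue is to lift this $\beta$-level match back to level $\alpha$. Here $\alpha$-support is the tool, as in the proof of Lemma \ref{lem24}. First apply support to $(\sigma^*,\bar{x},\bar{x}^*)$. This gives $\beta<\alpha$ and an extension $(\rho,\bar{x},\bar{x}^*,\bar{w})$ such that any tuple that is $\geq_\beta$ this extension is $\geq_\alpha (\sigma^*,\bar{x},\bar{x}^*)$. Next apply genericity of $p$ to the pair $(\sigma,\rho)$ at this $\beta$. This yields $\sigma'\prec p$ and $\bar{z},\bar{w}'$ with $(\rho,\bar{x},\bar{x}^*,\bar{w})\leq_\beta(\sigma',\bar{x},\bar{z},\bar{w}')$. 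Support then gives $(\sigma^*,\bar{x},\bar{x}^*)\leq_\alpha(\sigma',\bar{x},\bar{z})$.

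Composing these, I need transitivity of the tree relations $\leq_\gamma$, which I would prove by a routine induction. This gives $(\tau',\bar{y},y')\leq_\alpha(\sigma',\bar{x},\bar{z})$. Taking the first coordinate $z_1$ of $\bar{z}$ extends the map by $z_1\mapsto y'$.

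Two points need care. The first is direction: the forth step produces a relation with $p$ on the right, so $\mathcal{F}$ should be the symmetric closure. Equivalently, I would use maps where either orientation of $\leq_\alpha$ holds, and check that the back step is the mirror image of the above argument. The second point is that the new image $z_1$ must be distinct from $\bar{x}$. The Levels property makes the variables of a node distinct, and $\bar{x},\bar{z}$ is a tuple of distinct variables of $\sigma'$ in the relation, so $z_1\notin\bar{x}$.

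\textbf{Conclusion.} Interleaving forth and back steps through an enumeration of $X$ on both sides yields a bijection. It preserves atomic types, because $\leq_\alpha$ with $\alpha\geq 0$ implies $\leq_0$ agreement of labels, and the labels along $p$ and $q$ give the atomic diagrams of $\mathcal{A}^p$ and $\mathcal{A}^q$. Hence $\mathcal{A}^p\cong\mathcal{A}^q$.
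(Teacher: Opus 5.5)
Your proposal is correct and follows essentially the paper's proof. It uses the same family of finite maps $\bar{x}\mapsto\bar{y}$ witnessed by $(\sigma,\bar{x})\leq_\alpha(\tau,\bar{y})$ for $\sigma\prec p$, $\tau\prec q$, upgraded to $\leq_{\alpha+1}$ by Lemma~\ref{lem:alphaToAlphaPlus1}; the off-path witness from the definition of $\leq_{\alpha+1}$ is moved onto the path by combining $\alpha$-support at that witness with $\alpha$-genericity, and then composed (the paper uses transitivity tacitly). The only real difference is orientation: the paper keeps one orientation by asserting $\geq_{\alpha+1}$ via a second application of Lemma~\ref{lem:alphaToAlphaPlus1}, whereas a member of your symmetric closure can a priori only be extended on one side. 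So you should either alternate forth and back steps strictly (each step flips the orientation), or first prove $\leq_\alpha$ symmetric, e.g.\ from $\leq_{\alpha+1}\Rightarrow\geq_\alpha$ plus $\alpha$-agreement.
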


\begin{proof}

Let $\mathcal{F}$ be the set of finite partial isomorphisms that take $\bar{x}\in p$ to $\bar{y}\in q$, where for some $\sigma\prec p$, with variables $\bar{x}$ and $\tau\prec q$, with variables $\bar{y}$ we have $(\sigma,\bar{x})\leq_\alpha(\tau,\bar{y})$.  We show that $\mathcal{F}$ has the back-and-forth property.

\bigskip

Clearly, $\mathcal{F}$ is non-empty.  

\bigskip
\noindent
\textbf{Forth}:  Suppose $\bar{x}\rightarrow\bar{y}$ is in $\mathcal{F}$.  Say $\sigma\prec p$ have variables $\bar{x}$ and $\tau\prec q$ have variables $\bar{y}$ with $(\sigma,\bar{x})\leq_\alpha(\tau,\bar{y})$. 
By Lemma \ref{lem:alphaToAlphaPlus1}, $(\sigma,\bar{x})\leq_{\alpha+1}(\tau,\bar{y})$.
Another application of Lemma \ref{lem:alphaToAlphaPlus1} gives that $(\sigma,\bar{x})\geq_{\alpha+1}(\tau,\bar{y})$.
 Let $z$ be a variable not in $\bar{x}$ and $\sigma'$ be an extension of $\sigma$ with $\sigma'\prec p$ including $z$.
 By the definition of the back-and-forth relations, there is an extension $(\tau',\bar{y},w)$ of $(\tau,\bar{y})$,(not necessarily along $q$) such that  $(\tau',\bar{y},w)\geq_\alpha(\sigma',\bar{x},z)$.
 In the definition of $\alpha$-support, there is an extension $(\tau'',\bar{y},w,\bar{u})$ and ordinal $\beta<\alpha$ associated to $(\tau',\bar{y},w)$ such that for all $(\nu,\bar{s},t,\bar{r})$, $(\tau'',\bar{y},w,\bar{u})\leq_\beta (\nu,\bar{s},t,\bar{r})$ implies that $(\tau',\bar{y},w)\geq_\alpha (\nu,\bar{s},t)$.
 By $\alpha$-genericity of $q$, there is an extension $(\mu,\bar{y},w^*,\bar{u}*)$ of $(\tau,\bar{y})$ along $q$ such that $(\mu,\bar{y},w^*,\bar{u}*)\geq_\beta(\tau'',\bar{y},w,\bar{u})$.
 This means that $(\mu,\bar{y},w^*)\leq_\alpha (\tau',\bar{y},w)$.
By Lemma \ref{lem:alphaToAlphaPlus1}, $(\mu,\bar{y},w^*)\geq_\alpha (\tau',\bar{y},w)$.
This yields that $(\mu,\bar{y},w^*)\geq_\alpha (\sigma',\bar{x},z)$, and so $\bar{x},z\rightarrow\bar{y},w^*$ is in $\mathcal{F}$.

\bigskip
\noindent
\textbf{Back}:  Suppose $\bar{x}\rightarrow\bar{a}$ is in $\mathcal{F}$.
Say $\sigma\prec p$ have variables $\bar{x}$ and $\tau\prec q$ have variables $\bar{y}$ with $(\sigma,\bar{x})\leq_\alpha(\tau,\bar{y})$.
By Lemma \ref{lem:alphaToAlphaPlus1}, $(\sigma,\bar{x})\geq_{\alpha}(\tau,\bar{y})$.
The argument in the Forth section yields for any $(\tau',\bar{y},z)$ extending $(\tau,\bar{y})$ along $q$, there is a $(\mu,\bar{x},w^*)$ extending $(\sigma,\bar{x})$ along $p$.
So, $\bar{x},w^*\rightarrow\bar{y},z$ as desired.
\end{proof}

To complete the proof of Theorem \ref{thm21}, it is enough to show the following.

\begin{lem}
\label{lem26}

Let $T$ satisfy all of the properties, and let $p$ be an $\alpha$-generic path.  If $\mathcal{B} = \mathcal{A}^p$, then $T\cong T_\mathcal{B}$.  
\end{lem}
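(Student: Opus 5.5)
We build an isomorphism $T_\mathcal{B}\cong T$ by a back-and-forth argument, following the proof of Proposition \ref{prop:finishing} but with the back-and-forth relations on $T$ in place of forcing. Recall that $\mathcal{B}=\mathcal{A}^p$ has universe $X$. A node $\rho$ of $T_\mathcal{B}$ at level $n$ represents a tuple $\bar{b}=(b_1,\ldots,b_n)$ of distinct elements of $X$. Say that a node $\tau$ of $T$ at level $n$, with variables $\bar{w}$ (the first $n$ variables), is a \emph{good match} for $\rho$ if there is $\sigma\prec p$ whose variables include $\bar{b}$ such that $(\sigma,\bar{b})\leq_\alpha(\tau,\bar{w})$. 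By Lemma \ref{lem:alphaToAlphaPlus1} (applied twice, as in Lemma \ref{lem25}), this relation is symmetric and upgrades to $\leq_{\alpha+1}$. Since $\leq_\alpha$ refines $\leq_0$, a good match carries the label $U_{D_{\bar{b}}}$. Let $F$ be the set of label-preserving partial isomorphisms $f$ from finite subtrees of $T_\mathcal{B}$, closed under predecessor, into $T$, such that $f(\rho)$ is a good match for $\rho$ for every $\rho$ in the domain. Since $F$ maps root to root, we show $F\neq\emptyset$ and that $F$ has the back-and-forth property. A countable back-and-forth family gives $T_\mathcal{B}\cong T$. Predecessors are preserved automatically if we extend only along immediate successors.

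\textbf{Root.} We must show $(\sigma,\emptyset)\leq_\alpha(\lambda,\emptyset)$ for some $\sigma\prec p$; take $\sigma=\lambda$, and the relation is trivial by reflexivity. We should check that each $\leq_\beta$ is reflexive, by an induction on $\beta$ taking $\sigma'=\tau'$.

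\textbf{Forth.} Let $f(\rho)=\tau$ with witness $\sigma\prec p$, and let $\rho'$ be a successor of $\rho$ representing $\bar{b},c$. Choose $\sigma'\prec p$ whose variables include $\bar{b},c$. Since $(\sigma,\bar{b})\geq_{\alpha+1}(\tau,\bar{w})$, there is an extension $\tau^*\succ\tau$ with variables $\bar{w},v$ for some $v$ such that $(\tau^*,\bar{w},v)\geq_\alpha(\sigma',\bar{b},c)$. The variable $v$ need not be the next variable $x'$ after $\bar{w}$. To move it there we use $\alpha$-permutation, arguing as in the Forth step of Lemma \ref{lem25}. First apply $\alpha$-support to $(\tau^*,\bar{w},v)$ to obtain $\beta<\alpha$ and a supporting extension. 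Then $\alpha$-permutation gives an extension through a successor $\tau'$ of $\tau$ whose variables are $\bar{w},x'$, and which is $\geq_\beta$ the supporting extension. Support then yields $(\tau',\bar{w},x')\equiv_\alpha(\tau^*,\bar{w},v)$, and hence $(\tau',\bar{w},x')$ is a good match for $\rho'$. Two further points need care. First, permutation is stated with the permuted tuple arranged as $\bar{u},x'$, so we must make sure $\bar{w}$ is kept in place. Second, $f$ must remain injective. For this we use Replication: $\tau$ has infinitely many successors with the same label, and the $\leq_\alpha$-type of a successor should be replicated as well. To get this, we use $\alpha$-agreement with $\nu=\tau$ to transfer the witness to sibling successors. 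This Forth step is the main obstacle, since it has to combine permutation, support and agreement so that the new variable lands exactly at level $n+1$ and avoids the existing range of $f$.

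\textbf{Back.} Let $\tau'$ be a successor of $\tau=f(\rho)$, with variables $\bar{w},x'$. Since $(\sigma,\bar{b})\leq_{\alpha+1}(\tau,\bar{w})$, there is an extension $\sigma^*\succ\sigma$ with variables $\bar{b},\bar{u}$ such that $(\tau',\bar{w},x')\leq_\alpha(\sigma^*,\bar{b},u)$ for a single variable $u$. Apply $\alpha$-support together with $\alpha$-genericity of $p$, exactly as in Lemma \ref{lem25}, to replace $\sigma^*$ by some $\mu\prec p$ carrying an element $c$ with $(\mu,\bar{b},c)\equiv_\alpha(\tau',\bar{w},x')$. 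Here $c\notin\bar{b}$, because the atomic type distinguishes equality. Injectivity in $T_\mathcal{B}$ is easy: $\rho$ has infinitely many successors representing $\bar{b},c$, so we choose one, $\rho'$, outside the domain of $f$ and set $f(\rho')=\tau'$.
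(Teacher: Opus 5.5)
\textbf{Route.} Your route differs from the paper's only in bookkeeping. You define good matches by $\le_\alpha$ against nodes on the fixed path $p$. The paper instead matches $\Pi_\alpha$-types of generic structures built from both $T$ and $T_\mathcal{B}$, which tacitly uses that $T_\mathcal{B}$ itself has all the properties.

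\textbf{What works.} The type-matching in your Forth and Back steps is fine. In Forth, support plus permutation (read, as intended, with $\bar w$ kept in place) give the match at the permuted node, and $\alpha$-agreement then passes it down to that node's initial segment $\tau'$ at level $n+1$. Injectivity in Back is genuinely easy, since $T_\mathcal{B}$ has infinitely many successors representing $\bar b,c$.

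\textbf{The gap: injectivity in Forth.} Your proposed remedy fails. Applying $\alpha$-agreement with $\nu=\tau$ to a sibling $\tau''$ yields only some $\mu\succ\tau''$ and some variable $z$ of $\mu$ with $(\tau',\bar w,x')\le_\beta(\mu,\bar w,z)$. Since $z$ may be a later variable, this says nothing about the element $x'$ that $\tau''$ itself adds, so $\tau''$ need not be a good match. Replication multiplies labels, not $\le_\alpha$-types.

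\textbf{This multiplicity does not follow from the six properties at all.} Let $\mathcal{B}$ be a graph with one isolated vertex $e_0$ and infinitely many disjoint edges (rank at most $2$; take $\alpha\geq 2$). Delete from $T_\mathcal{B}$ all but one of the root's successors representing $e_0$, together with their subtrees.
\begin{itemize}
\item Levels, consistency and replication survive, because edge endpoints carry the same label as $e_0$.
\item The tree back-and-forth relations between nodes of level $\geq 1$ are unchanged.
\item At the root, $\alpha$-agreement and $\alpha$-permutation still hold (one $e_0$-successor is enough as a witness), and $\alpha$-support is trivial for the empty tuple.
\end{itemize}
Yet among the root's successors, the subtree above the surviving $e_0$-node is the only one of its isomorphism type, whereas in every $T_\mathcal{C}$ each such type occurs infinitely often. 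So this tree is not in $\mathbb{T}$. Correspondingly, your $F$ cannot map the infinitely many $e_0$-successors of the root of $T_\mathcal{B}$ injectively.

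\textbf{The paper has the same hole.} Its map goes from $T$ to $T_\mathcal{B}$, and its Back step obtains $\sigma^*$ ``just as above'' without checking that $\sigma^*$ lies outside the current domain. The same point is skipped in Proposition \ref{prop:finishing}.

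\textbf{What is missing.} You need a stronger replication axiom, of the same shape as $\alpha$-agreement: for every $\tau$ at level $n$, every $\mu\succ\tau$ with variables $\bar w,x',\bar u$, and every $\beta<\alpha$, infinitely many successors of $\tau$ have an extension $\mu^*$ with $(\mu,\bar w,x',\bar u)\le_\beta(\mu^*,\bar w,x',\bar u^*)$. Apply this to an $\alpha$-support extension of $(\tau',\bar w,x')$, then use support and agreement. This gives infinitely many siblings that are good matches, after which your back-and-forth goes through.
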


\begin{proof}   

For an $\alpha$-generic path $q$ through $T$, we write $\mathcal{A}^q$ for the resulting structure.  For an $\alpha$-generic path $r$ through $T_\mathcal{B}$, we write $\mathcal{B}^r$ for the resulting structure.  By Lemma \ref{lem25}, these structures are all isomorphic.  Moreover, in either tree, the structures obtained from $\alpha$-generic paths that extend a specific node assign the same $\Pi_\alpha$-type to the variables of that node.  With this in mind, we define an isomorphism $F$ from $T$ onto $T_\mathcal{B}$, preserving the tree structure, and with the special feature that for $F(\sigma) = \tau$, where both nodes are at level $n$ and for the tuple $\bar{x}$ consisting of the first $n$ variables, the $\Pi_\alpha$-type of $\bar{x}$ in structures $\mathcal{A}^q$ for $\alpha$-generic paths $q$ through $T$ that extend 
$\sigma$, matches the $\Pi_\alpha$-type of $\bar{x}$ in $\mathcal{B}^r$ for $\alpha$-generic paths $r$ through $T_\mathcal{B}$ that extend $\tau$.  Let $\mathcal{F}$ be the set of partial $1-1$ functions $f$ mapping a finite subtree $S$ of $T$ onto a finite subtree $S'$ of $T_\mathcal{B}$, such that $f$ preserves the tree structure and has the further special feature.  We show that $\mathcal{F}$ has the back-and-forth property.  

\bigskip
\noindent     
\textbf{$F\not=\emptyset$}:  The function taking the root node of $T$ to the root node of $T_\mathcal{B}$ has the special feature, since for $\alpha$-generic paths $q$ through $T$ and $r$ through $T_\mathcal{B}$, $\mathcal{A}^q\cong\mathcal{B}^r$.  

\bigskip
\noindent
\textbf{Forth}:  Suppose $f$ is in $\mathcal{F}$, mapping $S$ onto $S'$.  Let $\sigma'$ be a further node in $T$.  We may suppose that $\sigma'$ is a successor to some node $\sigma$ of $S$.  Say that $F(\sigma) = \tau$.  Let $\bar{x}$ be the variables of $\sigma$.  For all $\alpha$-generic paths $q$ extending $\sigma$ and all $\alpha$-generic paths $r$ extending $\tau$, $(\mathcal{A}^q,\bar{x})\cong(\mathcal{B}^r,\bar{x})$ under an isomorphism $F_{q,r}$.  Let $x'$ be the next variable after $\bar{x}$. Let $r$ be an $\alpha$-generic path extending $\tau$.  Then $F_{q,r}$ takes $\bar{x}$ to $\bar{x}$ and takes $x'$ to some $y$.  Let $\tau'$ be the successor of $\tau$ with $\tau'\prec q$.  By $\alpha$-permutation, $\tau$ has another successor $\tau*$ such that $(\tau',\bar{x},y)\leq_\alpha(\tau^*,\bar{x},x')$.  If $r^*$ is an $\alpha$-generic extension of $\tau^*$, then 
$(\mathcal{A}^q,\bar{x},x')\cong(\mathcal{B}^r,\bar{x},x')$.  We let $f'$ be the extension of $f$ that takes $\sigma'$ to $\tau*$.  This is in $\mathcal{F}$. 

\bigskip
\noindent
\textbf{Back}:  Suppose $f$ is in $\mathcal{F}$, mapping the subtree $S$ of $T$ onto the subtree $S'$ of $T_\mathcal{B}$, as above, and let $\tau'$ be a further node of $T_\mathcal{B}$, where $\tau'$ is a successor of some $\tau$ in $S'$. Say that $f(\sigma) = \tau$.  Proceeding just as above, we find a successor $\sigma^*$ of $\sigma$ such that the extension of $f$ taking $\sigma^*$ to $\tau'$ is in $\mathcal{F}$.  
\end{proof}  


\section{Sharpness of the Bound}

In the previous section, we showed that if $\alpha\geq 1$ is an $X$-computable ordinal, then $\mathbb{T}^\alpha$ is $X$-effective $\Pi_{2\alpha+2}$.  In this section, we show that the bound is sharp.  

\begin{thm}\label{thm:sharpness}

For an $X$-computable ordinal $\alpha$, the class $\mathbb{T}^\alpha$ is complete $X$-effective $\Pi_{2\alpha+2}$.

\end{thm}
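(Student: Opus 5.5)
The upper bound is already available: by Lemma \ref{lem20} and Theorem \ref{thm21}, together with the computable $\Pi_2$ axioms for levels, consistency and replication, $\mathbb{T}^\alpha$ is $X$-effective $\Pi_{2\alpha+2}$, relativizing to $X$. So what remains is hardness. We must show that every $X$-effective $\Pi_{2\alpha+2}$ subset of $2^\omega$ (equivalently, a complete one) reduces to $\mathbb{T}^\alpha$ by an $X$-computable, hence continuous, map $S\mapsto T_{\mathcal{A}_S}$.

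I will reduce through structures. The goal is an $X$-computable family $S\mapsto\mathcal{A}_S\in Mod(L)$ such that:
\begin{itemize}
\item $\mathcal{A}_S$ has Scott rank at most $\alpha$ when $S$ lies in a fixed complete $\Pi_{2\alpha+2}$ set $P$;
\item $\mathcal{A}_S$ has Scott rank greater than $\alpha$ when $S\notin P$.
\end{itemize}
Since $\mathcal{A}\mapsto T_\mathcal{A}$ is Turing computable, and since $SR(T_\mathcal{A})=SR(\mathcal{A})$ by \cite{GR}, composing the two maps gives the required reduction. It will be convenient to work with graphs or linear orders, using the Proposition that transfers Scott complexity to graphs if needed.

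The heart of the argument is to build such a family. A natural choice is orderings or trees whose building blocks pin down the $\Pi_{2\alpha}$ and $\Sigma_{2\alpha}$ levels of the back-and-forth relations. Recall that ``Scott rank $\le\alpha$'' says: for every tuple there is a $\Sigma_\alpha$ formula isolating its $\Pi_\alpha$-type, that is, $\forall\bar a\,\exists\bar b\,\exists\beta<\alpha\,\forall\bar a'\bar b'\,[\bar a\bar b\le_\beta\bar a'\bar b'\to\bar a\le_\alpha\bar a']$. Counting quantifiers over a structure, $\le_\alpha$ is $\Pi_{2\alpha}$, and the whole condition comes out at $\Pi_{2\alpha+2}$. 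The construction should realize this quantifier pattern exactly. Write $S\in P$ iff $\forall n\,R(S,n)$ with $R$ a $\Sigma_{2\alpha+1}$ predicate, and $R(S,n)$ iff $\exists m\,Q(S,n,m)$ with $Q$ a $\Pi_{2\alpha}$ predicate.

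For each $n$, I will attach to $\mathcal{A}_S$ a component $C_n$ (say, in a disjoint union of graphs with distinguishing markers) built from ``$\alpha$-pairs''. These are uniformly computable structures $\mathcal{M}_0,\mathcal{M}_1$ of rank below $\alpha$ with $\mathcal{M}_0\le_\alpha\mathcal{M}_1$ but not isomorphic, for example $\omega^\alpha$ versus $\omega^\alpha\cdot 2$ style pairs from \cite{Mbook2}. By the Ash--Knight pair-of-structures theorem, relativized to $X$, we get a $\Delta_{2\alpha+1}$/$\Pi_{2\alpha}$-guessing construction: for each $m$, a substructure is uniformly built that is $\cong\mathcal{M}_1$ if $Q(S,n,m)$ holds and $\cong\mathcal{M}_0$ otherwise. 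Inside $C_n$ I would arrange that an element $a_n$ sits over infinitely many copies indexed by $m$. When some $m$ witnesses $R(S,n)$, a finite parameter (the witness copy) lets a $\Sigma_\alpha$ formula isolate the orbit of $a_n$. When no $m$ works, all copies look $\alpha$-equivalent but are non-automorphic, so the $\Pi_\alpha$-type of $a_n$ fails to be $\Sigma_\alpha$-isolated and the Scott rank exceeds $\alpha$. One must also check that every other tuple is isolated at level $\alpha$, keeping the rank at most $\alpha$ when $S\in P$, and that the bounded quantifier $\exists\beta<\alpha$ causes no problems at limit $\alpha$. There I would use the Remark after Proposition \ref{prop:finishing} and handle the limit case by cofinal sequences.

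The main obstacle will be this second step: producing, uniformly and $X$-computably, components whose Scott rank jumps from $\le\alpha$ to $>\alpha$ exactly according to a $\Sigma_{2\alpha+1}$ condition. The level $2\alpha$ (rather than $\alpha$) arises because the tree's back-and-forth relations over paths double the quantifier count. I would first try to realize the doubling directly, using the pair-of-structures theorem to code $\Pi_{2\alpha}$ sets into $\le_\alpha$-indistinguishable pairs. If that becomes too delicate, I would instead appeal to known completeness results for the index set of structures of Scott rank at most $\alpha$ in a suitable class. I would then finish with the $\mathcal{A}\mapsto T_\mathcal{A}$ reduction, followed by relativization to $X$.
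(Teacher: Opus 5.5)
Your outer reduction is the paper's. The upper bound is Lemma~\ref{lem20} with Theorem~\ref{thm21}, and hardness of $\mathbb{T}^\alpha$ comes from composing a reduction to $Mod(L)^\alpha$ with the Turing computable, rank-preserving map $\mathcal{A}\mapsto T_\mathcal{A}$. But then everything rests on Theorem~\ref{Mod(L)} (that $Mod(L)^\alpha$ is $\Pi_{2\alpha+2}$-hard), and the mechanism you sketch for it does not work.

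First, the blocks you ask for cannot exist. If $\mathcal{M}_0$ has Scott rank $\beta<\alpha$, it has a $\Pi_{\beta+1}$, hence $\Pi_\alpha$, Scott sentence. By Karp, $\mathcal{M}_0\le_\alpha\mathcal{M}_1$ forces $\mathcal{M}_1$ to satisfy that sentence, so $\mathcal{M}_1\cong\mathcal{M}_0$. Your example $\omega^\alpha$ versus $\omega^\alpha\cdot 2$ has rank about $2\alpha$, not below $\alpha$.

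Second, and more fundamentally, a $\le_\alpha$-pair fed into the pair-of-structures theorem codes $\Pi^0_\alpha$ sets, far short of your $\Pi_{2\alpha}$ matrix $Q$. Nor can you fix this with a deeper pair: for non-limit $\alpha$, a non-isomorphic pair with $\mathcal{M}_0\le_{2\alpha}\mathcal{M}_1$ has Scott rank at least $2\alpha>\alpha$, so planting it would ruin the case $S\in P$. Hence the step where a witness $m$ for $Q(S,n,m)$ becomes a $\Sigma_\alpha$ isolation of the orbit of $a_n$ has no content. Falling back on ``known completeness results'' simply assumes Theorem~\ref{Mod(L)}. Also, the doubling is not caused by the tree: it is already present for $Mod(L)^\alpha$, where one level of Scott rank costs two quantifier alternations.

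What is missing is a coding in which raising the Scott rank by exactly one absorbs two jumps. The paper writes the $\Pi_{2\alpha+2}$ set as $\bigcap_n D_n$ with each $D_n$ in $\Sigma_{2\alpha+1}$, handles each $D_n$ separately, and glues the results with unary predicates $U_n$. It then splits into cases:
\begin{itemize}
\item For $\alpha=1$, it reduces the $\Sigma^0_3$-complete set $Cof$ to bunches of daisies. A bound $b$ beyond which $W_n$ is full yields the existential orbit definition ``a center with a petal of length $b+3$''.
\item For finite $\alpha>1$, it iterates Harrison-Trainor's unfriendly jump inversion. Using the pair $\mathcal{G}\equiv_1\mathcal{H}$ of ranks $1$ and $2$, this turns a $Y''$-computable graph $\mathcal{A}$ into a $Y$-computable $\mathcal{B}$. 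A back-and-forth analysis shows $SR(\mathcal{B})=1+SR(\mathcal{A})$ exactly.
\item For $\alpha=\lambda+n$, it codes a $Y^{\lambda+1}$-computable structure of rank $\le n$ using a $\lambda+1$-friendly pair with $\mathcal{G}\equiv_\lambda\mathcal{H}$ and ranks $\le\lambda$. This shifts Scott rank by exactly $\lambda$.
\item Only in the limit case, where $2\alpha+1=\alpha+1$, does a single $\Pi_\alpha$ coding suffice. There the paper does roughly what you describe, placing orderings $\omega^{\alpha_n}\cdot\mathcal{L}_n$ next to a copy of $\omega^\alpha$.
\end{itemize}
Without a rank-exact jump-inversion device of this kind, your proposal establishes only the upper bound.
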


What drives the complexity of $\mathbb{T}^\alpha$ is the Scott rank.  Let $L$ be the language of graphs, and let $Mod(L)^\alpha$ consist of the structures in $Mod(L)$ of Scott rank at most $\alpha$.  It is well-known (see \cite{Mbook2} Lemma II.67) that for $\alpha\geq 1$ an $X$-computable ordinal, $Mod(L)^\alpha$ is $X$-effective $\Pi_{2\alpha+2}$.  To prove Theorem \ref{thm:sharpness}, we prove the following.

\begin{thm}
\label{Mod(L)}

For all $X$, and all $X$-computable ordinals $\alpha\geq 1$, $Mod(L)^\alpha$ is $\Pi_{2\alpha+2}$-hard, within $Mod(L)$.  

\end{thm}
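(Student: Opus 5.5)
We plan to reduce an arbitrary $X$-effective $\Pi_{2\alpha+2}$ set $S\subseteq 2^\omega$ (or, equivalently by relativization, a complete $\Pi^0_{2\alpha+2}$ set of reals) to $Mod(L)^\alpha$ by a continuous (indeed $X$-computable) map $z\mapsto\mathcal{G}_z$ with $z\in S$ iff $SR(\mathcal{G}_z)\leq\alpha$. By the earlier Proposition, graphs are universal and the embedding into graphs preserves Scott sentence complexity, so it suffices to build $\mathcal{G}_z$ in any convenient computable relational language and then pass to graphs. Relativizing, we may take $X$ computable throughout.

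The approach is to use structures whose Scott rank is governed by ordinals or by sums of back-and-forth-understood pieces. A $\Pi_{2\alpha+2}$ condition has the form $\forall n\,\exists m\, R(z,n,m)$ with $R$ $\Pi_{2\alpha}$. The plan is to let $\mathcal{G}_z$ be a disjoint union over $n$ of components $\mathcal{C}_{z,n}$, where each $\mathcal{C}_{z,n}$ is in turn a disjoint union over $m$ of pieces $\mathcal{D}_{z,n,m}$. The pieces should have two properties. If $R(z,n,m)$ holds, then $\mathcal{D}_{z,n,m}$ is a fixed structure $\mathcal{P}$ of Scott rank $<\alpha$ (or exactly rank so that orbits are $\Sigma_\alpha$-definable). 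If $R(z,n,m)$ fails, then $\mathcal{D}_{z,n,m}$ is a structure $\mathcal{Q}$ that is $\leq_{\beta}$-indistinguishable from $\mathcal{P}$ at the relevant level but not isomorphic to it. The natural candidates are ordinal-like pairs such as $\omega^{\beta}$ versus $\omega^\beta\cdot 2$, or the standard pairs from the Ash--Knight pairs-of-structures theorem. A $\Pi_{2\alpha}$ predicate can be coded uniformly by choosing between two structures that are $\equiv_{2\alpha}$-close, since the Ash--Knight theorem gives such uniform codings for $\Pi_{\gamma}$ sets when $\mathcal{P}\leq_\gamma\mathcal{Q}$. We also add infinitely many copies of each piece so that the component is determined up to isomorphism by which kinds of pieces occur.

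The key calculation has two directions. If $z\in S$, every component contains some $\mathcal{P}$-piece. The intended property is that such a component is ``anchored'': each tuple's orbit is defined by a $\Sigma_\alpha$ formula using the anchor as an existential parameter. This is exactly condition (2) of Lemma \ref{addedcondition}, with the anchor serving as $\bar b$ and $\beta<\alpha$. So $SR(\mathcal{G}_z)\leq\alpha$. If $z\notin S$, some component $n$ consists only of $\mathcal{Q}$-pieces that are $\alpha$-equivalent but non-automorphic. In that case some tuple's $\Pi_\alpha$-type fails to determine its orbit, so the rank exceeds $\alpha$. The tool here is Lemma \ref{addedcondition} with the back-and-forth analysis: we exhibit tuples $\bar a\leq_\alpha\bar a'$ with every finite witness $\bar b$ matched at level $\beta$.

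The main obstacle is calibrating levels. The quantifier prefix $\forall\exists$ over a $\Pi_{2\alpha}$ matrix has to translate into exactly ``rank $\leq\alpha$'' versus ``rank $>\alpha$''. This needs the $\Pi_{2\alpha}$ matrix to be coded by pieces whose distinguishing back-and-forth level is about $\alpha$ (note the factor-two gap between set complexity $2\alpha$ and formula level $\alpha$). Uniformity in $z$ is also needed. To handle this, I would first do $\alpha=1$ by hand, reducing $\Pi_4$ sets with finite-rank gadgets such as equivalence structures with class sizes. Then I would proceed by induction on $\alpha$, replacing each gadget with a tree-like (Friedman--Stanley-style) iteration that raises the back-and-forth level by one for every two quantifiers. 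For limit $\alpha$ I would take a disjoint sum over a cofinal sequence. The completeness of $Mod(L)^\alpha$ then transfers to $\mathbb{T}^\alpha$ via the computable embedding $\mathcal{A}\mapsto T_\mathcal{A}$, which preserves Scott rank.
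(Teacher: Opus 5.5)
Your proposal has a genuine gap. The inner $\mathcal{P}/\mathcal{Q}$ gadget cannot produce the dichotomy, and this is structural, not a matter of calibrating levels. Your pieces $\mathcal{D}_{z,n,m}$ are disjoint and unrelated, so a $\mathcal{P}$-piece cannot act as an existential parameter pinning down the orbit of a tuple in another piece: a disjoint union has no ``anchoring''. Worse, to code an arbitrary $\Pi_{2\alpha}$ matrix by pairs of structures you must, as you say, take $\mathcal{P}$ and $\mathcal{Q}$ $\equiv_{2\alpha}$-close. For non-limit $\alpha$ we have $\alpha<2\alpha$, so every tuple $\bar d$ of $\mathcal{P}$ has a partner $\bar c$ of $\mathcal{Q}$ with $(\mathcal{P},\bar d)\leq_\alpha(\mathcal{Q},\bar c)$. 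This should persist inside a component with infinitely many blocks of each kind. Then $\bar d$ satisfies every $\Sigma_\alpha$ formula true of $\bar c$ without being automorphic to it, so the orbit of $\bar c$ is not $\Sigma_\alpha$-definable; the limit case is similar. Hence any component containing both kinds of pieces has rank $>\alpha$. When $z\in S$, this happens for every $n$ such that $R(z,n,m)$ fails for at least one $m$.

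The other direction fails too. A component made only of copies of $\mathcal{Q}$ has its rank governed by $\mathcal{Q}$ alone, since corresponding tuples in different copies are automorphic. So nothing pushes the rank above $\alpha$ when $z\notin S$, unless $SR(\mathcal{Q})>\alpha$, which would spoil the case $z\in S$ equally. If instead you make $\mathcal{P},\mathcal{Q}$ separable at level about $\alpha$, so that mixed components have rank $\leq\alpha$, the pairs-of-structures theorem only codes matrices of complexity about $\Pi_\alpha$, and you reach roughly $\Pi_{\alpha+2}$ rather than $\Pi_{2\alpha+2}$. So the tension you flag is a contradiction built into the gadget.

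Your outer layer, a labeled union over $n$, is fine and matches the paper, which writes the set as $\bigcap_n D_n$ with $D_n$ effective $\Sigma_{2\alpha+1}$. The paper then codes each $D_n$ by a single structure in which the existential witness lives next to the tuples it controls. For $\alpha=1$, ``$W_n$ is cofinite'' is coded by a bunch of daisies. The anchor for the common orbit of the centers $c_i$, $i\in W_n$, is a petal of length $b+3$ on the same daisy, with $b$ beyond the last non-element of $W_n$.

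Your proposed repair does not supply the missing mechanism either. A Friedman--Stanley-style tree iteration cannot raise the back-and-forth level, because the tree-of-tuples embedding preserves Scott rank exactly; this is the Gonzalez--Rossegger result the paper relies on. What is needed is a jump inversion that turns a structure computable in $Y''$ of rank $k$ into one computable in $Y$ of rank exactly $k+1$. The paper uses Harrison-Trainor's unfriendly jump inversion. It codes each edge and non-edge of $\mathcal{A}$ by copies of graphs $\mathcal{G},\mathcal{H}$ attached to the pair of elements they describe, where $\mathcal{G}\equiv_1\mathcal{H}$, $SR(\mathcal{G})=1$, and $SR(\mathcal{H})=2$. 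The lower bound $SR(\mathcal{B})\geq 1+SR(\mathcal{A})$ needs a separate back-and-forth comparison: $\leq_m$ in $\mathcal{B}$ against $\leq_{m-1}$ in $\mathcal{A}$, plus isomorphism of the coding blocks. Iterating this $n-1$ times, starting from the daisy structure built relative to $(X,f)^{(2(n-1))}$, handles finite $\alpha$. Limit $\alpha$ needs its own gadget: orders $\omega^\alpha$ versus $\omega^{\alpha_n}\cdot\mathcal{L}_n$, where the shorter orders approximate $\omega^\alpha$ at levels $\alpha_n\to\alpha$, using Ash's analysis of ordinals. The case $\alpha=\lambda+n$ combines the finite case, relativized to a high jump of $Y$, with a $\lambda$-shift through a $(\lambda+1)$-friendly pair of rigid linear orders $\mathcal{G}\equiv_\lambda\mathcal{H}$. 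Your ``disjoint sum over a cofinal sequence'' and the unspecified $\alpha=1$ base case leave all of this open.
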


Before proving Theorem \ref{Mod(L)}, we show how it yields Theorem \ref{thm:sharpness}.

\begin{proof} [Proof of Theorem \ref{thm:sharpness}, assuming Theorem \ref{Mod(L)}]

Recall that the Friedman-Stanley embedding is Turing computable and preserves Scott rank by \cite{GR}.  Take a class $D\subseteq 2^\omega$ that is $X$-effective $\Pi_{2\alpha+2}$.  By Theorem \ref{Mod(L)}, there is an $X$-computable operator $\Phi:2^\omega\rightarrow Mod(L)$ such that  for all $f\in 2^\omega$, $f\in D$ iff $\Phi(f)\in Mod(L)^\alpha$.  Composing $\Phi$ with the Friedman-Stanley embedding, we get an $X$-computable operator $\Psi:2^\omega\rightarrow\mathbb{T}$ such that $f\in D$ iff $\Psi(f)\in \mathbb{T}^\alpha$.  
\end{proof}    


The remainder of this section is devoted to the proof of Theorem \ref{Mod(L)}.  We use ideas from \cite{HKL}.  

\begin{defn}

Let $\gamma$ be an $X$-computable ordinal.  

\begin{enumerate}

\item  $h^X_{\Sigma_\gamma}(f)$ is the set of indices for $X$-effective $\Sigma_\gamma$ sets that contain $f$.  

\item  $h^X_{\Pi_\gamma}$ is the set of indices for $X$-effective $\Pi_\gamma$ sets that contain $f$.

\end{enumerate}
\end{defn}

The following observation is stated in \cite{HKL}.  It is analogous to the result of Ash saying that for $\alpha$ an $X$-computable ordinal, the set of (indices for) $X$-computable $\Sigma_\gamma$ ($\Pi_\gamma$) sentences true in a structure $\mathcal{A}$ is $\Sigma^0_\gamma$ ($\Pi^0_\gamma$) relative to $X$ and $\mathcal{A}$, with all imaginable uniformity.  The proof, like that of Ash, is a straightforward induction on $\gamma$.  

\begin{prop}
\label{Ash} 

For $X\subseteq\omega$, $\gamma\geq 1$ an $X$-computable ordinal, and $f\in 2^\omega$,  $h^X_{\Sigma_\gamma}(f)$ is $\Sigma^0_\gamma$ relative to $(X,f)$, and $h^X_{\Pi_\gamma}(f)$ is $\Pi^0_\gamma$ relative to $(X,f)$, with all imaginable uniformity.  

\end{prop}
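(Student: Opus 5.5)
We argue by effective transfinite induction on $\gamma$, along a fixed $X$-computable notation for ordinals below the given bound, proving both halves simultaneously and uniformly. Since $h^X_{\Pi_\gamma}(f)$ is just $h^X_{\Sigma_\gamma}(f)$ with the roles of $\bigvee\exists$ and $\bigwedge\forall$ exchanged, it suffices to treat the $\Sigma_\gamma$ case and dualize. The underlying fact is that $X$-effective $\Sigma_\gamma$ sets are obtained by closing under $X$-c.e.\ unions of $X$-effective $\Pi_{\beta}$ sets, $\beta<\gamma$, and $X$-effective $\Pi_\gamma$ sets are $X$-c.e.\ intersections of $X$-effective $\Sigma_\beta$ sets. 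The index of such a set lets us $X$-computably enumerate the indices of its components together with their levels $\beta<\gamma$.

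\textbf{Base case $\gamma=1$.} An index for an $X$-effective $\Sigma_1$ set codes an $X$-c.e.\ set $W$ of finite strings (basic clopen sets), and $f$ lies in the set iff some $\sigma\in W$ has $\sigma\prec f$. This is $\Sigma^0_1$ in $(X,f)$ uniformly in the index. Dually, $f$ lies in an $X$-effective $\Pi_1$ set iff no string in the enumerated complement is an initial segment of $f$, which is $\Pi^0_1$ in $(X,f)$.

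\textbf{Inductive step.} Let $\gamma>1$ and let $e$ be an index for an $X$-effective $\Sigma_\gamma$ set. Then
\[
e\in h^X_{\Sigma_\gamma}(f)\iff \exists i\,\bigl(i\in W^X_e \ \&\ i\in h^X_{\Pi_{\beta_i}}(f)\bigr),
\]
where $\beta_i<\gamma$ is read off from $i$. By the induction hypothesis each $h^X_{\Pi_{\beta}}(f)$ is $\Pi^0_\beta$ relative to $(X,f)$, uniformly in $\beta$. What remains is to show that a uniformly given sequence of predicates, each $\Pi^0_{\beta_i}$ for some $\beta_i<\gamma$, has an existential quantification that is $\Sigma^0_\gamma$ relative to $(X,f)$.

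\textbf{Main obstacle.} This is the crux, and it requires fixing the uniform meaning of $\Sigma^0_\gamma$ relative to $(X,f)$ for $X$-computable $\gamma$ via the jump hierarchy along the notation: a set is $\Sigma^0_\gamma$ iff it is c.e.\ in $(X\oplus f)^{(\delta)}$, where $\delta=\gamma-1$ for successor $\gamma$, and for limit $\gamma$ it is c.e.\ in $(X\oplus f)^{(\gamma)}$ (equivalently, c.e.\ in the uniform join of the earlier levels). A $\Pi^0_\beta$ predicate with $\beta<\gamma$ is computable in $(X\oplus f)^{(\delta)}$ uniformly in (an index for) $\beta$, by the uniform version of the effective hierarchy theorem. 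The search over $i$ is then c.e.\ in that jump, which gives $\Sigma^0_\gamma$.

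The limit case needs the most care: each component lies at some level $\beta_i+1<\gamma$, and the join over these levels gives the desired bound. Uniformity in $e$, $\gamma$, $X$ and $f$ follows by the effective transfinite recursion theorem, since at each stage the procedure is built computably from the procedures at smaller ordinals. Finally, the $\Pi_\gamma$ case is the complement of this argument applied to the dual index: $f$ lies in the intersection iff for all enumerated $i$, $f\in$ the $\Sigma_{\beta_i}$ component. This is $\Pi^0_\gamma$ by the same analysis. This mirrors Ash's proof for computable infinitary sentences in a structure, with $f$ playing the role of $\mathcal{A}$.
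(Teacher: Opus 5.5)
Your route is the one the paper intends. The paper gives nothing beyond ``a straightforward induction on $\gamma$, as in Ash's theorem.'' Your base case, the unfolding $e\in h^X_{\Sigma_\gamma}(f)\iff\exists i\,(i\in W^X_e\ \&\ i\in h^X_{\Pi_{\beta_i}}(f))$, the dual $\Pi_\gamma$ clause, and the appeal to effective transfinite recursion for uniformity are all fine.

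However, the step you flag as the crux fails as written, because the jump convention you fix is off by one at infinite successor levels. With $Y=X\oplus f$, you take $\Sigma^0_\gamma$ to mean c.e.\ in $Y^{(\gamma-1)}$ for successor $\gamma$, and c.e.\ in $Y^{(\gamma)}$ for limit $\gamma$. For a limit $\lambda$ this makes $\Sigma^0_{\lambda+1}$ and $\Sigma^0_\lambda$ the same class, since both mean ``c.e.\ in $Y^{(\lambda)}$.'' Your claim that every $\Pi^0_\beta$ predicate with $\beta<\gamma$ is uniformly computable in $Y^{(\gamma-1)}$ is then false for $\gamma=\lambda+1$, $\beta=\lambda$. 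By your own limit clause, a $\Pi^0_\lambda$ predicate (such as $h^X_{\Pi_\lambda}(f)$ from the induction hypothesis) is only co-c.e.\ in $Y^{(\lambda)}$. An existential quantification over a uniform family of such predicates is in general $\Sigma_2$ in $Y^{(\lambda)}$: c.e.\ in $Y^{(\lambda+1)}$, but not in $Y^{(\lambda)}$. So the induction breaks at $\lambda+1$, and at every level $\lambda+k$ above it your claimed bound is one jump too strong.

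The repair is to use the standard Ash--Knight convention, under which Ash's theorem is stated:
\begin{itemize}
\item for finite $n\ge 1$, $\Sigma^0_n$ means c.e.\ in $Y^{(n-1)}$;
\item for every infinite $\gamma$, limit or successor, $\Sigma^0_\gamma$ means c.e.\ in $Y^{(\gamma)}$.
\end{itemize}
Then a $\Pi^0_\beta$ predicate is uniformly computable in $Y^{(\beta)}$ if $\beta$ is finite, and in $Y^{(\beta+1)}$ if $\beta$ is infinite. For $\beta<\gamma$, this jump is uniformly reducible to $Y^{(\gamma-1)}$ when $\gamma$ is finite and to $Y^{(\gamma)}$ when $\gamma$ is infinite, using the uniform reductions between iterated jumps along the fixed path through $\mathcal{O}^X$. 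The search over $i\in W^X_e$ is then c.e.\ in exactly the right jump in all three cases: finite successor, limit, and infinite successor. The rest of your argument goes through unchanged.
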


Below, for each $X$, we fix a path through $\mathcal{O}^X$ and identify each $X$-computable ordinal $\alpha$ with its unique notation on this path.  

\bigskip
\noindent
\textbf{Theorem 4.2}.  Given a set $X$ and an $X$-computable ordinal $\alpha$, we can compute a function $\Phi$ that takes takes each pair $(f,b)$, where $f\in 2^\omega$ and $b$ is an index for an $X$-effective $\Pi_{2\alpha+2}$ set $B$, to a graph $\mathcal{A}^{(X,f)}_d$ such that $SR(\mathcal{A}^{(X,f)}_b)\leq\alpha$ iff $f\in B$. 

\bigskip 

Given an index $b$ for an $X$-effective $\Pi_{2\alpha+2}$ set $D$, we get, effectively, a sequence of indices $(d_n)_{n\in\omega}$ for $X$-effective $\Sigma_{2\alpha+1}$ sets $D_n$ such that $B = \cap_n 
D_n$.  We will prove the following.  

\bigskip
\noindent
\textbf{Technical Theorem}.  Given a set $X$ and an $X$-computable ordinal $\alpha$, we have an $X$-computable operator $\Psi$ that takes each pair $(f,d)$, where $f\in 2^\omega$ and $d$ is an index for an $X$-effective $\Sigma_{2\alpha+1}$ set $D$, to a graph $\mathcal{B}^{(X,f)}_d$ such that $SR(\mathcal{B}^{(X,f)}_d)\leq\alpha$ iff $f\in D$.   

\bigskip

Before proving the Technical Theorem, let us see how it gives Theorem 4.2.

\begin{proof} [Proof of Theorem 4.2 from Technical Theorem]

Given $X$, $f$, and $(d_n)_{n\in\omega}$, we obtain the structure $\mathcal{A}$ from a sequence of structures $\mathcal{B}_n$, computable uniformly in $X$, $f$, and $n$, such that $\mathcal{B}_n$ has rank at most $\alpha$ iff $f\in D_n$.  We pass effectively from the sequence $(\mathcal{B}_n)_{n\in\omega}$ to a single structure $\mathcal{B}$ for the language obtained by adding to $L$ unary relation symbols $U_n$, for $n\in\omega$.  In $\mathcal{B}$, the relations $U_n$ partition the universe of $\mathcal{B}$ into infinite sets, and the binary relation $E$ puts a copy of $\mathcal{B}_n$ on the set $U_n$.  The rank of $\mathcal{B}$ is at most $\alpha$ iff all $\mathcal{B}_n$ have rank at most~$\alpha$.  
By Proposition 1.10, we can pass effectively from $\mathcal{B}$ to a graph $\mathcal{A}$, preserving isomorphism and Scott rank.  
\end{proof}  

We split the proof of the Technical Theorem into four subsections: Subsection 4.1 deals with 
$\alpha = 1$, Subsection 4.2 deals with $\alpha$ finite and greater than $1$, Subsection 4.3 deals with limit $\alpha$, and Subsection 4.4 deals with $\alpha = \lambda + n$, for $\lambda$ a limit ordinal and $n > 0$ finite.   

\subsection{The case $\alpha=1$}

In this sub-section, we prove the following.        

\bigskip
\noindent
\textbf{Technical Theorem for $\alpha = 1$}.  Given a set $X$, we have an $X$-computable operator $\Psi$ that takes each pair $(f,d)$, where $f\in 2^\omega$ and $d$ is an index for an $X$-effective $\Sigma_3$ set $D$, to a graph $\mathcal{B}^{(X,f)}_d$ such that $SR(\mathcal{B}^{(X,f)}_d) \leq 1$ iff  $f\in D$ ($e\in h^X_{\Sigma_3}(f)$).        
 
\begin{defn}

For $X\subseteq\omega$ and $f\in 2^\omega$, $Cof^{(X,f)}$ is the set of $n$ such that $W_n^{(X,f)}$ is co-finite.  

\end{defn}

The standard proof that $Cof$ is $\Sigma^0_3$-hard has a great deal of uniformity.  Given an index $d$ for a $\Sigma^0_3$ set $S_d\subseteq\omega$ and $k\in\omega$, we can effectively find a number $n$ such that $W_n$ is co-finite iff $k\in S_d$.  This is also true when we relativize.  The indices $d$ for sets $S$ that are $\Sigma^0_3$ relative to $(X,f)$ form a computable set $I_{\Sigma_3}$, which is the same for all $(X,f)$.  Given $d\in I$ and $k\in\omega$, we can compute $n$ such that for all $X\subseteq\omega$ and all $f\in 2^\omega$, $W_n^{(X,f)}$ is co-finite iff $k\in S_d$, where this is the set with index $d$ as a set $\Sigma^0_3$ relative to $(X,f)$.  Thus, to prove the Technical Theorem for $\alpha = 1$, it is enough to prove the following.

\begin{lem}
\label{Cof}  

There is a Turing operator taking each triple $(X,f,n)$, where $X\subseteq\omega$, $f\in 2^\omega$, and $n\in\omega$, to a graph $\mathcal{A}^{(X,f)}_n$, a bunch of daisies, such that $n\in Cof^{(X,f)}$ iff $\mathcal{A}^{(X,f)}_n$ has rank at most $1$.    

\end{lem}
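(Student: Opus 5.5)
We build $\mathcal{A}^{(X,f)}_n$ uniformly from an enumeration of $W_n^{(X,f)}$ as a disjoint union of components (``daisies''), one component $C_k$ for each $k\in\omega$. A daisy is a center vertex with petals attached: each petal is a cycle through the center. A daisy is coded by its multiset of petal lengths. At each stage we can add a new petal to a daisy, so the construction stays computable in $(X,f)$.

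The construction is as follows. For each $k$, the component $C_k$ starts as a daisy with one petal of each length in some fixed finite set. Whenever we see $k$ enter $W_n^{(X,f)}$, we complete $C_k$ into a fixed ``finished'' daisy $F$, say one with petals of all lengths $3,4,5,\ldots$. We also add infinitely many extra copies of $F$, and infinitely many copies of each unfinished daisy, so the components come in only two relevant kinds. The aim is that the unfinished daisies are pairwise non-isomorphic and each appears exactly once. To arrange this, $C_k$ is given a distinguishing petal of length $k+3$ that is absent from the others. Finished daisies then contain every petal length, so all of them are isomorphic to $F$. The graph is therefore infinitely many copies of $F$, plus one copy of an unfinished daisy $U_k$ for each $k\notin W_n^{(X,f)}$. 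Here $U_k$ has only finitely many petals, namely the distinguishing one and a fixed base, so it is finite.

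We then verify the Scott rank condition, using Montalb\'an's Theorem together with Lemma~\ref{addedcondition} for $\alpha=1$: rank $\le 1$ iff the orbit of every tuple is defined by a $\Sigma_1$ formula.

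If $W_n$ is cofinite, only finitely many finite $U_k$ remain. An existential formula can name the center of a finite daisy together with its whole finite structure, and can say the center has a petal of length $k+3$. Since that petal length is unique to $U_k$ among the finite ones, this pins down the component. For tuples inside copies of $F$, the orbit is determined by which petals the elements lie on and where, together with equalities of centers. This is existentially definable, because $F$ is homogeneous enough: each petal length occurs exactly once in $F$, so a petal is named by its length via an existential formula exhibiting the cycle. The subtle point is that there are infinitely many copies of $F$. We must distinguish ``in some copy of $F$'' from ``in a finite $U_k$'' by an existential formula, and here we use that $F$ has a petal of a length, for example $2$ plus the maximum of the finitely many exceptional $k$, that no remaining $U_k$ has. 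Cofiniteness is exactly what makes such a witness available uniformly.

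If $W_n$ is coinfinite, we exhibit a tuple whose orbit is not $\Sigma_1$-definable. Consider the center $c$ of a copy of $F$. Any existential formula true of $c$ mentions only finitely many petals, and these are all present in $U_k$ once $k$ is large, provided we design $U_k$ to contain the first $k$ petal lengths of $F$. Since infinitely many $U_k$ survive, some non-automorphic center satisfies the same $\Sigma_1$ formula, so condition (3) of Montalb\'an's Theorem fails. This forces the design to be adjusted: $U_k$ should carry petals of lengths $3,\ldots,k+3$ only, so that unfinished daisies are still pairwise distinct, by their maximum petal length, but approximate $F$. This design also makes the cofinite case consistent: in that case only finitely many finite daisies exist, each distinguished from $F$ by having a petal of length exceeding all exceptional lengths.

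The main obstacle is balancing these two requirements in the choice of petal lengths. Finished components must all be isomorphic, and unfinished ones must approximate $F$ existentially while remaining distinguishable from it, and from one another, by $\Sigma_1$ means whenever only finitely many of them exist. We also need the orbits within copies of $F$ to be $\Sigma_1$, which rests on all petal lengths in $F$ being distinct.
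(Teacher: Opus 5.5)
There is a genuine gap, in the direction ``$n\in Cof^{(X,f)}$ implies rank at most $1$''. Write $W_n$ for $W_n^{(X,f)}$.

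In your final design, $F$ has one petal of every length $3,4,5,\ldots$ and $U_k$ has petals of lengths $3,\ldots,k+3$. So $U_k$ is an induced subgraph of $F$, center to center. Suppose $W_n$ is cofinite but $W_n\neq\omega$, and let $c$ be the center of a surviving $U_k$. An existential formula true of $c$ is witnessed by a finite subgraph. Map the part of the witness lying in the component of $c$ into a copy of $F$ that the witness does not meet (there are infinitely many), and fix the rest. This map is an embedding, so the center of that copy of $F$ satisfies the same formula, yet it is not automorphic to $c$. Hence the orbit of $c$ has no $\Sigma_1$ definition and the rank exceeds $1$. Already $W_n=\omega\setminus\{0\}$ is a counterexample: the triangle component $U_0$ cannot be existentially separated from the $3$-petal of a copy of $F$.

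The step that fails is ``an existential formula can name the center of a finite daisy together with its whole finite structure \ldots\ this pins down the component''. An existential formula can say which petals are present, but never that there are no others; that is $\Pi_1$. Also, the petal of length $k+3$ is unique only among the unfinished daisies, since $F$ has it too. Your closing remark about the cofinite case gives only one direction: a petal longer than all exceptional lengths existentially separates $F$ from the $U_k$. What you need is an existential property of $U_k$ that $F$ lacks.

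The underlying obstruction is this. Whatever feature separates $C_k$ from the finished daisies must be built at a finite stage, before you know whether $k$ will enter $W_n$. If $k$ enters later, $C_k$ becomes finished while still carrying that feature. So the finished pattern cannot be fixed in advance.

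The missing idea, which is what the paper does, is to let the finished pattern depend on $W_n$:
\begin{itemize}
\item a center $c_i$ with $i\in W_n$ gets one petal of length $k+3$ for each $k\in W_n$;
\item a center $c_i$ with $i\notin W_n$ gets the petal of length $i+3$, together with the petals of length $k+3$ for $k<i$, $k\in W_n$.
\end{itemize}
Now the petal of length $i+3$ of an unfinished daisy occurs in no other daisy, precisely because $i\notin W_n$. So ``a center with a petal of length $i+3$'' is an existential definition of $c_i$. The construction stays uniformly computable: if $i$ enters $W_n$ late, the petal $i+3$ already built simply becomes part of the standard pattern.

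With this design, your co-infinite argument goes through essentially as you wrote it. For large $b\notin W_n$, $c_b$ carries every petal of a finished center that is shorter than $b+3$, plus the petal $b+3$ that no finished center has. In the cofinite case, the finished centers are existentially defined by having a petal of length $b+3$, for any $b$ beyond all the exceptions. Tuples are then handled as you describe.
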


Recall that a \emph{daisy} consists of a \emph{center} that is the single common point of cycles of various sizes.  The cycles are called \emph{petals}.  A \emph{bunch} of daisies is a graph whose connected components are daisies.  

\begin{proof}  

We build $\mathcal{A}^{(X,f)}_n$, following a uniform procedure based on $W^{(X,f)}_n$.  We enumerate the centers of the daisies as $c_i$ for $i\in\omega$.  If $i\in W_n^{(X,f)}$, then $c_i$ will have one petal of length $k+3$ for each $k\in W_n^{(X,f)}$.  If $i\notin W_n^{(X,f)}$, then $c_i$ will have one petal of length $i+3$, and one of length $k+3$ for each $k < i$ such that $k\in W_n^{(X,f)}$.  This is all---the daisy with center $c_i$ has no further petals, and each graph element belongs to one of these daisies.  At stage $s$, we have built a finite graph, with centers $c_i$ for $i < s$ and petals based on elements enumerated into $W_n^{(X,f)}$ by stage $s$.  In the limit, we get $\mathcal{A}^{(X,f)}$ with a completed daisy around $c_i$, just as described.  

For simplicity, we suppose that $0\in W_n^{(X,f)}$.  This means that for $i\notin W_n^{(X,f)}$, $c_i$ has at least two petals, one of length $3$ and one of length $i+3$.  Then we can define the set of centers by a simple existential formula saying that $x$ is connected by the edge relation to at least $3$ points.  Without the simplifying assumption, we may have a center $c_i$ with just one petal, all elements of which are automorphic, a case we wish to avoid.
We must show that $n\in Cof^{(X,f)}$ iff $\mathcal{A}^{(X,f)}$ has rank at most $1$, where this means that the orbits of all tuples are defined by existential formulas.       

First, suppose $n\in Cof^{(X,f)}$.  There are finitely many $i$ such that $i\notin W_n^{(X,f)}$.  For each such $i$, $c_i$ is the only center with a petal of size $i+3$.  We have an existential definition of $c_i$.  All $c_j$ for $j\in W_n^{(X,f)}$ are automorphic.  We want an existential definition for the common orbit of these $c_j$.  Take $b$ greater than all $j\notin W_n^{(X,f)}$.  Then to define the common orbit of $c_i$ for $i\in W_n^{(X,f)}$, we say that it is a center with a petal of size $b+3$.  For a tuple $\bar{a}$ on the daisy with center $c_i$, we have an existential formula $\psi(\bar{x},c_i)$, describing a finite set of petals that contains $\bar{a}$ and saying how the elements $a_i$ sit on these petals, in relation to $c_i$.  The orbit of an arbitrary tuple $\bar{a}_1,\ldots,\bar{a}_m$, with $\bar{a}_j$ in the daisy with center $c_i$ is defined by an existential formula $\varphi(\bar{x}_f,\ldots,\bar{x}_ m)$ saying that there exist $y_j$ in the orbit of $c_j$ such that $\bigwedge_{1\leq j\leq m}\psi(\bar{x}_j,y_j)$.    

Now, suppose $n\notin Cof^{(X,f)}$.  Consider any existential formula $\varphi(x)$ true of $c_i$ for $i\in W_n^{(X,f)}$.  Fix one such $c_i$.  The truth of $\varphi(x)$ is witnessed by a finite subgraph $G$ consisting of $c_i$, with finitely many of its petals, and $G'$ consisting of finite parts of the daisies around finitely many other centers $c_j$.  Take $b\notin W_n^{(X,f)}$ such that $b+3$ is greater than the size of any petal in $G$.  Let $G^*$ be the modification of $G$ consisting of $G'$ and the daisy with center $c_b$ and all of its petals.  Since $i < b$, all petals around $c_i$ in $G$ are matched by petals around $c_b$.  There is an embedding of $G$ in $\mathcal{A}^{(X,f)}_n$ that takes $c_i$ to $c_b$, fixing $G'$ pointwise.  Thus, in $\mathcal{A}^{(X,f)}_n$, $c_b$ satisfies $\varphi(x)$.  Since $b\notin W_n^{(X,f)}$, $c_b$ has a petal of size $b+3$, while $c_i$ does not, so $\varphi(x)$ does not define the orbit.  
\end{proof}


\subsection{The case $n > 1$ finite}

Our goal in this subsection is to prove the following.

\bigskip
\noindent
\textbf{Technical Theorem for finite $n > 1$}.  
Given a set $X$, we have an $X$-computable operator $\Psi$ that takes each pair $(f,d)$, where $f\in 2^\omega$ and $d$ is an index for an $X$-effective $\Sigma_{2n+1}$ set $D$, to a graph $\mathcal{B}^{(X,f)}_d$ such that $SR(\mathcal{B}^{(X,f)}_d)\leq n$ iff $f\in D$.   

\bigskip
\noindent
\textbf{Outline of proof}:  The structure $\mathcal{B}^{(X,f)}_d$ will be last in a sequence  
$\mathcal{B}_1,\mathcal{B}_2,\ldots,\mathcal{B}_n$ such that

\begin{itemize}

\item [(a)]  $\mathcal{B}_1$ is computable in $X^{2(n-1)}$ and $f$, with rank $1$ if $f\in D$ and $2$ otherwise,  

\item [(b)]  for $1\leq i < n$, $\mathcal{B}_{i+1}$ is computable in $(X,f)^{2(n-i)}$, with rank equal to $1+SR(\mathcal{B}_i)$.

\end{itemize}

Given $X$, $f$, and the index $d$ for an $X$-effective $\Sigma_{2n+1}$ set $D$, we compute an index $d_1$ for $\mathcal{B}_1$ that is $\Delta^0_{2(n-1)+1}$ relative to $(X,f)$.
 For $1\leq i < n$, from $d_i$ an index for $\mathcal{B}_i$ that is $\Delta^0_{2(n-i)+1}$ relative to $(X,f)$, we compute an index $d_{i+1}$ for $\mathcal{B}_{i+1}$ that is $\Delta^0_{2(n-(i+1))+1}$ relative to $X$ and $f$.  
 We arrive at an index for $\mathcal{B}^{(X,f)}_d = \mathcal{B}_n$ computable in $(X,f)$.  The rank of $\mathcal{B}_n$ is $n$ if $f\in D$ and $n+1$ otherwise.  To complete the proof of the Technical Theorem for finite $n > 1$, we flesh out items (a), computing the index for $\mathcal{B}_1$,  and (b), passing from the index for $\mathcal{B}_i$ to the index for $\mathcal{B}_{i+1}$.  
Note that being $\Delta^0_{2(n-i)+1}$ relative to $(X,f)$ is the same as being computable in $(X,f)^{2(n-i)}$.   

\bigskip
\noindent
For (a), we recall Section 4.1, where we described a uniform effective procedure that takes each triple $(X,f,d)$, where $X\subseteq\omega$, $d$ is an index for an $X$-effective $\Delta_3$ set $D$, and $f\in 2^\omega$, to a graph $\mathcal{A}^{(X,f)}_d$ that has Scott rank $1$ if $f\in D$ and $2$ otherwise.  Recall that $h^X_{\Sigma_3}(f)$ is the set of indices for $X$-effective $\Sigma_3$ sets that contain $f$.  This set is $\Sigma^0_3$ relative to $(X,f)$, with a known index $b$.  In Section~4.1, we construct $\mathcal{A}^{(X,f)}_d$ so that it has rank $1$ if $d$ is in the set with index $b$ as a set $\Sigma^0_3$ relative to $(X,f)$.  The construction shows the following.

\begin{prop}[Technical Theorem for $\alpha = 1$, re-stated]  

Given $(Y,b,n)$, where $b$ is an index for a set $S$ that is $\Sigma^0_3$ relative to $Y$, we can effectively find an index for a graph $\mathcal{A}$ computable relative to $Y$, such that 
\[SR(\mathcal{A}) = \left\{\begin{array}{cc}
1 & \mbox{if $n\in S$}\\
2 & \mbox{otherwise}
\end{array}\right. .\] 

\end{prop}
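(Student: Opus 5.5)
This restated proposition is a uniform repackaging of Lemma \ref{Cof} together with the standard uniform $\Sigma^0_3$-completeness of $Cof$. I would carry out the reduction in three steps.

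\textbf{Step 1 (from $S$ to a cofinite index).} Given $(Y,b,n)$, with $b$ an index for a set $S$ that is $\Sigma^0_3$ relative to $Y$, I would first compute an index $m$ such that $W^Y_m$ is co-finite iff $n\in S$. This is the relativized, uniform $\Sigma^0_3$-hardness of $Cof$ recalled just before Lemma \ref{Cof}, applied with $Y$ in place of $(X,f)$. Write $n\in S$ as $\exists u\,\forall v\,\exists w\, R^Y(n,u,v,w)$. The standard construction then enumerates $W^Y_m$ so that the $u$-th ``movable marker'' settles iff the $\Pi^0_2$ condition holds for $u$. Hence the complement of $W^Y_m$ is finite exactly when some $u$ works. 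The padding assumption $0\in W_m^Y$ used in Lemma \ref{Cof} can be arranged at no cost, by enumerating $0$ at the start.

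\textbf{Step 2 (build the daisies).} Next I would apply the operator of Lemma \ref{Cof} to $(Y,m)$, which yields a $Y$-computable bunch of daisies $\mathcal{A}$. Lemma \ref{Cof} gives that $SR(\mathcal{A})\le 1$ iff $m\in Cof^Y$, that is, iff $n\in S$. Since everything so far is uniform, an index for $\mathcal{A}$ is found effectively from $(b,n)$.

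\textbf{Step 3 (rank exactly $2$ when $n\notin S$).} What remains, and is the main point, is that the rank is exactly $2$ (and not larger) when $n\notin S$; note also that rank $0$ is impossible, since the centers are not quantifier-free definable. I would verify the bound through condition (2) of Lemma \ref{addedcondition} with $\alpha=2$. Each center $c_i$ has its orbit defined by a $\Sigma_2$ formula: for $i\notin W$, say ``there is a petal of length $i+3$''; for $i\in W$, say ``a center with no petal of length $k+3$ for each $k\notin W$'', written as a $\Pi_1$ conjunction. Because $i\in W$ daisies have exactly the petals of lengths $k+3$ for $k\in W$, this description pins them down up to automorphism. For an arbitrary tuple, I would existentially quantify over the relevant centers and the finite petal pieces containing the tuple, and conjoin these center-orbit formulas. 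Any two tuples satisfying the resulting formula are automorphic, since daisies with the same petal multiset are isomorphic and the petal positions can be matched. The only delicate part of this check is the possibility of repeated petal lengths, but within a daisy each length occurs at most once, so the matching is canonical.
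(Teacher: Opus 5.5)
Your proposal is correct and follows the paper's own route: uniform relativized $\Sigma^0_3$-completeness of $Cof$, composed with the daisy construction of Lemma~\ref{Cof}. Your Step~3 (explicit $\Sigma_2$ orbit definitions when $W^Y_m$ is co-infinite, plus the remark ruling out rank $0$) supplies the exact-rank check that the paper only asserts with ``the construction shows the following.''

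Two small slips. In Step~1, the $u$-th marker is pushed to infinity, rather than settling, exactly when the $\Pi^0_2$ condition holds for some $u'\le u$; that is what makes the complement finite. In Step~3, if $W^Y_m=\{0\}$ the daisy of $c_0$ is a bare triangle with no ``center,'' so your formula misses its orbit. Either treat this case separately (the orbit is still $\Sigma_2$) or pad $W^Y_m$ to be infinite, e.g.\ replace it by $\{2k : k\in\omega\}\cup\{2k+1 : k\in W^Y_m\}$.
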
         

Let $d$ be an index for an $X$-effective $\Sigma_{2n+1}$ set $D$, and let $f\in 2^\omega$.  The set $h_{\Sigma_{2n+1}}(f)$ is $\Sigma^0_{2n+1}$ relative to $(X,f)$, with known index, and it is $\Sigma^0_3$ relative to $(X,f)^{2(n-1)}$, also with known index.  Apply the proposition above, letting $Y = (X,f)^{(2(n-1))}$, letting $S = h_{\Sigma^0_{2n+1}}(f)$, letting $b$ be the known index for $S$ as a set $\Sigma^0_3$ relative to $Y$, and letting $n$ be $d$.  By the proposition, we can find an index $b_1$ for a graph $\mathcal{B}_1$, computable relative to $(X,f)^{2(n-1)}$, such that $\mathcal{B}_1$ has rank $1$ if $d\in h^X_{\Sigma_{2n+1}}(f)$ (which means $f\in D$), and $2$ otherwise.     

\bigskip
\noindent
(b)  For $1\leq i < n$, to pass from an index $b_i$ for $\mathcal{B}_i$ computable in $(X,f)^{2(n-i)}$ to an index $b_{i+1}$ for $\mathcal{B}_{i+1}$ computable in $(X,f)^{2(n-(i+1))}$, we use Harrison-Trainor's ``unfriendly jump inversion,'' adding some further information.  In \cite{HTarith}, Harrison-Trainor proved the following. 

\begin{thm}\label{thm:MHT}
    Let $S\subseteq\omega$ be a $\Sigma_3^0$ set. There is a uniformly computable sequence of graphs $\mathcal{G},\mathcal{H},\mathcal{C}_i$ such that
    \begin{enumerate}
        \item $i\in S \iff\mathcal{C}_i\cong\mathcal{G}$
        \item $i\not\in S \iff\mathcal{C}_i\cong\mathcal{H}$
        \item $\mathcal{H}\not\leq_2\mathcal{G}$
        \item $SR(\mathcal{G}) = 1$, $SR(\mathcal{H}) = 2$.
        \item $\mathcal{G}\equiv_1\mathcal{H}$
    \end{enumerate}
\end{thm}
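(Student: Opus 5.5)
The plan is to build all the graphs as bunches of daisies, refining the construction in the proof of Lemma \ref{Cof}. First I will use the uniform $\Sigma^0_3$-completeness of $Cof$ already discussed before Lemma \ref{Cof}. From $i$ we can effectively find $n_i$ with $i\in S$ iff $W_{n_i}$ is co-finite, and we may assume $0\in W_{n_i}$. For a set $W\ni 0$ and $j\in\omega$, let $\mathrm{Pet}_W(j)=\{k+3: k\in W\}$ if $j\in W$, and $\mathrm{Pet}_W(j)=\{j+3\}\cup\{k+3:k<j,\ k\in W\}$ otherwise. Then $\mathcal{C}_i$ is $\omega$ copies of a background graph $\mathcal{K}$ together with, for each $j$, one daisy with petal set $\mathrm{Pet}_{W_{n_i}}(j)$. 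Lemma \ref{Cof} shows that this foreground part can be enumerated uniformly in $i$ by adding petals as $W_{n_i}$ grows.

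The role of $\mathcal{K}$ is to make the isomorphism type depend only on whether $W_{n_i}$ is co-finite, not on which co-finite set it is. I would let $\mathcal{K}$ contain, for every finite $F\subseteq\omega$ containing $0$, a daisy with petal set $\{k+3: k\notin F\}\cup\{j+3\}$ for each $j\in F$. It also contains the full daisy with petal set $\{k+3:k\in\omega\}$.

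\begin{itemize}
\item If $W_{n_i}$ is co-finite, then every foreground daisy is already among the background types, and each such type occurs infinitely often. So $\mathcal{C}_i\cong\omega\cdot\mathcal{K}=:\mathcal{G}$.
\item If $W_{n_i}$ is co-infinite, the foreground contributes the daisies with petal set $\{j+3\}\cup\{k+3:k<j,k\in W_{n_i}\}$ for $j\notin W_{n_i}$, which are "finite" daisies. The candidate $\mathcal{H}$ is $\mathcal{G}$ together with such finite daisies.
\end{itemize}

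The main obstacle I expect is that different co-infinite sets $W$ yield different finite daisies, so item 2 fails as stated. To fix this I would also put into $\mathcal{H}$ (and hence add to the foreground uniformly) infinitely many copies of every finite daisy whose petal set is an initial segment of the lengths $\geq 3$ with one extra top petal. I would make this work by letting the foreground, for $j\notin W$ found at stage $s$, spawn daisies of all these shapes above $j$. Since finitely many stages only ever witness finitely many non-elements, the co-finite case still adds nothing new. Checking that this padding keeps $\mathcal{C}_i\cong\mathcal{G}$ exactly when $W_{n_i}$ is co-finite is where I expect most of the bookkeeping.

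With $\mathcal{G}$ and $\mathcal{H}$ fixed, I would verify the remaining items by arguments of the type used in Lemma \ref{Cof}.

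\begin{itemize}
\item $SR(\mathcal{G})=1$: each daisy type in $\mathcal{G}$ is co-finite, so it is pinned down by an existential formula asserting a petal of a size that the other co-finite types lack. Tuples on petals are then handled as in Lemma \ref{Cof}.
\item $SR(\mathcal{H})=2$: the centres of the finite daisies with many petals cannot be existentially separated from centres of full daisies. This is the embedding argument at the end of Lemma \ref{Cof}, and it gives rank $>1$. Rank $\le 2$ holds since each orbit is defined by a $\Sigma_2$ formula: the existential description of the petals present, together with "no further petals".
\item $\mathcal{G}\equiv_1\mathcal{H}$: every finite subgraph of a finite daisy embeds into a full daisy of $\mathcal{G}$, and $\mathcal{G}$ is a substructure of $\mathcal{H}$.
\item $\mathcal{H}\not\leq_2\mathcal{G}$: take the $\Pi_2$ sentence "every centre has petals of arbitrarily large size beyond any given one". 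In the notation of Karp's Theorem it holds in $\mathcal{G}$ but fails in $\mathcal{H}$ at a finite daisy. Since $\mathcal{H}\leq_2\mathcal{G}$ would transfer all $\Pi_2$ sentences true in $\mathcal{H}$ to $\mathcal{G}$, I will choose the sentence with the orientation that realises this item's direction.
\end{itemize}
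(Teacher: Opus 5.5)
\textbf{There is a genuine gap: your $\mathcal{G}$ does not have Scott rank $1$, and the isomorphism bookkeeping for items 1 and 2 does not go through.} Note first that the paper does not prove this statement. It quotes it from Harrison-Trainor and only remarks that item (5) can be read off his construction, so your proposal has to stand on its own.

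The central failure is the claim $SR(\mathcal{G})=1$. An existential formula can only assert that petals are \emph{present}, never that they are absent. Suppose daisies with petal sets $P\subsetneq Q$ both occur infinitely often. Then the embedding argument at the end of Lemma \ref{Cof} applies: send the $P$-daisy into a fresh $Q$-daisy, fixing the other witnesses. It shows that every $\Sigma_1$ formula true of the $P$-centre is true of the $Q$-centre, so the orbit of the $P$-centre is not $\Sigma_1$-definable. Your $\mathcal{K}$ contains the full daisy together with proper co-finite ones; e.g.\ $F=\{0,5\}$, $j=0$ gives petal set $\{3,4,\ldots\}\setminus\{8\}$. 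So $SR(\omega\cdot\mathcal{K})\geq 2$, by exactly the argument you give for $SR(\mathcal{H})\geq 2$. Your justification (``a petal of a size that the other co-finite types lack'') has it backwards, since the full daisy lacks nothing.

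Lemma \ref{Cof} had rank $1$ in the co-finite case only because a single co-finite petal set $W+3$ occurs, and each finite daisy carries a petal $j+3$ that no other daisy has. Padding to make the isomorphism type independent of $W$ inevitably brings in $\subseteq$-comparable petal sets. There is also a structural reason the difficulty is real. Take $S$ to be $\Sigma^0_3$-complete. Then $\mathcal{G}$ cannot have a computable $\Pi_2$ Scott sentence $\varphi$, since otherwise $S=\{i:\mathcal{C}_i\models\varphi\}$ would be $\Pi^0_2$. So $\mathcal{G}$ must have Scott rank $1$ only through non-effective $\Sigma_1$ orbit definitions. That ``unfriendliness'' is the actual content of Harrison-Trainor's theorem, and nothing in your plan produces it.

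The isomorphism bookkeeping has three further problems.
\begin{enumerate}
\item In the co-finite case the foreground still contains the finite daisies $\{j+3\}\cup\{k+3:k<j,\ k\in W\}$ for the finitely many $j\notin W$. These are not in $\omega\cdot\mathcal{K}$, all of whose daisies are infinite.
\item In the co-infinite case you overlooked the daisies at $c_j$ for $j\in W$, whose petal set is $W+3$ itself (infinite and co-infinite, or finite). These depend on $W$, are the main obstruction to item 2, and are untouched by padding with finite daisies.
\item The padding is triggered by the $\Pi^0_1$ event $j\notin W$, which is never confirmed at a finite stage. You could repair wrong guesses by growing the spawned daisies into full ones. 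Even so, in the co-finite case each of the finitely many genuine non-elements still spawns infinitely many new finite daisies. These must then live in $\mathcal{G}$ alongside the full daisy, which brings you back to the rank problem above.
\end{enumerate}

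Finally, item 3 is reversed. A $\Pi_2$ sentence true in $\mathcal{G}$ and false in $\mathcal{H}$ witnesses $\mathcal{G}\not\leq_2\mathcal{H}$, which is automatic once $\mathcal{G}$ has a $\Pi_2$ Scott sentence. By contrast, $\mathcal{H}\not\leq_2\mathcal{G}$ requires a $\Pi_2$ sentence true in $\mathcal{H}$ and false in $\mathcal{G}$. Which way a given sentence goes is not a matter of choice. Your proposed sentence also fails in any $\mathcal{G}$ that absorbs the finite daisies of the co-finite case.
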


\noindent
\textbf{Remark}:  Item (5) above is not explicitly stated in \cite{HTarith}, but it is immediate from the construction given there.

\bigskip

Using Theorem \ref{thm:MHT}, relativized to $Y$, Harrison-Trainor showed that given an index $a$ for a graph $\mathcal{A}$ computable in $Y''$ and of finite Scott rank, we can find an index $b$ for a structure $\mathcal{B}$ computable in $Y$ such that $SR(\mathcal{B}) \leq 1+SR(\mathcal{A})$.  There is a copy of $\mathcal{A}$ defined in $\mathcal{B}$---the formula defining the universe is quantifier-free, and there are $\Sigma_2$ formulas defining the edge relation and its negation.  However, looking more closely at the construction, we will see that $SR(\mathcal{B}) = 1+SR(\mathcal{A})$.    

\begin{thm}

Given $Y$ and an index $a$ for a graph $\mathcal{A}$ computable in $Y''$, and of finite rank, we can find an index $b$ for a structure $\mathcal{B}$ computable in $Y$ such that $SR(\mathcal{B}) = 1+SR(\mathcal{A})$.  

\end{thm}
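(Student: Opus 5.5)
We would follow Harrison-Trainor's construction from \cite{HTarith} and refine its analysis. The atomic diagram of $\mathcal{A}$ is $\Delta^0_3$ relative to $Y$. So each atomic fact ``$R(\bar a)$'' is decided by a $Y$-computable procedure, and both the set of tuples in $R$ and the set of tuples not in $R$ are $\Sigma^0_3$ relative to $Y$, uniformly. We take $\mathcal{B}$ to consist of a copy of the universe of $\mathcal{A}$, given by a quantifier-free definable sort of points. For each pair $(a,a')$ we attach two gadgets, $\mathcal{C}_{i}$ and $\mathcal{C}_{j}$, from Theorem \ref{thm:MHT} relativized to $Y$. Here $i$ codes ``$E(a,a')$'' and $j$ codes ``$\neg E(a,a')$'', so that exactly one gadget is isomorphic to $\mathcal{G}$ and the other to $\mathcal{H}$. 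This is uniformly $Y$-computable. It defines $\mathcal{A}$ inside $\mathcal{B}$: the universe is quantifier-free definable, and $E$ and $\neg E$ are each $\Sigma_2$ definable, as ``the attached gadget is $\cong\mathcal{G}$'', using that $\mathcal{G}$ has a $\Pi_2$ Scott sentence while $\mathcal{H}\not\le_2\mathcal{G}$. We take the upper bound $SR(\mathcal{B})\le 1+SR(\mathcal{A})$ from Harrison-Trainor. Our real task is the lower bound $SR(\mathcal{B})\ge 1+SR(\mathcal{A})$.

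For the lower bound we would prove a transfer lemma for back-and-forth relations. For tuples $\bar a,\bar a'$ of points of $\mathcal{A}$ and every $\beta\ge1$,
\[(\mathcal{A},\bar a)\le_\beta(\mathcal{A},\bar a')\quad\Longrightarrow\quad(\mathcal{B},\bar a)\le_{1+\beta}(\mathcal{B},\bar a').\]
There is also a version for tuples that include gadget elements: we extend $\bar a$ by the gadget elements used and compare componentwise via the gadgets' own back-and-forth behaviour. We prove this by induction on $\beta$. The base case $\beta=1$ needs $\le_2$ in $\mathcal{B}$ from $\le_1$ in $\mathcal{A}$. This is exactly where items (3) and (5) of Theorem \ref{thm:MHT} enter. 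Item (5), $\mathcal{G}\equiv_1\mathcal{H}$, shows that for finite pieces of gadgets the $\Sigma_1$-type cannot tell $\mathcal{G}$ from $\mathcal{H}$. So $\le_2$ between tuples of $\mathcal{B}$ reduces to $\le_1$ (atomic-type agreement and existential data) between the underlying tuples of $\mathcal{A}$. The successor and limit steps are then the usual back-and-forth: given a response in $\mathcal{A}$ at level $\gamma$, we lift it to $\mathcal{B}$ at level $1+\gamma$, matching gadgets isomorphically.

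With the transfer lemma in hand, suppose $SR(\mathcal{B})\le 1+\beta$ with $\beta<SR(\mathcal{A})$. We use the characterization in Lemma \ref{addedcondition}. For a tuple $\bar a$ of $\mathcal{A}$ we get $\gamma<1+\beta$ and a witness tuple in $\mathcal{B}$. We enlarge the witness to $\bar a,\bar b$ with $\bar b$ points of $\mathcal{A}$, absorbing gadget elements into the points they hang from. Then $\bar a\bar b\le_\gamma\bar a'\bar b'$ in $\mathcal{B}$ implies $\bar a\le_{1+\beta}\bar a'$ in $\mathcal{B}$. Pulling back needs the converse transfer: $\le_{1+\beta}$ in $\mathcal{B}$ implies $\le_\beta$ in $\mathcal{A}$. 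This holds because $E$ and $\neg E$ are $\Sigma_2$-definable in $\mathcal{B}$, so $\Pi_\beta$ facts about $\mathcal{A}$ become $\Pi_{1+\beta}$ facts about $\mathcal{B}$, by the standard interpretation shift. We also write $\gamma=1+\gamma'$ with $\gamma'<\beta$, and the forward transfer lemma moves the hypothesis down to $\mathcal{A}$. Together these give, by Lemma \ref{addedcondition}, $SR(\mathcal{A})\le\beta$, a contradiction. Since the ranks are finite, $1+\beta=\beta+1$, and this yields $SR(\mathcal{B})=1+SR(\mathcal{A})$.

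The main obstacle is the base case of the transfer lemma, and the handling of tuples that contain elements inside gadgets, for which the statement of Theorem \ref{thm:MHT} says nothing directly. We expect to open up the construction in \cite{HTarith} to check that elements of $\mathcal{G}$ and $\mathcal{H}$ have matching $\le_1$-extensions, as the Remark asserts item (5) is immediate. If that fails, we would strengthen item (5) to a tuple-wise statement: for every finite tuple in $\mathcal{H}$ there is a $\ge_1$-matching tuple in $\mathcal{G}$ and conversely.
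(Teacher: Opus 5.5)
Your plan matches the paper's. You use the same gadget coding of $E$ on the sets $V_{(x,y,i)}$, take the upper bound from Harrison-Trainor, and get the lower bound by transferring back-and-forth relations and applying Lemma~\ref{addedcondition}. Your forward transfer is the ``if'' half of the paper's back-and-forth lemma, and deriving the converse from the $\Sigma_2$-definability of $E$ and $\neg E$, rather than by the paper's induction, is a fine shortcut.

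But your lower bound breaks down when $SR(\mathcal{A})=1$, where the only $\beta<SR(\mathcal{A})$ is $\beta=0$. Lemma~\ref{addedcondition} for $\mathcal{B}$ with $\alpha=1$ then returns $\gamma=0$, so there is no $\gamma'<\beta$ with $\gamma=1+\gamma'$. Worse, the converse transfer ``$\leq_1$ in $\mathcal{B}$ implies $\leq_0$ in $\mathcal{A}$'' is false. The shift $\Pi_\beta\mapsto\Pi_{1+\beta}$ only works for $\beta\geq 1$. The fact $xEy$ is expressed in $\mathcal{B}$ by $\Sigma_2$ and $\Pi_2$ formulas, but by no $\Pi_1$ formula: by item (5) of Theorem~\ref{thm:MHT}, $\leq_1$ in $\mathcal{B}$ sees nothing about a tuple of $U$-points beyond its equality type. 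So your argument only gives $SR(\mathcal{B})\geq 1+SR(\mathcal{A})$ when $SR(\mathcal{A})\geq 2$. Even there, when $\gamma'=0$ you need the level-$0$ forward transfer (same atomic type in $\mathcal{A}$ gives $\leq_1$ in $\mathcal{B}$). That step is easy but lies outside your stated range $\beta\geq 1$.

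The missing case needs a separate argument; this is the paper's rank-$1$ lemma. If $SR(\mathcal{A})=1$, then $\mathcal{A}$ is neither complete nor edgeless, since those have quantifier-free definable orbits. So there are pairs of distinct elements $(x,y)$ with $xEy$ and $(x',y')$ with $\neg x'Ey'$. Because $\mathcal{G}\equiv_1\mathcal{H}$ and the gadgets hang on a bare set $U$, the two pairs satisfy the same existential formulas in $\mathcal{B}$. They are not automorphic: an automorphism taking $(x,y)$ to $(x',y')$ would carry $V_{(x,y,1)}\cong\mathcal{G}$ onto $V_{(x',y',1)}\cong\mathcal{H}$. Hence the orbit of $(x,y)$ is not $\Sigma_1$-definable, and $SR(\mathcal{B})\geq 2$.

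Two minor points:
\begin{enumerate}
\item ``The attached gadget is $\cong\mathcal{G}$'' is $\Pi_2$, not $\Sigma_2$. The $\Sigma_2$ definition of $xEy$ says that the \emph{other} gadget fails the Scott sentence of $\mathcal{G}$. Your conclusion that $E$ and $\neg E$ are both $\Sigma_2$ is still right.
\item No tuple-wise strengthening of item (5) is needed. Named gadget elements are matched by isomorphism once the $\mathcal{A}$-atomic types agree, and unnamed ones only require (5).
\end{enumerate}
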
  

By Proposition 1.10, we can effectively replace the original $\mathcal{B}$ by a graph of the same rank.  The structure $\mathcal{B}$ (in its initial version) has unary predicates $U,V$ that partition the universe into disjoint infinite sets.  The set $U$ will correspond to the universe of a copy of $\mathcal{A}$, but the edge relation $E$ is coded.  For simplicity, we identify $U$ with the universe of $\mathcal{A}$.  The structure $\mathcal{B}$ has a unary function $c$ from $V$ from $V$ to the set of triples $(x,y,i)$, where $x,y$ are distinct elements of $U$ and $i\in 2$.\footnote{To be precise, we may think of relations $c_0(v,x,y)$, which holds if $c(v) = (x,y,0)$, and $c_1(v,x,y)$, which holds if $c(v) = (x,y,1)$.}  For each such triple, $(x,y,i)$, the $c$-inverse image is an infinite set $V_{(x,y,i)}$.  Finally, the  structure $\mathcal{B}$ has a binary relation $R$ that, on $V_{(x,y,i)}$ puts a copy of 
$\mathcal{G}$ if $xEy$ and $i = 1$ or if $\neg{xEy}$ and $i = 0$, and $\mathcal{H}$ otherwise.  The relation $R$ is the union of its restrictions to the sets $V_{(x,y,i)}$.        

\bigskip

The fact that from the index for $\mathcal{A}$ computable in $Y''$, we can compute the index for $\mathcal{B}$ computable in $Y$, is clear.  The relations $xEy$ and $\neg{xEy}$ are both $\Sigma^0_3$ in $Y$, with known indices, so we can put copies of $\mathcal{G}$, $\mathcal{H}$ on the sets $V_{(x,y,i)}$ as required.  Since $\mathcal{G}$ has Scott rank $1$, it has a $\Pi_2$ Scott sentence $\varphi$.  This sentence is false in $\mathcal{H}$.  For the copy of $\mathcal{A}$ defined in $\mathcal{B}$, the universe is defined by the quantifier-free formula $U(x)$.  The relation $\mathcal{A}\models xEy$ has a $\Pi_2$ definition saying that $\varphi$ holds in $V_{(x,y,0)}$, and a $\Sigma_2$ definition saying that $\neg{\varphi}$ holds in $V_{(x,y,1)}$.     

We know that if $\mathcal{A}$ has rank $n$, $\mathcal{B}$ has rank at most $1+n$.  We must show that the rank of $\mathcal{B}$ is not less than $1+n$.  It is enough to show that if $\mathcal{A}$ is an infinite graph of rank $n$, not less, then the resulting $\mathcal{B}$ has rank $1+n$, not less.  We start with the case $SR(\mathcal{A}) = 1$.

\begin{lem}

Let $\mathcal{A}$ be a graph of rank $1$, with pairs of distinct elements $(x,y),(x',y')$ such that $xEy$ and $\neg{x'Ey'}$.  Then the structure $\mathcal{B}$ obtained from $\mathcal{A}$ must have rank greater than $1$.  

\end{lem}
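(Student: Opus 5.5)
We argue by contradiction, using the criterion of Montalb\'an's Theorem with $\alpha=1$: if $SR(\mathcal{B})\le 1$, then for every tuple the orbit is defined by an existential formula. We look at an element $v\in V$ and show its orbit cannot be existentially defined. Fix $(x,y)$ with $xEy$ and $(x',y')$ with $\neg x'Ey'$. Then $V_{(x,y,0)}$ carries a copy of $\mathcal{H}$, while $V_{(x',y',0)}$ carries a copy of $\mathcal{G}$.

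Take $v\in V_{(x,y,0)}$ such that the orbit of $v$ in the copy of $\mathcal{H}$ is not existentially definable within $\mathcal{H}$. Such a $v$ exists because $SR(\mathcal{H})=2$. Suppose $\exists\bar u\,\psi(v,\bar u)$ defines the orbit of $v$ in $\mathcal{B}$, with $\psi$ finitary quantifier-free. Fix a witness $\bar u$. It meets finitely many blocks $V_{(s,t,i)}$ together with some elements of $U$, and we may assume it includes $x,y$ (the formula may mention $c(v)$). The finite configuration $\psi(v,\bar u)$ splits into three parts: the part inside $V_{(x,y,0)}$, the parts inside other blocks, and the $U$-part together with the $c$-links.

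The key point is that we must prevent the witness from transferring to a tuple that is not automorphic to $v$. The tempting move is to carry the witness over to $V_{(x',y',0)}$, a $\mathcal{G}$-copy, using $\mathcal{G}\equiv_1\mathcal{H}$. However, $x',y'$ need not satisfy the same existential (indeed $\Sigma_1$) facts in $\mathcal{A}$ as $x,y$, since the edge between them differs. So we stay inside $\mathcal{H}$ instead.

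Since the orbit of $v$ in $\mathcal{H}$ is not existentially definable, the existential formula $\exists\bar u_0\,\psi_0(z,\bar u_0)$ restricted to the $V_{(x,y,0)}$-part is satisfied in $\mathcal{H}$ by some $v'$ not automorphic to $v$. Replace the witnesses in $V_{(x,y,0)}$ accordingly, and keep the other blocks and the $U$-part fixed. Since $R$ relates only elements of the same block and $c$ is constant on the block, $\psi(v',\bar u')$ still holds. Then $v'$ satisfies the formula, so $v'$ is automorphic to $v$ in $\mathcal{B}$.

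The main obstacle is showing that this automorphism gives a contradiction. An automorphism of $\mathcal{B}$ sending $v$ to $v'$ preserves $c$, so it maps the block $V_{(x,y,0)}$ onto $V_{(g x,g y,0)}$ for the induced automorphism $g$ of $\mathcal{A}$. It need not fix the block, so we must argue more carefully. First, $g$ fixes $x$ and $y$, because $c(v)=c(v')$. Hence the automorphism maps $V_{(x,y,0)}$ to itself and restricts to an automorphism of that copy of $\mathcal{H}$ taking $v$ to $v'$. This contradicts the choice of $v'$.

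This seems to show $SR(\mathcal{B})>1$ whenever $SR(\mathcal{H})>1$, using only that $\mathcal{A}$ has an edge, so that some block carries $\mathcal{H}$. The hypothesis that there is also a non-edge (so that a $\mathcal{G}$-block exists) appears unnecessary for this argument. If the intended $\mathcal{H}$ in Theorem~\ref{thm:MHT} had all orbits existentially definable, the argument would instead use items (3) and (5) to compare blocks, but item (4) gives $SR(\mathcal{H})=2$, so the argument above suffices.
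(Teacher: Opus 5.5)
Your route works once one step is corrected, and it differs from the paper's.

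\textbf{The step to fix.} The claim ``such a $v$ exists because $SR(\mathcal{H})=2$'' does not follow. Scott rank $2$ only provides a \emph{tuple} of $\mathcal{H}$ whose orbit is not existentially definable, not necessarily a single element. For example, a disjoint union of infinitely many two-way infinite paths is vertex-transitive. Yet a pair of vertices from different components has no existentially definable orbit, since every existential formula it satisfies is also satisfied by a pair lying far apart on one path. So take a nonempty tuple $\bar v\subseteq V_{(x,y,0)}$ instead. The rest then goes through unchanged:
\begin{itemize}
\item splitting the witness configuration by blocks;
\item replacing the $V_{(x,y,0)}$-part by a non-automorphic $\bar v'$ with matching witnesses;
\item the final step: an automorphism of $\mathcal{B}$ sending $\bar v$ to $\bar v'$ must fix $x$ and $y$ because of $c_0$, so it restricts to an automorphism of the copy of $\mathcal{H}$ on $V_{(x,y,0)}$.
\end{itemize}

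\textbf{Comparison with the paper.} The paper's proof is exactly the ``tempting move'' you rejected, applied to the pairs in $U$ rather than to elements of $V$. By item (5) of Theorem~\ref{thm:MHT}, $\mathcal{G}\equiv_1\mathcal{H}$. Hence $(x,y)$ and $(x',y')$ satisfy the same existential formulas in $\mathcal{B}$; this is part (a) of the back-and-forth lemma for $\mathcal{B}$ with empty $V$-part. Yet they are not automorphic, since an automorphism would carry $V_{(x,y,1)}\cong\mathcal{G}$ onto $V_{(x',y',1)}\cong\mathcal{H}$.

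Your objection to that move is misplaced. $E$ is not in the language of $\mathcal{B}$, so existential facts about $x,y$ in $\mathcal{A}$ play no role. In $\mathcal{B}$ the edge relation is only visible at the $\Sigma_2$/$\Pi_2$ level.

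Your argument instead uses item (4), together with the fact that $c$ anchors each block to its pair. It needs even less of $\mathcal{A}$ than you say. Every pair of distinct elements has an $\mathcal{H}$-block ($V_{(x,y,1)}$ when $\neg xEy$), so neither the edge nor the non-edge is needed, and item (5) is not used.

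The cost is that your method can never give more than $SR(\mathcal{B})\geq 2$, because it only inherits non-definability from $\mathcal{H}$. The paper's version places the non-definable orbit on a tuple from $U$ via the back-and-forth relations. That is the base case of the pattern in the subsequent lemmas, where a witness in $\mathcal{A}$ against rank $n-1$ becomes a witness in $\mathcal{B}$ against rank $n$.
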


\begin{proof}  

Since $\mathcal{G}$ and $\mathcal{H}$ satisfy the same existential sentences, the pairs $(x,y)$ and $(x',y')$ satisfy the same existential formulas. By assumption, these pairs are not automorphic. Therefore, $\mathcal{B}$ must have rank greater than $1$.
\end{proof}
  
For $n\geq 2$, to show that for $\mathcal{A}$ of rank at least $n$, the resulting $\mathcal{B}$ has rank at least $1+n$, we must understand how the back-and-forth relations on $\mathcal{B}$ are related to those on $\mathcal{A}$.  

\begin{lem}
\label{back&forth}

Let $\bar{b} = \bar{a},\bar{v}$, and $\bar{b}' = \bar{a}',\bar{v}'$ be tuples in $\mathcal{B}$, where $\bar{a},\bar{a}'$ are in $\mathcal{A}$, $\bar{v},\bar{v}'$ are in $V$.  Let $\bar{v}_{(x,y,i)}$ be the part of $\bar{v}$ in $V_{(x,y,i)}$, and let $\bar{v}'_{(x',y',i)}$ be the corresponding part of $\bar{v}'$.  We suppose that $x,y\in\bar{a}$ and $x',y'\in\bar{a}'$.
\begin{enumerate}

\item [(a)]  $(\mathcal{B},\bar{a},\bar{v})\leq_1 (\mathcal{B},\bar{a}',\bar{v}')$ iff the equality relation on $\bar{a}$ matches that on $\bar{a}'$, and for all $(x,y,i)$ such that $\bar{v}_{(x,y,i)}$ is non-empty, 
$(V_{(x,y,i)},\bar{v}_{x,y,i})\leq_1(V_{(x',y',i)},\bar{v}_{(x,y,i)})$.  

\item [(b)]  For $m \geq 2$, $(\mathcal{B},\bar{a},\bar{v})\leq_m(\mathcal{B},\bar{a},\bar{v})$ iff $(\mathcal{A},\bar{a})\leq_{m-1}(\mathcal{A},\bar{a}')$ and for each $(x,y,i)$ such that $\bar{v}_{(x,y,i)}$ is non-empty, $(V_{(x,y,i)},\bar{v}_{(x,y,i)})\cong(V_{(x',y',i)},\bar{v}_{(x,y,i)})$.

\end{enumerate}

\end{lem}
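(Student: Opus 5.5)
I will prove (a) and (b) together by induction on $m$, using the decomposition of $\mathcal{B}$ into the sort $U$ (a copy of the universe of $\mathcal{A}$ with no structure on it) and the pieces $V_{(x,y,i)}$. Each piece carries a copy of $\mathcal{G}$ or $\mathcal{H}$, according to whether $xEy$ agrees with $i$. A tuple $\bar{a},\bar{v}$ is essentially the tuple $\bar{a}$ of $U$ together with finitely many finite tuples sitting in pieces indexed by pairs from $\bar{a}$. Throughout, the correspondence $(x,y,i)\mapsto(x',y',i)$ is induced by the position of $x,y$ in $\bar{a}$ and of $x',y'$ in $\bar{a}'$.

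For (a), existential formulas about $\bar{a},\bar{v}$ can say only the following: which elements are in $U$ and which are equal, which $c$-fibre each $v$ lies in, and finite configurations of new witnesses. New witnesses placed in fibres over fresh pairs reveal nothing, since every fibre is infinite and, by item (5) of Theorem \ref{thm:MHT}, $\mathcal{G}\equiv_1\mathcal{H}$. So any existential formula with witnesses in fresh fibres holds for both tuples. The only substantive content is existential facts within the occupied fibres. These are exactly $\Sigma_1$ facts about $(V_{(x,y,i)},\bar{v}_{(x,y,i)})$, which gives the stated equivalence once we check that an existential formula splits into fibre-by-fibre parts. It does, because $R$ relates only elements within a common fibre.

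For (b), with $m=2$, use the definition of $\leq_2$. In the direction $\Leftarrow$, given an extension $\bar{a}'\bar{d},\bar{v}'\bar{w}$ on the right, I use $(\mathcal{A},\bar{a})\leq_1(\mathcal{A},\bar{a}')$ to choose $\bar{c}$ in $\mathcal{A}$ matching the atomic type of $\bar{a}'\bar{d}$. Then the fibres over corresponding pairs carry the same structure ($\mathcal{G}$ versus $\mathcal{H}$), and so do the fibres over the old pairs. Inside those copies I pick witnesses for $\bar{w}$ so that the $\leq_1$ condition in (a) holds; for old occupied fibres this uses the isomorphism hypothesis. In the direction $\Rightarrow$, extending by a pair and elements of its fibres, or by a witness in a fibre isomorphic to the occupied one, forces the atomic types to agree. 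It also forces the fibre isomorphism types to agree: $\mathcal{H}\not\leq_2\mathcal{G}$ (item (3)) means a $\Pi_2$ sentence distinguishes them, and with $\leq_2$ this transfers through $\Sigma_1$ witnessing. The step from $\leq_2$ on occupied fibres to isomorphism of the pointed fibres is where I would use $SR(\mathcal{G})=1$ and $SR(\mathcal{H})=2$, since $\leq_2$ on pointed copies of such low-rank structures should give isomorphism; this needs care and is the main obstacle. The inductive step $m\to m+1$ then follows formally. An extension on one side is an extension of $\bar{a}'$ in $\mathcal{A}$ plus fibre elements, and the back-and-forth for $\leq_{m}$ in $\mathcal{A}$ together with the inductive hypothesis, with fibres handled by isomorphism, matches it.

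I expect the hardest points to be two. The first is precisely the $m=2$ base: showing that $\leq_2$ forces isomorphism of the pointed fibres and $\leq_1$ in $\mathcal{A}$. Here I would exploit that $\mathcal{G}$ has rank $1$, so orbits are $\Sigma_1$-definable and are preserved under the reverse $\leq_1$ obtained from $\leq_2$. The second is handling the fibres of $\mathcal{H}$, where I would use the orbit formulas of rank-2 structures at level $\Sigma_2$ only when $m\geq 3$; for $m=2$ I may need the explicit construction of \cite{HTarith}.
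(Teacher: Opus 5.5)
Your outline follows the paper's proof. You use induction on $m$ and prove part (a) by splitting an existential formula fibre by fibre, using $\mathcal{G}\equiv_1\mathcal{H}$ for unoccupied fibres. For the backward half of (b) you take $\bar c$ from the back-and-forth in $\mathcal{A}$ and the fibre witnesses along the fibre isomorphisms, and the forward half is by transfer. Your check that $\leq_2$ in $\mathcal{B}$ preserves the edge relation is sound: you relativize a $\Pi_2$ sentence separating $\mathcal{H}$ from $\mathcal{G}$ (item (3)) to the fibres over a pair. It is more careful than the paper, which cites part (a) there although (a) only records equality.

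The gap is the step you leave open. At $m=2$ you must get $(V_{(x,y,i)},\bar v_{(x,y,i)})\cong(V_{(x',y',i)},\bar v'_{(x',y',i)})$ when the fibre is a copy of $\mathcal{H}$. The proposal does not prove this. It needs neither the explicit construction behind Theorem \ref{thm:MHT} nor postponing $\Sigma_2$ orbit formulas to $m\geq 3$.

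The missing observation is the direction of transfer. Write $W=V_{(x,y,i)}$, $W'=V_{(x',y',i)}$, $\bar e=\bar v_{(x,y,i)}$, $\bar e'=\bar v'_{(x',y',i)}$. A $\Sigma_2$ formula about $W'$ relativizes, using the parameters $x',y'$ and the atomic predicate $c_i$, to a $\Sigma_2$ formula of $\mathcal{B}$. So by Karp's theorem, $(\mathcal{B},\bar a,\bar v)\leq_2(\mathcal{B},\bar a',\bar v')$ gives $(W,\bar e)\leq_2(W',\bar e')$: every $\Sigma_2$ formula true of $\bar e'$ in $W'$ is true of $\bar e$ in $W$. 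This is the forward transfer, not the reverse $\leq_1$ you use for $\mathcal{G}$.

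You already have $W\cong W'$, since $E$ agrees on $x,y$ and $x',y'$. Because $SR(\mathcal{H})=2$, the orbit of $\bar e'$ in $W'$ is defined by a $\Sigma_2$ formula $\psi$. For an isomorphism $f\colon W\to W'$, $\psi$ defines in $W$ the orbit of $f^{-1}(\bar e')$. Since $\bar e$ satisfies $\psi$, we get $(W,\bar e)\cong(W',\bar e')$. The same argument covers $\mathcal{G}$-fibres, since a $\Sigma_1$ orbit formula is also $\Sigma_2$. For $m\geq 3$ nothing beyond the induction hypothesis is needed, since $\leq_m$ implies $\leq_{m-1}$.

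The paper is thin at exactly this point. It handles the $\mathcal{G}$-fibre by rank $1$ and says the $\mathcal{H}$-fibre follows ``by construction.'' That inference fails, because the tuples in $V_{(x,y,0)}$ and $V_{(x,y,1)}$ are unrelated. The rank-$2$ argument above is what actually justifies the step.
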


\begin{proof}

We proceed by induction on $m$, starting with the base case $m = 1$.  In $\mathcal{B}$, existential formulas say nothing about a tuple $\bar{a}$ in $\mathcal{A}$ beyond the equality relation, and existential statements about a tuple $\bar{v}$ in $V$ splits into separate statements about the tuples $\bar{v}_{(x,y,i)}$ in $V_{(x,y,i)}$.  Thus, $(\mathcal{B},\bar{a},\bar{v})\leq_1(\mathcal{B},\bar{a}',\bar{v}')$ iff the equalities on $\bar{a}$ and $\bar{a}'$ match and for each $(x,y,i)$ such that 
$\bar{v}_{(x,y,i)}$ is non-empty, $(V_{(x,y,i)},\bar{v}_{(x,y,i)})\leq_1(V_{(x',y',i)},\bar{v}_{(x',y',i)})$. 

Assuming that the statement holds for $m$, where $m\geq 1$, we prove it for $m+1$.  Suppose $(\mathcal{B},\bar{a},\bar{v})\leq_{m+1}(\mathcal{B},\bar{a}',\bar{v}')$.  For all $\bar{c}'$ in $\mathcal{A}$, there exists $\bar{c}$ such that 
$(\mathcal{B},\bar{a}',\bar{v}',\bar{c}')\leq_m(\mathcal{B},\bar{a},\bar{v},\bar{c})$.
By inductive hypothesis, $(\mathcal{A},\bar{a}',\bar{c}')\leq_{m-1}(\mathcal{A},\bar{a},\bar{c})$.  Thus, $(\mathcal{A},\bar{a})\leq_m(\mathcal{A},\bar{a}')$.  
If $m=1$, by induction hypothesis, we similarly have that $(V_{(x,y,i)},\bar{v}_{(x,y,i)})\leq_2(V_{(x',y',i)},\bar{v}_{(x',y',i)})$ for each $x,y$.
For one of the $i\in2$,  $V_{(x,y,i)}\cong\mathcal{G}$, which means $(V_{(x,y,i)},\bar{v}_{(x,y,i)})\cong(V_{(x',y',i)},\bar{v}_{(x',y',i)})$ as $\mathcal{G}$ is Scott rank 1.
By construction, this also means that $(V_{(x,y,1-i)},\bar{v}_{(x,y,1-i)})\cong(V_{(x',y',1-i)},\bar{v}_{(x',y',1-i)})$.
If $m>1$, then $(V_{(x,y,i)},\bar{v}_{(x,y,i)})\cong(V_{(x',y',i)},\bar{v}_{(x',y',i)})$ follows immediately from the induction hypothesis.

Now, suppose $(\mathcal{A},\bar{a})\leq_m(\mathcal{A},\bar{a}')$ and for the relevant triples $(x,y,i)$  and corresponding $(x',y',i)$, $(V_{(x,y,i)},\bar{v}_{(x,y,i)})\cong (V_{(x',y',i)},\bar{v}_{(x',y',i)})$.  We show that $(\mathcal{B},\bar{a},\bar{v})\leq_{m+1}(\mathcal{B},\bar{a}',\bar{v}')$.  For $\bar{c}'$ in $\mathcal{A}$ and $\bar{w}'$ in $V$, we need $\bar{c}$ and $\bar{w}$ such that 
$(\mathcal{B},\bar{a}',\bar{c}',\bar{v}',\bar{w}')\leq_m(\mathcal{B},\bar{a},\bar{c},\bar{v},\bar{w})$.  Extending, if necessary, we suppose that each element of $\bar{w}'$ is in $V_{(x',y',i)}$ for some $x',y'\in \bar{a}',\bar{c}'$.  
Since $(\mathcal{A},\bar{a})\leq_m(\mathcal{A},\bar{a}')$, we have $\bar{c}$ such that $(\mathcal{A},\bar{a}',\bar{c}')\leq_{m-1}(\mathcal{A},\bar{a},\bar{c})$.  There is no problem choosing $\bar{w}$; we select $\bar{w}$ according to the isomorphism between $(V_{(x,y,i)},\bar{v}_{(x,y,i)})\cong (V_{(x',y',i)},\bar{v}_{(x',y',i)})$ in each of the relevant $V_{(x,y,i)}$. By inductive hypothesis, this guarantees that $(\mathcal{B},\bar{a}',\bar{c}',\bar{v}',\bar{w}')\leq_m(\mathcal{B},\bar{a},\bar{c},\bar{v},\bar{w})$, as desired.
\end{proof}

The next result will complete the proof of Theorem 4.7.  

\begin{lem}

Let $n\geq 2$.  If $\mathcal{A}$ is a graph of rank at least $n$, then $\mathcal{B}$ has rank at least $n+1$.  

\end{lem}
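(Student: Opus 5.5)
We argue by contraposition: assuming $SR(\mathcal{B})\leq n$, we show $SR(\mathcal{A})\leq n-1$. The tool is the characterization of Scott rank in Lemma \ref{addedcondition}, combined with the translation of back-and-forth relations in Lemma \ref{back&forth}(b). For finite $m\geq 2$ that lemma says $\leq_m$ on $\mathcal{B}$ restricted to tuples $\bar{a},\bar{v}$ amounts to $\leq_{m-1}$ on the $\mathcal{A}$-part, together with isomorphism of the $V$-parts in matching blocks. The blocks are all copies of $\mathcal{G}$ or $\mathcal{H}$, and these have rank at most $2$, so the $V$-parts contribute nothing beyond isomorphism once $m\geq 2$.

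Fix a tuple $\bar{a}$ in $\mathcal{A}$, viewed as a tuple in $U\subseteq\mathcal{B}$. Since $SR(\mathcal{B})\leq n$, Lemma \ref{addedcondition} gives $\beta<n$ and a tuple $\bar{b}=\bar{c},\bar{w}$ ($\bar{c}\in U$, $\bar{w}\in V$) such that for all $\bar{a}',\bar{b}'$, the relation $(\mathcal{B},\bar{a},\bar{b})\leq_\beta(\mathcal{B},\bar{a}',\bar{b}')$ implies $(\mathcal{B},\bar{a})\leq_n(\mathcal{B},\bar{a}')$. Enlarging $\bar{c}$ to contain all $x,y$ indexing the blocks met by $\bar{w}$ costs nothing. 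We may also raise $\beta$ to $\max(\beta,2)$, since raising $\beta$ only strengthens the hypothesis of the implication; this is harmless because $n\geq 2$, so we only need $\beta\leq n-1$ and $\beta\geq 2$.

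Next we translate to $\mathcal{A}$, using $\beta-1<n-1$ and the witness $\bar{c}$. Suppose $(\mathcal{A},\bar{a},\bar{c})\leq_{\beta-1}(\mathcal{A},\bar{a}',\bar{c}')$. This relation preserves atomic types (as $\beta-1\geq 1$), so matching blocks $V_{(x,y,i)}$ and $V_{(x',y',i)}$ carry the same graph, $\mathcal{G}$ or $\mathcal{H}$. Hence we can choose $\bar{w}'$ with $(V_{(x,y,i)},\bar{w}_{(x,y,i)})\cong(V_{(x',y',i)},\bar{w}'_{(x',y',i)})$ for each relevant block. Lemma \ref{back&forth}(b) then gives $(\mathcal{B},\bar{a},\bar{c},\bar{w})\leq_\beta(\mathcal{B},\bar{a}',\bar{c}',\bar{w}')$. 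By the choice of $\bar{b}$ this yields $(\mathcal{B},\bar{a})\leq_n(\mathcal{B},\bar{a}')$, and Lemma \ref{back&forth}(b) again, with empty $V$-part, gives $(\mathcal{A},\bar{a})\leq_{n-1}(\mathcal{A},\bar{a}')$. Thus condition (2) of Lemma \ref{addedcondition} holds for $\mathcal{A}$ with ordinal $n-1$ and witness $\beta-1<n-1$. So $SR(\mathcal{A})\leq n-1$, contradicting the assumption that $\mathcal{A}$ has rank at least $n$.

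The main obstacle is the edge case $\beta$ small, where Lemma \ref{back&forth}(b) does not apply. If $\beta=1$ we would need (a), which does not see the atomic type of $\bar{a}$, and $\beta-1=0$ would not be a legitimate witness in Lemma \ref{addedcondition}. Raising $\beta$ to $2$ sidesteps this, and is legitimate precisely because $n\geq 2$; when $n=2$ we get $\beta-1=1<1$ false, so that subcase needs separate care. There one shows directly, as in the preceding lemma for rank $1$, that $\mathcal{B}$ cannot have rank $2$ if $\mathcal{A}$ has rank at least $2$. The argument would take a pair $\bar{a},\bar{a}'$ with $\bar{a}\leq_1\bar{a}'$ but not automorphic, and use $\mathcal{G}\equiv_1\mathcal{H}$ together with $\mathcal{H}\not\leq_2\mathcal{G}$ to match blocks up to $\leq_1$ while the $\leq_2$ relation on $\mathcal{B}$ still fails to certify orbits. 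I would also double-check that Lemma \ref{back&forth}(b) is used with the isomorphism of blocks genuinely available, which relies on $\mathcal{G},\mathcal{H}$ being homogeneous enough (rank $\leq 2$) that $\leq_2$-equivalent tuples inside a block are automorphic.
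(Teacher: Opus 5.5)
For $n\ge 3$ your argument is correct and is the paper's proof run in contrapositive. The paper fixes $\bar a$ witnessing, via Lemma~\ref{addedcondition}, that $SR(\mathcal{A})\not\le n-1$. For each $\bar b=\bar c,\bar v$ in $\mathcal{B}$, it pushes the $\mathcal{A}$-witnesses $\bar a',\bar c'$ into $\mathcal{B}$, choosing $\bar v'$ through block isomorphisms and applying Lemma~\ref{back&forth}. You use the same two transfers in the other order. Your closing worry about homogeneity of $\mathcal{G},\mathcal{H}$ is moot, since you construct the block isomorphisms yourself when you choose $\bar w'$.

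\textbf{The gap is the case $n=2$.} The statement includes it, and you leave it unproved; the reason you give for setting it aside is also wrong.
\begin{itemize}
\item When the target ordinal in Lemma~\ref{addedcondition} is $n-1=1$, the witness ordinal must be $<1$. So $\beta-1=0$ is not illegitimate but forced, and $\le_0$ means agreement of atomic types ($\Pi_0$ is finitary quantifier-free, as in the proof of that lemma).
\item Lemma~\ref{back&forth}(a) is only needed in the direction where its blindness to $E$ costs nothing.
\end{itemize}

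Concretely, take $\beta=1$ for $\mathcal{B}$, raising $0$ to $1$ if needed. Suppose $\bar a,\bar c$ and $\bar a',\bar c'$ have the same atomic type in $\mathcal{A}$. Then equality matches and $xEy\iff x'Ey'$, so you can pick $\bar w'$ by block isomorphisms. Part (a) gives $(\mathcal{B},\bar a,\bar c,\bar w)\le_1(\mathcal{B},\bar a',\bar c',\bar w')$, hence $(\mathcal{B},\bar a)\le_2(\mathcal{B},\bar a')$. Part (b) with $m=2$ then gives $(\mathcal{A},\bar a)\le_1(\mathcal{A},\bar a')$. So $\bar c$ with ordinal $0$ witnesses $SR(\mathcal{A})\le 1$.

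The uniform fix is to always take $\beta=n-1$ and use (b) when $n\ge 3$ and (a) when $n=2$; the paper's $\le_{n-2}$-in-$\mathcal{A}$ formulation covers all $n\ge 2$ this way.

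The alternative you sketch for $n=2$ points the wrong way. A non-automorphic pair with $\bar a\le_1\bar a'$ in $\mathcal{A}$ yields $\bar a\le_2\bar a'$ in $\mathcal{B}$ by part (b), which is the opposite of what you want. It also supplies no witnesses uniformly over $\bar b$. What you need is this: for every $\bar c$, some $\bar a',\bar c'$ of the same atomic type as $\bar a,\bar c$ with $\bar a\not\le_1\bar a'$.
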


\begin{proof}

The fact that $\mathcal{A}$ does not have rank at most $n-1$ means that there is some $\bar{a}$ such that for all $\bar{b}$, there exist $\bar{a}',\bar{b}'$ such that 

\begin{enumerate}

\item  $(\mathcal{A},\bar{a},\bar{b})\leq_{n-2}(\mathcal{A},\bar{a}',\bar{b}')$,

\item  $(\mathcal{A},\bar{a})\not\leq_{n-1}(\mathcal{A},\bar{b})$.  

\end{enumerate}
We claim that this same $\bar{a}$ witnesses that $\mathcal{B}$ does not have rank at most $n$.  We must show that for each $\bar{b}$ in $\mathcal{B}$, there exist $\bar{a}',\bar{b}'$ such that 
\begin{enumerate}

\item  $(\mathcal{B},\bar{a},\bar{b})\leq_{n-1}(\mathcal{B},\bar{a}',\bar{b}')$,

\item $(\mathcal{B},\bar{a})\not\leq_n(\mathcal{B},\bar{a}')$.  

\end{enumerate}

Take $\bar{b}$ in $\mathcal{B}$.  Let $\bar{c}$ be the part of $\bar{b}$ in $\mathcal{A}$, and let $\bar{v}$ be the part in $V$.  Extending, if necessary, we suppose that $\bar{v}$ breaks up into tuples $\bar{v}_{(x,y,i)}$ for $x,y\in \bar{a},\bar{c}$.  Since $\bar{a}$ witnesses that $\mathcal{A}$ does not have Scott rank at most $n-1$, we have $\bar{a}',\bar{c}'$ such that 
$(\mathcal{A},\bar{a},\bar{c})\leq_{n-1}(\mathcal{A},\bar{a}',\bar{c}')$ and $(\mathcal{A},\bar{a})\not\leq_n(\mathcal{A},\bar{a}')$.  For each $(x,y,i)$ such that $v_{(x,y,i)}$ is a non-empty tuple in $\bar{v}$, choose $\bar{v}'_{(x',y',i)}$ a corresponding tuple such that $(V_{(x,y,i)},\bar{v}_{(x,y,i)})\cong(V_{(x',y',i)},\bar{v}'_{(x',y',i)})$.  We can do this, since $xEy$ iff $x'Ey'$ by $(\mathcal{A},\bar{a},\bar{b})\leq_{n-2}(\mathcal{A},\bar{a}',\bar{b}')$.  By Lemma 4.9, $(\mathcal{B},\bar{a},\bar{c},\bar{v})\leq_n(\mathcal{B},\bar{a}',\bar{c}',\bar{v}')$ and 
$(\mathcal{B},\bar{a})\not\leq_{n+1}(\mathcal{B},\bar{a}')$.  
\end{proof}  

\subsection{The case $\alpha$ a limit ordinal}

In this subsection, we prove the following.  

\begin{thm}
\label{4.3}

For $X\subset\omega$ and $\alpha$ an $X$-computable limit ordinal, there is an $X$-computable operator that takes pairs $(f,d)$, where $f\in 2^\omega$ and $d$ is an index for an $X$-effective $\Sigma_{\alpha+1}$ set $D\subseteq 2^\omega$, to a graph $\mathcal{A}^{(X,f)}_d$ that has Scott rank at most $\alpha$ iff $f\in D$. 

\end{thm}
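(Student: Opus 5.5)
We reduce to the successor pieces. Since $\alpha$ is an $X$-computable limit ordinal, fix (via the chosen path through $\mathcal{O}^X$) an $X$-computable increasing sequence $(\alpha_k)_{k\in\omega}$ of successor-type ordinals cofinal in $\alpha$. An $X$-effective $\Sigma_{\alpha+1}$ set $D$ has the form $\bigcup_j \bigcap_k D_{j,k}$, and we can uniformly reorganize it as $D=\bigcup_j C_j$. Here each $C_j$ is $X$-effective $\Pi_\alpha$, hence $C_j=\bigcap_k E_{j,k}$ with $E_{j,k}$ an $X$-effective $\Sigma_{2\beta_k+1}$ set for ordinals $\beta_k<\alpha$. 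Since $\alpha$ is a limit, $2\beta+1<\alpha$ whenever $\beta<\alpha$, so we may pick $\beta_{j,k}<\alpha$ with $E_{j,k}$ being $\Sigma_{2\beta_{j,k}+1}$. Padding, we may also assume $\beta_{j,k}\ge \alpha_k$.

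\textbf{Building blocks.} By the Technical Theorem for the smaller ordinals $\beta_{j,k}$, we get uniformly $(X,f)$-computable graphs $\mathcal{B}_{j,k}$ with the following property: if $f\in E_{j,k}$, then $SR(\mathcal{B}_{j,k})\le\beta_{j,k}$, and otherwise it is $\beta_{j,k}+1$. We can take this as the induction hypothesis over ordinals below $\alpha$, handled by Subsections 4.1, 4.2, 4.4 and the present one. (The construction gives exact ranks, by the same reasoning as Theorem 4.7.) The naive combination $\mathcal{C}_j=\bigsqcup_k\mathcal{B}_{j,k}$, labelled by unary predicates and then turned into a graph via Proposition 1.10, always has rank at most $\alpha$, so it is useless. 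We must instead encode the failure of $f\in C_j$ so that it pushes the rank up to $\alpha+1$.

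\textbf{Key idea.} We aim to arrange the following: if $f\in C_j$, then the component for $j$ has rank $\le\alpha$ with every orbit defined at some bounded level $<\alpha$. If $f\notin C_j$, then, since failure at one $k$ only raises rank by one, the combined structure should have an orbit needing $\Sigma_\alpha$ information that is nonetheless not $\Sigma_\alpha$-definable. To get this, for each $j$ we take infinitely many copies of the blocks and build, uniformly, a structure $\mathcal{A}_j$ whose elements $c_{j,k}$ are attached to $\mathcal{B}_{j,k}$. We add a ``tail'' making $c_{j,k}$ and $c_{j,k'}$ $\le_{\alpha_k}$-indistinguishable when both blocks have low rank. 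Failures then create tuples that are $\le_\beta$-equivalent for every $\beta<\alpha$ yet non-automorphic. This mirrors the daisy argument of Lemma \ref{Cof}, with $\Sigma_{\alpha_k}$ replacing existential definability.

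\textbf{Final combination.} We set $\mathcal{A}^{(X,f)}_d$ to be the disjoint union of $\mathcal{A}_j$ over $j$, plus enough redundancy that rank $\le\alpha$ holds iff some $\mathcal{A}_j$ is ``good''. For the union over $j$ (a $\Sigma$ step), we use the trick from Lemma \ref{Cof}: all other components are arranged to become automorphic to a canonical good one once a good index exists. We then verify both directions with Lemma \ref{addedcondition}. In the forward direction we exhibit, for each tuple, a parameter and $\beta<\alpha$ as in that lemma. In the reverse direction, we use the back-and-forth computations, in the style of Lemma \ref{back&forth}, to show that some tuple is $\le_\beta$-equivalent to non-automorphic tuples for all $\beta<\alpha$.

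The main obstacle is the reverse direction. We must compute back-and-forth relations in the combined structure up to level $\alpha$, and show that the unbounded sequence of rank-raising failures really prevents any $\Sigma_\alpha$ orbit definition. I would try first to prove a general analogue of Lemma \ref{back&forth} for labelled disjoint unions and tails.
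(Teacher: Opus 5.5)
\textbf{There is a genuine gap.} The construction that is supposed to do all the work is never defined: the ``tail'' attached to the points $c_{j,k}$, and the ``redundancy'' in the final union. You also say yourself that the reverse direction is open.

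More seriously, the ingredient you plan to feed into that construction cannot carry the needed information. Your inductive hypothesis controls only the Scott rank of the blocks $\mathcal{B}_{j,k}$. That rank is $\le\beta_{j,k}$ or $\beta_{j,k}+1$, in either case below $\alpha$, and nothing is said about isomorphism type.
\begin{itemize}
\item In the good case you need points attached to different blocks to be automorphic, or at least $\le_{\alpha_k}$-indistinguishable. Two blocks of low rank can be told apart by a finitary sentence, so no tail can force this from rank information alone.
\item In the bad case, a failure $f\notin E_{j,k}$ is an event at the single bounded level $\beta_{j,k}+1<\alpha$. To push the rank past a limit $\alpha$ you need a tuple that is $\le_\beta$-equivalent, for unboundedly many $\beta<\alpha$, to tuples outside its orbit. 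A single bounded-level failure does not produce that.
\item The union step has a related problem. If each bad $\mathcal{A}_j$ has rank $>\alpha$ on its own, then any disjoint union containing one has rank $>\alpha$. You would then be capturing ``all $j$ good'' rather than ``some $j$ good.'' The trick of Lemma~\ref{Cof} works only because each bad piece is harmless individually, and the badness comes from an infinite accumulation of bad pieces.
\end{itemize}

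\textbf{The missing idea} is to code the $\Pi_\alpha$ conditions into isomorphism type rather than Scott rank. The paper does this directly, with no induction on smaller ordinals.
\begin{itemize}
\item It writes $D=\bigcup_n D_n$ with each $D_n$ an $X$-effective $\Pi_\alpha$ set. These should be taken increasing, which is possible because finite unions of $\Pi_\alpha$ sets are $\Pi_\alpha$. This makes $f\in D$ equivalent to $d_n\in S$ for cofinitely many $n$.
\item By the Ash--Knight result it gets, uniformly in $n$, $Y$-computable orderings $\mathcal{L}_n$ with $\mathcal{L}_n\cong\omega^\alpha$ iff $d_n\in S=h^X_{\Pi_\alpha}(f)$, and $\mathcal{L}_n$ a smaller ordinal otherwise.
\item The structure has one class of type $\omega^\alpha$ and classes of type $\omega^{\alpha_n}\cdot\mathcal{L}_n$.
\item If $f\in D$, all but finitely many classes are copies of $\omega^\alpha$, and one writes down a $\Pi_{\alpha+1}$ Scott sentence.
\item If $f\notin D$, the copy of $\omega^\alpha$ is unique. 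Each other class $U_i$ is left-divisible by $\omega^{\alpha_i}$, so by Ash's analysis of ordinals the first element $b$ of $\omega^\alpha$ is $\equiv_{\alpha_i}$ to the first element of $U_i$. Hence no $\Sigma_\alpha$ formula defines the orbit of $b$.
\end{itemize}
This is the limit analogue of the daisy argument you were reaching for. The difference is that the near-copies come from the back-and-forth structure of ordinals, which blocks controlled only by Scott rank do not provide.
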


\begin{proof}

Given $d$, we can find a sequence of indices $(d_n)$ for $X$-effective $\Pi_{\alpha}$ sets $D_n$ with union $D$.  Let $S = h^X_{\Pi_\alpha}(f)$.  For $Y = (X,f)$, $S$ is $\Pi_\alpha$ relative to $Y$.  From $\alpha$, we compute an increasing sequence of ordinals $(\alpha_n)_{n\in\omega}$ with limit $\alpha$.  By a result from \cite{AK}, for each $n$, we can effectively associate with the index $d_n$ an ordering $\mathcal{L}_n$, computable in $Y$, such that $\mathcal{L}_n$ has type $\omega^\alpha$ if $d_n\in S$ and some $\beta_n < \omega^\alpha$, otherwise.  

The initial version of $\mathcal{A}^{(X,f)}_d$ is a structure $\mathcal{B}$ with an equivalence relation $\sim$ that partitions the universe into disjoint sets $V$ and $U_n$ (not named).  There is a relation that puts an ordering of type $\omega^\alpha$ on $V$ and an ordering of type $\omega^{\alpha_n}\cdot L_n$ on $U_n$. 
We must show that $SR(\mathcal{A}^{(X,f)}_d)\leq\alpha$ iff $f\in D$. 
Note that if $d_n\notin S$, then $U_n$ has type less than $\omega^\alpha$.  
On the other hand, if $d_n\in S$, then $U_n$ has type equal to $\omega^\alpha$.  

Consider the case that $f\in D$.
In particular, for some $j$, for all $i\geq j$ we have that $d_i\in S$.
 This means that the structure $\mathcal{A}^{(X,f)}_d$ will contain infinitely many copies of linear orderings of the form $\omega^{\alpha}$ and finitely many equivalence classes with ordinals strictly less than $\omega^\alpha$.
 Let these finitely many ordinals be denoted $A_0\leq\cdots\leq A_N$.
    Note that, by the ordinal analysis in \cite{Ash86}, there is a sentence $\varphi_N$ in $\Pi_{<\alpha}$ that states that a given ordinal is longer than $A_N$.
    Furthermore, each $A_i$ has $SR(A_i)<\alpha$, so it has a $\Pi_{<\alpha}$ Scott sentence $\psi_i$ and $\omega^\alpha$ has a $\Pi_{\alpha+1}$ Scott sentence $\chi$.
We obtain a $\Pi_{\alpha+1}$ $\mathcal{A}^{(X,f)}_d$ by expressing that for each $i$, there is one equivalence class satisfying $\psi_i$, the rest of the classes satisfy $\varphi_N$ and every equivalence class has $\varphi_N\to \chi$.
This gives, $SR(\mathcal{A}^{(X,f)}_d)\leq\alpha$.

    Consider the case $x\not\in T$.
    This means that for all $j$, $d_i\not\in S$.
    The structure $\mathcal{A}^{(X,f)}_d$ will contain one copy of $\omega^\alpha$ and the rest of the equivalence classes will contain a linear ordering strictly less than $\omega^\alpha$.
    Furthermore, by construction, for each $i$, among these smaller linear orderings, there will be an ordinal $A_i$ that is left divisible by $\omega^{\alpha_i}$.
    Let $b$ be the first element of the $\omega^\alpha$ and $a_i$ be the first element of $A_i$.
    It follows from the ordinal analysis in \cite{Ash86} that $(\omega^\alpha,b)\equiv_{\alpha_i} (A_i,a_i)$.
    In particular, for each $i$, there is no $\Sigma_{\alpha_i}$ description of the automorphism orbit of $b$.
    Taken together, we conclude that there is no $\Sigma_{\alpha}$ description of the automorphism orbit of $b$.
    Therefore, $SR(\mathcal{A}^{(X,f)}_d)>\alpha$.

By Proposition 1.10, we can pass effectively from $\mathcal{B}$ to a graph $\mathcal{A}^{(X,f)}_d$, preserving rank.  
\end{proof}  

\subsection{The case $\alpha = \lambda + n$, for limit $\lambda$ and finite $n\geq 1$}

In this subsection, we prove the following.

\begin{thm}

For $X\subseteq\omega$ and $\alpha = \lambda + n$, where $\lambda$ is an $X$-computable limit ordinal and $n\geq 1$ is finite, there is an $X$-computable operator that takes each pair $(f,d)$, where $f\in 2^\omega$ and $d$ is an index for an $X$-effective $\Sigma_{2\alpha+1}=\Sigma_{\lambda + 2n+1}$ set $D$ to a graph $\mathcal{A}^{(X,f)}_d$ such that $\mathcal{A}^{(X,f)}_d$ has rank at most $\lambda + n$ iff $f\in D$.   

\end{thm}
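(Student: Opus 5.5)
We combine the limit case (Theorem \ref{4.3}) with the finite-step machinery of Subsection 4.2, exactly as the finite case was built from the case $\alpha=1$. Note that $2\alpha+1 = 2(\lambda+n)+1 = \lambda+2n+1$, since $2\lambda=\lambda$ in ordinal arithmetic. Set $Y=(X,f)$. Given the index $d$ for an $X$-effective $\Sigma_{\lambda+2n+1}$ set $D$, Proposition \ref{Ash} gives that $h^X_{\Sigma_{\lambda+2n+1}}(f)$ is $\Sigma^0_{\lambda+2n+1}$ relative to $Y$, with known index. Hence it is $\Sigma^0_{\lambda+1}$ relative to $Y^{(2n)}$ (taking jumps along the fixed notation path, so that $\Sigma^0_{\lambda+2n+1}$ relative to $Y$ is $\Sigma^0_{\lambda+1}$ relative to $Y^{(2n)}$, with uniform indices).

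\textbf{Step 1.} Apply the construction of Theorem \ref{4.3}, relativized to $Y^{(2n)}$ and with $D$ replaced by the corresponding $\Sigma^0_{\lambda+1}$ set of indices. This yields a graph $\mathcal{B}_0$, computable in $Y^{(2n)}$ and with known index, such that $SR(\mathcal{B}_0)=\lambda$ if $d\in h^X_{\Sigma_{\lambda+2n+1}}(f)$ and $SR(\mathcal{B}_0)=\lambda+1$ otherwise. The proof of Theorem \ref{4.3} only shows ``$\leq\lambda$ iff'', so I would need to check the sharper dichotomy. In the negative case, the structure is built from an equivalence relation with classes that are ordinals $\leq\omega^\lambda$; the orbit of a tuple should be definable by a $\Sigma_{\lambda+1}$ formula (saying the class has type $\omega^\lambda$, plus $\Sigma_{<\lambda}$ position data), so the rank is exactly $\lambda+1$.

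\textbf{Step 2.} Iterate the Harrison-Trainor jump inversion $n$ times. At step $i$, pass from $\mathcal{B}_i$, computable in $Y^{(2(n-i))}$, to $\mathcal{B}_{i+1}$, computable in $Y^{(2(n-i-1))}$, with $SR(\mathcal{B}_{i+1}) = 1+SR(\mathcal{B}_i)$. The main obstacle is that Theorem 4.7 and Lemma \ref{back&forth} were stated for finite Scott rank, where $1+m=m+1$. For infinite rank $\gamma$, the correct statement is $SR(\mathcal{B})=\gamma+1$. So I would redo Lemma \ref{back&forth}(b) for all ordinals $\beta\geq2$ in the form
$(\mathcal{B},\bar a,\bar v)\leq_{\beta}(\mathcal{B},\bar a',\bar v')$ iff $(\mathcal{A},\bar a)\leq_{\beta'}(\mathcal{A},\bar a')$ and the $V$-parts are isomorphic, where $\beta'=\beta-1$ for successor $\beta$ and $\beta'=\beta$ for limit $\beta$ (more precisely, using $\omega+\delta$ shifted indices: each step of $\mathcal{A}$ costs two steps in $\mathcal{B}$ below $\omega$ and one step above). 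The induction is the same, with the limit step taken as a conjunction over smaller $\beta$.

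Then Lemma \ref{addedcondition} transfers the rank exactly as before: a supporting pair $(\beta,\bar b)$ for $\mathcal{A}$ at level $\gamma$ lifts to one for $\mathcal{B}$ at level $\gamma+1$. Conversely, a failure witness $\bar a$ for $\mathcal{A}$ at $\gamma$ is a failure witness in $\mathcal{B}$ at $\gamma+1$, by the argument of the last lemma of Subsection 4.2. The effectivity is unchanged: the edge relation of $\mathcal{B}_i$ and its complement are $\Sigma^0_3$ in $Y^{(2(n-i-1))}$, and we use the relativized Theorem \ref{thm:MHT}.

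\textbf{Step 3.} After $n$ steps, $\mathcal{B}_n$ is computable in $Y$, uniformly in $d$. Its rank is $\lambda+n$ if $f\in D$, and $\lambda+n+1$ otherwise. Finally, Proposition 1.10 converts $\mathcal{B}_n$ into a graph of the same rank.
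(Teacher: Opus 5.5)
Your decomposition runs in the wrong order, and it already breaks at the index computation. Ordinal addition does not commute: $(Y^{(2n)})^{(\lambda)}\equiv_T Y^{(2n+\lambda)}=Y^{(\lambda)}$. For example, with $\lambda=\omega$, $(Y'')^{(\omega)}=\bigoplus_k Y^{(k+2)}\equiv_T Y^{(\omega)}$. So a set that is $\Sigma^0_{\lambda+1}$ relative to $Y^{(2n)}$ is just $\Sigma^0_{\lambda+1}$ relative to $Y$. The correct identity is that $\Sigma^0_{\lambda+2n+1}$ relative to $Y$ equals $\Sigma^0_{2n+1}$ relative to $Y^{(\lambda)}$: the $2n$ extra jumps sit on top of $\lambda$, not underneath it. Hence $h^X_{\Sigma_{\lambda+2n+1}}(f)$ is in general not $\Sigma^0_{\lambda+1}$ relative to $Y^{(2n)}$, and Step 1 cannot be applied to it. Theorem~\ref{4.3} relativized to $Y^{(2n)}$ only yields $\Sigma_{\lambda+1}$-hardness, and no finite jump inversion afterwards recovers the missing $2n$ jumps.

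The same non-commutativity defeats Step 2. Your claim that Harrison-Trainor's inversion sends an infinite rank $\gamma$ to $\gamma+1$ is false. In the copy of $\mathcal{A}$ inside $\mathcal{B}$, the edge relation and its complement are each both $\Sigma_2$- and $\Pi_2$-definable, and the pair from Theorem~\ref{thm:MHT} has ranks $1$ and $2$. So a $\Sigma_\gamma$ orbit definition in $\mathcal{A}$ yields a $\Sigma_{1+\gamma}$ orbit definition in $\mathcal{B}$. Since $1+\gamma=\gamma$ for $\gamma\ge\omega$, you get $SR(\mathcal{B})\le SR(\mathcal{A})$ once $SR(\mathcal{A})$ is infinite, so the rank does not go up. In your extended back-and-forth lemma the shift is $\beta\mapsto -1+\beta$, not $\beta\mapsto\beta-1$. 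Indeed, $\le_\omega$ in $\mathcal{B}$ is the conjunction of the $\le_m$, which corresponds to the conjunction of the $\le_{m-1}$ in $\mathcal{A}$, i.e.\ to $\le_\omega$ in $\mathcal{A}$. From there $\le_{\omega+k}$ in $\mathcal{B}$ corresponds to $\le_{\omega+k}$ in $\mathcal{A}$, not to $\le_{\omega+k-1}$. After $n$ steps you would still be at rank $\lambda$ versus $\lambda+1$.

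What is missing is a jump inversion by $\lambda$, applied \emph{after} the finite construction. The paper first runs Sections 4.1--4.2 relative to $Y^{(\lambda)}$ (its $Y^{\lambda+1}$), producing $\mathcal{A}$ with $SR(\mathcal{A})\le n$ iff $f\in D$. It then codes the edge relation of $\mathcal{A}$ using a different pair: $\lambda+1$-friendly rigid linear orders $\mathcal{G}\equiv_\lambda\mathcal{H}$ that are $\le_{\lambda+1}$-incomparable and of rank at most $\lambda$. This is the operator $\Phi_\lambda$. The paper shows $SR(\mathcal{A})\le n$ iff $SR(\Phi_\lambda(\mathcal{A}))\le\lambda+n$ by three ingredients:
\begin{itemize}
\item translating $\Pi_k$ sentences about $\mathcal{A}$ into $\Pi_{\lambda+k}$ sentences about $\mathcal{B}$;
\item translating back, via the $\lambda$-jump of $\mathcal{B}$ and the Pullback Theorem;
\item a $\Pi_{\lambda+2}$ axiomatization of the range of $\Phi_\lambda$.
\end{itemize}
This places $\lambda$ on the left in both the rank ($\lambda+n$) and the complexity ($\lambda+2n+1$), as the statement requires.
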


Here is an outline of the proof.  Recall that $f\in D$ iff $d\in h^X_{\Sigma_{\lambda+2n+1}}(f)$, where $h^X_{\Sigma_{\lambda+2n+1}}(f)$ is a subset of $\omega$ that is $\Sigma^0_{\lambda+2n+1}$ relative to $(X,f)$.  Let $Y = (X,f)$.    

\begin{enumerate}

\item  By the results from Sections 4.1 and 4.2, we can pass effectively from the index $d$ to an index $a$ for a graph $\mathcal{A}$, computable in $Y^{\lambda+1}$, such that $SR(\mathcal{A})\leq n$ iff $f\in D$.  

\item  We consider a pair of $X$-computable structures $\mathcal{G},\mathcal{H}\in Mod(L)$, from \cite{CGHT}, \cite{GHKMMS}, with some special properties different from those in Section 4.2.
Using the pair $\mathcal{G},\mathcal{H}$, we pass effectively from the index $a$ for $\mathcal{A}$, as a structure computable in $Y^{\lambda+1}$, to an index $b$ for a structure $\mathcal{B}$ computable in $Y$.  The construction is quite similar to that in Section 4.2.    

\item  Using results from \cite{CGHT}, with some of the ideas going back to \cite{Maher}, we show that this $\mathcal{B}$ has the desired Scott rank; that is, $SR(\mathcal{B})\leq \lambda+n$ iff ${SR(\mathcal{A})\leq n}$.  

\item  Finally, we apply Proposition 1.10 to pass from $\mathcal{B}$ to the desired graph 
$\mathcal{A}^{(X,f)}_d$.       

\end{enumerate}
Below, we say more about Steps (2) and (3).

\subsubsection{Properties of $\mathcal{G},\mathcal{H}$, construction of $\mathcal{B}$}

Our goal here is to say exactly what properties are needed for $\mathcal{G}$ and $\mathcal{H}$, and then to describe precisely the construction that takes $\mathcal{A}$ to $\mathcal{B}$.  We fix $X$ and an $X$-computable limit ordinal $\lambda$.  In \cite{GHKMMS}, it is shown that there are $\mathcal{G},\mathcal{H}$ in $Mod(L)$ (linear orderings) with the following properties:

\begin{enumerate}

\item  $\mathcal{G},\mathcal{H}$ are both rigid,   

\item  the pair $\mathcal{G},\mathcal{H}$ is ``$\lambda+1$-friendly;'' that is, they are $X$-computable and the relations relations $\leq_\beta$ for $1\leq\beta < \lambda+1$ are uniformly $X$-c.e.,

\item  $\mathcal{G}\equiv_\lambda\mathcal{H}$, 

\item  $\mathcal{G}\not\leq_{\lambda+1}\mathcal{H}$ and $\mathcal{H}\not\leq_{\lambda+1}\mathcal{G}$, 

\item  both $\mathcal{G},\mathcal{H}$ have Scott rank at most $\lambda$.

\end{enumerate}

Given $\mathcal{G}$ and $\mathcal{H}$, we define an $X$-effective operator $\Phi_\lambda$ that takes each graph $\mathcal{A}$ to a structure $\mathcal{B}$, with unary relations $U,V$, a ternary relation $C$, and a binary relation $R$.  As in Section 4.2, $U,V$ partition the universe into infinite sets, and we identify $U$ with $\mathcal{A}$.  The relation $C$ serves to partition $V$ into infinite sets $V_{(x,y)}$, where $(x,y)$ is a pair of distinct elements of $U$. $C(v,x,y)$ means that $v\in V_{(x,y)}$.  The relation $R$ puts on each $V_{(x,y)}$ a copy of $\mathcal{G}$ if $xEy$ and $\mathcal{H}$ otherwise.  Because of item (4), we have $\Sigma_{\lambda+1}$ formulas saying of a pair $(x,y)$ in $U$ that $V_{(x,y)}$ is isomorphic to $\mathcal{G}$, or to $\mathcal{H}$.   

\bigskip

The next lemma follows from the discussion in Chapter X.3 of \cite{Mbook2} regarding the operator $\Phi_\lambda$.  However, we can give a simple proof.  

\begin{lem}

Let $\mathcal{G}$ and $\mathcal{H}$ be $L$-structures, with $\Pi_{\lambda+2}$ Scott sentences $\varphi_G$, $\varphi_H$, and let $\Phi_\lambda$ be the operator described above, taking each 
$\mathcal{A}\in Mod(L)$ to a structure $\mathcal{B}$ in which $\mathcal{G}$ and $\mathcal{H}$ are used to code the binary relation.  The class of structures $\mathcal{B}$ isomorphic to $\Phi_\lambda(\mathcal{A})$, for $\mathcal{A}\in Mod(L)$, is $\Pi_{\lambda+2}$.  

\end{lem}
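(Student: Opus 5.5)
We will write down an explicit $\Pi_{\lambda+2}$ sentence $\chi$ and check that its countable models are exactly the structures isomorphic to some $\Phi_\lambda(\mathcal{A})$. Such a structure $\mathcal{B}$ has the form described above, in the language $\{U,V,C,R\}$. We take $\chi$ to be the conjunction of the following axioms.

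\begin{enumerate}
\item \emph{Finitary first-order axioms.} $U$ and $V$ partition the universe. $C(v,x,y)$ implies $v\in V$, $x,y\in U$ and $x\neq y$. Every $v\in V$ has exactly one pair $(x,y)$ with $C(v,x,y)$. $R(v,w)$ implies that $v,w$ lie in the same $V_{(x,y)}$.
\item \emph{Infinitude axioms.} $U$ is infinite, and for all distinct $x,y\in U$ the set $V_{(x,y)}$ is infinite. These are computable $\Pi_2$.
\item \emph{Coding axiom.} For all distinct $x,y\in U$,
\[(\varphi_G)^{V_{(x,y)}}\vee(\varphi_H)^{V_{(x,y)}},\]
where $\psi^{V_{(x,y)}}$ denotes the relativization of $\psi$ to the set $\{v: C(v,x,y)\}$, with $R$ in place of the edge symbol.
\item \emph{Symmetry and irreflexivity.} For all distinct $x,y$, $(\varphi_G)^{V_{(x,y)}}\leftrightarrow(\varphi_G)^{V_{(y,x)}}$. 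This guarantees that the decoded relation is a graph.
\end{enumerate}

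For the complexity count, relativizing to a set defined by an atomic formula with parameters $x,y$ does not raise complexity. Each quantifier $\exists v$ becomes $\exists v\,(C(v,x,y)\wedge\cdots)$ and each $\forall v$ becomes $\forall v\,(C(v,x,y)\to\cdots)$, so $(\varphi_G)^{V_{(x,y)}}$ is $\Pi_{\lambda+2}$ with free variables $x,y$. The disjunction in axiom (3) is only a finite disjunction of two $\Pi_{\lambda+2}$ formulas, and this is the step to handle carefully: a disjunction of two $\Pi_{\lambda+2}$ formulas is still $\Pi_{\lambda+2}$, by distributing
\[\textstyle\bigwedge_i\forall\bar u\,\theta_i\ \vee\ \bigwedge_j\forall\bar w\,\eta_j\ \equiv\ \bigwedge_{i,j}\forall\bar u\,\forall\bar w\,(\theta_i\vee\eta_j),\]
where each $\theta_i\vee\eta_j$ is $\Sigma_{\lambda+1}$. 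The outer $\forall x\forall y$ keeps the whole axiom $\Pi_{\lambda+2}$. The same bound holds for the symmetry axiom if it is written as
\[\bigl(\neg\varphi_G^{V_{(x,y)}}\vee\varphi_G^{V_{(y,x)}}\bigr)\wedge\bigl(\neg\varphi_G^{V_{(y,x)}}\vee\varphi_G^{V_{(x,y)}}\bigr),\]
replacing $\neg\varphi_G$ by $\varphi_H$ using the coding axiom. Since $\mathcal{G}\not\cong\mathcal{H}$ (they are separated by $\leq_{\lambda+1}$), $\varphi_H$ implies $\neg\varphi_G$, so under the coding axiom $\neg\varphi_G^{V}$ is equivalent to $\varphi_H^{V}$. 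Thus every piece is a positive Boolean combination of $\Pi_{\lambda+2}$ formulas, and $\chi$ is $\Pi_{\lambda+2}$.

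Correctness has two directions.

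\textbf{Each $\Phi_\lambda(\mathcal{A})$ satisfies $\chi$.} This is immediate from the construction: each $V_{(x,y)}$ is a copy of $\mathcal{G}$ or $\mathcal{H}$ according to whether $xEy$, and $E$ is symmetric and irreflexive.

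\textbf{Each countable model of $\chi$ is some $\Phi_\lambda(\mathcal{A})$.} Let $\mathcal{B}'\models\chi$ be countable.
\begin{itemize}
\item Define a graph $\mathcal{A}$ on $U^{\mathcal{B}'}$ by letting $xEy$ hold iff $V_{(x,y)}\models\varphi_G$.
\item By axiom (4), $E$ is symmetric, and it is irreflexive because $C$ only uses distinct pairs.
\item By axiom (3) and the fact that $\varphi_G,\varphi_H$ are Scott sentences, each $V_{(x,y)}$ is countable and isomorphic to $\mathcal{G}$ when $xEy$ and to $\mathcal{H}$ otherwise.
\item Choose isomorphisms on each block and the identity on $U$. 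Since $V$ is the disjoint union of the blocks, these glue to an isomorphism $\mathcal{B}'\cong\Phi_\lambda(\mathcal{A}')$, where $\mathcal{A}'$ is a copy of $\mathcal{A}$ with universe $\omega$, which exists because $U$ is infinite.
\end{itemize}
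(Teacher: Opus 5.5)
Your proposal is correct and follows the paper's route: an explicit $\Pi_{\lambda+2}$ axiomatization by ``$U$ is infinite'' plus ``for all distinct $x,y\in U$, $V_{(x,y)}$ satisfies $\varphi_G$ or $\varphi_H$.'' You also supply what the paper leaves implicit, namely the finitary structural axioms on $U,V,C,R$ (which the paper's $\rho$ needs but does not state) and the check that a disjunction of two $\Pi_{\lambda+2}$ formulas is again $\Pi_{\lambda+2}$. The one divergence is your symmetry axiom (4), which is correct if $\Phi_\lambda$ is applied only to undirected graphs; the paper's $\rho$ omits it and so captures the images of arbitrary $\{E\}$-structures in $Mod(L)$ (loops are never coded), so under that reading you should simply drop (4), and either version works for the later use in the Scott rank lemma.
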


\begin{proof}

We axiomatize the class by a $\Pi_{\lambda+2}$-sentence $\rho$ saying the following:
\begin{enumerate}

\item  $U$ is infinite,

\item  for all $x\not= y$ in $U$, the corresponding $V_{(x,y)}$, with the restriction of the relation $R$, satisfies one of $\varphi_G$, $\varphi_H$.

\end{enumerate}
It is computable $\Pi_2$ to say that $U$ is infinite.  We have a $\Pi_{\lambda+2}$ formula saying of $x,y$, that if both are in $U$ and $x\not= y$, then $V_{(x,y)}$ satisfies one of the sentences $\varphi_G$ or $\varphi_H$.
\end{proof}

We also describe how to pass from a $Y^{\lambda+1}$-computable code for $\mathcal{A}$ to a $Y$-computable code for $\mathcal{B}$.
This resembles Lemma 5.3 in \cite{CGHT}

\begin{lem}
Given $Y$ and an index $a$ for a graph $\mathcal{A}$ computable in $Y^{\lambda+1}$ we can find an index $b$ for a graph $\mathcal{B}\cong\Phi_\lambda(\mathcal{A})$ computable in $Y$.
\end{lem}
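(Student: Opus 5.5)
The plan is to build $\mathcal{B}$ by a $\lambda+1$ jump-inversion argument in the style of Lemma 5.3 of \cite{CGHT}, using the $\lambda+1$-friendliness of the pair $\mathcal{G},\mathcal{H}$. The universe of $\mathcal{B}$ and the relations $U$, $V$, $C$ are fixed in advance and computably. $U$ is a computable infinite set identified with the universe of $\mathcal{A}$, which we may take to be $\omega$. $V$ is the complement of $U$, computably partitioned into infinite sets $V_{(x,y)}$ for distinct $x,y$. So the only work is to build, uniformly in $(x,y)$ and computably in $Y$, a copy of $R$ on $V_{(x,y)}$. This copy must be isomorphic to $\mathcal{G}$ if $xEy$ holds in $\mathcal{A}$ and to $\mathcal{H}$ otherwise. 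Since the index $a$ gives $E$ as a $\Delta^0_{\lambda+1}$ relation relative to $Y$, it suffices to prove the following uniform claim. Given a $Y$-computable index for a $\Delta^0_{\lambda+1}(Y)$ bit $e$, we can $Y$-effectively build a copy of $\mathcal{G}$ if $e=1$ and of $\mathcal{H}$ if $e=0$.

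For this claim I would apply the Ash--Knight metatheorem for $\Delta^0_{\lambda+1}$ constructions (the $\lambda$-system version from \cite{AK}), relativized to $Y$. Equivalently, I would use the standard pair-of-structures theorem: if $\mathcal{G},\mathcal{H}$ are $\lambda$-friendly and $\mathcal{G}\equiv_\lambda\mathcal{H}$, so in particular $\mathcal{G}\leq_\lambda\mathcal{H}$ and $\mathcal{H}\leq_\lambda\mathcal{G}$, then for any $\Delta^0_{\lambda+1}(Y)$ bit we can $Y$-uniformly produce a copy of the corresponding structure. I would write $\Sigma^0_{\lambda+1}$ as the union of $\Pi^0_\lambda$ conditions and verify the hypotheses.
\begin{enumerate}
\item Friendliness (item 2) makes the back-and-forth relations $\leq_\beta$ for $\beta<\lambda+1$ uniformly $X$-c.e. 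Since $Y=(X,f)$ computes $X$, the alternating-tree instructions can be checked $Y$-computably.
\item Item 3, $\mathcal{G}\equiv_\lambda\mathcal{H}$, supplies the moves: while the approximation to $e$ changes, the construction can switch between building $\mathcal{G}$ and $\mathcal{H}$ with the finite partial diagrams extended consistently. Concretely, at each guess-change the current tuple in one structure is matched by a $\leq_\beta$-related tuple in the other, with $\beta$ dropping along the $\lambda$-system.
\end{enumerate}
Since the construction is uniform in the index, we obtain the index $b$ effectively from $a$.

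The main obstacle is the exact form of the switching step at the limit level. A $\Delta^0_{\lambda+1}$ bit is approximated through $\Delta^0_{\lambda}$-style guesses, and for limit $\lambda$ these are given by a $\lambda$-system with a limit-indexed family of jumps. We must ensure the diagrams built in $V_{(x,y)}$ converge to a copy of the correct structure. I would first try to cite the general jump-inversion result directly, namely Theorem 18.6 of \cite{AK}, or its relativized form as used in \cite{CGHT}. That result says exactly that a $\lambda$-friendly pair with $\mathcal{G}\leq_\lambda\mathcal{H}$ and $\mathcal{H}\leq_\lambda\mathcal{G}$ lets us code a $\Delta^0_{\lambda+1}$ bit. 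I would also check whether that theorem needs $\mathcal{G}\leq_\lambda\mathcal{H}$ in one direction only or in both; both hold by item 3. Finally, running all pairs $(x,y)$ in parallel gives $\mathcal{B}\cong\Phi_\lambda(\mathcal{A})$ computable in $Y$. By Proposition 1.10 it can be replaced by a graph, which is what the lemma calls $\mathcal{B}$.
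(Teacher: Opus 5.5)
Your proposal is correct and follows the paper's proof. The paper likewise lays down $U$ and the partition of $V$ computably, uses the pair-of-structures theorem for the friendly pair with $\mathcal{G}\equiv_\lambda\mathcal{H}$ to get $Y$-uniform copies $\mathcal{C}_{\langle x,y\rangle}$ that are $\cong\mathcal{G}$ or $\cong\mathcal{H}$ according to the $\Delta_{\lambda+1}(Y)$ relation $E$, and places $\mathcal{C}_{\langle x,y\rangle}$ on $V_{(x,y)}$. The only difference is your final conversion to a graph via Proposition 1.10. The paper keeps $\mathcal{B}$ as a literal copy of $\Phi_\lambda(\mathcal{A})$ here, since the subsequent Scott-rank lemmas use that form, and it converts to a graph only at the very end.
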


\begin{proof}

Note that $\mathcal{G}$ and $\mathcal{H}$ are $\lambda+1$-friendly and have $\mathcal{G}\equiv_\lambda\mathcal{H}$.
By the pair of structures theorem \cite{AK}, this means that distinguishing $\mathcal{G}$ and $\mathcal{H}$ is $\Delta_{\lambda+1}(Y)$ hard.
In other terms, given an effective $\Delta_{\lambda+1}$ set $S$ there is a $Y$ effective method of constructing $\mathcal{C}_x$ with $\mathcal{C}_x\cong\mathcal{G}$ if $x\in S$ and $\mathcal{C}_x\cong\mathcal{H}$ if $x\not\in S$.

Using $a$, uniformly given $x,y\in\mathcal{A}$, we can find whether $xEy$ in a $Y^{\lambda+1}$-computable manner.
This means that there is a $\Delta_{\lambda+1}(Y)$ set $S$ of pairs $x,y$ with $xEy$.
We find the index $b$ for $\mathcal{B}$ by describing a $Y$ computable method of building $\mathcal{B}$.
For each element in $\mathcal{A}$, we put a corresponding element with predicate $U$ in $\mathcal{B}$.
We ensure that each $x,y\in\mathcal{A}$ has $V_{x,y}\cong \mathcal{C}_{\langle x,y \rangle}$.
The constructed $\mathcal{B}$ is isomorphic to $\Phi_\lambda(\mathcal{A})$, as it places a copy of $\mathcal{G}$ on $V_{x,y}$ when $xEy$ and a copy of $\mathcal{H}$ on $V_{x,y}$ otherwise.
\end{proof}

\subsubsection{Scott rank of $\mathcal{B}$}

Our goal here is to show that $\mathcal{A}$ has Scott rank at most $n$ iff $\mathcal{B}$ has Scott rank at most $\lambda+n$.  We use some known results, given in \cite[Section 5]{CGHT}.   

\begin{lem}

For each $\Pi_k$-sentence $\varphi$, there is a $\Pi_{\lambda+k}$ sentence $\varphi_*$ such that $\mathcal{A}\models\varphi$ iff $\mathcal{B}\models\varphi_*$.

\end{lem}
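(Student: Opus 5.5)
We define $\varphi_*$ by induction on the complexity of $\varphi$, translating each part of $\varphi$ into the language of $\mathcal{B}$ and then checking truth by a parallel induction. This is the standard interpretation-based pullback/pushforward. The key point is that $\mathcal{A}$ is interpreted in $\mathcal{B}$ with universe $U$, a quantifier-free definition. The edge relation $xEy$ is defined by a $\Sigma_{\lambda+1}$ formula $e(x,y)$ saying ``$V_{(x,y)}\cong\mathcal{G}$''. Its negation is defined by a $\Sigma_{\lambda+1}$ formula $\bar e(x,y)$ saying ``$V_{(x,y)}\cong\mathcal{H}$''.

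\textbf{Obtaining $e$ and $\bar e$.} By property (4), $\mathcal{H}\not\leq_{\lambda+1}\mathcal{G}$, so there is a $\Pi_{\lambda+1}$ sentence true in $\mathcal{H}$ and false in $\mathcal{G}$. Similarly $\mathcal{G}\not\leq_{\lambda+1}\mathcal{H}$ gives a $\Pi_{\lambda+1}$ sentence true in $\mathcal{G}$ and false in $\mathcal{H}$. Negating, we get $\Sigma_{\lambda+1}$ sentences $\gamma$, true in $\mathcal{G}$ but not in $\mathcal{H}$, and $\eta$, true in $\mathcal{H}$ but not in $\mathcal{G}$. Since every $V_{(x,y)}$ is a copy of exactly one of $\mathcal{G}$, $\mathcal{H}$, we set $e(x,y)$ to be $\gamma$ relativized to $V_{(x,y)}$, with quantifiers bounded by $C(\cdot,x,y)$, and $\bar e(x,y)$ to be $\eta$ relativized the same way. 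Relativizing to a quantifier-free definable set preserves complexity, so both are $\Sigma_{\lambda+1}$, and hence also $\Pi_{\lambda+2}$ and $\Delta$ at level $\lambda+2$.

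\textbf{The translation.} We translate formulas $\varphi(\bar x)$ in normal form. Replace atomic $xEy$ by $e(x,y)$, $\neg xEy$ by $\bar e(x,y)$, and equality by equality. Relativize quantifiers to $U$, so $\exists u$ becomes $\exists u\,(U(u)\wedge\ldots)$ and $\forall u$ becomes $\forall u\,(U(u)\to\ldots)$, and keep disjunctions and conjunctions as they are. We then check complexity by induction on $k\ge 1$: if $\varphi$ is $\Sigma_k$ (resp.\ $\Pi_k$), then $\varphi_*$ is $\Sigma_{\lambda+k}$ (resp.\ $\Pi_{\lambda+k}$).

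\textbf{Base case, the main obstacle.} A finitary quantifier-free formula becomes a finite Boolean combination of $\Sigma_{\lambda+1}$ formulas. Write it in disjunctive normal form using only positive literals $e$, $\bar e$ and equalities; this is possible because $\neg e$ is equivalent to $\bar e$ on pairs of distinct elements of $U$. The result is $\Sigma_{\lambda+1}$. For the dual we need a $\Pi_{\lambda+1}$ translation, which is the delicate point. We use the conjunctive form $\neg\bar e(x,y)$ in place of $e(x,y)$, and $\neg e(x,y)$ in place of $\bar e(x,y)$; these are $\Pi_{\lambda+1}$ and equivalent in $\mathcal{B}$. So a quantifier-free $\varphi$ has translations in both $\Sigma_{\lambda+1}$ and $\Pi_{\lambda+1}$.

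\textbf{Inductive steps.} A $\Sigma_1$ formula $\bigvee\exists\bar u\,\psi$ with $\psi$ quantifier-free becomes $\bigvee\exists\bar u\,(U(\bar u)\wedge\psi_*)$ with $\psi_*$ the $\Sigma_{\lambda+1}$ version, which is $\Sigma_{\lambda+1}$. A $\Pi_1$ formula uses the $\Pi_{\lambda+1}$ version and becomes $\Pi_{\lambda+1}$. For $k\ge2$, a $\Sigma_k$ formula $\bigvee\exists\bar u\,\psi_i$ with $\psi_i$ $\Pi_{k_i}$, $k_i<k$, translates by the induction hypothesis to $\bigvee\exists\bar u$ over $\Pi_{\lambda+k_i}$ formulas, which is $\Sigma_{\lambda+k}$. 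The $\Pi_k$ case is dual.

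\textbf{Truth.} A routine induction shows that $\mathcal{A}\models\varphi(\bar a)$ iff $\mathcal{B}\models\varphi_*(\bar a)$ for $\bar a$ in $U$, identifying $U$ with $\mathcal{A}$. The base case holds by the defining properties of $e$ and $\bar e$. The quantifier and connective cases hold because quantifiers range over $U$. Taking $\varphi$ to be a sentence gives the lemma.
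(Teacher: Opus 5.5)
Your proposal is correct and takes the paper's approach: the paper's one-line proof is just that $U$ together with the $\Sigma_{\lambda+1}$ definitions of $E$ and $\neg E$ (from item (4)) gives a copy of $\mathcal{A}$ in $\mathcal{B}$, and your inductive translation, using the $\Sigma_{\lambda+1}$ or $\Pi_{\lambda+1}$ version of quantifier-free formulas as the context requires, spells this out. One small detail: there is no $V_{(x,x)}$, so you should guard the atomic case, translating $xEy$ as $x\neq y\wedge e(x,y)$ and $\neg xEy$ as $x=y\vee\bar e(x,y)$, which does not change the complexity.
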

  
This is clear from the fact that we have $\Sigma_{\lambda+1}$ formulas that define a copy of $\mathcal{A}$ in $\mathcal{B}$. 

\begin{lem}

For each $\Pi_{\lambda+k}$ sentence $\psi$, there is a $\Pi_k$-sentence $\psi^*$ such that 
$\mathcal{B}\models\psi$ iff $\mathcal{A}\models\psi^*$.  

\end{lem}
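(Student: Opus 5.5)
We prove the statement by induction on $k$, with the sentence in prenex normal form; this is the pullback direction for the coding $\Phi_\lambda$. The key tool is a back-and-forth analysis of $\mathcal{B}$ analogous to Lemma \ref{back&forth}, now using properties (1)--(5) of the pair $\mathcal{G},\mathcal{H}$ in place of those from Theorem \ref{thm:MHT}. Write tuples of $\mathcal{B}$ as $\bar{a},\bar{v}$ with $\bar{a}\subseteq U$ and $\bar{v}\subseteq V$. Extending if necessary, we may assume that every element of $\bar{v}$ lies in some $V_{(x,y)}$ with $x,y\in\bar{a}$.

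The first step is a claim about the back-and-forth relations:
\[(\mathcal{B},\bar{a},\bar{v})\leq_{\lambda+k}(\mathcal{B},\bar{a}',\bar{v}')\iff(\mathcal{A},\bar{a})\leq_k(\mathcal{A},\bar{a}')\ \&\ \bigwedge_{(x,y)}(V_{(x,y)},\bar{v}_{(x,y)})\cong(V_{(x',y')},\bar{v}'_{(x',y')}),\]
valid for $k\geq 1$, where for $k=0$ we read $(\mathcal{A},\bar{a})\leq_0(\mathcal{A},\bar{a}')$ as agreement of equality only. I prove it in two stages.

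For $k=0$, I show that $\leq_\lambda$ on $\mathcal{B}$ sees only the equality type of $\bar{a}$ together with $\leq_\beta$ (for $\beta<\lambda$) between the pieces $(V_{(x,y)},\bar{v}_{(x,y)})$. This uses $\mathcal{G}\equiv_\lambda\mathcal{H}$ and the fact that below $\lambda$ the structure $\mathcal{B}$ is a disjoint sum indexed by pairs, so one can argue componentwise by induction on $\beta\leq\lambda$.

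For the step from $\lambda$ to $\lambda+1$, property (4) makes the $\mathcal{G}$ versus $\mathcal{H}$ distinction visible, which recovers the edge relation. Property (1), rigidity, together with (5) and (4), should give that $\leq_{\lambda+1}$ between pieces carrying parameters already forces isomorphism of the pointed pieces. I then proceed by induction on $k$ exactly as in the proof of Lemma \ref{back&forth}. In the forth direction, new elements of $U$ are matched using $\leq_{k-1}$ in $\mathcal{A}$, and new elements of $V$ are matched using the isomorphisms between the pieces.

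\textbf{Main obstacle.} The hardest part is the base transition. I must show that $\leq_{\lambda+1}$ on pointed copies of $\mathcal{G}$ or $\mathcal{H}$ implies isomorphism. The plan is to take Scott rank at most $\lambda$ (property (5)): then $\Sigma_\lambda$ formulas define orbits, so $\leq_\lambda$ already yields isomorphism between pointed copies of the same structure. Property (4) is what rules out matching $\mathcal{G}$ with $\mathcal{H}$. A point to check is that $\Pi_{\lambda+1}$ information about $(\mathcal{B},\bar{a})$ includes these $\Sigma_{\lambda+1}$ distinctions for both tuples.

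Given the claim, define $\psi^*$ by induction on $\psi$. For $\psi$ of the form $\bigwedge_i\forall\bar{u}\,\theta_i$, split the quantified variables into $U$-part and $V$-part. Replace $V$-quantifiers by a quantifier over the (countably many) isomorphism types of pointed pieces, turning each such quantifier into a conjunction. Replace the $U$-quantifiers by quantifiers in $\mathcal{A}$. Since by the claim the $\Pi_{\lambda+k}$-type of $\bar{a},\bar{v}$ is determined by the $\Pi_k$-type of $\bar{a}$ in $\mathcal{A}$ together with the isomorphism types of the pieces (each determined by the atomic type of $\bar{a}$), the result is $\Pi_k$.

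Alternatively, and more cleanly, let $\psi^*$ be the conjunction over the countably many $\Pi_k$-types realized appropriately. More precisely, set $\psi^*=\bigvee\{\chi_{\mathcal{A}'}:\mathcal{B}'\models\psi\}$, where each $\chi_{\mathcal{A}'}$ is the $\Pi_k$ theory of $\mathcal{A}'$ and $\mathcal{B}'=\Phi_\lambda(\mathcal{A}')$. Then apply the claim with empty tuples: $\mathcal{A}\equiv_k\mathcal{A}'$ implies $\mathcal{B}\equiv_{\lambda+k}\mathcal{B}'$. This gives a $\Pi_k$ sentence via a normal form argument as in Lemma \ref{Pialpha}, using the fact that a disjunction of $\Pi_k$ theories closed under $\leq_k$ can be rewritten as a $\Pi_k$ sentence.
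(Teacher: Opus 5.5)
\textbf{There is a gap in the step from your back-and-forth claim to a $\Pi_k$ sentence.} The claim itself is essentially right, and the same argument works between $\Phi_\lambda(\mathcal{A})$ and $\Phi_\lambda(\mathcal{A}')$. But it is purely semantic. It only shows that $K_\psi=\{\mathcal{A}:\Phi_\lambda(\mathcal{A})\models\psi\}$ is closed upward under $\le_k$, and such closure does not give $\Pi_k$-definability.

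A counterexample already exists for $k=1$. Take the class of graphs that omit some cycle $C_n$ ($n\ge 3$) as an induced subgraph. It is closed upward under $\le_1$ and is $\Sigma_2$, but it is not $\Pi_1$. Suppose a $\Pi_1$ conjunct fails in the disjoint union of all cycles. The failing tuple lies inside finitely many of the cycles, and the disjoint union of just those cycles belongs to the class, yet the conjunct still fails there. So the principle that ``a disjunction of $\Pi_k$ theories closed under $\le_k$ can be rewritten as a $\Pi_k$ sentence'' is false. Route (2) has further problems:
\begin{itemize}
\item your disjunction ranges over continuum many $\mathcal{A}'$;
\item $\chi_{\mathcal{A}'}$ need not be a single $L_{\omega_1\omega}$ sentence;
\item even a countable disjunction of $\Pi_k$ sentences is only $\Sigma_{k+1}$;
\item Lemma \ref{Pialpha} uses countably many tuples of one fixed structure, so it gives nothing uniform in $\mathcal{A}$.
\end{itemize}
Your first route has the same defect. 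If you translate $\psi$ recursively, sending $U$-quantifiers to $\mathcal{A}$-quantifiers, the subformulas of complexity $\le\lambda$ keep their quantifiers, and you get a $\Pi_{\lambda+k}$ formula of $\mathcal{A}$. Your reason for calling the result $\Pi_k$ again appeals only to the semantic claim.

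\textbf{The missing idea is a syntactic truncation at level $\lambda$.} Take each subformula $\theta(\bar x,\bar v)$ of $\psi$, with $\bar x$ in $U$ and $\bar v$ in $V$. Take each pointed configuration $\tau$, meaning which pairs the $v$'s lie over and the isomorphism type of each pointed piece; there are countably many. Build $\theta^*_\tau(\bar x)$ with $\Phi_\lambda(\mathcal{A})\models\theta(\bar a,\bar v)$ iff $\mathcal{A}\models\theta^*_\tau(\bar a)$ whenever $\bar a,\bar v$ realize $\tau$.
\begin{itemize}
\item If $\theta$ has complexity $\le\lambda$, do not translate it. By the $k=0$ part of your analysis (run across structures), its truth depends only on $\tau$ and the equality type of $\bar a$, so $\theta^*_\tau$ can be taken quantifier-free.
\item Above $\lambda$, send $\forall v$ to a universal quantifier over any new $U$-elements of the pair that $v$ lies over, followed by $\bigwedge_{\tau'}(\mathrm{real}_{\tau'}\rightarrow\theta^*_{\tau'})$. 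Here realizability of $\tau'$ is a quantifier-free edge condition. Treat $\exists$ dually.
\end{itemize}
Then $\Pi_{\lambda+j}$ formulas go to $\Pi_j$ formulas, uniformly in $\mathcal{A}$. This uses only the $k=0$ part of your claim; the $\le_{\lambda+k}$ characterization you flagged as the main obstacle is not needed.

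The paper gets this bookkeeping for free. It replaces $\mathcal{B}$ by its structural $\lambda$-jump $\mathcal{B}^{(\lambda)}$, where $\Pi_{\lambda+k}$ sentences become $\Pi_k$. It notes that $\mathcal{A}\mapsto\mathcal{B}^{(\lambda)}$ is (relatively) Turing computable, and then applies the Pullback Theorem.
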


The argument given in \cite[Section 5]{CGHT} works by noting that, in some sense, $\Phi_\lambda$ is better than $X$-computable.
Without loss of generality, $\Phi_\lambda$ may produce the $\lambda$ jump of $\mathcal{B}$ (defined precisely in \cite{Mbook2} Chapter X).
Any $\Pi_{\lambda+k}$ sentence $\psi$ in $\mathcal{B}$ corresponds to a $\Pi_k$ sentence $\hat\psi$ in $\mathcal{B}^{(\lambda)}$.
From there, the Pullback Theorem \cite{KMV} gives that $\hat\psi$ corresponds to a $\Pi_k$ sentence $\psi^*$ in $\mathcal{A}$.

\bigskip

The result below now follows from a known pattern of reasoning \cite[Section 6.2]{GHT}.  We can give a simple proof.

\begin{lem}

For $\mathcal{G}$, $\mathcal{H}$, and $\Phi$ as described, and $\Phi(\mathcal{A}) = \mathcal{B}$, we have \linebreak $SR(\mathcal{A}) \leq n$ iff $SR(\mathcal{B}) \leq \lambda+n$. 

\end{lem}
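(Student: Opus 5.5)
We will use the Montalbán characterization: a structure has Scott rank at most $\gamma$ iff it has a $\Pi_{\gamma+1}$ Scott sentence. For $n\geq 1$ this gives $SR(\mathcal{A})\leq n$ iff $\mathcal{A}$ has a $\Pi_{n+1}$ Scott sentence, and $SR(\mathcal{B})\leq\lambda+n$ iff $\mathcal{B}$ has a $\Pi_{\lambda+n+1}$ Scott sentence. So it suffices to transfer Scott sentences in both directions, shifting complexity by exactly $\lambda$, using the two preceding lemmas together with the $\Pi_{\lambda+2}$ sentence $\rho$ that axiomatizes the range of $\Phi_\lambda$ up to isomorphism.

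\emph{($\Leftarrow$)} Suppose $\psi$ is a $\Pi_{\lambda+n+1}$ Scott sentence for $\mathcal{B}$. The second lemma gives a $\Pi_{n+1}$ sentence $\psi^*$ with $\Phi_\lambda(\mathcal{A}')\models\psi$ iff $\mathcal{A}'\models\psi^*$ for every graph $\mathcal{A}'$. Since $\Phi_\lambda$ is injective on isomorphism types, we get
\[
\mathcal{A}'\models\psi^* \iff \Phi_\lambda(\mathcal{A}')\cong\mathcal{B} \iff \mathcal{A}'\cong\mathcal{A}.
\]
The middle equivalence holds because $\mathcal{G}\not\cong\mathcal{H}$, so the edge relation is recovered from $\mathcal{B}$. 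Thus $\psi^*$ is a $\Pi_{n+1}$ Scott sentence for $\mathcal{A}$, and $SR(\mathcal{A})\leq n$. This is exactly the argument of the Proposition following the graph-coding theorem in the introduction.

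\emph{($\Rightarrow$)} Suppose $\varphi$ is a $\Pi_{n+1}$ Scott sentence for $\mathcal{A}$. The first lemma gives a $\Pi_{\lambda+n+1}$ sentence $\varphi_*$ that, applied to the interpreted copy of $\mathcal{A}$, satisfies $\Phi_\lambda(\mathcal{A}')\models\varphi_*$ iff $\mathcal{A}'\models\varphi$. We take $\rho\wedge\varphi_*$. Since $n\geq1$, $\lambda+2\leq\lambda+n+1$, so this is $\Pi_{\lambda+n+1}$. Let $\mathcal{B}'\models\rho\wedge\varphi_*$. We must show $\mathcal{B}'\cong\mathcal{B}$, and this is the step needing care: we need that every model of $\rho$ really is $\Phi_\lambda(\mathcal{A}')$ for some graph $\mathcal{A}'$. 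In the lemma, $\rho$ only says each $V_{(x,y)}$ satisfies $\varphi_G$ or $\varphi_H$; it does not say that $V_{(x,y)}$ and $V_{(y,x)}$ agree. I would add this symmetry requirement as a conjunct. It is $\Pi_{\lambda+2}$, since each of $\varphi_G$, $\varphi_H$ relativized to $V_{(x,y)}$ is $\Pi_{\lambda+2}$, and the agreement can be expressed as a Boolean combination of the $\Sigma_{\lambda+1}$ formulas distinguishing $\mathcal{G}$ from $\mathcal{H}$ noted earlier. We also need $U$ infinite, which $\rho$ already states. The statement presumably works with graphs with universe $\omega$, so that $U$ infinite matches. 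With these conjuncts, $\mathcal{B}'\cong\Phi_\lambda(\mathcal{A}')$ where $\mathcal{A}'$ is read off from $U$. Then $\varphi_*$ gives $\mathcal{A}'\models\varphi$, so $\mathcal{A}'\cong\mathcal{A}$ and $\mathcal{B}'\cong\mathcal{B}$. Hence $\mathcal{B}$ has a $\Pi_{\lambda+n+1}$ Scott sentence and $SR(\mathcal{B})\leq\lambda+n$.

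The main obstacle I expect is bookkeeping rather than a deep step. We need the boldface (non-computable) versions of the two transfer lemmas, since Montalbán's theorem is boldface; the lemmas as stated are boldface, so this is fine. We also need the levels $\lambda+k$ to line up exactly. For instance, $\varphi_*$ must be $\Pi_{\lambda+n+1}$ and not $\Pi_{\lambda+n+2}$; this holds because the copy of $\mathcal{A}$ is defined by $\Sigma_{\lambda+1}$ formulas for both the edge relation and its negation, so each quantifier-free instance becomes $\Delta_{\lambda+1}$ and each later quantifier block adds one level. One point needs explicit checking: $\Pi_1$ translates to $\Pi_{\lambda+1}$, because universally quantifying over a $\Delta_{\lambda+1}$ matrix stays $\Pi_{\lambda+1}$ at a limit $\lambda$.
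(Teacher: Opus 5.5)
Your proposal is correct and is essentially the paper's proof. In one direction you conjoin the $\Pi_{\lambda+n+1}$ translation $\varphi_*$ with the $\Pi_{\lambda+2}$ axiom $\rho$ for the range of $\Phi_\lambda$; in the other you pull a $\Pi_{\lambda+n+1}$ Scott sentence for $\mathcal{B}$ back to a $\Pi_{n+1}$ Scott sentence for $\mathcal{A}$, using that $\Phi_\lambda$ is injective on isomorphism types. The symmetry conjunct you add is harmless but unnecessary: any model of $\rho$ decodes to some $L$-structure $\mathcal{A}'$, and $\varphi_*$ forces $\mathcal{A}'\models\varphi$, so $\mathcal{A}'\cong\mathcal{A}$, which already makes the coded relation symmetric.
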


\begin{proof}

First, suppose $SR(\mathcal{A})\leq n$.  Then $\mathcal{A}$ has a $\Pi_{n+1}$ Scott sentence $\varphi$.  By Lemma 4.14, there is a $\Pi_{\lambda+n+1}$ sentence $\varphi_*$ such that $\mathcal{A}\models\varphi$ iff $\mathcal{B}\models\varphi_*$.  By Lemma 4.13, there is a $\Pi_{\lambda+2}$ sentence $\rho$ characterizing the class of structures isomorphic to those in the range of the $X$-computable operator $\Phi$ defined on $Mod(L)$.  The sentence $\varphi_*\ \&\ \rho$ is $\Pi_{\lambda+n+1}$, and it is a Scott sentence for $\mathcal{B}$.  Therefore $SR(\mathcal{B})\leq \lambda+n$.  
Now, suppose $SR(\mathcal{B})\leq\lambda+n$.  Then $\mathcal{B}$ has a $\Pi_{\lambda+n+1}$ Scott sentence $\psi$.  By Lemma 4.15, there is a $\Pi_{n+1}$ sentence $\psi^*$ such that $\mathcal{A}\models\psi^*$ iff $\mathcal{B}\models\psi$.  Then $\psi^*$ is a Scott sentence for $\mathcal{A}$.  Therefore, $SR(\mathcal{A})\leq n$.  
\end{proof}

This was the last piece we needed for the proof of Theorem 4.12, and Theorem 4.12 was all we needed to complete the proof of Theorem 4.1.

\end{document}